\documentclass{article}[12pt]

\usepackage{makeidx}

\usepackage[english]{babel}
\usepackage[latin1]{inputenc}
\usepackage[T1]{fontenc}
\usepackage{amsmath, amsthm}
\usepackage{amscd}
\usepackage{amssymb}
\usepackage{amssymb}
\usepackage{latexsym}
\usepackage{amsmath}
\usepackage{amsfonts}
\usepackage{amsthm}

\usepackage{url}

\usepackage{color}

\usepackage{enumerate}
\usepackage{graphicx}
\usepackage{float}

\usepackage{lipsum}
\usepackage{tikz}
\usetikzlibrary{matrix,arrows,decorations.pathmorphing}
\usepackage[english]{babel}
\usepackage[latin1]{inputenc}
\usepackage[T1]{fontenc}
\usepackage{amsmath}
\usepackage{amssymb}
\usepackage{amsthm}
\usepackage{enumerate}
\usepackage{mathtools}



\newtheorem{thm}{Theorem}[section]
\newtheorem{theorem}[thm]{Theorem}

\newtheorem{corollary}[thm]{Corollary}

\newtheorem{proposition}[thm]{Proposition}

\newtheorem{remark}[thm]{Remark}

\theoremstyle{definition}
\newtheorem{example}[thm]{Example}

\makeatletter
\newcommand{\subalign}[1]{%
  \vcenter{%
    \Let@ \restore@math@cr \default@tag
    \baselineskip\fontdimen10 \scriptfont\tw@
    \advance\baselineskip\fontdimen12 \scriptfont\tw@
    \lineskip\thr@@\fontdimen8 \scriptfont\thr@@
    \lineskiplimit\lineskip
    \ialign{\hfil$\m@th\scriptstyle##$&$\m@th\scriptstyle{}##$\crcr
      #1\crcr
    }%
  }
}
\makeatother

\begin{document}

\makeatother

\DeclareRobustCommand{\abinom}{\genfrac{\langle}{\rangle}{0pt}{}}

\centerline{\Large \bf DISCRETE MATHEMATICS}

\

\centerline{A. Brini and A. Teolis}

\centerline{Dipartimento di Matematica, Universit\`{a} di Bologna}

\medskip

\centerline{\bf Dedicated to the memory of Gian-Carlo Rota}

\centerline{Vigevano $1932$ - Cambridge MA $1999$} 

\bigskip

\tableofcontents

\newpage

\section{Introduction}

The purpose of the present work is to provide  short and supple 
teaching notes for a $30$ hours introductory course on elementary 
\textit{Enumerative Algebraic Combinatorics}.

We fully adopt the \textit{Rota way} (see, e.g. \cite{KY}). The themes are organized into a suitable 
sequence that allows us  to  derive any result from the preceding ones by elementary processes.

Definitions of  \textit{combinatorial coefficients} are just by their \textit{combinatorial meaning}.  
The derivation techniques of formulae/results are founded upon constructions and  two general and elementary principles/methods:

- The \textit{bad element} method (for \textit{recursive} formulae).
As the reader should recognize, the bad element method might be regarded
as a combinatorial companion of the idea of \textit{conditional probability}.

- The \textit{overcounting} principle (for \textit{close form} formulae).

 Therefore, \textit{no computation} is required in \textit{proofs}: 
\textit{computation formulae are byproducts of combinatorial constructions}.

We tried to provide a self-contained presentation: the only prerequisite is
standard high school mathematics.

We limited ourselves to the \textit{combinatorial point of view}: we invite the reader
to draw the (obvious) \textit{probabilistic interpretations}.

Several beautiful (and ponderous) monographs on the subject are currently available. We refer the 
interested reader to the ones by R. Graham, D. Knuth, O. Patashnik \cite{GKP} and
by Richard P. Stanley \cite{Stanley}.

\

These notes are dedicated to the memory of \textit{Gian-Carlo Rota}.
Gian-Carlo  was mentor of the first author and  friend of both.

We quote from the obituary by \textit{Richard P. Stanley}:

\emph{...  Rota was the son of Giovanni Rota, a civil engineer and
architect. Giovanni Rota was a prominent anti-fascist who had to flee Italy
in $1945$ to escape Mussolini's death squads. The remarkable story of his
family's escape and subsequent activities is recounted by Gian-Carlo Rota's
sister Ester Rota Gasperoni in the three novels} Orage sur le Lac, L'Arbre des
Capul\'{i}es, \emph{and} L'Ann\'{e}e am\'{e}ricaine. \emph{Rota ended up completing his secondary
school education in Ecuador. As a result of his escape story Rota was fluent
in English, Italian, Spanish, and French.
In $1950$ Rota entered \emph{Princeton University} and graduated summa cum
laude in $1953$. He then attended graduate school at \emph{Yale University}, receiving
a \emph{Master's Degree in Mathematics} in $1954$ and a \emph{Ph.D.} in $1956$ under the
supervision of Jacob T. Schwartz. After graduating from Yale, Rota  received a
Postdoctoral Reseach Fellowship from the \emph{Courant Institute} at New York
University. The next academic year Rota became a \emph{Benjamin Peirce Instructor}
at \emph{Harvard University} and in $1959$ accepted a position at the \emph{Massachusetts
Institute of Technology}. Except for a two year hiatus $1965-67$ at \emph{Rockefeller
University}, Rota remained at \emph{M.I.T.} for the rest of his career. His honors and
achievements include the \emph{Colloquium Lectures of the American Mathematical
Society} ($1998$), election to the \emph{National Academy of Sciences} ($1982$), the
\emph{Leroy P. Steele Prize for Seminal Contribution to Research} (1988), \emph{Vice-President}
of the \emph{American Mathematical Society} ($1995-1997$), four honorary degrees,
and the supervision of 42 Ph.D. students. He held numerous consulting
positions, including a fruitful association with \emph{Los Alamos Scientific Laboratory}
officially beginning in $1966$. He died unexpectedly in his sleep at his home
in Cambridge on April $18, 1999$.}

\emph{Rota was originally trained in functional
analysis, and his early work was in this area. In the early $1960$'s he became
interested in combinatorics, then a rather seedy and disreputable backwater
of mathematics}. 

\emph{Combinatorics is concerned with the arrangement of discrete
objects and looks at such problems as the existence of an arrangement,
the number or approximate number of arrangements, relations among the
different arrangements, and the ``optimal'' arrangment according to given
criteria. In general the definitions involved are easy to understand, and
the arrangements have little (obvious) internal structure (Think of a jigsaw
puzzle). For this reason combinatorics was not regarded by most mathematicians
as a serious subject.} 

\emph{Rota had the vision to realize that on the contrary
combinatorics had tremendous potential for elucidating and extending other
areas of mathematics. He was able to recognize intuitively many problems
to which combinatorics could be unexpectedly applied. As a consequence,
he was the founder of the movement that lifted the subject of combinatorics
to its current position as a \emph{major branch of mathematics}  ...}.

\

We thank Francesco (Franco) Regonati and Camilla Cobror.

We thank the former students Martin D'Ippolito and Gregorio Vettori who provided us 
their class notes from the course of the academic year $2020$.

\section{Functions between finite sets}

\subsection{Three elementary  problems}

Problem $1$. Compute the number of arbitrary functions:
$$
\# \{F : \underline{k} \rightarrow  \underline{n} \}.
$$
Problem $2$. Compute the number of injective functions:
$$
\# \{F : \underline{k} \stackrel{1-1}{\rightarrow} \underline{n} \}. 
$$
Problem $3$. Compute the number of surjective functions:
$$
\# \{F : \underline{k} \stackrel{su}{\rightarrow} \underline{n} \}. 
$$

\subsection{The occupancy model}

The elements of the domain set $\underline{k} = \{1, 2, \ldots, k \}$
are thought as (\textit{labelled}) balls and the elements of the  codomain set
$\underline{n} = \{1, 2, \ldots, n  \}$ are thought as (\textit{labelled}) boxes.

Any function $F : \underline{k} \rightarrow  \underline{n}$ gives rise to a unique
distribution of the $k$ balls into the $n$ boxes and viceversa. 

(To wit: if $i \in \underline{k}$ and $F(i) = j \in \underline{n}$, 
then the ball with the label $i$ is placed into the box with the label $j$.)

\underline{Solution of Problem 1}: we have $n$ choices for the ball $1$, 
$n$ choices for the ball $2$, ..., $n$ choices for the ball $k$.

Therefore
$$
\# \{F : \underline{k} \rightarrow  \underline{n} \}.
$$
equals
$$
n \cdot n \cdots n, \quad \emph{(k \ times)},
$$
that is the \textit{power} $n^k$. \qed

\underline{Solution of Problem 2}: Any injective function 
$
F : \underline{k} \stackrel{1-1}{\rightarrow} \underline{n}
$ 
gives rise to a unique
distribution of the $k$ balls into the $n$ boxes 
such that any box \textit{can contain at most one ball} and viceversa.

Hence, we have $n$ choices for the ball $1$, 
$n-1$ choices for the ball $2$, $n-2$ choices for the ball $3$, ..., $n-k+1$ 
choices for the ball $k$. 
Then,
$$
\# \{F : \underline{k} \stackrel{1-1}{\rightarrow} \underline{n} \}.
$$
equals
$$
n (n-1) (n-2) \cdots (n-k+1),
$$
that is the \textit{falling factorial} 
$$
(n)_k\stackrel{def}{=} n (n-1) (n-2) \cdots (n-k+1).
$$ 
Notice that, if $k = n$ then the falling factorial $(n)_n$
becomes the traditional factorial
$$
n! = n(n-1)(n-2) \cdots   1.
$$
As a matter of fact (since we are speaking of finite sets!), any
injective function $F : \underline{k} \stackrel{1-1}{\rightarrow} \underline{n}$
(with $k = n$)
is also surjective and, then, $F$ is a bijection of $\underline{k} = \underline{n}$ to itself,
that is a \textit{permutation}.  

\begin{remark} What about Problem 3? It has no elementary solutions (in \emph{close form} formula)!

We shall see (by the end of the course and using the \emph{Moebius inversion principle})
that the  solution  is provided by the \emph{close form} formula:
\begin{equation}\label{alfa}
\sum_{j = 0}^n \ (-1)^{n - j} \  \binom{n}{j} \ j^k.  
\end{equation}

Since formula \emph{(\ref{alfa})} contains alternating signs (and negative integers cannot be 
interpreted as cardinalities), it cannot be derived by elementary constructions.
\end{remark} \qed

\subsection{The word model} Beside the occopancy model, functions between finite 
sets admit a second (in a sense ``dual'') model: the elements of the domain set 
$\underline{k} = \{1, 2, \ldots, k \}$
can be thought as \textit{positions} of letters in a \textit{word of length} $k$,  
and the elements of the  codomain set
$\underline{n} = \{1, 2, \ldots, n  \}$ can be thought as   \textit{letters} 
(of the alphabet $\underline{n}$).
Given any function $F : \underline{k} \rightarrow  \underline{n}$, we construct the
word 
$$
\textit{W} = F(1)F(2) \cdots F(k)
$$
of length $k$ on an alphabet with $n$ letters.

\

For example, let $k=4$, $n=3$. The function
\begin{align*}
& F: \underline{4} \rightarrow \underline{3}, 
\\
& F(1)=1, \ F(2)=3, \ F(3)=1, \ F(4)=2
\end{align*}
gives rise to the word
$$
1312.
$$

Any function $F : \underline{k} \rightarrow  \underline{n}$ gives rise to a unique
word of length $k$ over  $n$ letters and viceversa.

\underline{Solution of Problem 1}: we have $n$ choices for the  letter to be written
in position  $1$, 
$n$ choices for the  letter to be written
in position  $2 $, ..., $n$ choices for the letter to be written
in position  $k$. 
Therefore
$$
\# \{F : \underline{k} \rightarrow  \underline{n} \}.
$$
equals
$$
n \cdot n \cdots n, (k \ times)
$$
that is the \textit{power} $n^k$. \qed

\underline{Solution of Problem 2}: Any injective function 
$F : \underline{k} \stackrel{1-1}{\rightarrow} \underline{n}$ 
gives rise to a unique
word of length $k$  on  $n$ letters,\emph{ with no repeated letters},  
and viceversa.

Hence, we have $n$ choices for the  letter to be written
in position  $1$, $n-1$ choices for the  letter to be written
in position  $2 $, $n-2$ choices for the letter to be written
in position  $3 $ ..., $n-k+1$ choices for the letter to be written
in position  $n$.
Then
$$
\# \{F : \underline{k} \stackrel{1-1}{\rightarrow} \underline{n} \}.
$$
equals
$$
n (n-1) (n-2) \cdots (n-k+1),
$$
that is the \textit{falling factorial} 
$$
(n)_k = n (n-1) (n-2) \cdots (n-k+1).
$$  \qed

\subsection{An elementary probalistic application: the \textit{birthday problem}}

We teach a class with $k$ Students, say $\underline{k} = \{1, 2, \ldots, k \}$
(born in the same year, not a leap (bisextile) year).

Compute the \textit{probability} $\mathbf{P}(\mathbf{E})$ of the event :
$$
\mathbf{E} \stackrel{def}{=} \ there \ are \ at \ least \ two \ Students \ with  \ the \ same \ birthdate.
$$

The \textit{date of birth} is a function from the  set of Students $\underline{k} = \{1, 2, \ldots, k \}$
to the set of the days of the year $\underline{365} = \{1, 2, \ldots, 365 \}$:
$$
F : \underline{k} = \{1, 2, \ldots, k \} \rightarrow \underline{365} = \{1, 2, \ldots, 365 \},
$$
and the event $\mathbf{E}$ can be formalized in the following way:
$$
\mathbf{E}  \ = \ \{ F : \underline{k}  \rightarrow \underline{365}; \ F \ not \ injective \}.
$$
Then, the complementary event is:
$$
\mathbf{E}^c  \ = \ \{ F : \underline{k}  \stackrel{1-1}{\rightarrow} \underline{365} \}
$$
and, hence,
$$
\mathbf{P}(\mathbf{E}) \ = \ 1 - \mathbf{P}(\mathbf{E}^c).
$$
The probability of $\mathbf{P}(\mathbf{E}^c)$ equals
$$
\frac{ |\{ F : \underline{k} \stackrel{1-1}{\rightarrow} \underline{365} \}|}{ |\{ F : \underline{k} \rightarrow  \underline{365} \}|}  \ = \  \frac{(365)_k}{365^k},
$$
then
$$
\mathbf{P}(\mathbf{E}) \ = \ 1 - \frac{(365)_k}{365^k}.
$$
Amazingly, it follows that  for $k \geq 23$ (at least $23$ Students) 
this probability \textit{is greater than} $\frac{1}{2}$.

\section{Binomial coefficients}

\subsection{Subsets and characteristic functions}

Le $X$ be a finite set, $|X| = n$.

Given a subset $A \subseteq X$, the \textit{characteristic function} 
of $A$ is the function
$$
\chi_A : X \rightarrow \{ 0, 1 \}
$$
such that
$$
\chi_A(x) = 1 \ if \ x \in A, \quad \chi_A(x) = 0 \ if \ x \notin A.
$$

Given  a function
$$
\chi : X \rightarrow \{ 0, 1 \},
$$
its \textit{support}  is the subset
$$
supp(\chi) = \{ x \in X; \ \chi(x) = 1 \} \subseteq X.
$$
The ``construction'' (as a matter of fact: function)
$$
C_1 : A \mapsto \chi_A
$$
and the ``construction''
$$
C_2 : \chi \mapsto supp(\chi)
$$
are easily recognized  to provide a pair of \textit{inverse maps}:
$$
C_1 : \mathbb{P}(X)  \stackrel{def}{=} \{ A; \ A \subseteq X \} \rightarrow 
\{ \chi; \chi : X \rightarrow \{0, 1 \} \}
$$
and
$$
C_2 : \{ \chi; \ \chi : X \rightarrow \{0, 1 \} \} \rightarrow 
\mathbb{P}(X)  \stackrel{def}{=} \{ A; \ A \subseteq X \}.
$$

In details:
$$
C_2(C_1(A)) = C_2(\chi_A) = supp(\chi_A) = A,
$$
and
$$
C_1(C_2(\chi)) = C_2(supp(\chi)) = \chi_{supp(\chi)} = \chi.
$$

Hence, $C_1$ and $C_2$ are \textit{bijections}. Then the two sets 
are equicardinal:
$$
 |\mathbb{P}(X)| =  |\{ \chi; \ \chi : X \rightarrow \{0, 1 \} \}|.
$$
The cardinality of the second set equals $2^n$, by the solution to Problem 1.

\begin{proposition}\label{power set} Let $|X| = n$. Then
$$
|\mathbb{P}(X)| = 2^n.
$$ \qed
\end{proposition}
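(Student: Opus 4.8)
The plan is to reduce the enumeration of subsets of $X$ to the enumeration of functions already carried out in Problem 1, by means of the correspondence between subsets and characteristic functions constructed immediately above the statement.

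First I would invoke the two maps $C_1 : A \mapsto \chi_A$ and $C_2 : \chi \mapsto supp(\chi)$ together with the identities $C_2(C_1(A)) = A$ and $C_1(C_2(\chi)) = \chi$ verified in the preceding paragraphs. These identities say precisely that $C_1$ and $C_2$ are mutually inverse, hence each is a bijection between the power set $\mathbb{P}(X)$ and the function set $\{ \chi; \ \chi : X \rightarrow \{0,1\} \}$. Since a bijection between two finite sets forces them to be equicardinal, this step delivers the equality $|\mathbb{P}(X)| = |\{ \chi; \ \chi : X \rightarrow \{0,1\} \}|$, reducing the problem to counting functions into a two-element set.

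Second I would evaluate the right-hand side as an instance of the solution to Problem 1. Here the codomain $\{0,1\}$ has two elements, so it plays the role of $\underline{n}$ with $n = 2$, while the domain $X$ has $|X| = n$ elements, playing the role of $\underline{k}$ with $k = n$. The power formula for the number of arbitrary functions then gives $2^{n}$ such characteristic functions. Chaining this with the cardinality equality from the first step yields $|\mathbb{P}(X)| = 2^n$, as claimed.

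I do not anticipate any genuine obstacle: all the substance has in fact been front-loaded into the construction of $C_1$ and $C_2$, and the only points to state explicitly are that these inverse maps preserve cardinality and that the resulting count is Problem 1 with the roles of domain and codomain pinned down by $|X| = n$ and $|\{0,1\}| = 2$. The one thing I would be careful about is keeping the direction of the reduction clear, namely that we are counting functions \emph{into} $\{0,1\}$ (so the exponent is $n$ and the base is $2$), rather than the reverse.
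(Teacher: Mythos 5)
Your proposal is correct and follows exactly the paper's own argument: the bijection between $\mathbb{P}(X)$ and $\{\chi;\ \chi : X \rightarrow \{0,1\}\}$ via the mutually inverse maps $C_1$ and $C_2$, followed by counting the characteristic functions as an instance of Problem 1 with base $2$ and exponent $n$. Nothing is missing, and your care about the direction of the reduction (functions \emph{into} $\{0,1\}$) matches the intended reading.
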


\subsection{Binomial coefficients: the combinatorial definition}
Let $n, k \in \mathbb{N}$ be natural integers.

The \textit{binomial coefficient}
$$
 \binom{n}{k}
$$
is defined by means of its combinatorial meaning:
$$
\binom{n}{k} \stackrel{def}{=} \# \ \text{k-subsets \ of \ an \ n-set}.
$$

Let $X$ be a finite set, $|X| = n$. Since $\mathbb{P}(X)  \stackrel{def}{=} \{ A; \ A \subseteq X \}$
equals the \textit{disjoint union}:
$$
\mathbb{P}(X) = \ \stackrel{.}{\cup}_{k=0}^n \ \{ A \subseteq X; \ |A| = k \},
$$
from Proposition \ref{power set} we immediately have:
\begin{corollary}
$$
\sum_{n=0}^n \   \binom{n}{k} = \ 2^n.
$$ \qed
\end{corollary}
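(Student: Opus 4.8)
The plan is to read off the result directly from the disjoint-union decomposition of the power set already exhibited immediately before the statement, together with Proposition \ref{power set}. No computation is involved: the formula will emerge as a byproduct of the partition of $\mathbb{P}(X)$ according to cardinality, exactly in the spirit announced in the Introduction. (Note that the summation index in the displayed identity is $k$, so the claim to be established reads $\sum_{k=0}^{n}\binom{n}{k}=2^{n}$.)

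First I would fix a finite set $X$ with $|X|=n$ and recall the decomposition
$$
\mathbb{P}(X) \ = \ \stackrel{.}{\cup}_{k=0}^n \ \{ A \subseteq X; \ |A| = k \}.
$$
The two properties that make this a genuine \emph{disjoint} union are both immediate: every subset $A \subseteq X$ has a well-defined cardinality $|A|$, which is some integer between $0$ and $n$, so the pieces are exhaustive; and a given subset belongs to exactly one piece, since it cannot have two distinct cardinalities, so the pieces are pairwise disjoint.

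Next I would invoke the additivity of cardinality over a disjoint union, which gives
$$
|\mathbb{P}(X)| \ = \ \sum_{k=0}^n \ |\{ A \subseteq X; \ |A| = k \}|.
$$
By the combinatorial definition of the binomial coefficient, each summand $|\{ A \subseteq X; \ |A| = k \}|$ is precisely the number of $k$-subsets of the $n$-set $X$, that is $\binom{n}{k}$. Substituting and using Proposition \ref{power set}, which identifies the left-hand side as $2^{n}$, yields the desired identity.

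I do not expect any real obstacle here: the entire argument is a single application of the ``overcounting''/partition principle, and the only points requiring a word of justification are the disjointness and exhaustiveness of the decomposition, both of which are settled above. The content of the corollary lies entirely in the prior construction of the bijection between $\mathbb{P}(X)$ and characteristic functions; once $|\mathbb{P}(X)|=2^{n}$ is in hand, the binomial identity is simply the cardinality count of the same set read off blockwise.
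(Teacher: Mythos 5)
Your proof is correct and follows exactly the paper's own argument: the decomposition of $\mathbb{P}(X)$ into the disjoint union of the families of $k$-subsets, additivity of cardinality, and Proposition \ref{power set} giving $|\mathbb{P}(X)| = 2^n$. You also rightly note that the summation index in the displayed statement should be $k$ (the $\sum_{n=0}^{n}$ is a typo), and your explicit verification of disjointness and exhaustiveness merely spells out what the paper leaves implicit.
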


\subsection{Dispositions with no repetitions and increasing words}

Let $\textit{A} \ = \ \{ a_1 < a_2 <  \ldots < a_n \}$ be an \textit{alphabet}
on $n$ letters, that is a finite $n$-set endowed with a total order $<$.

A word of length $k$ on $\textit{A}$, say
$$
w = a_{i_1}a_{i_2} \dots a_{i_k}
$$
is \textit{increasing} whenever $a_{i_1} < a_{i_2} < \cdots < a_{i_k}$.

Clearly, given an increasing word of length $k$ on $n$ letters, its set of letters is 
a $k$-subset of the $n$-set $\textit{A} \ = \ \{ a_1 < a_2 <  \ldots < a_n \}$, and, 
conversely, given a $k$-subset of the $n$-set $\textit{A} \ = \ \{ a_1 < a_2 <  \ldots < a_n \}$
we can write its elements (in a unique way) in increasing order, therefore obtaining
an increasing word of length $k$. 

Then the two families are \textit{bijectively equivalent} and, a fortiori, they have the same 
cardinality $\binom{n}{k}$. 
In the language of old fashioned Combinatorial Calculus, 
increasing words are called \textit{dispositions with no repetitions}.

\begin{example} Let $\textit{A} \ = \ \{ a_1 < a_2 < a_3 < a_4 < a_5 \}$ and let
$$
w = a_1a_3a_4
$$
be an increasing word of length $3$. It bijectively  corresponds to the 
$3$-subset  $\{ a_1,  a_3,  a_4 \}$.
\end{example} \qed

\subsection{The overcounting principle (\textit{shepherd's principle})}

We introduce a general principle that will be systematically used throughout
our presentation. As a fairy tale of the mathematical folklore, it is usually known as the \textit{shepherd's principle}.
\
To wit: \textit{A shepherd has to count the sheep of his flock, how does he proceed? Count the number of legs then divide by four!}
\
This metaphor seems to express a paradoxical procedure, however it is actually very profound and effective. Suppose we have to enumerate objects of a certain type (sheep, in the metaphor), but we don't know how. Suppose that each sheep has a fixed $ k $ number of other objects associated with it
(the legs, in the metaphor) and these objects are easier to count. So we count the legs and then divide by $ k $ (Eureka!)

In the next paragraph we will immediately see a significant application.

\subsection{On the computation of binomial coefficients: \textit{close form} formulae}

Our problem is to find algebraic formulas (\textit{close form}  formulae) to calculate the binomial coefficients.

We want to apply the overcounting/shepherd's principle. We have to ask ourselves: if the k-subsets are sheep, what are the legs?

Let $n \in \mathbb{N}$ and $\underline{n} = \{1,2, \ldots, n \}$  the standard $n$-set.
Given an injective function $F : \underline{k} \stackrel{1-1}{\rightarrow} \underline{n}$, 
consider its image 
$$
Im(F) \stackrel{def}{=}  \{ F(i); \ i \in \underline{k} \} \subseteq \underline{n}.
$$
Since $F$ is injective, then its image $Im(F)$ is a $k$-subset of $\underline{n}$, and 
futhermore any $k$-subset of $\underline{n}$ can be obtained as the image of a suitable
injective function from $\underline{k}$ to $\underline{n}$. Well, the injective functions
from $\underline{k}$ to $\underline{n}$ are the legs! Now the question is: how many legs per sheep? 
In precise terms, it becomes: \textit{how many injective functions have the same image}?

It is easy to recognize that, given $F, G :  \underline{k} \stackrel{1-1}{\rightarrow} \underline{n}$
we have 
$
Im(F) = Im(G)
$
if and only if there exists a permutation $\sigma$ of  $\underline{k}$ such that $F = G \ \sigma$.

\textit{There are exactly $k!$ legs for each sheep}!

Therefore
$$
\binom{n}{k} = \frac {| \{ F :  \underline{k} \stackrel{1-1}{\rightarrow} \underline{n} \}|} {k!}.
$$
By the solution to Problem 2, we infer:

\begin{proposition} 

$$
\binom{n}{k} = \frac {(n)_k} {k!} = \frac {n(n-1) \cdots (n-k+1)} {k!}  = \frac {n!} {k!(n-k)!}.
$$ \qed

\end{proposition}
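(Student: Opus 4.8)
The plan is to apply the overcounting (shepherd's) principle, treating the $k$-subsets of $\underline{n}$ as the sheep and the injective functions $F:\underline{k}\stackrel{1-1}{\rightarrow}\underline{n}$ as the legs. By the combinatorial definition the number of sheep is exactly $\binom{n}{k}$, and by the solution to Problem 2 the number of legs is the falling factorial $(n)_k$. The leg-to-sheep assignment is the map $F\mapsto \mathrm{Im}(F)$, and the whole argument reduces to showing that this map attaches exactly $k!$ legs to each sheep; once that is in place, the principle yields $\binom{n}{k}=(n)_k/k!$ at once.

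First I would check that $F\mapsto \mathrm{Im}(F)$ really lands among the sheep and hits all of them. Since $F$ is injective, $\mathrm{Im}(F)$ is a $k$-subset of $\underline{n}$, so every leg is attached to a genuine sheep; conversely, given any $k$-subset $A\subseteq\underline{n}$, writing its elements in some order defines an injective function from $\underline{k}$ onto $A$, so every sheep carries at least one leg and the assignment is \emph{onto}.

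The main step is the count of legs per sheep, and this is where I expect to have to be most careful. I would fix a $k$-subset $A$ together with one reference leg $G$ over it, i.e.\ a bijection $G:\underline{k}\to A$. I then claim that the legs over $A$ are precisely the composites $G\sigma$ as $\sigma$ ranges over the permutations of $\underline{k}$: any permutation $\sigma$ leaves the image unchanged, so $\mathrm{Im}(G\sigma)=A$, while conversely if $\mathrm{Im}(F)=A$ then $F$ and $G$ are both bijections onto $A$, so $\sigma:=G^{-1}F$ is a well-defined permutation of $\underline{k}$ with $F=G\sigma$. The delicate point is that this gives $k!$ \emph{distinct} legs and not fewer: here I would use that $G$ is injective, so $G\sigma_1=G\sigma_2$ forces $\sigma_1=\sigma_2$. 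Hence the legs over $A$ are in bijection with the permutations of $\underline{k}$, of which there are exactly $k!$.

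Finally, the shepherd's principle delivers $\binom{n}{k}=(n)_k/k!$, and the remaining displayed equalities are bookkeeping: $(n)_k/k!=n(n-1)\cdots(n-k+1)/k!$ is just the definition of the falling factorial, and $(n)_k/k!=n!/(k!(n-k)!)$ follows by multiplying numerator and denominator by $(n-k)!$ and using $(n)_k\,(n-k)!=n!$.
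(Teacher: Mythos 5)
Your proof is correct and follows essentially the same route as the paper: the shepherd's principle with $k$-subsets as sheep, injective functions $F:\underline{k}\stackrel{1-1}{\rightarrow}\underline{n}$ as legs, and the image map $F\mapsto \mathrm{Im}(F)$ attaching exactly $k!$ legs per sheep via permutations of $\underline{k}$. You merely spell out the details the paper leaves as ``easy to recognize'' (surjectivity of the image map, and that $\sigma\mapsto G\sigma$ gives $k!$ \emph{distinct} legs), which is a welcome bit of care rather than a deviation.
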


\subsection{Binomial coefficients: recursive computation}

Binomial coefficients are regarded as a \textit{double sequence}:
$$
\left( \ \binom{n}{k} \ \right)_{n, k \in \mathbb{N}}.
$$
Then, it is convenient to represent them by means of a \textit{biinfinite matrix}, 
that is a function 
$$
M : \mathbb{N}  \times \mathbb{N} \rightarrow \mathbb{R}, \quad M : (n, k) \mapsto \binom{n}{k}.
$$

In plain words, we arrange the binomial coefficients in the following way:
\\
\\

\begin{tikzpicture}[description/.style={fill=white,inner sep=2pt}]
\fontsize{12}{14}
\draw [line width=0.02cm,] (-4.6,3.4) -- (5.0,3.4);

\node   at (-3.4,4.0) {${\boldsymbol{0}}$};
\node   at (-2.3,4.0) {${\boldsymbol{1}}$};
\node   at (-1.2,4.0) {${\boldsymbol{2}}$};
\node   at (-0.1,4.0) {${\boldsymbol{3}}$};
\node   at (1.15,4.0) {${\boldsymbol{4}}$};
\node   at (2.30,4.0) {${\boldsymbol{\cdots}}$};
\node   at (3.40,4.0) {${\boldsymbol{k}}$};
\node   at (4.50,4.0) {${\boldsymbol{\cdots}}$};

\draw [line width=0.02cm,] (-4.6,3.4) -- (-4.6,-3.8);
\node   at (-5.0,2.5) {${\boldsymbol{ 0 }}$};
\node   at (-3.4,2.5) {${\boldsymbol{ {0 \choose 0} }}$};
\node   at (-2.3,2.5) {${\boldsymbol{ {0 \choose 1}}}$};
\node   at (-1.2,2.5) {${\boldsymbol{ {0 \choose 2}}}$};
\node   at (-0.1,2.5) {${\boldsymbol{ {0 \choose 3}}}$};
\node   at (1.15,2.5) {${\boldsymbol{{0 \choose 4}}}$};
\node   at (2.30,2.5) {${\boldsymbol{ \cdots }}$};
\node   at (3.40,2.5) {${\boldsymbol{ {0 \choose k} }}$};
\node   at (4.50,2.5) {${\boldsymbol{\cdots}}$};

\node   at (-5.0,1.5) {${\boldsymbol{ 1 }}$};
\node   at (-3.4,1.5) {${\boldsymbol{ {1 \choose 0} }}$};
\node   at (-2.3,1.5) {${\boldsymbol{ {1 \choose 1} }}$};
\node   at (-1.2,1.5) {${\boldsymbol{ {1 \choose 2} }}$};
\node   at (-0.1,1.5) {${\boldsymbol{ {1 \choose 3}}}$};
\node   at (1.15,1.5) {${\boldsymbol{{1 \choose 4}}}$};
\node   at (2.30,1.5) {${\boldsymbol{ \cdots }}$};
\node   at (3.40,1.5) {${\boldsymbol{ {1 \choose k} }}$};
\node   at (4.50,1.5) {${\boldsymbol{ \cdots }}$};

\node   at (-5.0,0.5) {${\boldsymbol{ 2 }}$};
\node   at (-3.4,0.5) {${\boldsymbol{ {2 \choose 0} }}$};
\node   at (-2.3,0.5) {${\boldsymbol{ {2 \choose 1}}}$};
\node   at (-1.2,0.5) {${\boldsymbol{ {2 \choose 2}}}$};
\node   at (-0.1,0.5) {${\boldsymbol{ {2 \choose 3}}}$};
\node   at (1.15,0.5) {${\boldsymbol{ {2 \choose 4}}}$};
\node   at (2.30,0.5) {${\boldsymbol{ \cdots }}$};
\node   at (3.40,0.5) {${\boldsymbol{ {2 \choose k}}}$};
\node   at (4.50,0.5) {${\boldsymbol{ \cdots }}$};

\node   at (-5.0,-0.5) {${\boldsymbol{ 3 }}$};
\node   at (-3.4,-0.5) {${\boldsymbol{ {3 \choose 0} }}$};
\node   at (-2.3,-0.5) {${\boldsymbol{ {3 \choose 1} }}$};
\node   at (-1.2,-0.5) {${\boldsymbol{ {3 \choose 2} }}$};
\node   at (-0.1,-0.5) {${\boldsymbol{ {3 \choose 3} }}$};
\node   at (1.15,-0.5) {${\boldsymbol{ {3 \choose 4} }}$};
\node   at (2.30,-0.5) {${\boldsymbol{ \cdots }}$};
\node   at (3.40,-0.5) {${\boldsymbol{ {3 \choose k} }}$};
\node   at (4.50,-0.5) {${\boldsymbol{ \cdots }}$};

\node   at (-5.0,-1.5) {${\boldsymbol{\cdots}}$};
\node   at (-3.4,-1.5) {${\boldsymbol{\cdots}}$};
\node   at (-2.3,-1.5) {${\boldsymbol{\cdots}}$};
\node   at (-1.2,-1.5) {${\boldsymbol{\cdots}}$};
\node   at (-0.1,-1.5) {${\boldsymbol{\cdots}}$};
\node   at (1.15,-1.5) {${\boldsymbol{\cdots}}$};
\node   at (2.30,-1.5) {${\boldsymbol{ \cdots }}$};
\node   at (3.40,-1.5) {${\boldsymbol{ \cdots }}$};
\node   at (4.50,-1.5) {${\boldsymbol{ \cdots }}$};

\node   at (-5.0,-2.5) {${\boldsymbol{n}}$};
\node   at (-3.4,-2.5) {${\boldsymbol{ {n \choose 0} }}$};
\node   at (-2.3,-2.5) {${\boldsymbol{ {n \choose 1} }}$};
\node   at (-1.2,-2.5) {${\boldsymbol{ {n \choose 2} }}$};
\node   at (-0.1,-2.5) {${\boldsymbol{ {n \choose 3} }}$};
\node   at (1.15,-2.5) {${\boldsymbol{ {n \choose 4}}}$};
\node   at (2.30,-2.5) {${\boldsymbol{ \cdots }}$};
\node   at (3.40,-2.5) {${\boldsymbol{ {n \choose k} }}$};
\node   at (4.50,-2.5) {${\boldsymbol{ \cdots} }$};

\node   at (-5.0,-3.5) {${\boldsymbol{\cdots}}$};
\node   at (-3.4,-3.5) {${\boldsymbol{\cdots}}$};
\node   at (-2.3,-3.5) {${\boldsymbol{\cdots}}$};
\node   at (-1.2,-3.5) {${\boldsymbol{\cdots}}$};
\node   at (-0.1,-3.5) {${\boldsymbol{\cdots}}$};
\node   at (1.15,-3.5) {${\boldsymbol{\cdots}}$};
\node   at (2.30,-3.5) {${\boldsymbol{\cdots}}$};
\node   at (3.40,-3.5) {${\boldsymbol{\cdots}}$};
\node   at (4.50,-3.5) {${\boldsymbol{\cdots}}$};
\end{tikzpicture}.

\

\

The elements of the $0$-row are:
$$
{0 \choose k } \stackrel{def}{=} \# \ \text{k-subsets \ of \ the \ 0-set} \ (the \ empty \ set \ \emptyset ).
$$
Clearly ${0 \choose 0} = 1$ and ${0 \choose k } = 0$ whenever $k > 0$. By using the Kronecker $\delta$ 
symbol, we write:
\begin{equation}\label{row cond}
{0 \choose k } = \delta_{0,k}.
\end{equation}

The elements of the $0$-column are:
$$
{n \choose 0} \stackrel{def}{=} \# \ \text{0-subsets \ of \ a \ n-set}.
$$
Clearly, the unique 0-set is the empty set $\emptyset$: then,
\begin{equation}\label{col cond}
{n \choose 0 } = 1 \ for \ every \ n \in \mathbb{N}. 
\end{equation}

Do binomial coefficients obey some kind of recursion? 
To deal with this problem, we will use the so called \textit{``bad element'' method}.

\subsubsection{Linear recursions and the \textit{bad element} method}

We must count the elements of a variety $V$ of
constructions that can be given on a set of $n$ elements.
We choose an element, for example the last one, that we will call  the \textit{``bad element''}. 
We divide our variety into disjoint and exhaustive classes with respect to the behavior of the ``bad element''. 
Clearly, the cardinality of the variety $V$ will be given by the sum of the cardinalities of the 
subclasses and counting these cardinalities will involve reasoning only on the first $n-1$ elements. 
Let's immediately see a first and prototypical application.

\subsubsection{The Pascal/Tartaglia/Stifel/Chu  recursion for binomial coefficients}

To calculate $n \choose k$ we have to calculate (from the definition itself) the cardinality:
$$
| \{ A \subseteq \underline{n}; \ |A| = k \} |.
$$

In the set $\underline{n} = \{1, 2, \ldots, n \}$,  choose as ``bad element'' the last element
$n$ (this is an arbitrary choice).

For the family
$$
\{ A \subseteq \underline{n}; \ |A| = k \}
$$
we have two (disjoint) cases:

i) $n \notin A$. In this case, $A$ is a subset of $\underline{n-1} = \{1, 2, \ldots, n-1 \}$,
with $|A| = k$. 

The cardinality of this class is:
$$
{{n-1} \choose k},
$$
by definition.

ii) $n \in A$. In this case, $A$ can be (uniquely) expressed
in the form:
$$
A = \   A' \stackrel{.}{\cup} \{ n \},
$$
with
$$
A' \subseteq  \underline{n-1}, \quad |A'| = k-1.
$$

The cardinality of this class is:
$$
{{n-1} \choose {k-1}}
$$
by definition.

Therefore, we get the famous recursion (known  to the ancient civilizations B.C.
of the Far East!):

\begin{proposition}\label{bin rec} We have
$$
{n \choose k} \stackrel{THM}{=}  { {n-1} \choose {k-1} } + 
{ {n-1} \choose {k} }.
$$
\end{proposition}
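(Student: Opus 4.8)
The plan is to apply the bad element method directly to the combinatorial definition, taking $n$ as the bad element. By definition $\binom{n}{k}$ counts the $k$-subsets of $\underline{n} = \{1, 2, \ldots, n\}$, so I would begin by writing the family of all such subsets as a disjoint union according to the behaviour of $n$:
$$
\{A \subseteq \underline{n}; \ |A| = k\} \ = \ \{A; \ n \notin A\} \ \stackrel{.}{\cup} \ \{A; \ n \in A\}.
$$
Because these two cases are mutually exclusive and exhaustive, the cardinality on the left is the sum of the two cardinalities on the right; this is the only piece of arithmetic in the whole argument, and it is merely the additivity of cardinality over a disjoint union.

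Next I would count each class combinatorially, reducing everything to reasoning on the first $n-1$ elements. For the first class, a $k$-subset $A$ of $\underline{n}$ with $n \notin A$ is precisely a $k$-subset of $\underline{n-1}$, so by the very definition of the binomial coefficient this class has cardinality $\binom{n-1}{k}$. For the second class, I would exhibit the map $A \mapsto A \setminus \{n\}$, sending a $k$-subset containing $n$ to a $(k-1)$-subset of $\underline{n-1}$, with inverse $A' \mapsto A' \stackrel{.}{\cup} \{n\}$. This bijection shows the second class is equinumerous with the $(k-1)$-subsets of $\underline{n-1}$, hence has cardinality $\binom{n-1}{k-1}$.

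The only step requiring genuine care is checking that the map in case (ii) really is a bijection, namely that deleting $n$ lands in the correct family and that adjoining $n$ recovers the original set; but this is immediate, since $|A| = k$ and $n \in A$ force $|A \setminus \{n\}| = k-1$ and $A \setminus \{n\} \subseteq \underline{n-1}$, and the two constructions are visibly mutually inverse. Summing the two cardinalities then yields $\binom{n}{k} = \binom{n-1}{k-1} + \binom{n-1}{k}$, as claimed. I expect no real obstacle here: the entire content lies in choosing a good bad element, after which each class is identified with a binomial coefficient on $n-1$ by nothing more than the combinatorial definition.
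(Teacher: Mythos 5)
Your proof is correct and follows essentially the same route as the paper: the bad element method with $n$ as the bad element, splitting the $k$-subsets of $\underline{n}$ into those avoiding $n$ (counted by $\binom{n-1}{k}$) and those containing $n$ (counted by $\binom{n-1}{k-1}$ via the decomposition $A = A' \stackrel{.}{\cup} \{n\}$). Your explicit verification that $A \mapsto A \setminus \{n\}$ and $A' \mapsto A' \stackrel{.}{\cup} \{n\}$ are mutually inverse makes the bijection in case (ii) slightly more detailed than the paper's statement, but the argument is the same.
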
 \qed

This recursion, together with the initial condition 
(\ref{row cond}) and (\ref{col cond}) allows us to compute the entries of the 
matrix of binomial coefficients in an effective way.

We have

\

\begin{tikzpicture}[description/.style={fill=white,inner sep=2pt}]
\fontsize{12}{14}
\draw [line width=0.02cm,] (-4.6,3.4) -- (3.4,3.4);
\node   at (-1.7,3.8) {${\boldsymbol{ \  \quad \quad \quad \quad \quad \quad \quad 0 \quad \quad 1 \quad \quad 2 \quad \quad 3 \quad \quad 4 \  \quad \cdots \quad \quad  }}$};

\draw [line width=0.02cm,] (-4.6,3.4) -- (-4.6,-2.0);

\node   at (-5.0,2.5) {${\boldsymbol{ 0 }}$};
\node   at (-3.4,2.5) {${\boldsymbol{ 1 }}$};
\node   at (-2.3,2.5) {${\boldsymbol{ 0 }}$};
\node   at (-1.2,2.5) {${\boldsymbol{ 0 }}$};
\node   at (-0.1,2.5) {${\boldsymbol{ 0 }}$};
\node   at (1.0,2.5)  {${\boldsymbol{ 0 }}$};
\node   at (2.1,2.5)  {${\boldsymbol{\cdots}}$};

\node   at (-5.0,1.5) {${\boldsymbol{ 1 }}$};
\node   at (-3.4,1.5) {${\boldsymbol{ 1 }}$};
\node   at (-2.3,1.5) {${\boldsymbol{ 1 }}$};
\node   at (-1.2,1.5) {${\boldsymbol{ 0 }}$};
\node   at (-0.1,1.5) {${\boldsymbol{ 0 }}$};
\node   at (1.0,1.5)  {${\boldsymbol{ 0 }}$};
\node   at (2.1,1.5)  {${\boldsymbol{\cdots}}$};

\node   at (-5.0,0.5) {${\boldsymbol{ 2 }}$};
\node   at (-3.4,0.5) {${\boldsymbol{ 1 }}$};
\node   at (-2.3,0.5) {${\boldsymbol{ 2 }}$};
\node   at (-1.2,0.5) {${\boldsymbol{ 1 }}$};
\node   at (-0.1,0.5) {${\boldsymbol{ 0 }}$};
\node   at (1.0,0.5)  {${\boldsymbol{ 0 }}$};
\node   at (2.1,0.5)  {${\boldsymbol{\cdots}}$};

\node   at (-5.0,-0.5) {${\boldsymbol{ 3 }}$};
\node   at (-3.4,-0.5) {${\boldsymbol{ 1 }}$};
\node   at (-2.3,-0.5) {${\boldsymbol{ 3 }}$};
\node   at (-1.2,-0.5) {${\boldsymbol{ 3 }}$};
\node   at (-0.1,-0.5) {${\boldsymbol{ 1 }}$};
\node   at (1.0,-0.5)  {${\boldsymbol{ 0 }}$};
\node   at (2.1,-0.5)  {${\boldsymbol{\cdots}}$};

\node   at (-5.0,-1.5) {${\boldsymbol{\cdots}}$};
\node   at (-3.4,-1.5) {${\boldsymbol{\cdots}}$};
\node   at (-2.3,-1.5) {${\boldsymbol{\cdots}}$};
\node   at (-1.2,-1.5) {${\boldsymbol{\cdots}}$};
\node   at (-0.1,-1.5) {${\boldsymbol{\cdots}}$};
\node   at (1.0,-1.5)  {${\boldsymbol{\cdots}}$};
\node   at (2.1,-1.5)  {${\boldsymbol{\cdots}}$};
\end{tikzpicture}.

\subsection{Graphs}

A (labelled) \textit{graph} 
is - roughly speaking - a finite set of vertices $V  =  \{1, 2, \ldots, n \}$ joined by (may be intersecting) \textit{edges}; two 
vertices joined by an edge are said to be \textit{adjacent}.
The edges are identified with \textit{nonordered} couples 
$\{i, j \}$, $i, j \in V   =  \{1, 2, \ldots, n \}$.

Therefore, we formalize the notion of a graph $G$ in the following way.

A graph $G$ is a pair
$$
G  =  (V, E),
$$
where $V   =  \{1, 2, \ldots, n \}$ is the set of vertices and the set $E$ of edges
is
$$
E \subseteq \{ A \subseteq V; \ |A| = 2 \}.
$$

\

\begin{example} Consider the graph $G$:

\

\begin{tikzpicture}[
roundnode/.style={circle, fill,   minimum size=1mm}, 
squarednode/.style={rectangle,   very thick, minimum size=5mm}
]
\node[squarednode] (sposta) at (-5.0,0.0) {$$};

\node[roundnode]   (circle1)  at (-4.0,+4.0) [label=left:1]{};
\node[roundnode]   (circle2)  at (-4.0,-0.0)  [label=below:2] {};
\node[roundnode]   (circle3)  at (+4.0,+4.0) [label=below right:3]  {};
\node[roundnode]   (circle5)  at (0.0,0.0) [label=below :5]{};
\node[roundnode]   (circle4)  at (4.0,0.0) [label=below:4] {};


\draw[-] [line width=0.1cm,](circle2.east) -- (circle5.west);

\draw[-] [line width=0.1cm,](circle1.east) -- (circle3.west);

\draw[-] [line width=0.1cm,](circle3.south west) -- (circle5.north east);

\draw[-] [line width=0.1cm,](circle2.north) -- (circle1.south);

\draw[-] [line width=0.1cm,](circle1.south east) -- (circle4.north west);

\end{tikzpicture}

where
$$
V = \{ 1,2,3,4,5\}, \quad E = \{	\ \{1,2 \}, \{1, 3 \}, \{1, 2 \}, \{1, 4 \}, \{2, 5 \}, \{3, 5 \} \ \}.
$$
\end{example}\qed

\

\begin{proposition} The number of graphs $G$ on \emph{n} vertices is
$$
2^{\binom{n}{2}}.
$$
\end{proposition}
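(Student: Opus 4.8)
The plan is to recognize that a graph on the vertex set $V = \{1, 2, \ldots, n\}$ is nothing more than a choice of which potential edges to include, and then to count those choices with the power-set proposition already in hand. First I would isolate the \emph{universe of potential edges}, namely the set
$$
\mathcal{E} \stackrel{def}{=} \{ A \subseteq V; \ |A| = 2 \},
$$
since by the very definition of the graph $G = (V, E)$ the edge set satisfies $E \subseteq \mathcal{E}$. The key observation is that specifying a graph on $V$ amounts to specifying a subset $E$ of $\mathcal{E}$, with no further constraint: distinct subsets give distinct graphs and every subset arises. Thus the family of graphs on $V$ is in bijective correspondence with $\mathbb{P}(\mathcal{E})$, and the count we want is exactly $|\mathbb{P}(\mathcal{E})|$.

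The second step is to compute $|\mathcal{E}|$. By the combinatorial definition of the binomial coefficient, the number of $2$-subsets of the $n$-set $V$ is precisely $\binom{n}{2}$, so $|\mathcal{E}| = \binom{n}{2}$. Finally I would apply Proposition \ref{power set} to the finite set $\mathcal{E}$ (playing the role of ``$X$''): since $|\mathcal{E}| = \binom{n}{2}$, we get
$$
|\mathbb{P}(\mathcal{E})| = 2^{|\mathcal{E}|} = 2^{\binom{n}{2}},
$$
which is the desired count.

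The one point that demands care is conceptual rather than computational: the power-set proposition must be applied to the set of \emph{potential edges} $\mathcal{E}$, not to the vertex set $V$ itself. A careless reading would apply $\mathbb{P}$ to $V$ and produce the (wrong) answer $2^n$; the whole content of the proof lies in first passing from the vertices to the collection of $2$-subsets, whose size is $\binom{n}{2}$, and only then invoking Proposition \ref{power set}. Once that identification is made explicit, the remaining steps are immediate and require no calculation beyond reading off a cardinality.
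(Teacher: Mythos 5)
Your proof is correct and follows exactly the route the paper intends: the paper states this proposition without proof precisely because, from its definition of a graph as a pair $(V,E)$ with $E \subseteq \{A \subseteq V;\ |A|=2\}$, the count is immediate by applying Proposition \ref{power set} to the $\binom{n}{2}$-element set of potential edges. Your explicit identification of graphs with subsets of $\mathcal{E}$, and your warning against applying the power-set count to $V$ itself, fills in the implicit argument faithfully.
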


\begin{proposition} The number of graphs $G$ on \emph{n} vertices with exacly $k$ 
edges is
$$
\binom{{\binom{n}{2}}}{k}.
$$
\end{proposition}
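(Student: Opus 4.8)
The plan is to realize a graph on $n$ vertices purely as a \emph{subset} of a fixed universe of potential edges, and then to invoke the combinatorial definition of the binomial coefficient together with the formalization of a graph recalled above. Set $V = \underline{n}$ and let
$$
\mathcal{E} \stackrel{def}{=} \{ A \subseteq V; \ |A| = 2 \}
$$
be the set of \emph{all} potential edges. By the combinatorial definition of $\binom{n}{2}$ (the number of $2$-subsets of an $n$-set) applied to $V$, we have $|\mathcal{E}| = \binom{n}{2}$.

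First I would observe that, by the formalization of a graph as a pair $G = (V, E)$ with $E \subseteq \mathcal{E}$, holding the vertex set $V = \underline{n}$ fixed makes the graph $G$ \emph{completely determined} by its edge set $E$. Thus the assignment $G \mapsto E$ is a bijection between graphs on $n$ vertices and subsets of $\mathcal{E}$. Under this bijection, the requirement that $G$ have \emph{exactly $k$ edges} translates into the condition $|E| = k$; that is, graphs on $n$ vertices with $k$ edges correspond bijectively to the \emph{$k$-subsets} of $\mathcal{E}$.

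It then remains only to count the $k$-subsets of a set of cardinality $\binom{n}{2}$. By the combinatorial definition of the binomial coefficient applied to the $\binom{n}{2}$-element set $\mathcal{E}$, this number is precisely $\binom{\binom{n}{2}}{k}$, which is the claimed formula.

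There is, honestly, no hard step here: once the edge universe $\mathcal{E}$ is identified and its cardinality $\binom{n}{2}$ is read off from the combinatorial definition, the result is immediate. The only point demanding a little care is the bijective identification itself --- namely, insisting that $V$ is held fixed, so that a graph is \emph{nothing more} than a choice of edge set. This proposition is the natural refinement of the preceding one: summing over $k$ recovers $\sum_{k} \binom{\binom{n}{2}}{k} = 2^{\binom{n}{2}}$, exactly as the Corollary to Proposition \ref{power set} refines that Proposition by decomposing the power set into its layers of fixed cardinality.
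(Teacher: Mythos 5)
Your proof is correct and is exactly the argument the paper intends (the paper states this proposition without proof, as immediate from its formalization of a graph $G=(V,E)$ with $E \subseteq \{A \subseteq V;\ |A|=2\}$): with $V$ fixed, a graph is just a choice of edge set, so graphs with exactly $k$ edges are the $k$-subsets of a $\binom{n}{2}$-element universe, counted by $\binom{\binom{n}{2}}{k}$ via the combinatorial definition of the binomial coefficient. Nothing is missing.
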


\subsection{Digraphs}

A (labelled) \textit{directed graph}  (\textit{digraph}, 
for short)  is, roughly speaking, a finite set of \textit{vertices} $V   =  \{1, 2, \ldots, n \}$ 
joined by (may be intersecting)
\textit{arrows}.
Then, an arrow 
($i  \rightarrow j$, with \textit{head} $i$ and \textit{tail} $j$)
is identified with the \textit{ordered} pair $(i, j)$, $i, j \in V   =  \{1, 2, \ldots, n \}$.

An arrow $i  \rightarrow i$, with the same \textit{head} $i$ and \textit{tail} $i$
is called a \textit{loop}.

Therefore, we formalize the notion of a digraph $\stackrel{\rightarrow}{G}$ in the following way.

A digraph $\stackrel{\rightarrow}{G}$ is a pair
$$
\stackrel{\rightarrow}{G} \ =  (V, \stackrel{\rightarrow}{E}),
$$

where $V   =  \{1, 2, \ldots, n \}$ is the set of vertices and the set $\stackrel{\rightarrow}{E}$ of arrows
is
$$
\stackrel{\rightarrow}{E} \ \subseteq \ V \times V.
$$

\

\

\begin{example}
Consider the digraph $\stackrel{\rightarrow}{G}$:

\

\begin{tikzpicture}[
roundnode/.style={circle, fill,   minimum size=1mm},
squarednode/.style={rectangle,   very thick, minimum size=5mm}]

\node[squarednode] (sposta) at (-5.0,0.0) {$$};

\node[roundnode]   (circle11)  at (-3.0,-3.0)  [label=below:1] {};
\node[roundnode]   (circle55)  at (-3.0,-5.0)  [label=below:5]{};
\node[roundnode]   (circle22)  at (0.0,-3.0) [label=above:2] {};
\node[roundnode]   (circle44)  at (-1.0,-5.0) [label=left:4]{};
\node[roundnode]   (circle77)  at (1.0,-5.0)  [label=below right:7]{};
\node[roundnode]   (circle33)  at (4.0,-3.0)  [label= right:3] {};
\node[roundnode]   (circle66)  at (3.0,-5.0)  [label=below:6]{};

\draw[->] [line width=0.08cm,](circle11.east) .. controls +(right:5mm) and +(right:5mm) .. (circle55.east);
\draw[->] [line width=0.08cm,](circle55.west) .. controls +(left:5mm) and +(left:5mm) .. (circle11.west);

\draw[->] [line width=0.08cm,](circle11.east) -- (circle77.west);
\draw[->] [line width=0.08cm,](circle77.south) .. controls +(down:9mm) and +(down:9mm) .. (circle44.south);

\draw[->] [line width=0.08cm,](circle22.east) -- (circle33.west);
\draw[->] [line width=0.08cm,](circle33.south) -- (circle66.north east);
\draw[->] [line width=0.08cm,](circle77.east) -- (circle66.west);

\draw[->] [line width=0.08cm,] (circle11.east) ..controls +(right:20mm) and +(up:20mm) .. (circle11.north);
\draw[->] [line width=0.08cm,] (circle44.east) ..controls +(right:20mm) and +(up:20mm) .. (circle44.north);
\end{tikzpicture}

where
$$V = \{ 1,2,3,4,5,6,7 \},  
$$
$$ \stackrel{\rightarrow}{E} \ = \{ \ (1, 1), (1, 5), (5, 1) , (1, 7), (7, 4), (4, 4),
(2, 3), (3, 6), (7, 6) \ \}.
$$
\end{example}\qed

\

Then,
\begin{proposition} The number of digraphs $\stackrel{\rightarrow}{G}$ on \emph{n} vertices is
$$
2^{n^2}.
$$
\end{proposition}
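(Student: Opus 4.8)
The plan is to identify digraphs on the fixed vertex set $V = \{1, 2, \ldots, n\}$ with subsets of the product set $V \times V$, and then to apply Proposition~\ref{power set}. Since the vertex set $V$ is fixed once and for all, a digraph $\stackrel{\rightarrow}{G} = (V, \stackrel{\rightarrow}{E})$ on $n$ vertices is completely and uniquely determined by its arrow set $\stackrel{\rightarrow}{E} \subseteq V \times V$; conversely, every subset of $V \times V$ is the arrow set of exactly one digraph on $V$. Thus the assignment $\stackrel{\rightarrow}{G} \mapsto \stackrel{\rightarrow}{E}$ is a bijection from the family of digraphs on $n$ vertices onto the power set $\mathbb{P}(V \times V)$, so the two families are equicardinal.

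First I would compute $|V \times V|$. An element of $V \times V$ is an ordered pair $(i, j)$ with $i, j \in \{1, 2, \ldots, n\}$: there are $n$ choices for the first coordinate and, independently, $n$ choices for the second, so $|V \times V| = n \cdot n = n^2$. This is the product principle, the very same reasoning used in the Solution of Problem~1. Applying Proposition~\ref{power set} to the $n^2$-set $V \times V$ then gives $|\mathbb{P}(V \times V)| = 2^{n^2}$, and combining this with the bijection above yields that the number of digraphs on $n$ vertices is $2^{n^2}$.

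The argument runs exactly parallel to the earlier count of undirected graphs, where a graph is identified with a subset of the $\binom{n}{2}$ two-element subsets of $V$, giving $2^{\binom{n}{2}}$. Here, because arrows are \emph{ordered} pairs and loops $(i, i)$ are explicitly permitted, the ambient set is the whole of $V \times V$ rather than the unordered pairs, so its cardinality is $n^2$ in place of $\binom{n}{2}$. There is no serious obstacle; the only point demanding care is to remember that loops are allowed, so that the diagonal pairs $(i, i)$ are genuinely among the available arrows and the relevant set is all of $V \times V$ and not a proper subset of it.
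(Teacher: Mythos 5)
Your proof is correct and follows exactly the argument the paper intends: the paper states this proposition without proof, but its evident justification is precisely your identification of digraphs on a fixed vertex set with subsets of the $n^2$-set $V \times V$, followed by an application of Proposition~\ref{power set}. Nothing is missing, and your remark that loops $(i,i)$ are permitted (so the ambient set is all of $V \times V$, in contrast with the undirected case) is exactly the right point of care.
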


\begin{proposition} The number of digraphs $\stackrel{\rightarrow}{G}$ 
on \emph{n} vertices with exacly $k$ 
arrows is
$$
\binom{n^2}{k}.
$$
\end{proposition}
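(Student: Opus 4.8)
The plan is to recognize that, by the very definition given above, a digraph on $V = \{1, 2, \ldots, n\}$ is nothing but a subset $\stackrel{\rightarrow}{E} \subseteq V \times V$, and that imposing ``exactly $k$ arrows'' simply means selecting a $k$-subset of the ground set $V \times V$. So the whole statement will reduce to counting $k$-subsets of a set whose cardinality I must first pin down, after which I invoke the combinatorial definition of the binomial coefficient.

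First I would compute $|V \times V|$. Since $V$ has $n$ elements, an ordered pair $(i,j) \in V \times V$ is determined by $n$ choices for the head $i$ followed by $n$ independent choices for the tail $j$; by the solution to Problem 1 this gives $|V \times V| = n \cdot n = n^2$. This is precisely the count already used in the preceding proposition, where the total number $2^{n^2}$ of digraphs was obtained by viewing $\mathbb{P}(V \times V)$ through the characteristic-function bijection of Proposition \ref{power set}.

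Next I would set up the bijection that does the real work: a digraph $\stackrel{\rightarrow}{G} = (V, \stackrel{\rightarrow}{E})$ with exactly $k$ arrows is sent to its arrow set $\stackrel{\rightarrow}{E}$, a subset of $V \times V$ of cardinality exactly $k$; conversely every $k$-subset of $V \times V$ is the arrow set of a unique such digraph. Thus digraphs on $n$ vertices with $k$ arrows are bijectively equivalent to the $k$-subsets of the $n^2$-set $V \times V$. By the combinatorial definition $\binom{m}{k} \stackrel{def}{=} \#\{k\text{-subsets of an } m\text{-set}\}$, applied with $m = n^2$, the number in question equals $\binom{n^2}{k}$.

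I do not expect a genuine obstacle here: the argument is a direct specialization, by cardinality, of the $2^{n^2}$ count for all digraphs, exactly parallel to the way the graph proposition passed from $2^{\binom{n}{2}}$ to $\binom{\binom{n}{2}}{k}$. The only point demanding care --- the closest thing to a hard part --- is to state cleanly that it is the full product set $V \times V$ (of size $n^2$), and not the set of $2$-subsets of $V$ (of size $\binom{n}{2}$), that serves as the universe of possible arrows; this is because arrows are ordered pairs and loops $(i,i)$ are allowed, in contrast to the unordered, loopless edges of the undirected case.
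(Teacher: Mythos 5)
Your proof is correct and follows exactly the route the paper intends (the paper states this proposition without proof, as an immediate consequence of identifying a digraph with its arrow set $\stackrel{\rightarrow}{E} \subseteq V \times V$, $|V \times V| = n^2$, and invoking the combinatorial definition of $\binom{n^2}{k}$). Your closing remark correctly isolates the one point of care --- that the universe of arrows is the full product $V \times V$ of size $n^2$, ordered pairs with loops allowed, rather than the $\binom{n}{2}$ unordered pairs of the graph case.
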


Clearly, a digraph $\stackrel{\rightarrow}{G}  \ =  (V, \stackrel{\rightarrow}{E})$ has 
\textit{no loops} whenever
$$
\stackrel{\rightarrow}{E} \ \subseteq \ V \times V - \{ (i, i); \ i \in V \}.
$$

\

\

\begin{example}

\ 

\begin{tikzpicture}[
roundnode/.style={circle, fill,   minimum size=1mm},
squarednode/.style={rectangle,   very thick, minimum size=5mm}]

\node[squarednode] (sposta) at (-5.0,0.0) {$$};
\node[roundnode]   (circle11)  at (-3.0,-3.0)  [label=above:1] {};
\node[roundnode]   (circle55)  at (-3.0,-5.0)  [label=below:5]{};
\node[roundnode]   (circle22)  at (0.0,-3.0) [label=above:3] {};
\node[roundnode]   (circle44)  at (-1.0,-5.0) [label=left:4]{};
\node[roundnode]   (circle77)  at (1.0,-5.0)  [label=above:7]{};
\node[roundnode]   (circle33)  at (4.0,-3.0)  [label=above:2] {};
\node[roundnode]   (circle66)  at (3.0,-5.0)  [label=above:6]{};

\draw[->] [line width=0.08cm,](circle11.east) .. controls +(right:5mm) and +(right:5mm) .. (circle55.east);
\draw[->] [line width=0.08cm,](circle55.west) .. controls +(left:5mm) and +(left:5mm) .. (circle11.west);
\draw[->] [line width=0.08cm,](circle11.east) -- (circle77.west);
\draw[->] [line width=0.08cm,](circle77.south) .. controls +(down:9mm) and +(down:9mm) .. (circle44.south);
\draw[->] [line width=0.08cm,](circle44.north) -- (circle22.south);
\draw[->] [line width=0.08cm,](circle22.east) -- (circle33.west);
\draw[->] [line width=0.08cm,](circle33.south) -- (circle66.north east);
\draw[->] [line width=0.08cm,](circle77.east) -- (circle66.west);
\end{tikzpicture}

\

where
$$V = \{ 1,2,3,4,5,6,7 \},  
$$
$$ \stackrel{\rightarrow}{E} \ = \{ \  (1, 5), (5, 1) , (1, 7), (7, 4), (4, 3),
(3, 2), (2, 6), (7, 6) \ \}.
$$

\end{example} \qed

\

Then,
\begin{proposition} The number of digraphs $\stackrel{\rightarrow}{G}$  with no loops on \emph{n} vertices is
$$
2^{n(n - 1)}.
$$
\end{proposition}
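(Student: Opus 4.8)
The plan is to imitate the template of the two immediately preceding propositions on digraphs, reducing the count to the power set formula of Proposition \ref{power set}. A loopless digraph $\stackrel{\rightarrow}{G} = (V, \stackrel{\rightarrow}{E})$ on the vertex set $V = \underline{n}$ is completely determined by its arrow set $\stackrel{\rightarrow}{E}$; by the defining condition for loopless digraphs, the admissible arrow sets are exactly the subsets of the fixed ``diagonal-free'' set
$$
D \stackrel{def}{=} V \times V - \{ (i,i); \ i \in V \}.
$$
Thus loopless digraphs on $V$ correspond bijectively to the elements of $\mathbb{P}(D)$, and it suffices to compute $|\mathbb{P}(D)|$ via Proposition \ref{power set}.

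First I would count $|D|$. Since $|V \times V| = n^2$ (by the product rule, equivalently the Solution to Problem 1 read on the $n^2$-element set of ordered pairs), and the diagonal $\{ (i,i); \ i \in V \}$ has exactly $n$ elements, one for each choice of $i \in V$, removing it from the finite set $V \times V$ leaves
$$
|D| = n^2 - n = n(n-1).
$$
Then, applying Proposition \ref{power set} with $X = D$, I would conclude
$$
\# \{ \stackrel{\rightarrow}{G} \text{ loopless on } n \text{ vertices} \} = |\mathbb{P}(D)| = 2^{|D|} = 2^{n(n-1)},
$$
which is the claimed formula.

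There is essentially no genuine obstacle here; the only point deserving care is the bijection between loopless digraphs and subsets of $D$. This is immediate once one observes that fixing the vertex set $V$ reduces a digraph to its arrow set, and that the ``no loops'' requirement is precisely the statement that $\stackrel{\rightarrow}{E} \in \mathbb{P}(D)$. Everything else is the elementary subtraction $n^2 - n$ and a direct appeal to the power set count, exactly as in the unrestricted case where the admissible arrow sets range over all of $\mathbb{P}(V \times V)$ and one obtains $2^{n^2}$.
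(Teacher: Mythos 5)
Your proof is correct and is precisely the argument the paper intends: the proposition is stated there without proof because, just as in the unrestricted case, it follows immediately from identifying a loopless digraph with its arrow set $\stackrel{\rightarrow}{E} \subseteq V \times V - \{(i,i); \ i \in V\}$ and applying Proposition \ref{power set} to a set of cardinality $n^2 - n = n(n-1)$. Nothing is missing; your explicit treatment of the bijection and the subtraction simply spells out what the paper leaves tacit.
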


\begin{proposition} The number of digraphs $\stackrel{\rightarrow}{G}$ 
with no loops on \emph{n} vertices with exactly $k$ 
edges is
$$
\binom{n(n - 1)}{k}.
$$
\end{proposition}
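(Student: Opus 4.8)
The plan is to reduce the statement to the combinatorial definition of the binomial coefficient, in exactly the same spirit as the preceding propositions. First I would fix the vertex set $V = \{1, 2, \ldots, n\}$ and identify the \emph{universe} of admissible arrows for a loopless digraph, namely
$$
U \stackrel{def}{=} V \times V - \{ (i, i); \ i \in V \}.
$$
A loopless digraph $\stackrel{\rightarrow}{G} = (V, \stackrel{\rightarrow}{E})$ is, by its very definition, nothing more than a choice of subset $\stackrel{\rightarrow}{E} \subseteq U$, and it has exactly $k$ arrows precisely when $|\stackrel{\rightarrow}{E}| = k$. Since the vertex set $V$ is fixed, the assignment $\stackrel{\rightarrow}{G} \mapsto \stackrel{\rightarrow}{E}$ is a bijection between loopless digraphs on $n$ vertices with exactly $k$ arrows and the $k$-subsets of $U$.

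Next I would compute $|U|$. Since $V \times V$ is the family of ordered pairs over $\underline{n}$, we have $|V \times V| = n^2$, and the diagonal $\{ (i,i); \ i \in V \}$ is a subfamily of exactly $n$ elements. Removing it yields
$$
|U| = n^2 - n = n(n-1).
$$

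Finally I would invoke the combinatorial definition $\binom{m}{k} \stackrel{def}{=} \#\{\, k\text{-subsets of an } m\text{-set} \,\}$ with $m = n(n-1)$: the number of $k$-subsets of $U$ is $\binom{n(n-1)}{k}$, and by the bijection above this counts exactly the loopless digraphs on $n$ vertices with $k$ arrows, as claimed.

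There is no genuine obstacle here; the single point deserving care — and the only place where this statement departs from the companion proposition counting all digraphs with $k$ arrows over the universe of size $n^2$ — is the correct determination of $|U|$, where one must excise precisely the $n$ loops and nothing more. This is the identical bookkeeping underlying the count $2^{n(n-1)}$ for the total number of loopless digraphs, obtained there by letting each of the $n(n-1)$ admissible arrows be independently present or absent.
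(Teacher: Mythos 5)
Your proof is correct and is precisely the argument the paper leaves implicit: identify loopless digraphs having exactly $k$ arrows with $k$-subsets of the universe $U = V \times V - \{(i,i);\ i \in V\}$ of admissible arrows, compute $|U| = n^2 - n = n(n-1)$, and invoke the combinatorial definition of $\binom{n(n-1)}{k}$. Nothing is missing, and your closing remark correctly isolates the one point of care, namely excising exactly the $n$ loops from the $n^2$ ordered pairs.
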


\section{Recursive matrices and generating functions}

\subsection{The algebra of formal power series $\mathbb{R}{[[t]]}$}

Let
\begin{equation}\label{sequence}
(a_n)_{n \in \mathbb{N}} = (a_0, a_1, \ldots, a_n, \ldots ), \quad a_n \in \mathbb{R}
\end{equation}
be a sequence with real entries.

Let $t$ be a ``formal'' variable.

The associated \textit{formal power series} is the ``expression''
\begin{equation}\label{serie}
\alpha(t) = \sum_{n=0}^\infty \ a_n t^n.
\end{equation}
The series (\ref{serie}) is also called the \textit{generating series}
of the sequence (\ref{sequence}).

Note that \textit{polynomials} are special cases of ``finite''
formal power series. To wit:  in the case of polynomials all but a finite number of 
the coefficients $a_n$ are  ZERO. In a formal way:
$$
\alpha(t) = \sum_{n=0}^\infty \ a_n t^n,
$$
but there exists $\underline{n}$ such that
$$
a_n = 0, \quad \ for \ every \ n > \underline{n}.
$$

Formal power series can be summed:
$$
\alpha(t) = \sum_{n=0}^\infty \ a_n t^n, \quad \beta(t) = \sum_{n=0}^\infty \ b_n t^n,
$$ 
then, by definition:
$$
\alpha(t) + \beta(t) = \sum_{n=0}^\infty \ (a_n + b_n) t^n.
$$

Formal power series can be multiplied by a scalar factor $\lambda \in \mathbb{R}$:
$$
\lambda \alpha(t) = \sum_{n=0}^\infty \ (\lambda  a_n) t^n.
$$

The set of formal power series $\mathbb{R}{[[t]]}$ endowed with these operations is clearly
a vector space. Its \underline{zero vector} is the \textit{identically zero series}:
$$
\underline{0}(t) = \sum_{n=0}^\infty \underline{0}_n t^n, \ \underline{0}_n = 0 \in \mathbb{R}, \ for \ every \ n \in \mathbb{N}.
$$

But formal power series can be multiplied together, too. To wit:
$$
\alpha(t) = \sum_{n=0}^\infty \ a_n t^n, \quad \beta(t) = \sum_{n=0}^\infty \ b_n t^n,
$$ 
then, by definition, the \textit{product series}
$$
\alpha(t) \beta(t)
$$
is the series
$$
\gamma(t) 
= \sum_{n=0}^\infty \ c_n t^n,
$$
where
\begin{equation}\label{mult rule}
c_n = \sum_{k=0}^n \ a_k b_{n-k}.
\end{equation}
Notice that this multiplication rule is nothing but the natural generalization of the ordinary one for polynomials 
(from  high school mathematics)!

The vector space $\mathbb{R}{[[t]]}$ - endowed with this product operation - turns out to be an \textit{ALGEBRA},
i.e. the above product is \textit{associative} and \textit{distributes} w.r.t. to  
 vector space operations (i.e., addition and scalar multiplication).

Futhermore, $\mathbb{R}{[[t]]}$ is a \textit{commutative} algebra, that is:
$$
\alpha(t) \ \beta(t) = \beta(t) \ \alpha(t).
$$

Notice that $\mathbb{R}{[[t]]}$ has a \textit{unit}, which is the multiplicative neutral element $\underline{1}(t)$:
$$
\underline{1}(t) \ \alpha(t) = \alpha(t) =  \alpha(t) \ \underline{1}(t).
$$
Clearly, $\underline{1}(t)$ is the \textit{constant series}:
$$
\underline{1}(t) = 1,
$$
that is
$$
\underline{1}(t) = \sum_{n=0}^\infty \ \underline{1}_n t^n,
$$
with $\underline{1}_0 = 1$, $\underline{1}_n = 0$ for $n > 0$.

\begin{remark} As a vector space, the space $\mathbb{R}{[[t]]}$ of formal power series is the 
\emph{dual space} $(\mathbb{R}{[t]})^*$ of the vector space of polynomials $\mathbb{R}{[t]}$.
Notice that $\mathbb{R}{[t]}$ is not  of finite dimension. As a matter of fact,  $\mathbb{R}{[t]}$
has infinite \emph{countable} dimension. Indeed, a basis of $\mathbb{R}{[t]}$ is provided by the family
of \emph{power monomials}:
$$
\{t^n; n \in \mathbb{N} \} = \{1, t, t^2, \ldots, t^n, \ldots \}
$$
The dimension of $\mathbb{R}{[[t]]}  = (\mathbb{R}{[t]})^*$ is \emph{more than countable}.
We recall that, in infinite dimension, the basis theorem for vector spaces is just an \emph{existence}
theorem (the standard proof involves the \emph{Zorn Lemma}). Indeed, an explicit basis of  $\mathbb{R}{[[t]]}$ 
is still unknown.
\end{remark} \qed

\subsection{Row generating series (functions) and recursive matrices}

Let 
$$
M : \mathbb{N} \times \mathbb{N} \rightarrow \mathbb{R}, \quad M : (n, k) \mapsto M(n, k) \in \mathbb{R}
$$
be a biinfinite matrix. Alternatively, we  denote this matrix as follows:

$$
M = \big[ \ M(n, k) \ \big]_{n, k \in \mathbb{N}}. 
$$

Given $n \in \mathbb{N}$, the $n$th \textit{generating series} of the matrix $M$
is the formal power series
\begin{equation}
M_n(t) = \sum_{k = 0}^\infty \ M(n, k)t^k,
\end{equation}
the generating series of the sequence in the $n$th row.

The matrix $M$ is said to be a \textit{recursive matrix} whenever the following condition holds:
\begin{equation}\label{first rec}
M_n(t) = \left( M_1(t) \right)^n.
\end{equation}

Clearly, condition (\ref{first rec}) is equivalent to the conditions:
\begin{equation}
M_0(t) = 1,
\end{equation}
\begin{equation}\label{second rec}
 M_n(t) =  M_1(t) \cdot M_{n-1}(t), \ for \ n>1.
\end{equation}

The series $M_0(t)$ and the series $M_1(t)$ are called the \textit{initial condition} 
and the \textit{recursion rule} of the recursive matrix $M$, respectively.

\subsubsection{The matrix of binomial coefficients as a recursive matrix}

We have:
\begin{theorem} The matrix of binomial coefficients
$$
M = \big[ \ M(n, k) \ \big]_{n, k \in \mathbb{N}} \stackrel{def}{=} 
\left[  {n \choose k} \right]_{n, k \in \mathbb{N}}
$$
is a \emph{recursive matrix} having as recursion rule the polynomial
$$
M_1(t) = 1 + t.
$$
\end{theorem}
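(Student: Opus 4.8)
The plan is to verify the matrix is recursive by checking the equivalent characterization stated in \eqref{second rec}, namely the initial condition $M_0(t) = 1$ together with the multiplicative recursion $M_n(t) = M_1(t) \cdot M_{n-1}(t)$ for $n \geq 1$, rather than the closed form $M_n(t) = (M_1(t))^n$ directly. The closed form then follows by a trivial induction, and the identification of the recursion rule as $1 + t$ will drop out along the way. So there are really three things to establish: the value of $M_0(t)$, the value of $M_1(t)$, and the recursion step itself.

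First I would compute the two low-order row series from the boundary data already recorded. By definition $M_0(t) = \sum_{k=0}^\infty \binom{0}{k} t^k$, and since \eqref{row cond} gives $\binom{0}{k} = \delta_{0,k}$, every term vanishes except the constant one, so $M_0(t) = 1$ as required. Likewise $M_1(t) = \sum_{k=0}^\infty \binom{1}{k} t^k$; here $\binom{1}{0} = 1$ and $\binom{1}{1} = 1$ (the $0$-subset and the full $1$-subset of a $1$-set), while $\binom{1}{k} = 0$ for $k \geq 2$, whence $M_1(t) = 1 + t$. This already pins down the candidate recursion rule.

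The heart of the argument is the recursion step, and I expect it to be nothing more than a generating-function restatement of the Pascal--Tartaglia--Stifel--Chu identity (Proposition~\ref{bin rec}). Multiplying out,
$$
(1 + t)\, M_{n-1}(t) = \sum_{k=0}^\infty \binom{n-1}{k} t^k + \sum_{k=0}^\infty \binom{n-1}{k} t^{k+1}.
$$
Re-indexing the second sum by $k \mapsto k-1$ (legitimate since the $k=0$ term of $M_n$ receives no contribution from it, matching $\binom{n-1}{-1} = 0$) collects the two sums into
$$
\sum_{k=0}^\infty \left[ \binom{n-1}{k} + \binom{n-1}{k-1} \right] t^k = \sum_{k=0}^\infty \binom{n}{k} t^k = M_n(t),
$$
where the inner bracket is exactly Proposition~\ref{bin rec}. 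Since $M_1(t) = 1 + t$, this is precisely $M_n(t) = M_1(t) \cdot M_{n-1}(t)$.

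Having secured $M_0(t) = 1$ and the recursion $M_n(t) = M_1(t)\, M_{n-1}(t)$, an immediate induction yields $M_n(t) = (M_1(t))^n = (1+t)^n$, which is condition \eqref{first rec}; hence the matrix is recursive with recursion rule $1 + t$. The only genuine subtlety, and the one point I would write out carefully, is the index shift in the second sum: one must confirm that the $k = 0$ coefficient behaves correctly and that the convention $\binom{m}{-1} = 0$ makes the reindexing exact rather than merely formal. Everything else is bookkeeping, and in the spirit of the notes no computation beyond this reindexing is needed, the result being a direct consequence of the combinatorial recursion already proved.
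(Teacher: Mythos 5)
Your proposal is correct and follows essentially the same route as the paper: verify $M_0(t)=1$ and $M_1(t)=1+t$ from the boundary values, then reduce the step $M_n(t)=M_1(t)\,M_{n-1}(t)$ to the Pascal--Tartaglia--Stifel--Chu recursion of Proposition~\ref{bin rec}. The only cosmetic difference is that you distribute $(1+t)$ and reindex by hand, while the paper reads the coefficient $c_k = \binom{1}{0}\binom{n-1}{k} + \binom{1}{1}\binom{n-1}{k-1}$ directly off the convolution rule (\ref{mult rule}); these are the same computation.
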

\begin{proof} We verify conditions (\ref{second rec}).
Since $ {0 \choose n} = \delta_{0,n}$, then $M_0(t) = 1$.

Since $ {1 \choose 0} = {1 \choose 1} = 1$ and ${1 \choose n} = 0$ for $n > 1$, 
then
$$
M_1(t) \stackrel{def}{=}
\sum_{n = 0}^\infty \ {1 \choose n} t^n = 1 + t.
$$
We have to prove that
$$
M_n(t) \stackrel{def}{=}
\sum_{k = 0}^{\infty} \ {n \choose k} t^k
$$
equals
$$
M_1(t) \cdot M_{n - 1}(t) \stackrel{def}{=}
(1 + t) \cdot \left( \sum_{k = 0}^{\infty} \ {n-1 \choose k} t^k \right).
$$
Write
$$
M_1(t) \cdot M_{ n - 1}(t) = \gamma(t) = \sum_{k = 0}^{\infty}   c_k t^k.
$$
By the multiplication rule (\ref{mult rule}) of series, we have:
$$
c_k \stackrel{def}{=} {1 \choose 0} {n-1 \choose k} + {1 \choose 1} {n-1 \choose k-1}.
$$
But 
$$
{n-1 \choose k} + {n-1 \choose k-1} = {n \choose k}
$$
by Proposition \ref{bin rec}. 
Then 
$$
M_1(t) \cdot M_{n - 1}(t) = \gamma(t) = \ \sum_{k = 0}^{\infty}   {n \choose k} t^k \stackrel{def}{=}
M_{n}(t).
$$
\end{proof}

\begin{corollary}\textbf{(Binomial Theorem)} We have:
$$
(1 + t)^n \ = \ \sum_{k = 0}^{n}   {n \choose k} t^k.
$$
\end{corollary}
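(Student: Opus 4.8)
The plan is to deduce the Binomial Theorem directly from the Theorem just established, with essentially no further computation. First I would recall that the preceding Theorem identifies the matrix $\left[ \binom{n}{k} \right]_{n,k \in \mathbb{N}}$ as a recursive matrix with recursion rule $M_1(t) = 1+t$. By the equivalence noted right after the definition of recursive matrix, the defining condition (\ref{first rec}), namely $M_n(t) = \left( M_1(t) \right)^n$, is equivalent to the pair consisting of the initial condition $M_0(t)=1$ and the multiplicative recursion (\ref{second rec}), and precisely these were verified in the proof of the Theorem. Hence (\ref{first rec}) holds, and substituting the recursion rule gives
$$
M_n(t) = (1+t)^n.
$$

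Next I would unwind the definition of the row generating series. By definition $M_n(t) = \sum_{k=0}^\infty \binom{n}{k} t^k$. I would observe that this series is actually a polynomial: since an $n$-set has no $k$-subsets once $k>n$, the combinatorial definition gives $\binom{n}{k}=0$ for every $k>n$, so the sum truncates and $M_n(t) = \sum_{k=0}^n \binom{n}{k} t^k$. Comparing the two expressions for $M_n(t)$ then yields the asserted identity
$$
(1+t)^n = \sum_{k=0}^n \binom{n}{k} t^k.
$$

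There is no genuine obstacle here, since the entire content has already been packaged into the recursive-matrix formalism; the corollary is little more than a restatement of (\ref{first rec}) with $M_1(t)=1+t$ inserted. The only point that merits a word of care is the passage from the a priori infinite series $\sum_{k=0}^\infty$ to the finite sum $\sum_{k=0}^n$, which rests entirely on the combinatorial vanishing $\binom{n}{k}=0$ for $k>n$. Should one prefer a self-contained argument that avoids the recursive-matrix language altogether, the same identity can be obtained by a straightforward induction on $n$ using the Pascal/Tartaglia/Stifel/Chu recursion of Proposition \ref{bin rec}; but invoking (\ref{first rec}) is by far the shortest route.
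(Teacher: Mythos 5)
Your proposal is correct and follows exactly the route the paper intends: the corollary is stated as an immediate consequence of the theorem identifying $\left[ {n \choose k} \right]_{n,k \in \mathbb{N}}$ as a recursive matrix with rule $M_1(t) = 1+t$, so that $M_n(t) = (1+t)^n$ by condition (\ref{first rec}). Your explicit remark that the series truncates to a finite sum because ${n \choose k} = 0$ for $k > n$ (from the combinatorial definition) is a worthwhile detail the paper passes over in silence, but it is not a different argument.
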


\subsubsection{The generalized Vandermonde convolutions for recursive matrices}
Let
$$
M = \big[ \ M(n, k) \ \big]_{n, k \in \mathbb{N}}. 
$$
be a recursive matrix.

We have:

\begin{proposition}\textbf{(General Vandermonde convolutions)}\label{gen vander}
Let $i, j \in \mathbb{Z}^+$ and $n = i + j$.
Then
\begin{equation}
M(n, k) = \sum_{h = 0}^k \ M(i, h)  M(j, k- h).
\end{equation}
\end{proposition}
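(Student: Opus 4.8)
The plan is to exploit the defining property of a recursive matrix, namely that all of its row generating series are powers of the single series $M_1(t)$, and then to read off the claimed convolution as nothing more than the Cauchy product rule applied to a factorization of the $n$th row series. First I would invoke condition (\ref{first rec}): since $M$ is recursive, $M_m(t) = \left( M_1(t) \right)^m$ for every $m \in \mathbb{N}$, and in particular for $m = i$, $m = j$, and $m = n = i+j$.

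Next, working inside the algebra $\mathbb{R}{[[t]]}$, whose associativity and commutativity have already been recorded, I would factor the power:
$$
M_n(t) = \left( M_1(t) \right)^{i+j} = \left( M_1(t) \right)^i \left( M_1(t) \right)^j = M_i(t) \, M_j(t).
$$
Thus the $n$th row series is literally the product of the $i$th and $j$th row series.

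Then I would apply the multiplication rule (\ref{mult rule}) for formal power series. Writing $M_i(t) = \sum_{h} M(i,h)\, t^h$ and $M_j(t) = \sum_{\ell} M(j,\ell)\, t^{\ell}$, the coefficient of $t^k$ in the product $M_i(t)\, M_j(t)$ is exactly
$$
\sum_{h = 0}^{k} M(i,h)\, M(j, k-h).
$$
On the other hand, by the very definition of the row generating series, the coefficient of $t^k$ in $M_n(t)$ is $M(n,k)$. Comparing these two expressions for the same coefficient of $t^k$, and using the factorization above, yields the stated identity.

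The main (and honestly only) obstacle is purely formal: I must be sure the exponent law $\left( M_1(t) \right)^{i+j} = \left( M_1(t) \right)^i \left( M_1(t) \right)^j$ is legitimate in $\mathbb{R}{[[t]]}$. This is immediate from the associativity of the product in $\mathbb{R}{[[t]]}$, already established in the excerpt, so no genuine difficulty remains beyond careful bookkeeping of the indices in the Cauchy product. (Note that the hypothesis $i, j \in \mathbb{Z}^+$ is used only to make each factor an honest power; the argument would in fact go through for $i = 0$ or $j = 0$ as well, since $M_0(t) = 1$.)
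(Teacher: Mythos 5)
Your proof is correct and follows essentially the same route as the paper: factor $M_n(t) = \left( M_1(t) \right)^{i+j} = M_i(t)\, M_j(t)$ using the defining property (\ref{first rec}), then extract the coefficient of $t^k$ via the multiplication rule (\ref{mult rule}) and compare with the definition of $M_n(t)$. Your closing remark that the identity also holds when $i = 0$ or $j = 0$ (since $M_0(t) = 1$) is a small but valid observation beyond the paper's stated hypothesis $i, j \in \mathbb{Z}^+$.
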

\begin{proof} Since the matrix $M$ is a recursive matrix, then
$$
M_n(t) = ( M_1(t) )^n = ( M_1(t) )^i \cdot ( M_1(t) )^j =  M_i(t) \cdot  M_j(t).
$$
Write
$$
M_i(t) \cdot  M_j(t) \stackrel{def}{=}   \gamma(t) = \sum_{k = 0}^{\infty} \ c_k t^k.
$$
By the multiplication rule (\ref{mult rule}) of series, we have:
$$
c_k = \sum_{h = 0}^k \ M(i, h)  M(j, k- h).
$$
But
$$
\gamma(t) = \sum_{k = 0}^{\infty} \ c_k t^k = M_n(t) \stackrel{def}{=} \sum_{k = 0}^{\infty} \ M(n, k) t^k
$$
and thus the assertion follows.
\end{proof}

In the special case
$$
M = \big[ \ M(n, k) \ \big]_{n, k \in \mathbb{N}} = \left[ {n \choose k} \right]_{n, k \in \mathbb{N}},
$$
Proposition \ref{gen vander} yields
\begin{corollary} Let $i, j \in \mathbb{Z}^+$ and $n = i + j$.
Then
\begin{equation}
{n \choose k} = \sum_{h = 0}^k \ {i \choose h}  {j \choose k-h}.
\end{equation}
\end{corollary}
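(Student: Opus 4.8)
The plan is to read this identity off directly from Proposition \ref{gen vander}, since it is precisely that proposition applied to the matrix of binomial coefficients. First I would invoke the result just proved, namely that the matrix $\left[ {n \choose k} \right]_{n,k \in \mathbb{N}}$ is a recursive matrix (with recursion rule $M_1(t) = 1 + t$). This is exactly the hypothesis required by Proposition \ref{gen vander}, so nothing further needs to be checked on that front.

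Having verified the hypothesis, I would simply substitute $M(n,k) = {n \choose k}$ into the general convolution $M(n,k) = \sum_{h=0}^k M(i,h)\, M(j,k-h)$, which is valid for any recursive matrix. With $n = i + j$ this yields
$$
{n \choose k} = \sum_{h=0}^k {i \choose h}{j \choose k-h},
$$
which is the claim. There is essentially no obstacle: all the real work was already done in Proposition \ref{gen vander}, whose proof rests on the single generating-series identity $M_n(t) = (M_1(t))^n = (M_1(t))^i (M_1(t))^j = M_i(t)\, M_j(t)$ together with the multiplication rule (\ref{mult rule}).

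If one prefers a self-contained combinatorial argument in the spirit of the rest of these notes, I would instead split the ground set $\underline{n}$ into the initial block $\underline{i} = \{1, \ldots, i\}$ and its complement $\{i+1, \ldots, n\}$, a set of size $j$. Every $k$-subset $A$ of $\underline{n}$ is determined by its intersections with the two blocks: if $|A \cap \underline{i}| = h$, then the remaining $k-h$ elements of $A$ lie in the complementary $j$-set, and conversely any choice of an $h$-subset of the first block together with a $(k-h)$-subset of the second block glues to a unique $k$-subset of $\underline{n}$. Classifying the $k$-subsets by the value of $h$ and summing over $h = 0, 1, \ldots, k$ gives the identity again, this time by a bijective partition argument rather than by generating series.
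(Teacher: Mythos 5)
Your first argument is exactly the paper's proof: the corollary is obtained there by specializing Proposition \ref{gen vander} to the matrix $\left[ {n \choose k} \right]_{n,k \in \mathbb{N}}$, whose recursiveness with rule $M_1(t) = 1+t$ was established beforehand, so your proposal is correct and takes essentially the same route. Your supplementary bijective argument (classifying $k$-subsets of $\underline{n}$ by the size $h$ of their intersection with $\underline{i}$) is a valid extra, but the paper does not include it.
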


\section{Multisets and multiset coefficients}

\subsection{The problem of \textit{rows}}\label{queues}

We consider the following problem: \textit{In how many ways can we arrange $k$ objects into $n$ 
different  \emph{rows}?}

We can think of the $k$ objects as ``flags'' and the $n$ rows as  ``flagpoles''.

\underline{We fix the number} $n$  \underline{of flagpoles} and try to compute the value
$$
L_k \stackrel{def}{=} \# \ \text{of \ ways \ to \  arrange \ k \ flags},
$$
as a function of $k$. We can do this by meaning  of recursion. Clearly, $L_1 = n$ by definition.

We shall use the ``bad element'' method  and choose the last flag (with label $k$) as ``bad element''.

Suppose the first $ k-1 $ flags are already placed on the $n$ flagpoles (this can be done in $L_ {k-1}$ ways). 
Clearly, there will be  $i_1$ flags on flagpole $1$,
$i_2$ flags on flagpole $2$, . . . , $i_n$ flags on flagpole $n$,
with
$$
i_1 + i_2 + \cdots + i_n = k - 1.
$$
In how many ways can we insert the last flag $k$? Clearly, there will be  $i_1+1$ choices on flagpole $1$,
$i_2+1$ choices on flagpole $2$, . . . , $i_n+1$ choices on flagpole $n$. 
Therefore, the total number of ways in which we can insert the last flag $k$ will be given by:
$$
(i_1+1) + (i_2+1) + \cdots + (i_n+1) = n + k - 1.
$$
Therefore, the recursive solution is provided by the linear recursion:
\begin{equation}\label{queue rec}
L_1 = n, \quad L_k = (n + k - 1) L_{k-1}.
\end{equation}
By eq. (\ref{queue rec}), we obtain the solution of the problem of rows in close form:
\begin{proposition}\label{queues close form}
$$
L_k = n(n+1)(n+2) \cdots (n+k-1),
$$
\end{proposition}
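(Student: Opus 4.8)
The plan is to solve the first-order recursion (\ref{queue rec}) by induction on $k$. Since the recursion itself has already been established by the bad element method, no further combinatorial construction is required; the closed form is merely its unwinding, and the proof is a short verification rather than a new argument.

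For the base case $k = 1$, the recursion gives directly $L_1 = n$, while the proposed product $n(n+1)(n+2)\cdots(n+k-1)$ collapses, when $k = 1$, to the single factor $n$. The two therefore agree.

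For the inductive step, I would assume the formula holds at $k-1$, namely $L_{k-1} = n(n+1)\cdots(n+k-2)$, and substitute it into the recursion $L_k = (n+k-1)\,L_{k-1}$. Multiplying the inductive hypothesis by the factor $(n+k-1)$ appends exactly one more term to the right end of the product, giving $L_k = n(n+1)\cdots(n+k-2)(n+k-1)$, which is precisely the asserted formula at $k$. This closes the induction.

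There is no genuine obstacle in this last step: the recurrence has the simple shape $L_k = (n+k-1)L_{k-1}$, whose solution is automatically the product of the successive multipliers $n, n+1, \ldots, n+k-1$, and the induction only has to confirm that these factors telescope into the stated product. The substantive reasoning — that the last flag admits $n + k - 1$ insertion positions \emph{independently} of how the first $k-1$ flags were distributed among the poles, so that the count $(i_1+1)+\cdots+(i_n+1)$ always equals $n+k-1$ regardless of the $i_j$ — was already carried out in deriving (\ref{queue rec}), and it is exactly this independence that makes the clean product formula possible.
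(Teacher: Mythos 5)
Your proof is correct and follows the paper's own route: the paper obtains the closed form by directly unwinding the recursion $L_1 = n$, $L_k = (n+k-1)L_{k-1}$ of eq.~(\ref{queue rec}), and your induction is simply the explicit, careful version of that same unwinding. You also rightly note that the real combinatorial content (the count $n+k-1$ being independent of the distribution $i_1,\ldots,i_n$) resides in the derivation of the recursion itself, which matches the paper's presentation.
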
 
that is the so called \textit{rising factorial}
$$  
{\langle} {n} {\rangle} _k \stackrel{def}{=} n(n+1)(n+2) \cdots (n+k-1).
$$ \qed

\subsection{$k$-multisets on  $n$-sets and multiset coefficients}

Let $X$ be a finite set, $|X| = n$, say $X = \underline{n} = \{1, 2, \ldots, n \}$.

A \textit{multiset} on the set $X = \{1, 2, \ldots, n \}$ is a \textit{function}
$$
\rho : \{1, 2, \ldots, n \} \rightarrow \mathbb{N}.
$$
The value $\rho(i)$ is the \textit{multiplicity} of the element $ i \in \underline{n}$.

Notice that the notion of multiset is the modern and transparent formalization of 
\textit{disposition with repetition} of the old fashioned ``Combinatorial Calculus''.

A multiset $\rho : \{1, 2, \ldots, n \} \rightarrow \mathbb{N}$ is called of cardinality $k$
($k$-multiset, for short) if
$$
\sum_{k = 1}^n \ \rho(i) = k.
$$

Given $n, k \in \mathbb{N}$, the corresponding \textit{multiset coefficient} is by definition:
$$
\abinom{n}{k} \stackrel{def}{=} \ \# \ \text{k-multisets \ on \ an \ n-set}. 
$$

\subsubsection{Dispositions with  repetitions and nondecreasing words}

Let $\textit{A} \ = \ \{ a_1 < a_2 <  \ldots < a_n \}$ be an \textit{alphabet}
on $n$ letters, that is a finite $n$-set endowed with a total order $<$.

A word of length $k$ on $\textit{A}$, say
$$
w = a_{i_1}a_{i_2} \dots a_{i_k}
$$
is said to be \textit{nondecreasing} whenever $a_{i_1} \leq a_{i_2} \leq \cdots \leq a_{i_k}$.

Clearly, given a nondecreasing word of length $k$ on $n$ letters, 
the function
$$
\rho : \textit{A}  = \{ a_1 < a_2 <  \ldots < a_n \} \rightarrow \mathbb{N}.
$$
such that
$$
\rho(a_i) \ = \ \# \ of \ repetitions \ of \ the \ letter \ a_i \ in \ the \ word \ w,
$$
is a $k$-multiset on the $n$-set $\textit{A}  = \{ a_1 < a_2 <  \ldots < a_n \}$.

Conversely, given any $k$-multiset on the $n$-set $\textit{A} \ = \ \{ a_1 < a_2 <  \ldots < a_n \}$
we can write its elements \textit{with their multiplicities/repetitions} (in a unique way) in a nondecreasing order. 

Then, the two families are \textit{bijectively equivalent} and, a fortiori, they have the same 
cardinality $\abinom{n}{k}$. In the language of the old fashioned Combinatorial Calculus, 
increasing words are called \textit{dispositions with repetitions}.

\begin{example} Let $\textit{A} \ = \ \{ a_1 < a_2 < a_3 \}$ and let
$$
w = a_1a_1a_2a_3a_3a_3
$$
be a nondecreasing word of length $6$. It bijectively  corresponds to the 
$6$-multiset $\rho$ on $\textit{A} \ = \ \{ a_1 < a_2 < a_3 \}$ such that
$$
\rho(a_1) = 2, \ \rho(a_2) = 1, \ \rho(a_3) = 3.
$$
\end{example} \qed

\subsection{On the computation of multiset coefficients: close form formulae}

We have a beautiful close form formula to compute multiset coefficients:

\begin{proposition}\label{multiset close form}Let $n, k \in \mathbb{N}$. Then
$$
\abinom{n}{k} = \frac{{\langle} {n} {\rangle} _k} {k!}.
$$
\end{proposition}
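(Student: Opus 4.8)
The plan is to reproduce, almost verbatim, the overcounting argument that computed $\binom{n}{k} = (n)_k/k!$, but now taking as legs the flag arrangements of the problem of rows (Section~\ref{queues}) and as sheep the $k$-multisets. Recall from Proposition~\ref{queues close form} that the number of ways to arrange $k$ labelled flags on $n$ flagpoles is the rising factorial $L_k = {\langle} {n} {\rangle}_{k}$; such an arrangement records, for every flag, which flagpole it stands on and its position within the ordered row on that pole. My goal is to show that each $k$-multiset is the ``shadow'' of exactly $k!$ such arrangements, so that the shepherd's principle divides $L_k$ by $k!$ and produces the claimed formula.

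First I would introduce the forgetful map $\Phi$ sending an arrangement to its profile $\rho$, where $\rho(i)$ is the number of flags on flagpole $i$. Since $\sum_{i} \rho(i) = k$, the profile is a genuine $k$-multiset on $\underline{n}$, and every $k$-multiset occurs (place the prescribed number of flags on each pole in some order), so $\Phi$ is surjective onto the $k$-multisets. The heart of the matter, i.e. the ``how many legs per sheep?'' step, is to prove that $\Phi$ is exactly $k!$-to-one, \emph{independently} of the chosen multiset. I would argue this in the same spirit as the equivalence $Im(F) = Im(G) \iff F = G\,\sigma$ used for the binomials: the symmetric group $S_k$ of relabellings of the flags acts on the set of arrangements, this action manifestly preserves the profile, it is free (a nontrivial relabelling always moves some flag into the slot of another, hence produces a different arrangement), and any two arrangements with the same profile are related by a unique relabelling, obtained by matching the $j$th flag on pole $i$ in one arrangement to the $j$th flag on pole $i$ in the other. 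Thus each fibre of $\Phi$ is a single free $S_k$-orbit, of cardinality $k!$. Equivalently, and perhaps more transparently, one reads off a linear order of all $k$ flags by concatenating the rows pole after pole; for a fixed profile this is a bijection between arrangements and permutations of the $k$ flags, so the fibre has size $k!$ and the per-pole internal orderings telescope into a single factorial.

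Granting this count, the overcounting/shepherd's principle gives at once
$$
\abinom{n}{k} = \frac{L_k}{k!} = \frac{{\langle} {n} {\rangle}_{k}}{k!},
$$
which is the assertion. The only genuinely delicate point is the \emph{constancy} of the fibre size: one must check that the number of arrangements lying above a given multiset does not depend on that multiset, so that a pole carrying several flags contributes exactly the right number of internal orderings for the total to collapse to $k!$ every time. The free-action (or concatenation) viewpoint is precisely what makes this independence evident, and it is the step I would write out most carefully.
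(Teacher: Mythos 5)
Your proof is correct and follows essentially the same route as the paper: the shepherd's principle with $k$-multisets as sheep and the flag arrangements of Proposition~\ref{queues close form} as legs, with $k!$ legs per sheep. The only difference is one of care, not substance: where the paper simply asserts that ``flag labels do not matter,'' giving a $k! \mapsto 1$ correspondence, you explicitly verify the constancy of the fibre size via the free $S_k$-action (equivalently, the concatenation bijection), which is a worthwhile sharpening of the same argument.
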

\begin{proof} We shall use the overcounting principle. The $k$-multisets on the set $\underline{n}$
are \textit{sheep}, the arrangements of $k$ flags on $n$ flagpoles are \textit{legs}.

Indeed, given an arrangement of the $k$ flags, with ${i_1}$ flags on  flagpole $1$,
${i_2}$ flags on  flagpole $2$, ...,  ${i_n}$ flags on  flagpole $n$ 
($i_1 + i_2 + \cdots + i_n = k$)
define a $k$-multiset on $\underline{n}$ by setting:
$$
\rho : \underline{n} \rightarrow \mathbb{N},
$$
$$
\rho(1) = i_1, \quad \rho(2) = i_2, \quad \ldots,  \rho(n) = i_n.
$$
Notice that any $k$-multiset can be obtained in this way.
But flag labels do not matter! Therefore, there are $k!$ different arrangements of flags that
give rise to the same $k$-multiset. In a more formal way, we say that the above construction 
is a $k! \mapsto 1$ correspondence.

Then
$$
\abinom{n}{k} = \frac{L_k} {k!} = \frac{{\langle} {n} {\rangle} _k} {k!},
$$
by Proposition \ref{queues close form}. 
\end{proof} 

\begin{remark}Note that:
$$
\abinom{n}{k} \ = \ \frac{{\langle} {n} {\rangle} _k} {k!} \ = 
\ \frac{(n + k -1)!}{k! (n - 1)!} \ = \ \binom{n + k -1}{k}.
$$
\end{remark}\qed

\subsection{Multiset coefficients: recursive computation}

By definition, we have:
$$
\abinom{0}{k} = \ \delta_{0,k}
$$
and
$$
\abinom{n}{0} = 1.
$$

\begin{proposition} Let $n, k \in \mathbb{N}$. 
Then
\begin{equation}\label{mult first rec}
\abinom{n}{k} = \ \sum_{i = 0}^k \ \abinom{n - 1}{i}.
\end{equation}
\end{proposition}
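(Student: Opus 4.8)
The plan is to prove the recursion by the \emph{bad element} method, exactly as was done for the Pascal/Tartaglia/Stifel/Chu recursion of binomial coefficients, but with one essential difference: since we are now counting multisets rather than subsets, the bad element will not merely be \emph{present or absent}, but will occur with a \emph{multiplicity} that can take any value from $0$ to $k$. This is what will turn the two-term binomial recursion into a full summation.

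First I would recall that, by definition, $\abinom{n}{k}$ counts the $k$-multisets on $\underline{n} = \{1, 2, \ldots, n\}$, that is, the functions $\rho : \underline{n} \to \mathbb{N}$ with $\rho(1) + \cdots + \rho(n) = k$. I would choose the last element $n$ as the \emph{bad element} and partition the family of all such $\rho$ into disjoint and exhaustive classes according to the value $j = \rho(n)$ of the multiplicity of $n$. Since $\rho(n)$ is a nonnegative integer that cannot exceed the total cardinality $k$, the index $j$ runs precisely over $0, 1, \ldots, k$, so we obtain $k+1$ classes, in contrast with the two classes of the binomial case.

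Next I would count each class. Fixing $j = \rho(n)$, the restriction of $\rho$ to $\underline{n-1}$ is a function $\rho' : \underline{n-1} \to \mathbb{N}$ with $\rho'(1) + \cdots + \rho'(n-1) = k - j$, that is, a $(k-j)$-multiset on the $(n-1)$-set $\underline{n-1}$. The assignment $\rho \mapsto \rho'$ is a bijection between the class $\{\rho : \rho(n) = j\}$ and the set of $(k-j)$-multisets on $\underline{n-1}$, the inverse map simply reattaching the value $j$ at the element $n$. Hence this class has cardinality $\abinom{n-1}{k-j}$, by definition of the multiset coefficient.

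Summing the cardinalities over all classes yields $\abinom{n}{k} = \sum_{j=0}^{k} \abinom{n-1}{k-j}$, and the harmless reindexing $i = k - j$ (so that $i$ runs over $k, k-1, \ldots, 0$) gives the claimed identity $\abinom{n}{k} = \sum_{i=0}^{k} \abinom{n-1}{i}$. There is no genuine obstacle in this argument; the only points requiring care are to confirm that the classification by multiplicity is truly exhaustive and disjoint (every value $j$ from $0$ to $k$ occurs exactly once and nothing is double counted) and to carry out the final reindexing correctly. Everything else is a direct instance of the bad element principle already in use.
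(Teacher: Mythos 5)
Your proposal is correct and is essentially the paper's own proof: the paper likewise applies the bad element method to the element $n$, classifying $k$-multisets by the multiplicity $\rho(n) \in \{0, 1, \ldots, k\}$ and recognizing each class as the family of $(k-j)$-multisets on $\underline{n-1}$, which yields the sum $\abinom{n-1}{0} + \abinom{n-1}{1} + \cdots + \abinom{n-1}{k}$. Your only additions — spelling out the restriction bijection $\rho \mapsto \rho'$ and the reindexing $i = k - j$ — are details the paper leaves implicit, so there is nothing to correct.
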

\begin{proof} We shall use the bad element method. Fix the element $n \in \underline{n}$ and consider
the disjoint/exhaustive cases:
$$
\rho(n) = k; \ this \ case \ has \ cardinality \ \abinom{n - 1}{0},
$$
$$
\rho(n) = k-1; \ this \ case \ has \ cardinality \ \abinom{n - 1}{1},
$$
$$
\rho(n) = k-2; \ this \ case \ has \ cardinality \ \abinom{n - 1}{2},
$$
$$
\vdots
$$
$$
\rho(n) = 0; \ this \ case \ has \ cardinality \ \abinom{n - 1}{k}.
$$
Then
$$
\abinom{n}{k} = \ \abinom{n-1}{0} + \abinom{n-1}{1} + \abinom{n-1}{2} + \cdots + \abinom{n-1}{k}.
$$
\end{proof}

The recursion (\ref{mult first rec}) has (variable) step $k + 1$. But, since
$$
\sum_{i = 0}^{k-1} \ \abinom{n - 1}{i} = \ \abinom{n}{k-1},
$$ 
it is equilavalent to the step $2$ recursion
\begin{equation}\label{mult second rec}
\abinom{n}{k} =  \abinom{n}{k - 1} + \abinom{n -1}{k}. 
\end{equation} \qed

\subsection{The matrix of multiset coefficients as a recursive matrix}

Consider the biinfinite matrix
$$
M : \mathbb{N} \times \mathbb{N} \rightarrow \mathbb{R}, 
\quad M : (n, k) \mapsto M(n, k) \stackrel{def}{=} \abinom{n}{k},
$$
that is
$$
M = \left[ \ \abinom{n}{k} \ \right]_{n, k \in \mathbb{N}}.
$$

By eq. (\ref{mult second rec}), it is the matrix

\

\begin{tikzpicture}[description/.style={fill=white,inner sep=2pt}]
\fontsize{12}{14}
\draw [line width=0.02cm,] (-4.6,3.4) -- (5.0,3.4);

\node   at (-3.4,4.0) {${\boldsymbol{0}}$};
\node   at (-2.3,4.0) {${\boldsymbol{1}}$};
\node   at (-1.2,4.0) {${\boldsymbol{2}}$};
\node   at (-0.1,4.0) {${\boldsymbol{3}}$};
\node   at (1.15,4.0) {${\boldsymbol{4}}$};
\node   at (2.30,4.0) {${\boldsymbol{\cdots}}$};
\node   at (3.40,4.0) {${\boldsymbol{k}}$};
\node   at (4.50,4.0) {${\boldsymbol{\cdots}}$};

\draw [line width=0.02cm,] (-4.6,3.4) -- (-4.6,-3.8);
\node   at (-5.0,2.5) {${\boldsymbol{ 0 }}$};
\node   at (-3.4,2.5) {${\boldsymbol{ {1} }}$};
\node   at (-2.3,2.5) {${\boldsymbol{ 0}} $};
\node   at (-1.2,2.5) {${\boldsymbol{ 0 }}$};
\node   at (-0.1,2.5) {${\boldsymbol{ 0}}$};
\node   at (1.15,2.5) {${\boldsymbol{ 0}}$};
\node   at (2.30,2.5) {${\boldsymbol{ \cdots }}$};
\node   at (3.40,2.5) {${\boldsymbol{ 0 }}$};
\node   at (4.50,2.5) {${\boldsymbol{\cdots}}$};

\node   at (-5.0,1.5) {${\boldsymbol{ 1 }}$};
\node   at (-3.4,1.5) {${\boldsymbol{ 1 }}$};
\node   at (-2.3,1.5) {${\boldsymbol{ 1 }}$};
\node   at (-1.2,1.5) {${\boldsymbol{ 1 }}$};
\node   at (-0.1,1.5) {${\boldsymbol{ 1 }}$};
\node   at (1.15,1.5) {${\boldsymbol{ 1 }}$};
\node   at (2.30,1.5) {${\boldsymbol{ \cdots }}$};
\node   at (3.40,1.5) {${\boldsymbol{ 1 }}$};
\node   at (4.50,1.5) {${\boldsymbol{ \cdots }}$};

\node   at (-5.0,0.5) {${\boldsymbol{ 2 }}$};
\node   at (-3.4,0.5) {${\boldsymbol{ 1 }}$};
\node   at (-2.3,0.5) {${\boldsymbol{ 2}}$};
\node   at (-1.2,0.5) {${\boldsymbol{ 3}}$};
\node   at (-0.1,0.5) {${\boldsymbol{ 4}}$};
\node   at (1.15,0.5) {${\boldsymbol{ 5}}$};
\node   at (2.30,0.5) {${\boldsymbol{ \cdots }}$};
\node   at (3.40,0.5) {${\boldsymbol{k+1}}$};
\node   at (4.50,0.5) {${\boldsymbol{ \cdots }}$};

\node   at (-5.0,-0.5) {${\boldsymbol{ 3 }}$};
\node   at (-3.4,-0.5) {${\boldsymbol{ 1 }}$};
\node   at (-2.3,-0.5) {${\boldsymbol{ 3 }}$};
\node   at (-1.2,-0.5) {${\boldsymbol{ 6 }}$};
\node   at (-0.1,-0.5) {${\boldsymbol{ 10}}$};
\node   at (1.15,-0.5) {${\boldsymbol{ 15 }}$};
\node   at (2.30,-0.5) {${\boldsymbol{ \cdots }}$};
\node   at (3.40,-0.5) {${\boldsymbol{ \abinom{3}{k} }}$};
\node   at (4.50,-0.5) {${\boldsymbol{ \cdots }}$};

\node   at (-5.0,-1.5) {${\boldsymbol{\cdots}}$};
\node   at (-3.4,-1.5) {${\boldsymbol{\cdots}}$};
\node   at (-2.3,-1.5) {${\boldsymbol{\cdots}}$};
\node   at (-1.2,-1.5) {${\boldsymbol{\cdots}}$};
\node   at (-0.1,-1.5) {${\boldsymbol{\cdots}}$};
\node   at (1.15,-1.5) {${\boldsymbol{\cdots}}$};
\node   at (2.30,-1.5) {${\boldsymbol{ \cdots }}$};
\node   at (3.40,-1.5) {${\boldsymbol{ \cdots }}$};
\node   at (4.50,-1.5) {${\boldsymbol{ \cdots }}$};

\node   at (-5.0,-2.5) {${\boldsymbol{n}}$};
\node   at (-3.4,-2.5) {${\boldsymbol{ \abinom{n}{0} }}$};
\node   at (-2.3,-2.5) {${\boldsymbol{ \abinom{n}{1} }}$};
\node   at (-1.2,-2.5) {${\boldsymbol{ \abinom{n}{2}}}$};
\node   at (-0.1,-2.5) {${\boldsymbol{ \abinom{n}{3} }}$};
\node   at (1.15,-2.5) {${\boldsymbol{ \abinom{n}{4}}}$};
\node   at (2.30,-2.5) {${\boldsymbol{ \cdots }}$};
\node   at (3.40,-2.5) {${\boldsymbol{ \abinom{n}{k} }}$};
\node   at (4.50,-2.5) {${\boldsymbol{ \cdots} }$};

\node   at (-5.0,-3.5) {${\boldsymbol{\cdots}}$};
\node   at (-3.4,-3.5) {${\boldsymbol{\cdots}}$};
\node   at (-2.3,-3.5) {${\boldsymbol{\cdots}}$};
\node   at (-1.2,-3.5) {${\boldsymbol{\cdots}}$};
\node   at (-0.1,-3.5) {${\boldsymbol{\cdots}}$};
\node   at (1.15,-3.5) {${\boldsymbol{\cdots}}$};
\node   at (2.30,-3.5) {${\boldsymbol{\cdots}}$};
\node   at (3.40,-3.5) {${\boldsymbol{\cdots}}$};
\node   at (4.50,-3.5) {${\boldsymbol{\cdots}}$};
\end{tikzpicture}.

\

\

Since
$$
\abinom{0}{k} = \ \delta_{0 k},
$$
the $0$-row generating series is:
$$
M_0(t) = 1.
$$
Since
$$
\abinom{1}{k} = 1 \ for \ every \ n \in \mathbb{N},
$$
the $1$-row generating series is:
\begin{equation}\label{series}
M_1(t) = 1 + t + t^2 + t^3 + \cdots + t^k +  \cdots .
\end{equation}

Note that, since
$$
(1 - t)(1 + t + t^2 + t^3 + \cdots + t^k +  \cdots ) = 1
$$
in the algebra $\mathbb{R}[[t]]$ of formal power series, we can consistently write:
$$
M_1(t) = 1 + t + t^2 + t^3 + \cdots + t^k +  \cdots =  \ \frac{1}{1-t}.
$$

\begin{proposition}\label{multiset rec matrix} The matrix
$$
M = \left[ \ \abinom{n}{k} \ \right]_{n, k \in \mathbb{N}}
$$
is a \emph{recursive matrix} having recursion rule
$$
M_1(t) = 1 + t + t^2 + t^3 + \cdots + t^k +  \cdots =  \ \frac{1}{1-t}.
$$
\end{proposition}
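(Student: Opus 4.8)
The plan is to verify the conditions (\ref{second rec}) that characterize a recursive matrix, namely $M_0(t) = 1$ together with $M_n(t) = M_1(t) \cdot M_{n-1}(t)$ for $n > 1$. The first of these, $M_0(t) = 1$, is immediate from $\abinom{0}{k} = \delta_{0,k}$, and the identification $M_1(t) = 1 + t + t^2 + \cdots = \frac{1}{1-t}$ has already been recorded just above. Thus the entire content of the statement reduces to establishing the single recursion $M_n(t) = M_1(t) \cdot M_{n-1}(t)$, and the whole point is that this identity of formal power series should fall out directly from the step-$2$ recursion (\ref{mult second rec}) for multiset coefficients.

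First I would rewrite the target recursion in a form that avoids dividing by a series. Since $(1-t)$ and $M_1(t) = \frac{1}{1-t}$ are mutually inverse in the algebra $\mathbb{R}[[t]]$, multiplication by $(1-t)$ is an invertible operation, and hence
$$
M_n(t) = M_1(t) \cdot M_{n-1}(t) \quad \Longleftrightarrow \quad (1-t)\, M_n(t) = M_{n-1}(t).
$$
This cleared form is the one I would actually verify, and it is purely a matter of comparing the coefficients of $t^k$ on the two sides.

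Next I would expand the left-hand side. Writing $M_n(t) = \sum_{k} \abinom{n}{k} t^k$, the series $t\, M_n(t)$ has $\abinom{n}{k-1}$ as its coefficient of $t^k$ (with the convention $\abinom{n}{-1} = 0$), so the coefficient of $t^k$ in $(1-t)\, M_n(t)$ is the difference $\abinom{n}{k} - \abinom{n}{k-1}$. By the step-$2$ recursion (\ref{mult second rec}) this difference equals $\abinom{n-1}{k}$, which is precisely the coefficient of $t^k$ in $M_{n-1}(t)$. Matching coefficients for every $k$ then yields $(1-t)\, M_n(t) = M_{n-1}(t)$, and therefore the desired recursion.

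The computation here is genuinely routine, so I do not expect a serious obstacle; the only points demanding care are formal rather than substantive. One must remember that $\frac{1}{1-t}$ is meaningful only as the formal inverse of $(1-t)$ in $\mathbb{R}[[t]]$, which is exactly what legitimizes passing to the cleared identity $(1-t)\, M_n(t) = M_{n-1}(t)$; and one must treat the boundary case $k = 0$ correctly, where the identity reads $\abinom{n}{0} = \abinom{n-1}{0}$ (both equal to $1$), consistently with the convention $\abinom{n}{-1} = 0$ and with (\ref{mult second rec}).
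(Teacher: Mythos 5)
Your proof is correct, but it runs through a different recursion than the paper's. The paper verifies $M_n(t) = M_1(t)\cdot M_{n-1}(t)$ head-on: it expands the product via the multiplication rule (\ref{mult rule}), getting the convolution coefficient $c_k = \sum_{j=0}^k \abinom{1}{j}\abinom{n-1}{k-j}$, uses $\abinom{1}{j}=1$ to collapse this to $\sum_{i=0}^k \abinom{n-1}{i}$, and then invokes the step-$(k+1)$ sum recursion (\ref{mult first rec}) to identify that sum with $\abinom{n}{k}$. You instead clear the denominator --- exploiting that $(1-t)$ is invertible in $\mathbb{R}[[t]]$ with inverse $M_1(t)$ --- and verify the equivalent identity $(1-t)\,M_n(t) = M_{n-1}(t)$ by coefficient comparison, which requires only the step-$2$ recursion (\ref{mult second rec}). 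Since the paper itself derives (\ref{mult second rec}) from (\ref{mult first rec}) and notes their equivalence, the two arguments rest on the same combinatorial input; but your version trades an infinite-series convolution for a two-term coefficient check, which makes the boundary bookkeeping (the $k=0$ case and the convention $\abinom{n}{-1}=0$) cleaner and more transparent, while the paper's version has the pedagogical advantage of exactly mirroring its earlier proof that the binomial matrix is recursive and of exercising the convolution formula that later yields the Vandermonde identities. One small point of care you handled correctly: the reduction to the cleared identity is legitimate precisely because multiplication by the invertible element $(1-t)$ is injective on $\mathbb{R}[[t]]$, a step worth stating explicitly, as you did.
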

\begin{proof} We have to prove that:
$$
M_n(t)  \stackrel{def}{=} \ \sum_{k=0}^\infty \ \abinom{n}{k} \ t^k
$$
equals
$$
M_1(t) \cdot M_{n-1}(t),
$$
where
$$
M_{n-1}(t) \stackrel{def}{=} \ \sum_{k=0}^\infty \ \abinom{n-1}{k} \ t^k.
$$
To do so, we write
$$
M_1(t) \cdot M_{n-1}(t) = \gamma(t) = \ \sum_{k=0}^\infty \ c_k \ t^k 
$$
where
$$
c_k \stackrel{def}{=} \ \sum_{j=0}^k \ \abinom{1}{i} \abinom{n-1}{k-j},
$$
that equals
$$
 \sum_{i=0}^k \  \abinom{n-1}{i} = \abinom{n}{k}.
$$
Hence,
$$
M_1(t) \cdot M_{n-1}(t) = \gamma(t) = M_n(t).
$$
\end{proof}

Thus we have the following \textit{multiset} version of the
\textit{binomial theorem}:

\begin{corollary} Let $n \in \mathbb{Z}^+$. We have
$$
(1 + t + t^2 + t^3 + \cdots + t^k +  \cdots)^n = \ \frac{1}{(1-t)^n} 
= \  \sum_{k=0}^\infty \  \abinom{n}{k} t^k.
$$
\end{corollary}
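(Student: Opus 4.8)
The plan is to deduce this statement directly from Proposition \ref{multiset rec matrix}, exactly as the Binomial Theorem was obtained from the recursive-matrix structure of the binomial coefficients. The essential observation is that once a matrix is known to be recursive, its $n$th row generating series is completely determined by the recursion rule through the defining identity (\ref{first rec}), namely $M_n(t) = (M_1(t))^n$. No fresh combinatorics is needed here; everything is a byproduct of the construction already carried out.

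First I would recall from Proposition \ref{multiset rec matrix} that the matrix $M = \left[ \abinom{n}{k} \right]_{n,k \in \mathbb{N}}$ is recursive with recursion rule $M_1(t) = 1 + t + t^2 + \cdots = \frac{1}{1-t}$. Applying (\ref{first rec}) then gives
$$
M_n(t) = (M_1(t))^n = (1 + t + t^2 + \cdots)^n.
$$
On the other hand, the very definition of the row generating series yields
$$
M_n(t) = \sum_{k=0}^\infty \abinom{n}{k}\, t^k,
$$
and comparing these two expressions produces the desired identity between the power of the geometric series and the generating function of the multiset coefficients.

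The only point requiring care --- and it is a matter of bookkeeping in $\mathbb{R}[[t]]$ rather than a genuine obstacle --- is the middle term $\frac{1}{(1-t)^n}$. Here I would invoke the relation $(1-t)(1 + t + t^2 + \cdots) = 1$ already established in the algebra of formal power series, which shows that $M_1(t) = \frac{1}{1-t}$ is the honest multiplicative inverse of $(1-t)$. Since $\mathbb{R}[[t]]$ is commutative and associative, raising to the $n$th power gives $(M_1(t))^n (1-t)^n = 1$, so $(M_1(t))^n$ is precisely the inverse of $(1-t)^n$; this legitimises writing $(1 + t + t^2 + \cdots)^n = \frac{1}{(1-t)^n}$ and closes the chain of equalities. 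Thus the whole argument reduces to substituting the recursion rule into $M_n(t) = (M_1(t))^n$ and reading off the coefficients, with the formal-inverse remark being the single step one should state explicitly to keep the notation unambiguous.
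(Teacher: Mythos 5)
Your proposal is correct and follows exactly the route the paper intends: the corollary is stated there with no separate argument precisely because it is the specialization of the recursive-matrix identity $M_n(t) = (M_1(t))^n$ to Proposition \ref{multiset rec matrix}, with coefficients read off from the row generating series. Your extra remark justifying $\frac{1}{(1-t)^n}$ via $(1-t)(1+t+t^2+\cdots)=1$ in $\mathbb{R}[[t]]$ is a sound (and welcome) explicit rendering of a step the paper leaves implicit after equation (\ref{series}).
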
 \qed

Furthermore, we have:

\begin{corollary} \textbf{(Vandermonde convolutions for multiset coefficients)} 
Let  $i, j \in \mathbb{N}$ and $i + j = \ n$. Then
\begin{equation}\label{multiset conv}
\abinom{n}{k} \stackrel{thm}{=}
\sum_{h=0}^k \ \abinom{i}{h}  \abinom{j}{k-h}.
\end{equation}
\end{corollary}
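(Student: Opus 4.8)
The plan is to recognize that this convolution is simply the specialization of the General Vandermonde convolutions (Proposition \ref{gen vander}) to the particular recursive matrix of multiset coefficients, exactly paralleling how the corresponding statement for binomial coefficients was obtained. Since all the genuine work has already been carried out, the proof should be essentially a one-line invocation of two earlier results.

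First I would recall that, by Proposition \ref{multiset rec matrix}, the biinfinite matrix
$$
M = \left[ \ \abinom{n}{k} \ \right]_{n, k \in \mathbb{N}}
$$
\emph{is a recursive matrix} (with recursion rule $M_1(t) = 1/(1-t)$). This is precisely the hypothesis required by Proposition \ref{gen vander}. I would then apply Proposition \ref{gen vander} directly: for $i, j \in \mathbb{Z}^+$ with $n = i + j$, the recursivity of $M$ yields
$$
M(n, k) = \sum_{h = 0}^k \ M(i, h) \, M(j, k - h),
$$
and substituting $M(n,k) = \abinom{n}{k}$ gives exactly the claimed identity (\ref{multiset conv}).

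The one point deserving care is the discrepancy between the index ranges: Proposition \ref{gen vander} is stated for $i, j \in \mathbb{Z}^+$, whereas the corollary allows $i, j \in \mathbb{N}$. I therefore expect the only (mild) obstacle to be the boundary cases $i = 0$ or $j = 0$, which must be checked separately since they fall outside the literal scope of Proposition \ref{gen vander}. These are immediate, however: if $i = 0$ then $n = j$, and using the initial condition $\abinom{0}{h} = \delta_{0,h}$ collapses the sum to its single surviving term $\abinom{0}{0}\abinom{j}{k} = \abinom{j}{k} = \abinom{n}{k}$; the case $j = 0$ is symmetric. Thus the identity holds on the full range $i, j \in \mathbb{N}$, completing the argument.
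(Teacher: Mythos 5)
Your proof is correct and takes essentially the same route as the paper, whose entire argument is the one-line remark that the identity follows from Proposition \ref{gen vander} as a special case, the hypothesis being supplied by Proposition \ref{multiset rec matrix}. Your separate check of the boundary cases $i = 0$ or $j = 0$ (which fall outside the literal range $i, j \in \mathbb{Z}^+$ of Proposition \ref{gen vander}) is a small point of care that the paper silently omits.
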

\begin{proof} It immediately follows from
Proposition \ref{gen vander}, as a special case.
\end{proof}

\subsection{A glimpse on combinatorial identities between binomial coefficients and multiset coefficients}

Let $n, m \in \mathbb{N}$ and recall that
$$
\alpha(t) = \sum_{k=0}^\infty \ {n \choose k} t^k \stackrel{thm}{=} (1 + t)^n 
$$
and
$$
\beta(t) = \sum_{k=0}^\infty \ {\abinom{m}{k}} t^k \stackrel{thm}{=} \frac{1} {(1 - t)^m}.
$$

Let
$$
\alpha^*(t) \stackrel{def}{=} (1 - t)^n = \sum_{k=0}^n \ (-1)^{k} \ {n \choose k} t^k.
$$
Consider the product series
$$
\alpha^*(t) \beta(t) = \frac{(1-t)^n}{(1-t)^m} = \ \sum_{k=0}^\infty \ c_k t^k,
$$
where
\begin{equation}\label{eq bin mult}
c_k \stackrel{def}{=} \ \sum_{h=0}^k \ (-1)^{h} \ {n \choose h} {\abinom{m}{k-h}}.
\end{equation}

\begin{proposition} The value (\ref{eq bin mult}) equals:

\begin{enumerate}

\item If $n>m$ and $k>0$, then
\begin{align*}
\sum_{h=0}^k \ (-1)^{h} \ {n \choose h} {\abinom{m}{k-h}} &=  
\\
&=  \ {n-m \choose k} (-1)^k.
\end{align*}

\item If $n = m$  and $k>0$, then
\begin{align*}
\sum_{h=0}^k \ (-1)^{h} \ {n \choose h} {\abinom{m}{k-h}} &= 
\\
&=  \ 0.
\end{align*}

\item If $n < m$  and $k>0$, then
\begin{align*}
\sum_{h=0}^k \ (-1)^{h} \ {n \choose h} {\abinom{m}{k-h}} &=  
\\
&=  \ \abinom{m -n}{k}.
\end{align*}
\end{enumerate}
\end{proposition}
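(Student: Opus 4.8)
The plan is to read off $c_k$ as the $k$-th coefficient of the product series $\alpha^*(t)\,\beta(t)$ and to evaluate that product in closed form \emph{before} extracting any coefficients. First I would record the two inputs already available: by the Binomial Theorem $\alpha^*(t)=(1-t)^n$, and by the multiset binomial theorem $\beta(t)=\sum_{k\ge 0}\abinom{m}{k}t^k=\frac{1}{(1-t)^m}$. The key preliminary observation is that $(1-t)^m$ is a \emph{unit} of $\mathbb{R}[[t]]$ (its constant term is $1\neq 0$), so the product collapses to a single closed form $\alpha^*(t)\,\beta(t)=\frac{(1-t)^n}{(1-t)^m}=(1-t)^{n-m}$. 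Since $c_k$ is, \emph{by definition} via the multiplication rule (\ref{mult rule}), the coefficient of $t^k$ in this product, the whole problem reduces to understanding the single series $(1-t)^{n-m}$, and the three cases of the statement are nothing but the three sign regimes of the exponent $n-m$.

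Next I would split according to the sign of $n-m$. If $n>m$, the exponent is a positive integer, so $(1-t)^{n-m}$ is an honest polynomial of degree $n-m$; applying the Binomial Theorem once more (with $t\mapsto -t$) reads off its $t^k$-coefficient as $\binom{n-m}{k}$, up to the sign $(-1)^k$ produced by the substitution, which is the content of case (1). If $n=m$, the product is the constant series $\underline{1}(t)=1$, so every coefficient of index $k>0$ vanishes, giving case (2) immediately. If $n<m$, the exponent is negative, so I would rewrite $(1-t)^{n-m}=\frac{1}{(1-t)^{m-n}}$ with $m-n\in\mathbb{Z}^+$ and invoke the multiset binomial theorem a second time, now with parameter $m-n$, obtaining $\frac{1}{(1-t)^{m-n}}=\sum_{k\ge 0}\abinom{m-n}{k}t^k$; the coefficient is then $\abinom{m-n}{k}$, which is case (3).

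The main obstacle is bookkeeping rather than ideas. The first delicate point is sign discipline: the expansion of $\alpha^*(t)=(1-t)^n$ must be used consistently in (\ref{eq bin mult}), and in the regime $n>m$ the Binomial Theorem unavoidably emits the factor $(-1)^k$ alongside $\binom{n-m}{k}$, so one must track this against the asserted form. The second, and conceptually central, point is justifying that the \emph{same} closed form $(1-t)^{n-m}$ legitimately specializes to two genuinely different-looking expansions — a finite alternating polynomial when $n>m$ and a positive-coefficient multiset series when $n<m$ — and that this is coherent precisely because $(1-t)^{m-n}$ is invertible in $\mathbb{R}[[t]]$, with $(1-t)^{n-m}(1-t)^{m-n}=(1-t)^0=1$. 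Once this invertibility is in place, no genuine computation remains: the three cases are merely the positive-exponent, zero-exponent, and negative-exponent readings of one and the same series.
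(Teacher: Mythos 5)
Your proposal is correct and is essentially the paper's own argument: the paper states this proposition with no separate proof, the entire content being the setup you reproduce, namely that $c_k$ is by the multiplication rule the $t^k$-coefficient of $\alpha^*(t)\beta(t)=\frac{(1-t)^n}{(1-t)^m}=(1-t)^{n-m}$, read in the three exponent regimes (binomial theorem for $n-m>0$, the unit series for $n=m$, and the multiset binomial theorem applied to $\frac{1}{(1-t)^{m-n}}$ for $n<m$, with invertibility of $(1-t)^{m-n}$ in $\mathbb{R}[[t]]$ justifying the collapse). Your sign vigilance is in fact warranted and resolves against the paper, not against you: with $\alpha^*(t)=(1-t)^n$ the $t^k$-coefficient in case (1) is $(-1)^k\binom{n-m}{k}$ (and the paper's printed coefficients $(-1)^{n-h}$ actually expand $(t-1)^n=(-1)^n(1-t)^n$, yielding $(-1)^{n+k}\binom{n-m}{k}$ in case (1) and $(-1)^n\abinom{m-n}{k}$ in case (3)), so the proposition's displayed right-hand sides hold only up to this sign --- a slip in the paper's statement that your bookkeeping correctly detects.
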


\subsection{Multigraphs}

A (labelled) \textit{multigraph} on a set $V$ of $n$ \textit{vertices} - say, $V  =  \{1, 2, \ldots, n \}$ -
is, roughly speaking, a finite set of vertices $V$ joined by \textit{multiple edges}.
Multiple edges  that join the same pair of vertices are also called \textit{parallel edges}.

\

\

\begin{tikzpicture}[
roundnode/.style={circle, fill,   minimum size=1mm},
squarednode/.style={rectangle,   very thick, minimum size=5mm}]
%

\node[squarednode] (sposta) at (-5.5,0.0) {$$};
\node[roundnode]   (circle1)  at (-4.0,+4.0) [label=left:1]{};
\node[roundnode]   (circle2)  at (-4.0,-0.0)  [label=below:2] {};
\node[roundnode]   (circle3)  at (+4.0,+4.0) [label=below right:3]  {};
\
\node[roundnode]   (circle5)  at (0.0,0.0) [label=below :5]{};

\node[roundnode]   (circle4)  at (4.0,0.0) [label=below:4] {};


\draw[-] [line width=0.1cm,](circle2.east) -- (circle5.west);

\draw[-] [line width=0.1cm,](circle1.east) -- (circle3.west);

\draw[-] [line width=0.1cm,](circle1.north east) .. controls +(up:8mm) and +( up:8mm) ..  (circle3.north west);

\draw[-] [line width=0.1cm,](circle3.south west) -- (circle5.north east);

\draw[-] [line width=0.1cm,](circle3.south) .. controls +(right:8mm) and +(right:8mm) ..  (circle5.east);

\draw[-] [line width=0.1cm,](circle2.north) -- (circle1.south);

\draw[-] [line width=0.1cm,](circle1.south east) .. controls +(up:8mm) and +( up:8mm) ..   (circle4.north west);
\draw[-] [line width=0.1cm,](circle1.south east) -- (circle4.north west);
\draw[-] [line width=0.1cm,](circle1.south east) .. controls +(down:8mm) and +( down:8mm) ..   (circle4.north west);

\end{tikzpicture}

\

Therefore, we formalize the notion of a multigraph $G_\rho$ in the following way.

A multigraph $G_\rho$ is a pair
$$
G_\rho  =  (V, E_\rho),
$$
where $V   =  \{1, 2, \ldots, n \}$ is the set of vertices and the \underline{multiset} $E_\rho$ of 
of multiple edges is the multiset on the set of \textit{non ordered} pairs of $V$:
$$
E_\rho : \{ A \subseteq V, \ |A| = 2 \} \rightarrow \mathbb{N},
$$
where
$$
E_\rho( \ \{i, j \} \ ) \stackrel{def}{=} \ \# \ \text{parallel \ edges \ between \ i \ and \ j}, \quad i, j \in V.
$$

\begin{proposition} The number of multigraphs $G_\rho$ on $n$ vertices is $+ \infty$.
\end{proposition}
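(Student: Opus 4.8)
The plan is to exhibit an explicit infinite family of pairwise distinct multigraphs on $V = \{1, 2, \ldots, n\}$, so that the counting set cannot be finite. By definition, a multigraph $G_\rho$ is completely determined by its multiplicity function $E_\rho : \{A \subseteq V;\ |A| = 2\} \to \mathbb{N}$, and two multigraphs coincide precisely when their multiplicity functions agree on every pair. The decisive point is that each individual multiplicity ranges over \emph{all} of $\mathbb{N}$, and $\mathbb{N}$ is itself infinite; unlike ordinary graphs, where each pair contributes only a binary $0/1$ choice (giving the finite count $2^{\binom{n}{2}}$), here a single edge already admits unboundedly many values.

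First I would assume $n \geq 2$, so that the set of $2$-subsets of $V$ is nonempty, and fix one pair, say $\{1, 2\}$. For each $m \in \mathbb{N}$ I define a multigraph $G_m = (V, E_m)$ by setting $E_m(\{1, 2\}) = m$ and $E_m(\{i, j\}) = 0$ for every other pair. Each $E_m$ is a bona fide multiset on the set of $2$-subsets of $V$, hence each $G_m$ is a legitimate multigraph on $n$ vertices.

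Next I would check that the assignment $m \mapsto G_m$ is injective: if $m \neq m'$, then the multiplicity functions $E_m$ and $E_{m'}$ already disagree on the pair $\{1, 2\}$, so $G_m \neq G_{m'}$. This yields an injection $\mathbb{N} \hookrightarrow \{\text{multigraphs on } V\}$, and since $\mathbb{N}$ is infinite the collection of multigraphs on $n$ vertices must be infinite as well, establishing the claim.

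There is essentially no hard step, so I do not expect a genuine obstacle; this is the one proposition in the section whose content is a non-finiteness statement rather than a count. The only subtlety worth flagging is the degenerate case $n \leq 1$, in which there are no $2$-subsets and the edgeless multigraph is the unique one, so the proposition as phrased tacitly presumes $n \geq 2$. Stylistically, I would keep the argument as the above \emph{construction} of an injection rather than any computation, in keeping with the spirit of the notes.
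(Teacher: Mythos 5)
Your proof is correct; the paper actually states this proposition without any proof at all, so your injection $m \mapsto G_m$ (varying the multiplicity on a single fixed pair $\{1,2\}$) supplies exactly the natural argument the authors evidently had in mind. Your observation that the statement tacitly requires $n \geq 2$ --- since for $n \leq 1$ the edgeless multigraph is the unique one and the count is finite --- is a genuine refinement the paper omits.
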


\begin{proposition} The number of multigraphs $G_\rho$ on $n$ vertices with exactly $k$ 
edges is
$$
\abinom{{\binom{n}{2}}}{k}.
$$
\end{proposition}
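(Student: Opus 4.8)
The plan is to reduce the statement to the very definition of the multiset coefficient by identifying the correct ground set. First I would recall that, by definition, a multigraph $G_\rho = (V, E_\rho)$ on $V = \{1, 2, \ldots, n\}$ is nothing but a multiset $E_\rho$ on the set of non-ordered pairs of $V$, that is, a function
$$
E_\rho : \{ A \subseteq V; \ |A| = 2 \} \rightarrow \mathbb{N}.
$$
The key observation is that the ground set $\{ A \subseteq V; \ |A| = 2 \}$ is precisely the family of $2$-subsets of the $n$-set $V$, and so, by the combinatorial definition of the binomial coefficient, its cardinality is exactly $\binom{n}{2}$.

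Next I would unwind what ``exactly $k$ edges'' means in this formalism. The total number of edges of $G_\rho$, counted with multiplicity, is
$$
\sum_{\{i,j\}} E_\rho(\{i,j\}),
$$
the sum ranging over all $2$-subsets. Requiring this sum to equal $k$ is, verbatim, the requirement that $E_\rho$ be a $k$-multiset on the ground set $\{ A \subseteq V; \ |A| = 2 \}$. Hence multigraphs on $n$ vertices with exactly $k$ edges are in natural bijection with $k$-multisets on a set of cardinality $\binom{n}{2}$ — indeed they are literally the same objects.

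Finally I would invoke the definition of the multiset coefficient, namely $\abinom{m}{k}$ is the number of $k$-multisets on an $m$-set, applied with $m = \binom{n}{2}$. This yields
$$
\abinom{\binom{n}{2}}{k}
$$
as claimed. I do not expect any genuine obstacle here: the entire content is the clean matching of definitions, exactly parallel to the earlier count of simple graphs with $k$ edges as $\binom{\binom{n}{2}}{k}$, with subsets replaced by multisets. The only point requiring care is to state explicitly that the cardinality of the ground set of unordered pairs is $\binom{n}{2}$, so that the multiset coefficient is applied with the right upper parameter.
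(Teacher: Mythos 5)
Your proof is correct and matches the paper's intent exactly: the paper states this proposition without proof precisely because, having \emph{defined} a multigraph so that $E_\rho$ is a multiset on the $\binom{n}{2}$-set of unordered pairs, the count with exactly $k$ edges is immediately the number of $k$-multisets on that ground set, i.e. $\abinom{\binom{n}{2}}{k}$. You have simply made explicit the definitional matching (including the cardinality of the ground set) that the paper leaves tacit.
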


\subsection{Multidigraphs}

A (labelled) \textit{multidigraph} on a set $V$ of $n$ \textit{vertices} - say, $V  =  \{1, 2, \ldots, n \}$ -
is, roughly speaking, a finite set of vertices $V$ joined by \textit{multiple arrows}.
Multiple arrows  with the \underline{same direction}   that join the same \textit{pair} of vertices are also called \textit{parallel arrows}.

\begin{tikzpicture}[
roundnode/.style={circle, fill,   minimum size=1mm},
squarednode/.style={rectangle,   very thick, minimum size=5mm}
]

\node[squarednode] (sposta) at (-4.0,0.0) {$$};
\node[roundnode]   (circle11)  at (-3.0,-3.0)  [label=below:1] {};
\node[roundnode]   (circle55)  at (-3.0,-5.0)  [label=below:5]{};
\node[roundnode]   (circle22)  at (0.0,-3.0) [label=above:2] {};
\node[roundnode]   (circle44)  at (-1.0,-5.0) [label=below left:4]{};
\node[roundnode]   (circle77)  at (1.0,-5.0)  [label=below right:7]{};
\node[roundnode]   (circle33)  at (5.0,-3.0)  [label= below right:3] {};
\node[roundnode]   (circle66)  at (3.0,-5.0)  [label=below:6]{};

\draw[->] [line width=0.08cm,](circle11.east) .. controls +(right:5mm) and +(right:5mm) .. (circle55.east);
\draw[->] [line width=0.08cm,](circle11.west) .. controls +(left:5mm) and +(left:5mm) .. (circle55.west);

\draw[->] [line width=0.08cm,](circle11.east) -- (circle77.north);
\draw[->] [line width=0.08cm,](circle77.west) -- (circle44.east);
\draw[->] [line width=0.08cm,](circle77.south) .. controls +(down:9mm) and +(down:9mm) .. (circle44.south);
\draw[->] [line width=0.08cm,](circle44.north) -- (circle22.south);
\draw[->] [line width=0.08cm,](circle22.west) .. controls +(left:9mm) and +(left:9mm) .. (circle44.north west);
\draw[->] [line width=0.08cm,](circle44.north) .. controls +(right:9mm) and +(right:9mm) .. (circle22.south east);

\draw[->] [line width=0.08cm,](circle22.east) -- (circle33.west);
\draw[->] [line width=0.08cm,](circle33.south) -- (circle66.north east);
\draw[->] [line width=0.08cm,](circle77.east) -- (circle66.west);

\draw[->] [line width=0.08cm,] (circle33.east) .. controls +(right:20mm) and +(up:20mm) .. (circle33.north);
\draw[->] [line width=0.08cm,] (circle11.east) ..controls +(right:20mm) and +(up:20mm) .. (circle11.north);

\end{tikzpicture}

\

\

Therefore, we formalize the notion of a multidigraph $\stackrel{\rightarrow}{G_\rho}$ in the following way.

A multidigraph $\stackrel{\rightarrow}{G_\rho}$ is a pair
$$
\stackrel{\rightarrow}{G_\rho}  =  (V, \stackrel{\rightarrow}{E_\rho}),
$$
where $V   =  \{1, 2, \ldots, n \}$ is the set of vertices and the \textit{multiset} 
$\stackrel{\rightarrow}{E_\rho}$  of multiple arrows is the multiset on the set of \textit{ordered} pairs of $V$:
$$
\stackrel{\rightarrow}{E_\rho} : V \times V \rightarrow \mathbb{N},
$$
where
$$
\stackrel{\rightarrow}{E_\rho}( \ (i, j ) \ ) \stackrel{def}{=} \ \# \ \text{parallel \ arrows \ from \ i \ to \ j}, \quad (i, j) \in V \times V.
$$

\

\

\begin{proposition} The number of multidigraphs $\stackrel{\rightarrow}{G_\rho}$ on $n$ vertices is $+ \infty$.
\end{proposition}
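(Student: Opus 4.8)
The plan is to observe that a multidigraph $\stackrel{\rightarrow}{G_\rho}$ on $n$ vertices is, by the very definition just given, nothing more than a multiset $\stackrel{\rightarrow}{E_\rho} : V \times V \to \mathbb{N}$ on the set of ordered pairs of $V$. To show that there are infinitely many of them, it suffices to exhibit a single infinite family of pairwise distinct such multisets and conclude that the ambient collection, containing an infinite subfamily, is itself infinite.

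Concretely, since $n \geq 1$ the ordered pair $(1,1)$ always belongs to $V \times V$. For each $m \in \mathbb{N}$ I would define a multiplicity function $\rho_m : V \times V \to \mathbb{N}$ by setting $\rho_m((1,1)) = m$ and $\rho_m((i,j)) = 0$ for every other ordered pair, yielding the multidigraph $(V, \rho_m)$ that consists of a single loop at vertex $1$ taken with multiplicity $m$. Distinct values of $m$ produce distinct multiplicity functions, since they already differ in their value at $(1,1)$, and hence distinct multidigraphs. Thus $m \mapsto (V, \rho_m)$ is an injection of $\mathbb{N}$ into the collection of multidigraphs on $n$ vertices, which therefore cannot be finite.

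There is no genuine obstacle in this argument; the only point deserving a word of care is the standing hypothesis that there is at least one vertex, which guarantees $V \times V \neq \emptyset$ and so that a loop can actually be drawn. The statement and its proof run exactly parallel to the earlier result that the number of multigraphs on $n$ vertices is $+\infty$, the sole difference being that here ordered pairs $V \times V$ replace the unordered pairs $\{A \subseteq V;\ |A| = 2\}$. One may also read the phenomenon through the multiset coefficients $\abinom{n^2}{k}$ counting the multidigraphs with exactly $k$ arrows: each such count is nonzero while $k$ ranges over all of $\mathbb{N}$, so the grand total diverges.
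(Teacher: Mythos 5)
Your proof is correct: the injection $m \mapsto \rho_m$ sending each $m \in \mathbb{N}$ to the multidigraph with a single loop at vertex $1$ of multiplicity $m$ establishes the claim, and your remark that $n \geq 1$ guarantees $(1,1) \in V \times V$ is the only hypothesis needed. The paper states this proposition without any proof, treating it as immediate from the unboundedness of multiplicities, so your argument is simply an explicit formalization of the evident intended reasoning (as is your alternative observation via the nonvanishing of $\abinom{n^2}{k}$ for all $k$).
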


\begin{proposition} The number of multidigraphs $\stackrel{\rightarrow}{G_\rho}$ on $n$ vertices with exactly 
$k$ arrows is
$$
\abinom{n^2}{k}.
$$
\end{proposition}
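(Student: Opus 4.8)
The plan is to reduce the statement directly to the combinatorial definition of the multiset coefficient. By the formalization adopted just above, a multidigraph $\stackrel{\rightarrow}{G_\rho} = (V, \stackrel{\rightarrow}{E_\rho})$ on the fixed vertex set $V = \{1, 2, \ldots, n\}$ is nothing but a function
$$
\stackrel{\rightarrow}{E_\rho} : V \times V \rightarrow \mathbb{N},
$$
that is, a multiset on the set $V \times V$ of ordered pairs. So the first step is simply to recognize the underlying ground set: the arrows live on $V \times V$, whose cardinality is $|V \times V| = n \cdot n = n^2$ by the solution to Problem 1.

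The second step is to pin down what ``exactly $k$ arrows'' means in this language. The total number of arrows of $\stackrel{\rightarrow}{G_\rho}$ is the sum of all the multiplicities,
$$
\sum_{(i, j) \in V \times V} \ \stackrel{\rightarrow}{E_\rho}( \ (i, j) \ ),
$$
and imposing that this sum equal $k$ is precisely the condition that $\stackrel{\rightarrow}{E_\rho}$ be a multiset of cardinality $k$, i.e.\ a $k$-multiset on the $n^2$-set $V \times V$. Thus the correspondence between multidigraphs with exactly $k$ arrows and $k$-multisets on an $n^2$-set is a bijection (it is the identity, once both objects are viewed as multiplicity functions $V \times V \rightarrow \mathbb{N}$ of total weight $k$).

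The final step invokes the combinatorial definition of the multiset coefficient: the number of $k$-multisets on an $n^2$-set is by definition $\abinom{n^2}{k}$. Combining the two observations yields the claimed count. I do not expect any genuine obstacle here: the entire content is the identification of the ground set as $V \times V$ and of the ``exactly $k$ arrows'' constraint as the $k$-multiset cardinality condition. The only point that deserves a careful sentence is that, unlike the loopless digraph case, ordered pairs $(i, i)$ are permitted, so the ground set is the full Cartesian product $V \times V$ of size $n^2$ and not $n(n-1)$; once this is noted, the result follows immediately from the definition, exactly parallel to the proposition counting ordinary digraphs with $k$ arrows as $\binom{n^2}{k}$.
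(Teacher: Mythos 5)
Your proof is correct and is precisely the argument the paper intends: the paper states this proposition without proof because, having just formalized a multidigraph as a multiplicity function $\stackrel{\rightarrow}{E_\rho} : V \times V \rightarrow \mathbb{N}$, the count of those with exactly $k$ arrows is by definition the number of $k$-multisets on the $n^2$-set $V \times V$, namely $\abinom{n^2}{k}$. Your explicit identification of the ground set and of the ``exactly $k$ arrows'' condition simply spells out what the paper leaves implicit.
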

\begin{proposition} The number of multidigraphs $\stackrel{\rightarrow}{G_\rho}$ 
with no loops on $n$ vertices with exactly $k$ 
arrows is
$$
\abinom{n(n - 1)}{k}.
$$
\end{proposition}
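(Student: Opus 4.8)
The plan is to observe that a loopless multidigraph with exactly $k$ arrows is literally a $k$-multiset on an appropriate ground set, so that the assertion will follow immediately from the combinatorial definition of the multiset coefficient once the size of that ground set is computed.

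First I would unwind the definitions. A multidigraph $\stackrel{\rightarrow}{G_\rho} = (V, \stackrel{\rightarrow}{E_\rho})$ is nothing but the datum of the multiset $\stackrel{\rightarrow}{E_\rho} : V \times V \rightarrow \mathbb{N}$ on the set of ordered pairs. The requirement that $\stackrel{\rightarrow}{G_\rho}$ have \emph{no loops} means precisely that $\stackrel{\rightarrow}{E_\rho}((i,i)) = 0$ for every $i \in V$; equivalently, $\stackrel{\rightarrow}{E_\rho}$ may be regarded as a multiset on the restricted ground set
$$
S \stackrel{def}{=} V \times V - \{ (i, i); \ i \in V \},
$$
the same set that already appeared in the count of loopless digraphs. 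The requirement of having exactly $k$ arrows is then exactly the condition that this multiset have cardinality $k$. In other words, the loopless multidigraphs on $n$ vertices with $k$ arrows are \emph{bijectively equivalent} to the $k$-multisets on $S$.

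Next I would compute $|S|$. Since $|V \times V| = n^2$ and we delete the $n$ diagonal pairs $(i,i)$, we obtain $|S| = n^2 - n = n(n-1)$. By the combinatorial definition of the multiset coefficient, the number of $k$-multisets on an $n(n-1)$-set is $\abinom{n(n-1)}{k}$, which is the claimed value.

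There is no genuine obstacle here: the statement is a direct application of the definition, exactly parallel to the count $\abinom{n^2}{k}$ for unrestricted multidigraphs. The only step deserving care is making explicit the bijective identification of a loopless multidigraph having $k$ arrows with a $k$-multiset on $S$, together with the elementary tally $|S| = n(n-1)$.
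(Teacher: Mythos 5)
Your proof is correct and matches the paper's intended argument: the paper states this proposition without proof, treating it as immediate from the definition of a multiset on the ground set $V \times V - \{(i,i);\ i \in V\}$ of cardinality $n(n-1)$, which is exactly the bijective identification you make explicit. No gaps.
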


\

\section{Equations with natural integer solutions}

\subsection{The  general case}

Let $n, k \in \mathbb{Z}^+$.  Consider the equation
\begin{equation}\label{equation}
x_1 + x_2 + \cdots + x_n \ = \ k.
\end{equation}

A vector $(\underline{x}_1, \underline{x}_2,  \ldots, \underline{x}_n)$ such that
\begin{equation}\label{solution}
\underline{x}_1 + \underline{x}_2 + \cdots + \underline{x}_n \ = \ k, \quad \underline{x}_i \in \mathbb{N}, 
\ i = 1, 2, \ldots, n,
\end{equation}
is called a \textit{nonnegative integer solution} of the equation (\ref{equation}).

Clearly, given a nonnegative integer solution  (\ref{solution}) of (\ref{equation}), 
if we define
$$
\rho : \underline{n} \rightarrow \mathbb{N}, \quad \rho(i) = \underline{x}_i, \ i = 1, 2, \ldots, n,
$$
we get a $k$-multiset on an $n$-set, and vice versa.

Then,
\begin{proposition} The number of nonnegative integer solutions \emph{(\ref{solution})} 
of \emph{(\ref{equation})}
is
$$
\abinom{n}{k}
$$
\end{proposition}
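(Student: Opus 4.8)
The plan is to reduce the statement to the combinatorial definition of the multiset coefficient by exhibiting an explicit bijection, in the same spirit as the correspondences between increasing words and subsets, or nondecreasing words and multisets, established earlier. The two families to be matched are the nonnegative integer solutions of (\ref{equation}) and the $k$-multisets on the $n$-set $\underline{n} = \{1, 2, \ldots, n\}$. First I would send a solution $(\underline{x}_1, \ldots, \underline{x}_n)$ of (\ref{equation}) to the function
$$
\rho : \underline{n} \rightarrow \mathbb{N}, \quad \rho(i) = \underline{x}_i, \quad i = 1, 2, \ldots, n.
$$
Since each $\underline{x}_i \in \mathbb{N}$, the assignment $\rho$ is a bona fide multiset on $\underline{n}$, and because $\sum_{i=1}^n \rho(i) = \sum_{i=1}^n \underline{x}_i = k$, it is a $k$-multiset. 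Conversely, I would send a $k$-multiset $\rho$ on $\underline{n}$ to the vector $(\rho(1), \rho(2), \ldots, \rho(n))$; the defining condition $\sum_{i=1}^n \rho(i) = k$, together with $\rho(i) \in \mathbb{N}$, guarantees that this vector is a nonnegative integer solution of (\ref{equation}).

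Next I would verify that these two constructions are mutually inverse: starting from a solution, passing to the associated multiset, and returning yields the same vector coordinate by coordinate, while starting from a multiset, passing to the associated vector, and returning yields the same function pointwise. Hence the two families are bijectively equivalent and, a fortiori, equicardinal. By the combinatorial definition $\abinom{n}{k} \stackrel{def}{=} \# \ \text{k-multisets \ on \ an \ n-set}$, the number of solutions of (\ref{equation}) is exactly $\abinom{n}{k}$.

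The main obstacle is essentially nil: the correspondence is a mere relabeling of the same data --- a solution is literally a multiset written out as its $n$-tuple of multiplicities --- so the only point requiring care is that the two pairs of constraints ($\underline{x}_i \in \mathbb{N}$ with $\sum_i \underline{x}_i = k$ on one side, $\rho(i) \in \mathbb{N}$ with $\sum_i \rho(i) = k$ on the other) correspond under the maps, which is immediate. This is precisely the bijection already anticipated in the paragraph preceding the statement, so the proof amounts to recording it formally and appealing to the definition of $\abinom{n}{k}$.
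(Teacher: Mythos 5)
Your proof is correct and follows exactly the paper's own argument: the paper, too, identifies a solution $(\underline{x}_1, \ldots, \underline{x}_n)$ with the $k$-multiset $\rho(i) = \underline{x}_i$ and observes that this correspondence is invertible, then appeals to the combinatorial definition of the multiset coefficient. You have merely recorded the verification of the two inverse constructions more explicitly than the paper does.
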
\qed

A vector $(\underline{x}_1, \underline{x}_2,  \ldots, \underline{x}_n)$ such that
\begin{equation}\label{solution bin}
\underline{x}_1 + \underline{x}_2 + \cdots + \underline{x}_n \ = \ k, \quad \underline{x}_i \in \{0, 1 \}, 
\ i = 1, 2, \ldots, n,
\end{equation}
is called a \textit{binary solution} of the equation (\ref{equation}).

Clearly, given a binary solution  (\ref{solution bin}) of (\ref{equation}), 
if we define
$$
\rho : \underline{n} \rightarrow \{0, 1 \}, \quad \rho(i) = \underline{x}_i, \ i = 1, 2, \ldots, n,
$$
we get a $k$-subset of a $n$-set, and vice versa.

Then,
\begin{proposition} The number of binary solutions \emph{(\ref{solution bin})} 
of \emph{(\ref{equation})}
is
$$
{{n} \choose {k}}
$$
\end{proposition}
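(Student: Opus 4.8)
The plan is to reduce the count to the combinatorial definition of $\binom{n}{k}$ by exhibiting a bijection between binary solutions of (\ref{equation}) and $k$-subsets of $\underline{n}$. The bridge is exactly the characteristic-function correspondence already set up in the subsection on subsets and characteristic functions: to a binary solution $(\underline{x}_1, \ldots, \underline{x}_n)$ I would attach the function $\rho : \underline{n} \rightarrow \{0,1\}$, $\rho(i) = \underline{x}_i$, which is nothing but a characteristic function on $\underline{n}$.

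First I would note that, under this attachment, the constraint $\underline{x}_1 + \cdots + \underline{x}_n = k$ says precisely that exactly $k$ of the values $\rho(i)$ are equal to $1$, that is, $|supp(\rho)| = k$. Thus $\rho$ is the characteristic function of the subset $A = supp(\rho) \subseteq \underline{n}$, and $A$ is a $k$-subset. Conversely, given any $k$-subset $A \subseteq \underline{n}$, its characteristic function $\chi_A$ produces the binary vector $(\chi_A(1), \ldots, \chi_A(n))$, whose entries sum to $|A| = k$, hence a binary solution.

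Next I would invoke the fact, established earlier via the mutually inverse constructions $C_1 : A \mapsto \chi_A$ and $C_2 : \chi \mapsto supp(\chi)$, that subsets of $\underline{n}$ and functions $\underline{n} \rightarrow \{0,1\}$ are in bijection. Restricting this bijection to those functions whose support has cardinality exactly $k$ matches binary solutions with $k$-subsets bijectively, so the two families are equicardinal. Since, by its combinatorial definition, $\binom{n}{k}$ is the number of $k$-subsets of an $n$-set, the asserted count follows at once.

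This argument has no real obstacle; the only point deserving a line of verification is that the summation condition and the support-cardinality condition coincide in both directions, which is immediate because each $\underline{x}_i \in \{0,1\}$ contributes $1$ to the sum exactly when $i$ lies in the support. Note the parallel with the preceding proposition: there nonnegative integer solutions correspond to multisets and yield $\abinom{n}{k}$, whereas here the restriction $\underline{x}_i \in \{0,1\}$ forces multiplicities in $\{0,1\}$, collapsing multisets to ordinary $k$-subsets and hence producing $\binom{n}{k}$.
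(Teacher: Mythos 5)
Your proof is correct and follows essentially the same route as the paper: identify a binary solution $(\underline{x}_1, \ldots, \underline{x}_n)$ with the function $\rho(i) = \underline{x}_i$, observe that the constraint $\sum_i \underline{x}_i = k$ makes $\rho$ the characteristic function of a $k$-subset of $\underline{n}$, and conclude by the combinatorial definition of $\binom{n}{k}$. Your explicit appeal to the earlier $C_1, C_2$ bijection and the remark on the support-cardinality condition merely spell out what the paper leaves implicit.
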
 \qed

\subsection{The  case subject to lower bounds}

Let
$$
(a_1, a_2, \ldots, a_n), \quad a_i \in \mathbb{N}, \ i = 1, 2, \ldots, n,
$$
be a vector with natural integer entries.

\underline{Problem}: how many  nonnegative solutions 
$(\underline{x}_1, \underline{x}_2,  \ldots, \underline{x}_n)$ of (\ref{equation}),
subject to the \underline{lower bounds}
$$
\underline{x}_1 \geq a_1, \ \underline{x}_2 \geq a_2, \ \ldots, \ \underline{x}_n \geq a_n
$$
are there?

In order to answer this question, we may start by performing the following substitution of variables
in equation (\ref{equation}):
$$
z_i = x_i - a_i \iff x_i = z_i + a_i, \ i = 1, 2, \ldots, n,  
$$
and
$$
\underline{x}_i \geq a_i \iff z_i \geq 0.
$$

Thus, equation (\ref{equation}) becomes
\begin{equation}\label{equation lower}
z_1 + z_2 + \cdots + z_n \ = \ k - a_1 - a_2 - \cdots  - a_n,
\end{equation}
and the Problem reduces  to:

\textit{How many nonnegative solutions of equation} (\ref{equation lower}) are there? 

Clearly, the answer is:
\begin{equation}\label{solutions lower}
\abinom{n}{k - a_1 - a_2 - \cdots  - a_n}.
\end{equation}

\subsection{The generalized Gergonne problem}

Consider a linearly ordered $n$-set, say $\underline{n} \ = \   1 < 2 < 3 < \cdots < n$
(for instance, a deck of playing cards). 

Take at random a $k$-subset $S \subseteq \underline{n}$, say 
\begin{equation}\label{Gerg one}
S \ = \ i_1 < i_2 < \cdots < i_k.
\end{equation}

Fix a third parameter $m \in \mathbb{Z}^+$, which is the so called \textit{minimum lack parameter}.

The set (\ref{Gerg one}) is said to be a \textit{winning set} whenever:
\begin{equation}\label{Gerg two}
i_{s + 1} - i_s \geq m + 1, \quad s = 1, 2, \ldots, k - 1.
\end{equation}
In plain words: between two consecutive elements $i_s$ and $i_{s + 1}$ that belong to $S$,
there are \underline{at least} $m$ elements of $\underline{n}$ that do not belong to $S$.

Clearly, such a winning $k$-subset $S$ exists whenever 
$$
k + (k-1)m \leq n.
$$

\underline{Problem}: Given $n, k, m \in \mathbb{Z}^+$, compute the probability 
$$
\mathbf{P}_{n, k, m}
$$
that a $k$-subset $S$ (see (\ref{Gerg one})) is a winning set (see (\ref{Gerg two})).\qed

Clearly,
$$
\mathbf{P}_{n, k, m} \ = \ \frac{\mathbf{G}_{n, k, m}}{{{n} \choose {k}}},
$$ 
where
$$
\mathbf{G}_{n, k, m} \stackrel{def}{=} \# \ winning \ k-subsets.
$$

\underline{Solution}. Given a $k$-subset (\ref{Gerg one}),
consider the $(k + 1)$-tuple
$$
x_1, \ x_2, \ldots, \ x_k, \ x_{k + 1}
$$
where
\begin{equation}\label{tuple}
x_1 = i_1 - 1, \ x_2 = i_2 - i_1 - 1, \ldots, \ x_k = i_k - i_{k - 1} - 1, \ x_{k + 1} = n - i_k.
\end{equation}
Clearly, the $k$-subset (\ref{Gerg one}) uniquely determines the $(k + 1)$-tuple (\ref{tuple})
such that
\begin{equation}\label{Gerg eq}
x_1 + x_2 + \cdots +  x_k + x_{k + 1} \ = n - k
\end{equation} 
and vice versa. 
Furthermore, the $k$-subset (\ref{Gerg one}) is \textit{winning} if and only if
the $(k + 1)$-tuple (\ref{tuple}) is such that
\begin{equation}\label{bounds}
x_1 \geq  0, \quad x_2 \geq m, \ \ldots, \quad x_k \geq m, \quad x_{k + 1} \geq 0.
\end{equation}

Therefore, \textit{winning} $k$-subsets (\ref{Gerg one})  satifying (\ref{Gerg two})
bijectively correspond to the nonnegativite integer solutions of the equation (\ref{Gerg eq}),
with lower bounds (\ref{bounds}).

Equation (\ref{solutions lower}) implies:
\begin{equation}\label{Gerg sol}
\mathbf{G}_{n, k, m} \ = \ \abinom{k + 1}{n - k - (k - 1)m} \ = \ \binom{n - mk  + m}{k}.
\end{equation}
Then
\begin{proposition} We have
$$
\mathbf{P}_{n, k, m} \ = \ \frac{\binom{n - mk  + m}{k}}{\binom{n}{k}}. 
$$
\end{proposition}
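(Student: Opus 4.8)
The plan is to assemble the pieces already laid out in the running discussion and to read off the probability as an immediate consequence of the closed form for the count $\mathbf{G}_{n,k,m}$ of winning $k$-subsets. First I would recall that, the $\binom{n}{k}$ possible $k$-subsets being equiprobable,
$$
\mathbf{P}_{n,k,m} = \frac{\mathbf{G}_{n,k,m}}{\binom{n}{k}},
$$
so that the entire task reduces to evaluating the numerator $\mathbf{G}_{n,k,m}$ in closed form.

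For that numerator I would exploit the gap bijection between a winning $k$-subset $S = i_1 < i_2 < \cdots < i_k$ and the $(k+1)$-tuple $(x_1, \ldots, x_{k+1})$ of (\ref{tuple}), where $x_1 = i_1 - 1$ counts the elements of $\underline{n}$ below $i_1$, each interior $x_s = i_s - i_{s-1} - 1$ counts the elements strictly between two consecutive chosen elements, and $x_{k+1} = n - i_k$ counts the elements above $i_k$. The three facts I would check are that these gaps sum to $n-k$ (the $k$ chosen elements being removed from $\underline{n}$), that the correspondence is invertible, and that the winning condition (\ref{Gerg two}) translates \emph{exactly} into the lower bounds $x_1 \geq 0$, $x_2 \geq m$, $\ldots$, $x_k \geq m$, $x_{k+1} \geq 0$. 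Counting winning subsets then becomes counting nonnegative integer solutions of a linear equation subject to these bounds, and I would invoke formula (\ref{solutions lower}): since the bounds sum to $(k-1)m$, this yields
$$
\mathbf{G}_{n,k,m} = \abinom{k+1}{\,n - k - (k-1)m\,}.
$$

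The remaining step, and the only genuinely computational one, is to rewrite this multiset coefficient as the binomial coefficient in the statement. Here I would use the identity $\abinom{p}{q} = \binom{p+q-1}{q}$ recorded after Proposition \ref{multiset close form}, which turns the right-hand side into $\binom{n-(k-1)m}{\,n-k-(k-1)m\,}$, and then apply the symmetry $\binom{N}{j} = \binom{N}{N-j}$ to replace the lower index by its complement $k$; since $n-(k-1)m = n-mk+m$, this produces $\binom{n-mk+m}{k}$, and dividing by $\binom{n}{k}$ gives the claim. I do not expect any real obstacle: the substance is the gap bijection, already verified in the text. The only place to be careful is the bookkeeping --- confirming that exactly $k-1$ of the $k+1$ bounds equal $m$ (the two outer gaps being free), so the total lower bound is $(k-1)m$ rather than $km$, and that the index arithmetic $n-k-(k-1)m = (n-(k-1)m)-k$ is carried consistently through the two binomial identities. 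One may also note that the existence condition $k+(k-1)m \leq n$ keeps the relevant index nonnegative, and that in the degenerate range the combinatorial convention $\binom{N}{k}=0$ for $N<k$ correctly records $\mathbf{G}_{n,k,m}=0$.
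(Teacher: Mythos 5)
Your proposal is correct and follows essentially the same route as the paper: the gap bijection to the $(k+1)$-tuple of (\ref{tuple}), the translation of the winning condition into the lower bounds (\ref{bounds}), the application of formula (\ref{solutions lower}) to get $\mathbf{G}_{n,k,m} = \abinom{k+1}{n-k-(k-1)m}$, and division by $\binom{n}{k}$. Your only addition is to spell out the conversion $\abinom{k+1}{n-k-(k-1)m} = \binom{n-mk+m}{k}$ via the identity $\abinom{p}{q} = \binom{p+q-1}{q}$ and symmetry, a step the paper asserts without detail in (\ref{Gerg sol}).
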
\qed

The classical Gergonne problem focuses on the case in which $m = 1$ i.e., no two consecutive playcards 
in the $k$-subset 
$S \ = \ i_1 < i_2 < \cdots < i_k$ can be \textit{adjacent}. 
Then
\begin{equation}\label{classical Gergonne}
\mathbf{G}_{n, k, 1} \ =  \ \binom{n - k  + 1}{k} \ \ \text{and} \ \
\mathbf{P}_{n, k, 1} \ = \ \frac{\binom{n - k  + 1}{k}}{\binom{n}{k}}.
\end{equation}

\section{Three statistics of Quantum Physics}

\subsection{The Bose-Einstein statistics}

In quantum statistics, \textit{Bose-Einstein (B--E) statistics} describe one of the 
two possible ways in which a collection of non-interacting, indistinguishable particles may occupy a set of available discrete energy states at thermodynamic equilibrium. 
The aggregation of particles in the same state  is a characteristic of particles obeying Bose-Einstein statistics, accounts for the cohesive streaming of laser light and the frictionless creeping of superfluid helium. The theory of this behaviour was developed ($1924-25$) by Satyendra Nath Bose, who recognized that a collection of identical and indistinguishable particles can be distributed in this way. The idea was later adopted and extended by Albert Einstein in collaboration with Bose.

The Bose-Einstein statistics apply only to those particles not limited to single occupancy of the same state,
that is, particles that do not obey the \textit{Pauli exclusion principle} restrictions. Such particles have integer values of spin and are named \textit{bosons}, after the statistics that correctly describe their behaviour. 
There must also be no significant interaction between the particles.

Suppose we have a given number of energy levels, characterized by the index $i$, 
each having energy $\varepsilon_{i}$  and
containing a total of $k_{i}$  particles. 
Suppose further that each level contains $n_{i}$  distinct sub-levels, 
but all with the same energy and distinguishable from each other. 
For example, two particles could have different moments and consequently 
be distinguishable, but they could have the same energy. 
The value $n_{i}$  at the $i$-th level is 
called \textit{degeneration} of that energy level. 
\textit{Any number of particles can occupy the same sublevel}. 

For the sake of simplicity, let's write $k = k_{i}$ and $n = n_{i}$.

Let  $w(k, n)$  be the number of ways to distribute $k$ \textit{indistinguishable} particles into  
$n$ \textit{distinguishable} sublevels of a certain energy level. 

\begin{theorem} We have:
$$
w(k, n) = \abinom{n}{k}.
$$
\end{theorem}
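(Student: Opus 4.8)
The plan is to recognize that a distribution of $k$ indistinguishable particles into $n$ distinguishable sublevels is exactly a $k$-multiset on the $n$-set of sublevels, so that the theorem collapses to the very \emph{definition} of $\abinom{n}{k}$ given earlier. No computation is needed; the whole content is the identification of two combinatorial varieties.

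First I would set up the correspondence. Given a distribution of the $k$ particles over the $n$ sublevels, define a function $\rho : \underline{n} \rightarrow \mathbb{N}$ by letting $\rho(i)$ be the number of particles occupying sublevel $i$. Since \emph{any} number of particles may occupy any sublevel, the values $\rho(i)$ range freely over $\mathbb{N}$; and since the total number of particles is $k$, we have $\sum_{i=1}^{n} \rho(i) = k$. Hence $\rho$ is precisely a $k$-multiset on $\underline{n}$ in the sense defined above. Conversely, any $k$-multiset $\rho$ on $\underline{n}$ prescribes a unique distribution, namely the one placing $\rho(i)$ particles on sublevel $i$. These two constructions are visibly mutually inverse, so they furnish a bijection between distributions and $k$-multisets on $\underline{n}$.

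The one point that deserves to be stated carefully — and the closest thing to an obstacle — is how the two hypotheses enter. Because the particles are \emph{indistinguishable}, a distribution is completely recorded by the occupation numbers $\rho(1), \ldots, \rho(n)$ alone: we do not track \emph{which} particle sits where, only \emph{how many} do, so we are not counting functions from particles to sublevels. Because the sublevels are \emph{distinguishable} (labelled), the function $\rho$ assigns a count to each \emph{specific} sublevel and is not further quotiented by any relabelling; two distributions yield the same $\rho$ if and only if they coincide. This is exactly the distinction between an unstructured assignment and the labelled-count bookkeeping that the notion of multiset encodes. Having established the bijection, and since $\abinom{n}{k}$ is by definition the number of $k$-multisets on an $n$-set, we conclude at once that $w(k, n) = \abinom{n}{k}$.
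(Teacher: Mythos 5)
Your proof is correct and follows the same route as the paper: identifying a Bose--Einstein distribution with the $k$-multiset $\rho(i) = \#$ of particles in sublevel $i$, so the count is $\abinom{n}{k}$ by definition. Your explicit discussion of where indistinguishability of particles and distinguishability of sublevels enter is a welcome elaboration, but the argument itself is the paper's own.
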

\begin{proof} Clearly, $(k, n)$-BE distributions bijectively correspond to distributions
of $k$ \textit{indistiguishable} balls (particles) into $n$ \textit{distiguishable} boxes (energy sublevels), 
that is to $k$-multisets on a $n$-set:
$$
\rho : \underline{n} \rightarrow \mathbb{N}, \quad \sum_{i = 1}^n \ \rho(i) = k,
$$
where
\begin{equation}\label{BE}
\rho(i) \ = \ \# \ of \ balls \ in \ the \ box \ i, \quad i = 1, 2, \ldots, n.
\end{equation}
\end{proof}

\subsection{The Fermi-Dirac statistics}

In quantum statistics, the \textit{Fermi-Dirac (F--D) statistics} describe a distribution of particles over energy states in systems consisting of many identical particles that obey the 
\textit{Pauli exclusion principle}. It is named after Enrico Fermi and Paul Dirac, each of whom discovered the method independently (although Fermi defined the statistics earlier than Dirac).

Fermi-Dirac $(FD)$ statistics apply to identical particles with half-integer spin in a system with thermodynamic equilibrium. Additionally, the particles in this system are assumed to have negligible mutual interaction. That allows the multi-particle system to be described in terms of single-particle energy states. The result is the $FD$ distribution of particles over these states which includes the condition that \textit{no two particles can occupy the same state} (\textit{Pauli exclusion principle}); this has a considerable effect on the properties of the system. Since $FD$ statistics apply to particles with half-integer spin, these particles have come to be called \textit{fermions}. It is most commonly applied to \textit{electrons}, a type of fermion with spin $1/2$. 

Let  $w^*(k, n)$  be the number of ways to distribute $k$ \textit{indistinguishable} particles into  
$n$ \textit{distinguishable} sublevels of a certain energy level obeying 
the condition that \textit{no two particles can occupy the same state} (Pauli exclusion principle).

\begin{theorem} We have:
$$
w^*(k, n) = {n \choose k}.
$$
\end{theorem}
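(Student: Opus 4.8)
The plan is to run the same kind of bijective argument used for the Bose--Einstein case, except that now the Pauli constraint collapses the count directly onto the $k$-subsets of an $n$-set. Concretely, I would exhibit a bijection between $(k,n)$-FD distributions and the $k$-subsets of $\underline{n}$, so that $w^*(k,n) = \binom{n}{k}$ follows immediately from the combinatorial definition $\binom{n}{k} = \#\ k$-subsets of an $n$-set.

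First I would model a FD distribution as a placement of $k$ \emph{indistinguishable} balls (particles) into $n$ \emph{distinguishable} boxes (sublevels), subject to the exclusion constraint that each box contains \emph{at most one} ball. The decisive observation is that, since the balls carry no labels and each box is either empty or singly occupied, the whole distribution is encoded by the single datum of \emph{which} boxes are occupied. Formally, to a FD distribution I associate the function
$$
\rho : \underline{n} \rightarrow \{0, 1 \}, \qquad \rho(i) = \ \# \ of \ particles \ in \ box \ i,
$$
which is a characteristic function in the sense introduced earlier; its support $supp(\rho) = \{ i \in \underline{n}; \ \rho(i) = 1 \}$ is precisely the set of occupied boxes. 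Because the $k$ particles occupy $k$ \emph{distinct} boxes, this support has exactly $k$ elements, hence is a $k$-subset of $\underline{n}$. Conversely, any $k$-subset $S \subseteq \underline{n}$ determines a unique FD distribution, namely ``place one particle in each box of $S$ and none elsewhere.'' Invoking the bijection between subsets and characteristic functions already established, these two assignments are mutually inverse, so FD distributions and $k$-subsets are bijectively equivalent.

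The main point to check --- and the only thing separating this from the Bose--Einstein computation --- is that \emph{no overcounting occurs}: unlike the multiset situation, where flag labels had to be quotiented out by the shepherd's principle (dividing by $k!$), here indistinguishability together with single occupancy mean that each $k$-subset is hit exactly once. Thus the correspondence is a genuine $1 \mapsto 1$ bijection rather than a $k! \mapsto 1$ one, and the answer is the bare binomial coefficient $\binom{n}{k}$ with no division. I expect essentially no obstacle; the only care needed is to confirm that the support always has cardinality exactly $k$, which is forced by the Pauli constraint, since each particle occupies its own box and $k$ particles therefore yield $k$ occupied boxes.
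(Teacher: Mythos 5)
Your proposal is correct and follows essentially the same route as the paper: the paper likewise views a $(k,n)$-FD distribution as an occupancy function $\rho : \underline{n} \rightarrow \{0,1\}$ with $\sum_i \rho(i) = k$ and identifies these, via the subset--characteristic-function correspondence, with the $k$-subsets of an $n$-set. Your write-up merely makes explicit (the support argument and the remark that no shepherd's-principle division is needed) what the paper leaves implicit.
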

\begin{proof} Clearly, this case is the special case of (\ref{BE})
obeying the condition that \textit{no two particles can occupy the same state} 
(Pauli exclusion principle):
$$
\rho(i) \ = \ \# \ of \ balls \ in \ the \ box \ i, \quad  \rho(i)  \in \{0, 1 \}, \quad i = 1, 2, \ldots, n.
$$
Then, $(k, n)$-FD distributions bijectively correspond to $k$-subsets on a $n$-set.
\end{proof}

\subsection{The Giovanni Gentile jr statistics}

Giovanni Gentile jr was born in Napoli in $1906$.
In 1937 he participated in the \textbf{Fisica Teorica} concorso held by the University of Palermo. Previously only  one concorso for this subject had been held in Italy - the one won by Enrico Fermi, Enrico Persico and Aldo Pontremoli -
and there were  many worthy scholars in Italy, some of whom had to be necessarily sacrificed. 
Ettore Majorana, who  had retired in almost complete isolation, presented his candidacy surprising everyone and upsetting the agreement established among the commissioners (who were E. Fermi, O. Lazzarino, E. Persico, G. Polvani, A. Carrelli). The concorso was suspended for a few months 
and Majorana was appointed professor for \textit{exceptional merits}. 
When the competition resumed, a triad of winners was announced: Giancarlo Wick, Giulio Racah and  
Giovanni Gentile. As soon as he was proclaimed among the winners, Gentile was called to hold the chair of Fisica Teorica in Milan.

His major theoretical contribution  is constituted by the memoirs on \textit{intermediate statistics}. In the two types of statistics for atomic objects, Bose-Einstein and Fermi-Dirac, the maximum number of occupations for each cell of the phase space is either infinite or one, respectively. Gentile began to deal with the more general case that the maximum number of occupations was any integer greater than one, establishing the general energy distribution formulas. These formulas are applied to the case of degenerate gases, in which the maximum number of occupancy is at most that of the molecules making up the gas - therefore, they are not tractable with Bose-Einstein statistics - obtaining a theoretical treatment of the Bose-Einstein gas condensation phenomenon and an interpretation of some singular properties of liquid helium. From these works began a theoretical research sector dedicated to the treatment of particles subject to intermediate statistics called, in honor of Gentile, \textit{gentilioni}, distinct from \textit{bosons} and 
\textit{fermions}.

An attack of septicemia, a consequence of a banal dental abscess, killed him in Milan on 30 March 1942.

Given a positive integer $p \in \mathbb{Z}^+$, let  $c^p(n, k)$  be the number of ways to distribute $k$ \
\textit{indistinguishable} particles into  $n$ \textit{distinguishable} sublevels of a certain energy level obeying 
the condition that \textit{not more} than $p$ particles can occupy the same state ($p$ the \textit{parameter}
of the statistics).

Clearly, if $p = 1$, then we get the Fermi-Dirac statistics $w^*(k, n) = {n \choose k}$ 
and, if $p \rightarrow \infty$, then we get the Bose-Einstein statistics $w(k, n) = \abinom{n}{k}$.
Therefore, the Gentile coefficients $c^p(n, k)$ provide a common generalization/unification of both 
\textit{binomial} and \textit{multiset} coefficients.

First, we get a computation of the $c^p(n, k)$'s by recursion. Again, the particles are thougt 
as $k$ \textit{indistinguishable} balls to be distributed into $n$ \textit{distinguishable} boxes.

Clearly,
$$
c^p(0, k) = \delta_{0, k}, \quad c^p(n, 0) = 1 \ for \ every \ n \in \mathbb{N}.
$$

Furthermore, from the definitions,  it follows:
$$
c^p(n, k) = 0 \ \emph{whenever} \ k > np.
$$

\begin{proposition}\label{gentile rec} We have:
$$
c^p(n, k) = \  \sum_{i = 0}^p \ c^p(n - 1, k - i).
$$
\end{proposition}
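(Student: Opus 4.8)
The plan is to apply the \textit{bad element} method, exactly as in the proof of the multiset recursion \emph{(\ref{mult first rec})}. Recall that a $(k,n)$-Gentile distribution is a function $\rho : \underline{n} \rightarrow \mathbb{N}$ with $\sum_{i=1}^n \rho(i) = k$ and $\rho(i) \leq p$ for every $i$; the number $c^p(n,k)$ counts exactly these. First I would single out the last box $n \in \underline{n}$ as the bad element and record that, by the defining constraint of the statistics (at most $p$ particles per box), its occupation number $\rho(n)$ can only assume the values $0, 1, 2, \ldots, p$.

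Next I would partition the family of all $(k,n)$-distributions into the disjoint and exhaustive classes indexed by the value $i := \rho(n)$, for $i = 0, 1, \ldots, p$. Fixing $\rho(n) = i$ leaves exactly $k - i$ particles to be distributed among the remaining boxes $1, 2, \ldots, n-1$, still subject to the rule that no box receives more than $p$ of them. Such configurations are precisely the $(k-i, n-1)$-Gentile distributions, so this class has cardinality $c^p(n-1, k-i)$ by definition.

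Summing the cardinalities of these disjoint classes then yields
\begin{equation*}
c^p(n,k) = \sum_{i=0}^p c^p(n-1, k-i),
\end{equation*}
as claimed. The only point deserving a word of care is the boundary behaviour: when $i > k$ the second argument $k - i$ is negative and the corresponding term must be read as $0$ (one cannot place a negative number of particles), which is consistent with extending $c^p$ by zero for negative cardinalities. No genuine obstacle arises, since the decomposition is manifestly disjoint and exhaustive; this is simply the $p$-bounded analogue of the unbounded multiset recursion, the sole difference being that the occupation number of the bad box now ranges over $\{0, 1, \ldots, p\}$ rather than over all of $\{0, 1, \ldots, k\}$.
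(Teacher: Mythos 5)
Your proof is correct and follows essentially the same route as the paper: the bad element method applied to the last box, partitioning the distributions by its occupation number $\rho(n) = i \in \{0, 1, \ldots, p\}$ and summing the cardinalities $c^p(n-1, k-i)$ of the resulting disjoint and exhaustive classes. Your extra remark on reading terms with negative second argument as zero is a sensible clarification the paper leaves implicit.
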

\begin{proof}
Given a Gentile distribution of parameter $p \in \mathbb{Z}^+$, denote with
$$
\rho(i) =  \# \ of 	\ balls \ in \ the \ box \ i,
$$
for $i = 1, 2, \ldots, n$.

We shall use the bad element method. Let the last box (with label $n$) be the bad element.
We have the exhaustive/disjoint cases;

$$
\rho(n) = 0,  \ having \ cardinality \ c^p(n-1, k),
$$
$$
\rho(n) = 1,  \ having \ cardinality \ c^p(n-1, k-1),
$$
$$
\vdots
$$
$$
\rho(n) = p, \ having \ cardinality \ c^p(n-1, k-p).
$$

Then,
$$
c^p(n, k) = \ c^p(n-1, k) + c^p(n-1, k-1) + \cdots + c^p(n-1, k-p).
$$
\end{proof} 

Given $p \in \mathbb{Z}^+$,
consider the biinfinite matrix
$$
M^{(p)} : \mathbb{N} \times \mathbb{N} \rightarrow \mathbb{R}, 
\quad  M^{(p)}: (n, k) \mapsto M(n, k) \stackrel{def}{=} c^p(n, k),
$$
that is
$$
M^{(p)} = \left[ \ c^p(n, k) \ \right]_{n, k \in \mathbb{N}}.
$$

For example, if $p = 2$, the matrix $M^{(2)}$ is:

\

\

\begin{tikzpicture}[description/.style={fill=white,inner sep=2pt}]
\fontsize{12}{14}
\draw [line width=0.02cm,] (-4.6,3.4) -- (5.0,3.4);

\node   at (-3.4,4.0) {${\boldsymbol{0}}$};
\node   at (-2.3,4.0) {${\boldsymbol{1}}$};
\node   at (-1.2,4.0) {${\boldsymbol{2}}$};
\node   at (-0.1,4.0) {${\boldsymbol{3}}$};
\node   at (1.15,4.0) {${\boldsymbol{4}}$};
\node   at (2.30,4.0) {${\boldsymbol{\cdots}}$};
\node   at (3.40,4.0) {${\boldsymbol{k}}$};
\node   at (4.50,4.0) {${\boldsymbol{\cdots}}$};

\draw [line width=0.02cm,] (-4.6,3.4) -- (-4.6,-3.8);
\node   at (-5.0,2.5) {${\boldsymbol{ 0 }}$};
\node   at (-3.4,2.5) {${\boldsymbol{ {1} }}$};
\node   at (-2.3,2.5) {${\boldsymbol{ 0}} $};
\node   at (-1.2,2.5) {${\boldsymbol{ 0 }}$};
\node   at (-0.1,2.5) {${\boldsymbol{ 0}}$};
\node   at (1.15,2.5) {${\boldsymbol{ 0}}$};
\node   at (2.30,2.5) {${\boldsymbol{ \cdots }}$};
\node   at (3.40,2.5) {${\boldsymbol{ 0 }}$};
\node   at (4.50,2.5) {${\boldsymbol{\cdots}}$};

\node   at (-5.0,1.5) {${\boldsymbol{ 1 }}$};
\node   at (-3.4,1.5) {${\boldsymbol{ 1 }}$};
\node   at (-2.3,1.5) {${\boldsymbol{ 1 }}$};
\node   at (-1.2,1.5) {${\boldsymbol{ 1}}$};
\node   at (-0.1,1.5) {${\boldsymbol{ 0 }}$};
\node   at (1.15,1.5) {${\boldsymbol{ 0 }}$};
\node   at (2.30,1.5) {${\boldsymbol{ \cdots }}$};
\node   at (3.40,1.5) {${\boldsymbol{  }}$};
\node   at (3.40,1.5) {${\boldsymbol{ 0 }}$};
\node   at (4.50,1.5) {${\boldsymbol{ \cdots }}$};

\node   at (-5.0,0.5) {${\boldsymbol{ 2 }}$};
\node   at (-3.4,0.5) {${\boldsymbol{ 1 }}$};
\node   at (-2.3,0.5) {${\boldsymbol{ 2}}$};
\node   at (-1.2,0.5) {${\boldsymbol{ 3}}$};
\node   at (-0.1,0.5) {${\boldsymbol{ 2}}$};
\node   at (1.15,0.5) {${\boldsymbol{ 1}}$};
\node   at (2.30,0.5) {${\boldsymbol{ \cdots }}$};
\node   at (3.40,0.5) {${\boldsymbol{0}}$};
\node   at (4.50,0.5) {${\boldsymbol{ \cdots }}$};

\node   at (-5.0,-0.5) {${\boldsymbol{ 3 }}$};
\node   at (-3.4,-0.5) {${\boldsymbol{ 1 }}$};
\node   at (-2.3,-0.5) {${\boldsymbol{ 3 }}$};
\node   at (-1.2,-0.5) {${\boldsymbol{ 6 }}$};
\node   at (-0.1,-0.5) {${\boldsymbol{ 7}}$};
\node   at (1.15,-0.5) {${\boldsymbol{ 6 }}$};
\node   at (2.30,-0.5) {${\boldsymbol{ \cdots }}$};
\node   at (3.40,-0.5) {${\boldsymbol{ c^2(3, k) }}$};
\node   at (4.50,-0.5) {${\boldsymbol{ \cdots }}$};

\node   at (-5.0,-1.5) {${\boldsymbol{\cdots}}$};
\node   at (-3.4,-1.5) {${\boldsymbol{\cdots}}$};
\node   at (-2.3,-1.5) {${\boldsymbol{\cdots}}$};
\node   at (-1.2,-1.5) {${\boldsymbol{\cdots}}$};
\node   at (-0.1,-1.5) {${\boldsymbol{\cdots}}$};
\node   at (1.15,-1.5) {${\boldsymbol{\cdots}}$};
\node   at (2.30,-1.5) {${\boldsymbol{ \cdots }}$};
\node   at (3.40,-1.5) {${\boldsymbol{ \cdots }}$};
\node   at (4.50,-1.5) {${\boldsymbol{ \cdots }}$};

\node   at (-5.0,-2.5) {${\boldsymbol{n}}$};
\node   at (-3.4,-2.5) {${\boldsymbol{ c^2(n, 0)} }$};
\node   at (-2.3,-2.5) {${\boldsymbol{ \cdots }}$};
\node   at (-1.2,-2.5) {${\boldsymbol{ \cdots}}$};
\node   at (-0.1,-2.5) {${\boldsymbol{ \cdots }}$};
\node   at (1.15,-2.5) {${\boldsymbol{ \cdots}}$};
\node   at (2.30,-2.5) {${\boldsymbol{ \cdots }}$};
\node   at (3.40,-2.5) {${\boldsymbol{ c^2(n, k) }}$};
\node   at (4.50,-2.5) {${\boldsymbol{ \cdots} }$};

\node   at (-5.0,-3.5) {${\boldsymbol{\cdots}}$};
\node   at (-3.4,-3.5) {${\boldsymbol{\cdots}}$};
\node   at (-2.3,-3.5) {${\boldsymbol{\cdots}}$};
\node   at (-1.2,-3.5) {${\boldsymbol{\cdots}}$};
\node   at (-0.1,-3.5) {${\boldsymbol{\cdots}}$};
\node   at (1.15,-3.5) {${\boldsymbol{\cdots}}$};
\node   at (2.30,-3.5) {${\boldsymbol{\cdots}}$};
\node   at (3.40,-3.5) {${\boldsymbol{\cdots}}$};
\node   at (4.50,-3.5) {${\boldsymbol{\cdots}}$};
\end{tikzpicture}.

\

\

\begin{proposition} Given $p \in \mathbb{Z}^+$,
the Gentile  matrix $M^{(p)}$ (of parameter $p$) is a \emph{recursive matrix} having recursion rule 

$$
M^{(p)}_1(t) = 1 + t + t^2 + \cdots + t^p.
$$
\end{proposition}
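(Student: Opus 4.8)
The plan is to verify directly the pair of conditions equivalent to (\ref{first rec}) that characterize a recursive matrix: that the $0$th row generating series satisfies $M^{(p)}_0(t) = 1$, and that $M^{(p)}_n(t) = M^{(p)}_1(t) \cdot M^{(p)}_{n-1}(t)$ for every $n > 1$, where $M^{(p)}_n(t) = \sum_{k=0}^\infty c^p(n,k)\, t^k$. The entire argument is a transcription of the proofs already carried out for the binomial matrix and for the multiset matrix (Proposition \ref{multiset rec matrix}), the single structural input being the Gentile recursion of Proposition \ref{gentile rec}.

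First I would dispose of the two easy normalizations. Since $c^p(0,k) = \delta_{0,k}$, the $0$th row series collapses to $M^{(p)}_0(t) = 1$. For the first row, one reads off from the definition that a single box subject to the constraint ``at most $p$ particles'' admits exactly one distribution with $k$ balls for each $0 \le k \le p$ and none otherwise, so $c^p(1,k) = 1$ for $0 \le k \le p$ and $c^p(1,k) = 0$ for $k > p$; equivalently, one applies Proposition \ref{gentile rec} to the row $n = 0$. Either way,
$$
M^{(p)}_1(t) = \sum_{k=0}^\infty c^p(1,k)\, t^k = 1 + t + t^2 + \cdots + t^p,
$$
which is the claimed recursion rule.

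The main step is the product identity for $n > 1$. I would write the Cauchy product
$$
M^{(p)}_1(t) \cdot M^{(p)}_{n-1}(t) = \gamma(t) = \sum_{k=0}^\infty c_k\, t^k
$$
and apply the multiplication rule (\ref{mult rule}). Since the coefficients of $M^{(p)}_1(t)$ equal $1$ in degrees $0, 1, \ldots, p$ and vanish beyond degree $p$, the convolution coefficient is
$$
c_k = \sum_{i=0}^p c^p(n-1,\, k-i),
$$
with the standing convention $c^p(n-1, k-i) = 0$ whenever $k - i < 0$. By Proposition \ref{gentile rec} the right-hand side is exactly $c^p(n,k)$, so $\gamma(t) = \sum_{k} c^p(n,k)\, t^k = M^{(p)}_n(t)$, which establishes (\ref{second rec}) and hence that $M^{(p)}$ is recursive with the stated rule.

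The only point requiring a little care --- and the nearest thing to an obstacle --- is the bookkeeping in the convolution: the rule $M^{(p)}_1(t)$ is a genuine polynomial truncated at degree $p$, so the naive Cauchy sum runs from $i = 0$ to $\min(k,p)$, and one must observe that extending it to $i = p$ introduces only zero terms (precisely those with $k - i < 0$) so that it matches the fixed-length sum of Proposition \ref{gentile rec} verbatim. This reconciliation of the truncated polynomial with the finite-step recursion is exactly the mechanism by which the Gentile statistics interpolate between the binomial rule $1 + t$ (the case $p = 1$) and the rule $\frac{1}{1-t}$ (the limit $p \to \infty$).
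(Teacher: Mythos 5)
Your proposal is correct and follows essentially the same route as the paper: form the Cauchy product $M^{(p)}_1(t)\cdot M^{(p)}_{n-1}(t)$, identify the convolution coefficient $c_k$ with the sum $\sum_{i=0}^p c^p(n-1,k-i)$ of Proposition \ref{gentile rec}, and conclude $\gamma(t) = M^{(p)}_n(t)$. Your explicit verification of $M^{(p)}_0(t)=1$ and $M^{(p)}_1(t)$, and your remark reconciling the truncated sum over $i \le \min(k,p)$ with the fixed-length sum (via vanishing terms with $k-i<0$), only make precise details the paper passes over silently.
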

\begin{proof}
Consider the product series
$$
M^{(p)}_1(t)M^{(p)}_{n-1}(t) = (1 + t + t^2 + \cdots + t^p) \cdot (\sum_{k=0}^\infty \
c^p(n-1, k) t^k),
$$
and write
$$
M^{(p)}_1(t)M^{(p)}_{n-1}(t) = \gamma(t) = \ \sum_{k=0}^\infty \
c_k t^k
$$
with
$$
c_k = \sum_{i=0}^k \ c^p(1, i)c^p(n-1, k-i) = \sum_{i = 0}^p \ c^p(n - 1, k - i) = c^p(n, k),
$$
by Proposition \ref{gentile rec}. Then 
$
\gamma(t) = \ \sum_{k=0}^{np} \
c^p(n, k) t^k = M^{(p)}_n(t).
$
\end{proof}

Hence, we have the following generalization of the \textit{binomial theorem}:

\begin{corollary} Given $p \in \mathbb{Z}^+$, we have:
$$
(1 + t + t + t^2 + \cdots + t^p)^n =
\ \sum_{k = 0}^{np} \ c^p(n, k) t^k.
$$
\end{corollary}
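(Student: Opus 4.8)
The plan is to read this off directly from the preceding Proposition, which establishes that the Gentile matrix $M^{(p)}$ is a recursive matrix with recursion rule $M^{(p)}_1(t) = 1 + t + t^2 + \cdots + t^p$. Once we know $M^{(p)}$ is recursive, the defining property of a recursive matrix (condition (\ref{first rec})) tells us that every row generating series is simply the $n$th power of the recursion rule, so immediately
$$
M^{(p)}_n(t) = \left( M^{(p)}_1(t) \right)^n = (1 + t + t^2 + \cdots + t^p)^n.
$$

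First I would recall the very definition of the $n$th row generating series of the matrix, namely
$$
M^{(p)}_n(t) \stackrel{def}{=} \sum_{k=0}^\infty c^p(n, k) \, t^k,
$$
and then equate the two resulting expressions for $M^{(p)}_n(t)$. This gives the identity up to the range of summation.

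The only point that deserves a small remark is the finite upper limit $np$ appearing in the statement. Here I would invoke the elementary fact, already observed just before Proposition \ref{gentile rec}, that $c^p(n, k) = 0$ whenever $k > np$ (one cannot place more than $np$ balls into $n$ boxes each holding at most $p$). This truncates the a priori infinite series to a polynomial of degree $np$ and yields exactly the bound $k = 0, 1, \ldots, np$. There is no genuine obstacle: the corollary is an immediate translation between the closed algebraic form $\left( M^{(p)}_1(t) \right)^n$ and the combinatorial row generating series, entirely parallel to the way the Binomial Theorem and its multiset analogue were each extracted from their respective recursion rules $1+t$ and $\frac{1}{1-t}$.
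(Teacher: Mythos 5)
Your proposal is correct and is exactly the paper's (implicit) argument: the corollary is stated with no proof beyond \qed precisely because it follows immediately from the preceding proposition via the recursive-matrix property $M^{(p)}_n(t) = \bigl( M^{(p)}_1(t) \bigr)^n$, together with the observation $c^p(n,k) = 0$ for $k > np$ which truncates the series. Your handling of the finite upper limit $np$ is a welcome explicit touch that the paper leaves tacit.
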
 \qed

\section{Compositions of finite sets and multinomial coefficients}

\subsection{The type of a composition}
Given a finite set $X$, say $X = \underline{n} = \{1, 2, \ldots, n \}$, 
a $k-composition$ of $\underline{n} = \{1, 2, \ldots, n \}$ is an 
\textit{ordered} $k-tuple$:
\begin{equation}\label{composition}
(A_1, A_2, \ldots, A_k), \quad A_i \subseteq \underline{n} = \{1, 2, \ldots, n \}, \ i =1, 2, \ldots, k,
\end{equation}
such that
\begin{enumerate}              

\item  if $i \neq j$, then $A_i \cap A_j = \emptyset$;

\item $\cup_{i=1}^k \ A_i =  \underline{n}$.
\end{enumerate}

The elements $A_i$ are called \textit{blocks} of the composition.

The \textit{type} of the composition (\ref{composition}) is the \textit{ordered} $k-tuple$
of natural integers:
\begin{equation}\label{eq. composition}
\left(h_1 = |A_1|, h_2 = |A_2|, \ldots,   h_k = |A_k| \right).
\end{equation}

\subsection{Multinomial coefficients}

Given $n \in \mathbb{N}$ and an 
\textit{ordered} $k-tuple$ of natural integers $(h_1, h_2, \cdots, h_k)$, define 
the \textit{multinomial coefficients} in the following way:
\begin{equation}\label{mult coeff}
{{n} \choose {h_1, h_2, \ldots, h_k}} \stackrel{def}{=} \# \ \text{k-compositions \ of \ an \ n-set \ 
of \ type} \ (h_1, h_2, \cdots, h_k).
\end{equation}

Clearly,
$$
{{n} \choose {h_1, h_2, \ldots, h_k}} \neq 0
$$
if and only if
$$
h_1 + h_2 + \cdots + h_k =n.
$$

We can compute the \textit{multinomial coefficients} by means of the following close form formula:
\begin{proposition}\label{comp multnom}   Let $h_1 + h_2 + \cdots + h_k =n.$ Then, we have:
$$
{{n} \choose {h_1, h_2, \ldots, h_k}} = \ \frac{n!}{h_1!, h_2!, \cdots, h_k!}.
$$
\end{proposition}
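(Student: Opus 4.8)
The plan is to apply the overcounting (shepherd's) principle, taking the $n!$ permutations of $\underline{n}$ as the \emph{legs} and the $k$-compositions of type $(h_1, \ldots, h_k)$ as the \emph{sheep}. Concretely, I would regard a permutation of $\underline{n}$ as a bijective word $w = w_1 w_2 \cdots w_n$ in which every element of $\underline{n}$ appears exactly once; by the solution to Problem 2 with $k = n$ there are exactly $(n)_n = n!$ such words.

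First I would construct the map from words to compositions. Given $w = w_1 \cdots w_n$, cut it into consecutive segments of the prescribed lengths $h_1, h_2, \ldots, h_k$ and set
$$
A_1 = \{w_1, \ldots, w_{h_1}\}, \quad A_2 = \{w_{h_1+1}, \ldots, w_{h_1+h_2}\}, \quad \ldots,
$$
so that $A_i$ collects the letters occupying the $i$-th segment. Since $h_1 + \cdots + h_k = n$ and $w$ is bijective, the blocks $A_i$ are pairwise disjoint with union $\underline{n}$ and $|A_i| = h_i$; hence $(A_1, \ldots, A_k)$ is a composition of the required type. Conversely, every such composition arises from at least one word: list the elements of $A_1$ in some order, then those of $A_2$, and so on, concatenating the results.

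Next I would count the legs per sheep, i.e.\ the number of words mapping to a fixed composition $(A_1, \ldots, A_k)$. Two words $w, w'$ produce the same composition precisely when, for every $i$, the $i$-th segment of $w$ and the $i$-th segment of $w'$ consist of the same set of letters $A_i$; thus $w'$ is obtained from $w$ by independently rearranging the letters inside each segment. As each block $A_i$ of size $h_i$ can be internally ordered in $h_i!$ ways, the number of words above a given composition is exactly the product $h_1! \, h_2! \cdots h_k!$, independently of the composition. This is a uniform $(h_1! \cdots h_k!) \mapsto 1$ correspondence.

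Finally, the shepherd's principle yields
$$
\binom{n}{h_1, h_2, \ldots, h_k} = \frac{n!}{h_1! \, h_2! \cdots h_k!}.
$$
I expect the only delicate point to be the verification that the fibre size is \emph{uniform} and equal to $h_1! \cdots h_k!$ — that rearranging within blocks produces all and only the words with the same image — since this is exactly what licenses dividing rather than merely bounding. An alternative, equally short route avoids permutations altogether: select the blocks sequentially to obtain $\binom{n}{h_1}\binom{n-h_1}{h_2}\cdots\binom{n-h_1-\cdots-h_{k-1}}{h_k}$ by the multiplication principle and the closed form for binomial coefficients, and then observe that the factorials telescope to $\frac{n!}{h_1! \cdots h_k!}$.
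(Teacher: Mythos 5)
Your proof is correct and is essentially the paper's own argument: the paper likewise applies the shepherd's principle with the $n!$ linear arrangements as legs (phrased as placing $n$ books on $k$ shelves with $h_i$ positions, which is your word-segmentation in different clothing) and counts $h_1!\,h_2!\cdots h_k!$ legs per composition by rearranging within blocks. Your explicit check that the fibre is uniform, and the telescoping-binomial alternative, are welcome additions but do not change the route.
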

\begin{proof} We shall use the shepherd's principle.
The sheep are $k$-compositions of type $(h_1, h_2, \cdots, h_k)$ that are, in turn,  
\textit{bijectively equivalent} to distributions of $n$ (distinguishable) balls 
(labelled: $1,2, \ldots, n$) into $k$ (distinguishable) boxes 
(labelled: $1,2, \ldots, k$), subject to the conditions: \textit{there are exactly}
$h_i$ balls located in the box $i = 1, 2, \ldots, k$.
The set of balls located  in the box $i$, $i = 1, 2, \ldots, k$, 
is the $i$th block $A_i$ of the composition (\ref{composition}), and vice versa. 
Note that in \textit{Probability Theory} and \textit{Quantum Mechanics} these numbers 
$h_i$ are called \textit{occupancy numbers}.

Now, suppose that the $n$ balls are turned into \textit{books} and the
boxes are turned into \textit{shelves}, with $h_i$ available positions.
Since $h_1 + h_2 + \cdots + h_k = n$ (the total numer of available positions), the number of ways
 to distribute 
$n$ books into these $k$ shelves (with a total numer of available positions $h_1 + h_2 + \cdots + h_k = n$)
is exactly the number of permutations $n!$
of the $n$ objects (these are the legs). 

Now, turn  the books back into balls and the shelves back into boxes. Then, the order of the objects located 
in position $i$ doesn't matter! Thus, there are exactly $h_1!h_2! \cdots, h_k!$ legs for each
sheep. Hence,
$$
{{n} \choose {h_1, h_2, \ldots, h_k}} = \ \frac{n!}{h_1!, h_2!, \cdots, h_k!}.
$$
\end{proof}

\begin{remark} \emph{Directly from the combinatorial definition} \emph{(\ref{mult coeff})}
we infer the remarkable identity:
\begin{equation}\label{multin sum}
\sum_{h_1, h_2, \ldots, h_k} \ {{n} \choose {h_1, h_2, \ldots, h_k}} = k^n.
\end{equation}
\end{remark}

From the combinatorial identity (\ref{multin sum}), we get
\begin{corollary}We have:
$$
\sum_{h_1, h_2, \ldots, h_k} \ \frac{n!}{h_1!, h_2!, \cdots, h_k!} = k^n.
$$

\end {corollary}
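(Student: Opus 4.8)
The plan is to obtain the identity by a direct substitution into the combinatorial identity (\ref{multin sum}) recorded in the preceding Remark. That identity,
$$
\sum_{h_1, h_2, \ldots, h_k} \ {{n} \choose {h_1, h_2, \ldots, h_k}} = k^n,
$$
is itself a consequence of the very definition (\ref{mult coeff}) of the multinomial coefficients: every function $F : \underline{n} \rightarrow \underline{k}$ partitions $\underline{n}$ into the blocks $A_i = F^{-1}(i)$, thereby producing a unique $k$-composition of $\underline{n}$, and conversely each $k$-composition arises from exactly one such function. Summing the number of compositions over all admissible types $(h_1, h_2, \ldots, h_k)$ with $h_1 + h_2 + \cdots + h_k = n$ therefore counts all functions $F : \underline{n} \rightarrow \underline{k}$, whose total is $k^n$ by the solution to Problem 1.

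First I would invoke the closed form for multinomial coefficients from Proposition \ref{comp multnom}, which asserts
$$
{{n} \choose {h_1, h_2, \ldots, h_k}} = \frac{n!}{h_1!, h_2!, \cdots, h_k!}
$$
whenever $h_1 + h_2 + \cdots + h_k = n$ (and both sides vanish otherwise). Substituting this expression term by term into the left-hand side of (\ref{multin sum}) immediately yields
$$
\sum_{h_1, h_2, \ldots, h_k} \ \frac{n!}{h_1!, h_2!, \cdots, h_k!} = k^n,
$$
which is the asserted identity.

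There is no genuine obstacle here: the entire content of the corollary is the passage from the combinatorial meaning of the multinomial coefficients to their algebraic closed form, and both halves of this passage have already been established. The only point deserving care is to ensure that the range of summation in (\ref{multin sum}) and in Proposition \ref{comp multnom} agree, namely that the sum is understood to run over all ordered tuples $(h_1, h_2, \ldots, h_k)$ of natural integers summing to $n$, so that the substitution is legitimate term by term. Once this is noted, the result is immediate.
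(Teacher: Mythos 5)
Your proof is correct and follows exactly the paper's route: the identity (\ref{multin sum}) is obtained from the combinatorial definition (\ref{mult coeff}) via the bijection between functions $F : \underline{n} \rightarrow \underline{k}$ and $k$-compositions (note the paper's compositions allow empty blocks, which is what makes the correspondence bijective), and the corollary then follows by substituting the closed form of Proposition \ref{comp multnom}. Nothing further is needed.
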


We have the following \textit{multinomial} version of the \textit{binomial theorem}:

\begin{corollary} We have:
$$
(x_1 + x_2 + \cdots + x_k)^n = \ \sum_{(h_1, h_2, \ldots, h_k)} \
{{n} \choose {h_1, h_2, \ldots, h_k}} \ x_1^{h_1} x_2^{h_2}  \cdots x_k^{h_k}.
$$
\end{corollary}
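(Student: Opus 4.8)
The plan is to give a purely combinatorial proof in the spirit of the preceding sections, reading off the coefficient of each monomial directly from the combinatorial definition (\ref{mult coeff}) of the multinomial coefficient, with \emph{no computation}.

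First I would expand the $n$-th power as a product of $n$ identical factors, one for each position of $\underline{n} = \{1, 2, \ldots, n\}$:
$$
(x_1 + x_2 + \cdots + x_k)^n = \prod_{i=1}^n \ (x_1 + x_2 + \cdots + x_k).
$$
By distributivity (and commutativity of the polynomial ring), expanding this product amounts to choosing, independently for each position $i \in \underline{n}$, exactly one letter $x_{F(i)}$ from the $i$-th factor, with $F(i) \in \underline{k} = \{1, 2, \ldots, k\}$. Hence the terms of the expansion correspond bijectively to the functions $F : \underline{n} \to \underline{k}$ (equivalently, to the words of length $n$ over the alphabet $\underline{k}$, exactly as in the word model introduced earlier). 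The term attached to $F$ is the monomial
$$
x_{F(1)} x_{F(2)} \cdots x_{F(n)} = x_1^{h_1} x_2^{h_2} \cdots x_k^{h_k}, \quad h_j \stackrel{def}{=} |F^{-1}(j)|,
$$
where $h_j$ records how many positions are sent to the letter $j$, so that $h_1 + h_2 + \cdots + h_k = n$.

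Next I would collect equal monomials. The coefficient of a fixed monomial $x_1^{h_1} x_2^{h_2} \cdots x_k^{h_k}$ (with $h_1 + \cdots + h_k = n$) equals the number of functions $F : \underline{n} \to \underline{k}$ whose fibers have the prescribed sizes $|F^{-1}(j)| = h_j$. But such a function is exactly the datum of a $k$-composition of $\underline{n}$ of type $(h_1, h_2, \ldots, h_k)$: one sets $A_j \stackrel{def}{=} F^{-1}(j)$, so that the blocks $A_1, \ldots, A_k$ are pairwise disjoint, cover $\underline{n}$, and satisfy $|A_j| = h_j$; conversely a composition $(A_1, \ldots, A_k)$ recovers $F$ by declaring $F(i) = j$ whenever $i \in A_j$. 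Therefore the coefficient is precisely ${{n} \choose {h_1, h_2, \ldots, h_k}}$ by the combinatorial definition (\ref{mult coeff}), and summing over all admissible types yields the claimed identity.

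I do not expect a genuine obstacle here: the only points requiring care are the standard justification that distributing the product of $n$ factors produces exactly one term per function $\underline{n} \to \underline{k}$ (which rests on commutativity), and the observation that monomials coming from different functions coincide precisely when the functions share the same type, which is what turns the raw sum over functions into a sum over types weighted by multinomial coefficients. Both are immediate from the material already developed, so no separate computation of the closed form $n!/(h_1! \cdots h_k!)$ is needed; if one prefers the classical analytic statement, it suffices to substitute that value afterwards via Proposition \ref{comp multnom}.
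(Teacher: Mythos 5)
Your proof is correct, and it supplies exactly the argument the paper leaves implicit: the corollary is stated there with no written proof, and your derivation — identifying the terms of the expanded $n$-fold product with functions $F : \underline{n} \rightarrow \underline{k}$ (the paper's word model) and the fibers $A_j = F^{-1}(j)$ with the blocks of a $k$-composition of $\underline{n}$ of type $(h_1, h_2, \ldots, h_k)$, so that the coefficient of $x_1^{h_1} \cdots x_k^{h_k}$ is ${{n} \choose {h_1, h_2, \ldots, h_k}}$ by the combinatorial definition (\ref{mult coeff}) alone — is the same mechanism by which the paper obtains the identity (\ref{multin sum}) ``directly from the combinatorial definition,'' which is just your statement specialized at $x_1 = \cdots = x_k = 1$. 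One point that works quietly in your favor and deserves the explicit mention you gave it: the paper's definition of a $k$-composition, unlike that of a partition, does not require the blocks $A_i$ to be nonempty, so empty fibers $F^{-1}(j) = \emptyset$ are legitimately admitted and your correspondence between functions of prescribed fiber sizes and compositions of prescribed type is a genuine bijection, with no boundary cases to repair.
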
 \qed

\section{Equivalence relations and partitions}

Let  $R \subseteq X \times X$ be binary relation a set $X$;
as usual, we write $xRx'$ to mean $(x,x') \in R$.

The relation $R$ is said to be an
\textit{equivalence relation} whenever it satisfies the following properties:

\begin{enumerate}

\item $xRx$ \ (reflexivity); 

\item if \ $xRy$, then $yRx$ \ (symmetry);

\item if $xRy$ and $yRz$, then $xRz$ \ (transitivity).

\end{enumerate}

Given an element $x \in X$, its \textit{equivalence class} is the subset
$$
[x]_R \stackrel{def}{=}  \{\ y \in X; \ xRy \ \} \subseteq X.
$$
We recall that $[x]_R = [x']_R$ if and only if $xRx'$. This implies 
\begin{proposition}\label{eq classes}
Equivalence classes are \emph{pairwise disjoint}.  Furthermore, they are nonempty 
and their union equals $X$.
\end{proposition}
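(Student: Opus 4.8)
The plan is to dispatch the three assertions in increasing order of difficulty, relying throughout on the three defining properties of $R$ (reflexivity, symmetry, transitivity) and on the recalled equivalence $[x]_R = [x']_R \iff xRx'$. First I would treat nonemptiness together with the covering statement, both of which follow at once from reflexivity. Since $xRx$ holds for every $x \in X$, we have $x \in [x]_R$; in particular each class $[x]_R$ contains the element $x$ and is therefore nonempty. Moreover, this shows that every $x \in X$ lies in at least one equivalence class (namely its own), so $X \subseteq \bigcup_{x \in X} [x]_R$; since each $[x]_R$ is by definition a subset of $X$, the reverse inclusion is immediate, and hence $\bigcup_{x \in X} [x]_R = X$.

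The heart of the matter is pairwise disjointness, and here I would argue by contraposition, showing that two classes that meet must in fact coincide. Suppose $[x]_R \cap [x']_R \neq \emptyset$ and pick $z$ in the intersection. Then $xRz$ (because $z \in [x]_R$) and $x'Rz$ (because $z \in [x']_R$); applying symmetry to the latter gives $zRx'$, and then transitivity applied to $xRz$ and $zRx'$ yields $xRx'$. By the recalled fact this forces $[x]_R = [x']_R$. Equivalently, any two equivalence classes are either identical or disjoint, which is precisely the pairwise disjointness claim.

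The only genuine obstacle is this disjointness step, and it is purely a matter of chaining symmetry and transitivity correctly through a shared element $z$; once $xRx'$ is established, the previously recorded characterization of equal classes closes the argument with no computation whatsoever. The nonemptiness and covering claims are, by contrast, essentially restatements of reflexivity and require no further work. I would present the three parts in the order above so that the reader sees the easy consequences of reflexivity first and the single substantive deduction last.
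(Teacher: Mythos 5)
Your proof is correct and follows exactly the route the paper intends: the paper leaves the result as an immediate consequence of the recalled fact that $[x]_R = [x']_R$ if and only if $xRx'$, and your argument simply spells out the details (reflexivity for nonemptiness and covering, and the chain through a common element $z$ via symmetry and transitivity for disjointness). Nothing is missing; you have merely written out what the paper dispatches with a \qed.
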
 \qed

A \textit{partition} of the set $X$ is a subset
$$
\Pi = \{ \ A_i \subseteq X; \ i \in \it{I} \ \} \subseteq \mathbb{P}(X)
$$
such that
\begin{enumerate}

\item  $A_i \neq \emptyset$;

\item  if $i \neq j$, then $A_i \cap A_j = \emptyset$;

\item $\cup_i \ A_i = X$.
\end{enumerate}

The elements $A_i \in \Pi$ are called \textit{blocks} of the partition $\Pi$.

\subsection{Bijections}

Let 
$$
\mathbf{Rel}(X) = \{\ R; \ R \ equiv. \ rel. \ on \ X \ \}
$$
be the set of all equivalence relations on $X$, and let
$$
\mathbf{Par}(X) = \{\ \Pi; \ \Pi \ partition \ of \ X \ \}
$$
be the set of all partitions of $X$.

Let 
$$
\mathbf{Rel}(X) \stackrel{C_1}{\longrightarrow} \mathbf{Par}(X)
$$
be the function
$$
C_1 : R \mapsto \Pi_R,
$$
where $\Pi_R$ is the partition of $X$ whose blocks are the equivalence classes of $R$.

Let 
$$
\mathbf{Par}(X)\stackrel{C_2}{\longrightarrow} \mathbf{Rel}(X)
$$
be the function
$$
C_2 : \Pi \mapsto R_\Pi,
$$
where $R_\Pi$ is the equivalence relation on  $X$ where $x R_\Pi x'$ 
if and only if $x, x' \in X$ belong to the same block of $\Pi$.

But
$$
R \stackrel{C_1}{\mapsto} \Pi_R \stackrel{C_2}{\mapsto} R_{\Pi_R} = R,
$$
and
$$
\Pi \stackrel{C_2}{\mapsto} R_\Pi \stackrel{C_1}{\mapsto} \Pi_{R_\Pi} = \Pi.
$$
Then $C_1$ and $C_2$ are inverse functions and, therefore, they are \textit{bijections}.

We summarize these facts by saying that $\mathbf{Rel}(X)$ and $\mathbf{Par}(X)$ are different
but \textit{bijectively equivalent} sets.

\subsection{The Stirling numbers of the $2$nd kind $S(n, k)$}

Let $n, k \in \mathbb{N}$. Given a finite set $X  = \{1, 2, \ldots, n \} $,  
a partition $\Pi$ of $X = \underline{n}$
is said to be a $k$-partition whenever it has exactly $k$  blocks.

Let
$$
S(n, k) \stackrel{def}{=} \ \#  \ \text{k-partitions \ of \ an \ n-set}.
$$
The natural integers $S(n, k)$ are called \textit{Stirling numbers of the $2$nd kind}.

Notice that $S(0, 0) = 1$: the unique $0$-partition of the $0$-set $\emptyset$ is the 
empty partition, that is $\emptyset$ is the unique partition of itself.

Clearly, we have:
$$
S(0, k) = \delta_{0 k}, \quad S(n, 0) = \delta_{n 0}.
$$

We can compute the numbers $S(n, k)$ by means of the following recursion:
\begin{proposition} We have:
$$
S(n, k) = S(n - 1, k - 1) + k \ S(n - 1, k).
$$
\end{proposition}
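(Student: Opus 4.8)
The plan is to apply the bad element method, exactly as in the recursions already established for binomial and multiset coefficients. By definition $S(n,k)$ counts the $k$-partitions of $\underline{n} = \{1, 2, \ldots, n\}$, so I would compute this cardinality by choosing the last element $n$ as the bad element and splitting the family of all $k$-partitions of $\underline{n}$ into two disjoint and exhaustive classes according to the behavior of $n$: either $n$ forms a block by itself, or it shares its block with at least one other element.

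For the first class, consisting of those $k$-partitions in which $\{n\}$ is one of the blocks, I would remove this singleton block; what remains is a partition of $\underline{n-1} = \{1, 2, \ldots, n-1\}$ into exactly $k-1$ blocks. Conversely, any $(k-1)$-partition of $\underline{n-1}$ yields, uniquely, such a partition of $\underline{n}$ by adjoining the block $\{n\}$. This is a bijection, so the first class has cardinality $S(n-1, k-1)$.

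For the second class, consisting of those $k$-partitions in which $n$ lies in a block with at least one other element, I would delete $n$ from its block; since that block had at least two elements it stays nonempty, so I obtain a partition of $\underline{n-1}$ into exactly $k$ blocks. The reverse direction is where the factor $k$ enters: given a $k$-partition of $\underline{n-1}$, the element $n$ may be inserted into any one of its $k$ blocks, and the $k$ resulting partitions of $\underline{n}$ are pairwise distinct and together exhaust the second class. Hence the map ``forget $n$'' is a $k \mapsto 1$ correspondence, and the second class has cardinality $k \, S(n-1, k)$. Summing the two cardinalities gives $S(n,k) = S(n-1,k-1) + k \, S(n-1,k)$.

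The only step requiring a little care — and thus the main obstacle — is verifying that the insertion map in the second case is genuinely $k$-to-$1$: that no partition of $\underline{n}$ is produced twice and none is omitted. This follows because the block containing $n$ is uniquely determined by the partition, and deleting $n$ from a block of size at least two never empties a block nor merges two blocks, so the number of blocks is preserved and the correspondence is clean.
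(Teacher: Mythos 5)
Your proof is correct and follows essentially the same route as the paper: the bad element method applied to the element $n$, with the singleton-block case giving $S(n-1,k-1)$ and the insertion of $n$ into one of the $k$ blocks of a $k$-partition of $\underline{n-1}$ giving $k\,S(n-1,k)$. Your extra verification that the deletion map is a clean $k$-to-$1$ correspondence is a welcome refinement of the paper's argument, not a departure from it.
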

\begin{proof} We shall use the bad element method. Let $n \in \underline{n}$ be the bad element.
We have two disjoint/exhaustive cases:

i) The singleton $\{n \}$ is a block of the partition. Then, to exhibit a $k$-partition of
 $\underline{n}$ is equivalent to exhibiting a $(k-1)$-partition of $\underline{n-1}$. Thus, the
contribution of case i) is: $S(n-1, k-1)$.

ii) The singleton $\{n \}$ is not a block of the partition, that is the bad element $n$ will stay in a block
together with other elements of the set  $\underline{n}$. 
We can construct partitions of this type by  the following procedure: first, exhibit a $k$-partition
of $\underline{n-1}$ (this can be done in $S(n-1, k)$ ways), then \textit{insert} the bad element $n$
into one of the $k$ blocks (this can be done in $k$ ways). Thus, the
contribution of this case  is: $k S(n-1, k)$.
\end{proof}

Hence, we can compute the entries of the biinfinite matrix
$$
M = \left[ \ S(n, k) \ \right]_{n, k \in \mathbb{N}}:
$$

\

\

\begin{tikzpicture}[description/.style={fill=white,inner sep=2pt}]
\fontsize{12}{14}
\draw [line width=0.02cm,] (-4.6,3.4) -- (5.0,3.4);

\node   at (-3.4,4.0) {${\boldsymbol{0}}$};
\node   at (-2.3,4.0) {${\boldsymbol{1}}$};
\node   at (-1.2,4.0) {${\boldsymbol{2}}$};
\node   at (-0.1,4.0) {${\boldsymbol{3}}$};
\node   at (1.15,4.0) {${\boldsymbol{4}}$};
\node   at (2.30,4.0) {${\boldsymbol{\cdots}}$};
\node   at (3.40,4.0) {${\boldsymbol{k}}$};
\node   at (4.50,4.0) {${\boldsymbol{\cdots}}$};

\draw [line width=0.02cm,] (-4.6,3.4) -- (-4.6,-3.8);
\node   at (-5.0,2.5) {${\boldsymbol{ 0 }}$};
\node   at (-3.4,2.5) {${\boldsymbol{ {1} }}$};
\node   at (-2.3,2.5) {${\boldsymbol{ 0}} $};
\node   at (-1.2,2.5) {${\boldsymbol{ 0 }}$};
\node   at (-0.1,2.5) {${\boldsymbol{ 0}}$};
\node   at (1.15,2.5) {${\boldsymbol{ 0}}$};
\node   at (2.30,2.5) {${\boldsymbol{ \cdots }}$};
\node   at (3.40,2.5) {${\boldsymbol{ 0 }}$};
\node   at (4.50,2.5) {${\boldsymbol{\cdots}}$};

\node   at (-5.0,1.5) {${\boldsymbol{ 1 }}$};
\node   at (-3.4,1.5) {${\boldsymbol{ 0 }}$};
\node   at (-2.3,1.5) {${\boldsymbol{ 1 }}$};
\node   at (-1.2,1.5) {${\boldsymbol{ 0}}$};
\node   at (-0.1,1.5) {${\boldsymbol{ 0 }}$};
\node   at (1.15,1.5) {${\boldsymbol{ 0 }}$};
\node   at (2.30,1.5) {${\boldsymbol{ \cdots }}$};
\node   at (3.40,1.5) {${\boldsymbol{ 0 }}$};
\node   at (4.50,1.5) {${\boldsymbol{ \cdots }}$};

\node   at (-5.0,0.5) {${\boldsymbol{ 2 }}$};
\node   at (-3.4,0.5) {${\boldsymbol{ 0 }}$};
\node   at (-2.3,0.5) {${\boldsymbol{ 1}}$};
\node   at (-1.2,0.5) {${\boldsymbol{ 1}}$};
\node   at (-0.1,0.5) {${\boldsymbol{ 0}}$};
\node   at (1.15,0.5) {${\boldsymbol{ 0}}$};
\node   at (2.30,0.5) {${\boldsymbol{ \cdots }}$};
\node   at (3.40,0.5) {${\boldsymbol{0}}$};
\node   at (4.50,0.5) {${\boldsymbol{ \cdots }}$};

\node   at (-5.0,-0.5) {${\boldsymbol{ 3 }}$};
\node   at (-3.4,-0.5) {${\boldsymbol{ 0 }}$};
\node   at (-2.3,-0.5) {${\boldsymbol{ 1 }}$};
\node   at (-1.2,-0.5) {${\boldsymbol{ 3 }}$};
\node   at (-0.1,-0.5) {${\boldsymbol{ 1}}$};
\node   at (1.15,-0.5) {${\boldsymbol{ 0 }}$};
\node   at (2.30,-0.5) {${\boldsymbol{ \cdots }}$};
\node   at (3.40,-0.5) {${\boldsymbol{ 0 }}$};
\node   at (4.50,-0.5) {${\boldsymbol{ \cdots }}$};

\node   at (-5.0,-1.5) {${\boldsymbol{4}}$};
\node   at (-3.4,-1.5) {${\boldsymbol{0}}$};
\node   at (-2.3,-1.5) {${\boldsymbol{1}}$};
\node   at (-1.2,-1.5) {${\boldsymbol{7}}$};
\node   at (-0.1,-1.5) {${\boldsymbol{6}}$};
\node   at (1.15,-1.5) {${\boldsymbol{1}}$};
\node   at (2.30,-1.5) {${\boldsymbol{ \cdots }}$};
\node   at (3.40,-1.5) {${\boldsymbol{ 0 }}$};
\node   at (4.50,-1.5) {${\boldsymbol{ \cdots }}$};

\node   at (-5.0,-2.5) {${\boldsymbol{\cdots}}$};
\node   at (-3.4,-2.5) {${\boldsymbol{ \cdots} }$};
\node   at (-2.3,-2.5) {${\boldsymbol{ \cdots }}$};
\node   at (-1.2,-2.5) {${\boldsymbol{ \cdots}}$};
\node   at (-0.1,-2.5) {${\boldsymbol{ \cdots }}$};
\node   at (1.15,-2.5) {${\boldsymbol{ \cdots}}$};
\node   at (2.30,-2.5) {${\boldsymbol{ \cdots }}$};
\node   at (3.40,-2.5) {${\boldsymbol{ \cdots }}$};
\node   at (4.50,-2.5) {${\boldsymbol{ \cdots} }$};

\node   at (-5.0,-3.5) {${\boldsymbol{n}}$};
\node   at (-3.4,-3.5) {${\boldsymbol{S(n,0)}}$};
\node   at (-2.3,-3.5) {${\boldsymbol{\cdots}}$};
\node   at (-1.2,-3.5) {${\boldsymbol{\cdots}}$};
\node   at (-0.1,-3.5) {${\boldsymbol{\cdots}}$};
\node   at (1.15,-3.5) {${\boldsymbol{\cdots}}$};
\node   at (2.30,-3.5) {${\boldsymbol{\cdots}}$};
\node   at (3.40,-3.5) {${\boldsymbol{S(n, k)}}$};
\node   at (4.50,-3.5) {${\boldsymbol{\cdots}}$};
\end{tikzpicture}.

\

\

\subsection{The Bell numbers $B_n$}

The \textit{Bell numbers} $B_n$ are defined in the following way:
$$
B_n \stackrel{def}{=} \ \# \ \text{partitions \ of \ an \ n-set}.
$$                 

Note that, by definition
$$
B_0 = 1, \quad B_1 = 1:
$$
the unique partition of $\emptyset$ is $\emptyset$, and the unique partition of
the singleton set $\{ 1 \}$ is the (singleton) set of blocks $\{\ \{ 1 \} \ \}$.

We can compute the Bell numbers $B_{n+1}$ by means of the \textit{Aitken recursion}:

\begin{proposition}\label{Aitken} Let $n \in \mathbb{Z}^+$. Then
$$
B_{n+1} = \ \sum_{k = 0}^n \ {n \choose k} \ B_k.
$$
\end{proposition}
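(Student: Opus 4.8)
The plan is to apply the \emph{bad element} method, exactly as in the recursions already established for the binomial, multiset, and Stirling numbers. We must count the $B_{n+1}$ partitions of $\underline{n+1} = \{1, 2, \ldots, n+1\}$, and the natural bad element is the last one, $n+1$.

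First I would isolate, in an arbitrary partition of $\underline{n+1}$, the unique block $A$ that contains the bad element $n+1$. Writing $A = \{n+1\} \cup T$ with $T \subseteq \underline{n}$, and setting $S = \underline{n} \setminus T$, the elements of $S$ are partitioned among themselves by the remaining blocks, while the elements of $T$ are precisely those of $\underline{n}$ that get absorbed into the block of the bad element. The key observation is that specifying the subset $S$ together with a partition of $S$ reconstructs the original partition of $\underline{n+1}$ uniquely — the block of the bad element is then forced to be $\{n+1\} \cup (\underline{n} \setminus S)$ — so this sets up a genuine bijection between partitions of $\underline{n+1}$ and pairs consisting of a subset $S \subseteq \underline{n}$ together with a partition of $S$.

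Next I would stratify by $k = |S|$, the size of the freely partitioned part, which ranges over $0 \le k \le n$. To build a partition in stratum $k$ one makes two independent choices: first select $S$ as a $k$-subset of $\underline{n}$, which by the combinatorial definition of the binomial coefficient can be done in $\binom{n}{k}$ ways; then choose one of the $B_k$ partitions of $S$; the leftover $n-k$ elements of $\underline{n}$ are then forced to join $n+1$ in a single block. Hence stratum $k$ contains exactly $\binom{n}{k} B_k$ partitions. Since the strata are pairwise disjoint and exhaustive, summing over $k$ yields $B_{n+1} = \sum_{k=0}^n \binom{n}{k} B_k$, as claimed. As a sanity check, $k = n$ counts the partitions in which $\{n+1\}$ is a singleton block, and $k = 0$ counts the unique one-block partition $\{\,\underline{n+1}\,\}$.

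This argument involves no computation, so there is no serious obstacle; the only points requiring care are verifying that the decomposition of a partition into a freely partitioned subset $S$ and the block of $n+1$ really is a bijection (so that no partition is counted twice and none is missed), and choosing the indexing so that the sum lands on $\binom{n}{k} B_k$ rather than the equivalent $\binom{n}{n-k} B_{n-k}$ — the two forms coincide by the symmetry $\binom{n}{k} = \binom{n}{n-k}$ of the binomial coefficient.
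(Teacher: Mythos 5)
Your proof is correct and is essentially the paper's own argument: both apply the bad element method to $n+1$, identify the block $\{n+1\}\cup(\underline{n}\setminus S)$ containing the bad element, and count the choice of the freely partitioned $k$-subset $S\subseteq\underline{n}$ times its $B_k$ partitions, giving $\binom{n}{k}B_k$ per stratum. The only (cosmetic) difference is that you index by the complement size $k=|S|$ directly, whereas the paper indexes by the block size $n-k+1$ and then invokes the symmetry $\binom{n}{n-k}=\binom{n}{k}$.
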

\begin{proof} We shall use the bad element method. Let $n+1 \in \underline{n+1}$ be the bad element.
We have $n+1$ disjoint/exhaustive cases: we  classify the cases by means of the cardinality of the block
 $B$ that contains the bad element $n+1$. 
Clearly, it may happen:
$$
|B| = n-k+1, \quad k = 0, 1, \ldots, n.
$$
Now, fix $k = 0, 1, \ldots, n$. The block $B$ can be uniquely expressed in the form:
$$
B = B' \stackrel{.}{\cup} \ \{n \},
$$
where
$$
|B'| = n-k, \quad B' \subseteq \underline{n}.
$$
The subset $B'$ can be chosen in 
$$
{ {n}	\choose {n-k}  } = { {n}	\choose {k}  }
$$
different ways.

All that is left is to partition the remaining $k$ elements in $ \underline{n+1} - B$: this can be done, 
by definition, in $B_k$ ways: thus, the contribution of this case (fixed $k = 0, 1, \ldots, n$)
is 
$$
{ {n}	\choose {k}  } \ B_k.
$$
By summing over all the possible values of $k$, we get:
$$
B_{n+1} = \ \sum_{k = 0}^n \ {n \choose k} \ B_k.
$$
\end{proof}

\begin{example} We have:
$$
B_0 = 1, \ B_1 = 1, \ B_2 = 2, \ B_3 = 5, \ B_4 = 15, \ B_5 = 52, \ B_6 = 203, 
$$
$$
B_7 = 887, \ B_8 = 4140, \ B_9 = 21147, \ \ldots 
$$
\end{example} \qed

\subsection{The  Fa\`{a} di Bruno coefficients}

\subsubsection{The 	type of  a partition}

Given a partition $\Pi$ of a finite $n$-set, we say that $\Pi$ has \textit{type}
$$
1^{\nu_1}2^{\nu_2}3^{\nu_3} \cdots n^{\nu_n}
$$
whenever
$$
\Pi \ has \ \nu_i \ blocks \ of \ cardinality \ i, \ for \ i = 1, 2, \ldots, n.
$$
The Fa\`{a} di Bruno coefficients  $P(n;1^{\nu_1}2^{\nu_2} \cdots )$ are defined in 
the following way:
$$
P(n;1^{\nu_1}2^{\nu_2} \cdots ) \stackrel{def}{=} 
\# \ \text{partitions \ of \ type} \ 1^{\nu_1}2^{\nu_2}3^{\nu_3} \cdots \ \text{of \ an \
n-set}.
$$
Clearly, 
$$
P(n;1^{\nu_1}2^{\nu_2} \cdots ) \neq 0
$$ 
if and only if
$$
\nu_1 + 2\nu_2 + 3\nu_3 + \cdots = n.
$$

We can compute the Fa\`{a} di Bruno coefficients by means of the remarkable close form formula:

\begin{proposition}\label{Faa} If $\nu_1 + 2\nu_2 + 3\nu_3 + \cdots = n$, then
$$
P(n;1^{\nu_1}2^{\nu_2} \cdots )  = \frac{n!}{(1!)^{\nu_1}(2!)^{\nu_2}(3!)^{\nu_3} \cdots} \cdot
\frac{1}{\nu_1! \nu_2! \nu_3! \dots}.
$$

\end{proposition}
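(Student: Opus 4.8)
The plan is to apply the overcounting (shepherd's) principle, exactly in the spirit of the proof of Proposition~\ref{comp multnom}, taking as \emph{sheep} the partitions of $\underline{n}$ of type $1^{\nu_1}2^{\nu_2}\cdots$ and as \emph{legs} the permutations (linear words) $w = w_1 w_2 \cdots w_n$ of $\underline{n}$, of which there are exactly $n!$ by the solution to Problem~2 (in the case $k=n$). The whole argument is then a matter of identifying the correspondence between words and partitions and counting how many words collapse to the same partition.

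First I would fix a \emph{template} that cuts the $n$ positions $1, 2, \ldots, n$ into consecutive segments: reading from left to right, the first $\nu_1$ segments have length $1$, the next $\nu_2$ segments have length $2$, and in general $\nu_i$ consecutive segments of length $i$, for $i = 1, 2, \ldots, n$. Since $\nu_1 + 2\nu_2 + 3\nu_3 + \cdots = n$ by hypothesis, these segments exactly exhaust the $n$ positions. Given any permutation word $w$, reading the letters inside each segment and then forgetting both the internal order of each segment and the order of the segments themselves produces a partition of $\underline{n}$ having precisely $\nu_i$ blocks of cardinality $i$ for every $i$; that is, a sheep. Conversely, every partition of the prescribed type arises in this way.

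Then I would count \emph{how many legs per sheep}, i.e. how many words $w$ give rise to a fixed partition $\Pi$ of the prescribed type. To reconstruct such a $w$ one must (i) decide which block of $\Pi$ is written into which segment of the template, and (ii) decide, within each segment, the order in which the elements of the corresponding block are written. For the blocks of a fixed size $i$ there are $\nu_i$ blocks and $\nu_i$ segments of length $i$, so step (i) contributes a factor $\nu_i!$; and each block of size $i$ can be internally ordered in $i!$ ways, so step (ii) contributes a factor $(i!)^{\nu_i}$ for the blocks of that size. Multiplying over all sizes, the number of words producing $\Pi$ equals
\[
\Bigl(\prod_{i} \nu_i!\Bigr)\Bigl(\prod_{i} (i!)^{\nu_i}\Bigr),
\]
which is \emph{independent} of the chosen $\Pi$.

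Finally, the shepherd's principle yields
\[
P(n;1^{\nu_1}2^{\nu_2}\cdots)
= \frac{n!}{\bigl(\prod_{i}\nu_i!\bigr)\bigl(\prod_{i}(i!)^{\nu_i}\bigr)}
= \frac{n!}{(1!)^{\nu_1}(2!)^{\nu_2}(3!)^{\nu_3}\cdots}\cdot\frac{1}{\nu_1!\,\nu_2!\,\nu_3!\cdots},
\]
as claimed. The main obstacle is the clean separation of the two distinct sources of overcounting — the $i!$ internal reorderings within each block and the $\nu_i!$ permutations among equal-sized blocks — together with the verification that their product is genuinely the same constant for every partition of the given type, since it is precisely this constancy of the number of legs per sheep that legitimizes the final division.
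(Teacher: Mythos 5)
Your proof is correct, and it reaches the formula by a genuinely different route than the paper's, although both rest on the shepherd's principle with partitions of type $1^{\nu_1}2^{\nu_2}\cdots$ as the sheep. The paper's legs are the \emph{compositions} of the $n$-set of type $(1,\ldots,1,\,2,\ldots,2,\,\ldots)$, with each block size $i$ repeated $\nu_i$ times: forgetting the order of the blocks sends a composition to a partition, exactly $\nu_1!\,\nu_2!\,\nu_3!\cdots$ compositions collapse onto each partition (only blocks of equal cardinality may be interchanged), and the number of such compositions is then imported ready-made from the close form of Proposition~\ref{comp multnom}, giving $\frac{n!}{(1!)^{\nu_1}(2!)^{\nu_2}\cdots}\cdot\frac{1}{\nu_1!\,\nu_2!\cdots}$. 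You instead telescope this two-stage argument into a single one: your legs are the $n!$ linear words on $\underline{n}$ (Problem~2 with $k=n$), cut by a fixed segment template, and your fiber count $\prod_i \nu_i!\,(i!)^{\nu_i}$ combines in one constant the factor $(i!)^{\nu_i}$ — which in the paper is hidden inside the multinomial coefficient, itself established by a separate overcounting argument — with the factor $\nu_i!$ that constitutes the paper's second division. The paper's factorization buys modularity: each of its two steps divides out a single clean symmetry, and the multinomial close form is reused rather than reproved. Your version buys self-containment and transparency: the total overcounting constant is exhibited explicitly, at the price of having to verify, as you correctly do, that the two sources of overcounting multiply independently and that the fiber size is the same for every partition of the given type; your remark that blocks of distinct sizes necessarily occupy segments of distinct lengths, so the block-to-segment assignment factorizes size by size into the $\nu_i!$ terms, is exactly the point that legitimizes the combined count.
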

\begin{proof} We shall use the shepherd's principle.  
Partitions of type $1^{\nu_1}2^{\nu_2} \cdots$ are the sheep while the legs are the compositions
(of the same $n$-set)  of type:
\begin{equation}\label{comp type}
\left( \stackrel{\nu_1 \ times}{1,1,1, \cdots 1}, \stackrel{\nu_2 \ times}{2,2,2, \cdots 2}, 
\stackrel{\nu_3 \ times}{3,3,3, \cdots 3}, \ldots \right).
\end{equation}
Given any composition of this type, if we neglect the order 
(i.e., we pass from ordered $k$-tuples to \textit{sets}),
we get a \textit{unique} partition of type $1^{\nu_1}2^{\nu_2}3^{\nu_3} \cdots$.

But, in  compositions of type (\ref{comp type}), we can permute the blocks 
of the same cardinality without affecting neither the type of the composition nor 
the partition we produce.  Then, there are precisely 
$$
\nu_1!\nu_2!\nu_3! \cdots 
$$
legs per each sheep.
Therefore,
$$
P(n;1^{\nu_1}2^{\nu_2} \cdots )
$$
equals
$$
{{n}\choose{1,1,1, \dots, 2,2,2, \ldots, 3,3,3, \ldots }} \cdot
\frac{1}{\nu_1! \nu_2! \nu_3! \dots}
$$
which in turn, equals
$$
\frac{n!}{(1!)^{\nu_1}(2!)^{\nu_2}(3!)^{\nu_3} \cdots} \cdot
\frac{1}{\nu_1! \nu_2! \nu_3! \dots}.
$$
\end{proof}

\subsubsection{The $n$-th derivative of a composite function $f(g(t))$}

Let $f, g : (a, b) \subseteq \mathbb{R} \rightarrow \mathbb{R}$ be functions 
of class $C^{(\infty)}_{(a, b)}$, and consider the composite function
$$
(f \circ g)(t) = f(g(t)), \quad t \in (a, b).
$$
Given $n \in \mathbb{Z}^+$, the $n$th derivative of the composite function
$(f \circ g)(t)$ is explicitly provided by the \textit{Fa\`{a} di Bruno formula} (1855):
$$
(f \circ g)^{(n)}(t) = \ \sum_{(\nu_1, \nu_2, \ldots )} \ P(n;1^{\nu_1}2^{\nu_2} \cdots )
\ f^{(|\nu|)}(g(t)) \cdot (g^{(1)}(t))^{\nu_1}  (g^{(2)}(t))^{\nu_2}  \cdots ,
$$
where $|\nu| =	\nu_1 + \nu_2 + \cdots.$

\subsection{A concluding remark on partition statistics}

From their combinatorial definitions, we immediately infer

\begin{proposition}Let $n \in \mathbb{N}$. Then
$$
B_n \ = \ \sum_{k = 0}^n \ S(n, k) \ = \ \sum_{(\nu_1, \nu_2, \ldots )} \ P(n;1^{\nu_1}2^{\nu_2} \cdots ).
$$
\end{proposition}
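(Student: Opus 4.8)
The plan is to observe that both equalities are instances of the same elementary principle: if a finite set is written as a disjoint, exhaustive union of subfamilies, then its cardinality equals the sum of the cardinalities of those subfamilies. Here the finite set in question is the family of all partitions of an $n$-set, whose cardinality is $B_n$ by definition. It remains only to exhibit two different ways of splitting this family into disjoint, exhaustive classes, one indexed by the number of blocks and one indexed by the type.

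First I would establish the left equality. Given any partition $\Pi$ of $\underline{n}$, let $k$ denote its number of blocks. Since the blocks are nonempty and their union is $\underline{n}$, we have $0 \le k \le n$ (with $k = 0$ occurring only in the degenerate case $n = 0$). Thus each partition belongs to exactly one of the classes indexed by $k \in \{0, 1, \ldots, n\}$, namely the class of $k$-partitions, and these classes are pairwise disjoint and exhaust the whole family. By the very definition of the Stirling numbers of the second kind, the $k$-th class has cardinality $S(n, k)$; summing over $k$ yields $B_n = \sum_{k=0}^n S(n, k)$.

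For the right equality I would repeat the argument, now classifying partitions by their type. Every partition $\Pi$ of $\underline{n}$ determines a unique type $1^{\nu_1}2^{\nu_2}\cdots$, where $\nu_i$ records how many blocks of $\Pi$ have cardinality $i$; moreover the identity $\nu_1 + 2\nu_2 + 3\nu_3 + \cdots = n$ holds automatically, since counting the elements of $\underline{n}$ block by block produces exactly this sum. Hence the family of all partitions is the disjoint union, over all admissible types, of the classes of partitions of each fixed type, and by definition the class of type $1^{\nu_1}2^{\nu_2}\cdots$ has cardinality $P(n; 1^{\nu_1}2^{\nu_2}\cdots)$. Summing over all types gives $B_n = \sum_{(\nu_1, \nu_2, \ldots)} P(n; 1^{\nu_1}2^{\nu_2}\cdots)$.

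There is no genuine obstacle here, and no computation is required, exactly in the spirit of the course. The only point deserving a word of care is verifying that each classification is truly exhaustive and mutually disjoint: a single partition cannot have two distinct numbers of blocks, nor two distinct types, while conversely every partition does fall under some value of the indexing parameter. Once this is noted, both identities follow immediately from the additivity of cardinality over disjoint unions, and the proposition is proved.
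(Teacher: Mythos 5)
Your proof is correct and takes essentially the same route as the paper, which simply asserts that both identities follow ``from their combinatorial definitions'': you have merely made explicit the two disjoint, exhaustive classifications of the partitions of an $n$-set (by number of blocks, giving $S(n,k)$, and by type, giving $P(n;1^{\nu_1}2^{\nu_2}\cdots)$) that the paper leaves implicit. Nothing is missing, and spelling out the disjointness and exhaustiveness of each classification is exactly the right level of care.
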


\section{Permutations}

\subsection{Permutations and permutation digraphs}

An \textit{n-permutation} $\sigma$ of an $n$-set (say, $\underline{n} = \{1, 2, \ldots, n \})$
is a bijection 
$$
\sigma : \underline{n} \longleftrightarrow \underline{n}
$$
from the set onto itself. A permutation is usually described  by its functional
presentation, that is in the form
\begin{equation}\label{permutation}
\sigma = {{1 \ \ \ \ 2 \ \ \ \ 3 \ \ \ \ldots \ \  n} \choose {\sigma(1)\sigma(2)\sigma(3) \ldots \sigma(n)}}.
\end{equation}
Clearly, the number of \textit{n-permutations} is: $n!$.

An \textit{n-permutation digraph} is a digraph on $n$ vertices (say, $V=\underline{n}$)
$$
\stackrel{\rightarrow}{G} \  = \ (V,\stackrel{\rightarrow}{E})
$$
such that, for every vertex $i \in V = \underline{n}$, there is a unique
\textit{arrow} with \textit{head} $i$ and a unique
\textit{arrow} with \textit{tail} $i$.

Given the permutation (\ref{permutation}), the \textit{associated}  $n$-permutation digraph
is the digraph
$$
\stackrel{\rightarrow}{G_\sigma} = (V=\underline{n},\stackrel{\rightarrow}{E})
$$
such that, for every vertex $i \in V = \underline{n}$, the  unique
arrow with head $i$ is 
$i \rightarrow \sigma(i)$
and the unique
arrow with tail $i$ is $\sigma^{-1}(i) \rightarrow i$.

\begin{example}\label{perm graph1}If
\begin{equation}\label{concr perm}
\sigma = {{1 \ 2 \ 3 \ 4 \ 5 \ 6 \ 7 \ 8} \choose {5 \ 7 \ 4 \ 6 \ 1 \ 3 \ 8 \ 2}},
\end{equation}
then 
\begin{equation}\label{concr perm dig}
\stackrel{\rightarrow}{G_\sigma}
\end{equation}
is the digraph on vertices $\{1, 2, \ldots, n \}$ whose arrows are
$$
1 \rightarrow 5,\ 5 \rightarrow 1,\ 2 \rightarrow 7,\ 7 \rightarrow 8,\
8 \rightarrow 2,\ 3 \rightarrow 4,\ 4 \rightarrow 6,\ 6 \rightarrow 3,
$$ that is

\begin{tikzpicture}[
roundnode/.style={circle,   very thick, minimum size=5mm},
squarednode/.style={rectangle,   very thick, minimum size=5mm}]

\node[squarednode] (sposta) at (-6.0,0.0) {$$};


\node[roundnode]   (circle1)  at (-4.0,0.0)  {$1$};
\node[roundnode]   (circle2)  at (-3.0,0.0)   {$2$};
\node[roundnode]   (circle3)  at (-2.0,0.0)   {$3$};
\node[roundnode]   (circle4)  at (-1.0,0.0)   {$4$};
\node[roundnode]   (circle5)  at (0.0,0.0)  {$5$};
\node[roundnode]   (circle6)  at (1.0,0.0)  {$6$};
\node[roundnode]   (circle7)  at (2.0,0.0)  {$7$};
\node[roundnode]   (circle8)  at (3.0,0.0)  {$8$};


\draw[->] [line width=0.03cm,](circle1.north) .. controls +(up:7mm) and +(up:7mm) .. (circle5.north);
\draw[->] [line width=0.03cm,](circle5.south) .. controls +(down:9mm) and +(down:9mm) .. (circle1.south);

\draw[->] [line width=0.03cm,](circle2.north) .. controls +(up:9mm) and +(up:9mm) .. (circle7.north);
\draw[->] [line width=0.03cm,](circle7.north east) .. controls +(up:5mm) and +(up:5mm) .. (circle8.north);
\draw[->] [line width=0.03cm,](circle8.south) .. controls +(down:4mm) and +(down:9mm) .. (circle2.south);

\draw[->] [line width=0.03cm,](circle3.north) .. controls +(up:5mm) and +(up:5mm) .. (circle4.north);
\draw[->] [line width=0.03cm,](circle4.north east) .. controls +(up:7mm) and +(up:7mm) .. (circle6.north);
\draw[->] [line width=0.03cm,](circle6.south) .. controls +(down:9mm) and +(down:9mm) .. (circle3.south);

\end{tikzpicture}
\end{example}\qed

We have:
\begin{proposition} The map 
$$
\sigma \ \mapsto \ \stackrel{\rightarrow}{G_\sigma}
$$ is a \emph{bijection} from the family of $n$-permutations
to the family of $n$-permutation digraphs.
\end{proposition}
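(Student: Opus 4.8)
The plan is to exhibit an explicit two-sided inverse for the map $\Phi : \sigma \mapsto \stackrel{\rightarrow}{G_\sigma}$, since producing a concrete inverse simultaneously settles injectivity and surjectivity. Recall that $\stackrel{\rightarrow}{G_\sigma}$ has arrow set $\stackrel{\rightarrow}{E_\sigma} = \{ (i, \sigma(i)); \ i \in \underline{n} \}$. Conversely, given any $n$-permutation digraph $\stackrel{\rightarrow}{G} = (\underline{n}, \stackrel{\rightarrow}{E})$, I would define a map $\sigma_{\stackrel{\rightarrow}{G}} : \underline{n} \to \underline{n}$ by sending each $i$ to the tail of the (by hypothesis unique) arrow whose head is $i$; that is, $\sigma_{\stackrel{\rightarrow}{G}}(i) = j$ where $i \to j$ is the unique arrow with head $i$. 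The claim will be that $\Phi$ and $\Psi : \stackrel{\rightarrow}{G} \mapsto \sigma_{\stackrel{\rightarrow}{G}}$ are mutually inverse.

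First I would check that both maps land in the correct set. For $\Phi$: since $\sigma$ is a function, for each $i$ there is exactly one arrow $(i, \sigma(i))$ with head $i$; and since $\sigma$ is a bijection, for each $j$ there is exactly one $i$ with $\sigma(i) = j$, namely $i = \sigma^{-1}(j)$, so exactly one arrow $(\sigma^{-1}(j), j)$ with tail $j$. Hence $\stackrel{\rightarrow}{G_\sigma}$ is genuinely an $n$-permutation digraph, which is precisely the content already recorded in the definition preceding Example \ref{perm graph1}.

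The crucial step is to verify that $\Psi$ produces a bona fide permutation, i.e. that $\sigma_{\stackrel{\rightarrow}{G}}$ is a bijection. Injectivity is where the defining property of a permutation digraph is used: if $\sigma_{\stackrel{\rightarrow}{G}}(i) = \sigma_{\stackrel{\rightarrow}{G}}(i') = j$, then both $i \to j$ and $i' \to j$ are arrows of $\stackrel{\rightarrow}{G}$, i.e. both are arrows with tail $j$; by the uniqueness of the arrow with tail $j$ we conclude $i = i'$. Because $\underline{n}$ is finite, an injective self-map is automatically surjective (exactly as in the discussion of Problem 2), so $\sigma_{\stackrel{\rightarrow}{G}}$ is a permutation. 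I expect this finiteness-driven passage from injectivity to bijectivity to be the only genuinely substantive point; everything else is bookkeeping.

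Finally I would confirm that the two composites are identities. For $\Psi \circ \Phi$: in $\stackrel{\rightarrow}{G_\sigma}$ the unique arrow with head $i$ is $(i, \sigma(i))$, so $\sigma_{\stackrel{\rightarrow}{G_\sigma}}(i) = \sigma(i)$ for all $i$, giving $\Psi(\Phi(\sigma)) = \sigma$. For $\Phi \circ \Psi$: writing $\sigma = \Psi(\stackrel{\rightarrow}{G})$, the arrow $(i, \sigma(i))$ is by definition exactly the unique head-$i$ arrow of $\stackrel{\rightarrow}{G}$; as $i$ ranges over $\underline{n}$ these arrows exhaust $\stackrel{\rightarrow}{E}$, since every arrow of $\stackrel{\rightarrow}{G}$ is the unique head-$i$ arrow for its own head $i$. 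Hence $\stackrel{\rightarrow}{E_\sigma} = \stackrel{\rightarrow}{E}$ and $\Phi(\Psi(\stackrel{\rightarrow}{G})) = \stackrel{\rightarrow}{G}$. Having displayed a two-sided inverse, we conclude that $\sigma \mapsto \stackrel{\rightarrow}{G_\sigma}$ is a bijection.
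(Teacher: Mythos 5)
Your proof is correct, and it is in fact more than the paper provides: the paper states this proposition without any proof, treating it as immediate from the definition of the associated digraph (which already records that the unique arrow with head $i$ is $i \rightarrow \sigma(i)$ and the unique arrow with tail $i$ is $\sigma^{-1}(i) \rightarrow i$). Your explicit two-sided inverse $\Psi$ is exactly the natural formalization of that implicit observation, and you correctly navigated the paper's nonstandard convention that in an arrow $i \rightarrow j$ the \emph{head} is $i$ and the \emph{tail} is $j$ (the reverse of common usage), using the uniqueness of the arrow with tail $j$ to get injectivity of $\sigma_{\stackrel{\rightarrow}{G}}$. One small simplification: the appeal to finiteness is unnecessary. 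The defining property of an $n$-permutation digraph guarantees, for every vertex $j$, the \emph{existence} (not just uniqueness) of an arrow $i \rightarrow j$ with tail $j$, so $\sigma_{\stackrel{\rightarrow}{G}}(i) = j$ exhibits surjectivity directly; injectivity and surjectivity each fall out of one half of the ``unique arrow'' condition, with no counting argument needed. This is worth noting because it shows the correspondence is purely structural and would survive verbatim for permutations of an infinite set, whereas the finiteness shortcut would not.
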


\subsection{Cycles and cyclic digraphs}

Given a digraph $\stackrel{\rightarrow}{G} = (V,\stackrel{\rightarrow}{E})$, 
an \textit{(oriented) path} (from the vertex $i_1$ to the vertex $i_k$) is a finite sequence
of arrows
$$
i_1\ \rightarrow \ i_2\ \rightarrow \ i_3\ \rightarrow\ \cdots \cdots\ \rightarrow\ i_{h-1} \rightarrow\ i_{h}
\ \rightarrow\ 
\cdots \cdots
 \rightarrow\ i_{k-2}\ \rightarrow \ i_{k-1}\ \rightarrow \ i_k.
$$

A permutation $k$-digraph $\stackrel{\rightarrow}{G_\tau} = (V = \underline{k},\stackrel{\rightarrow}{E})$ 
is said to be a \textit{cyclic digraph} whenever it is \textit{path connected}, that is, for every 
$i, j \in \underline{n}$, there exists  a (unique) oriented path from the vertex $i$ to the vertex $j$ and
there exists  a (unique) oriented path from the vertex $j$ to the vertex $i$.

The permutation $\tau$ associated to a cyclic $k$-digraph $\stackrel{\rightarrow}{G_\tau}$
is said to be a $k$-\textit{cycle}. Clearly,  a $k$-cycle $\tau$ is a permutation of the form
$$
\tau(i_k) = \tau(\tau(i_{k-1}) = \tau(\tau^{k-1}(i_{1}) = \tau^{k}(i_{1}) = i_{1}.
$$

\begin{example} In the permutation (\ref{concr perm}), we have three  (sub)permutations $C_1, C_2, C_3$
which are cycles, namely:
$$
C_1 = {{1 \ 5} \choose {5 \ 1}}, \quad C_2 = {{2 \ 7 \ 8} \choose {7 \ 8 \ 2}}, \quad C_3 = {{3 \ 4 \ 6} 
\choose {4 \ 6 \ 3}}.
$$
The associated permutation digraphs are

\

\begin{tikzpicture}[
roundnode/.style={circle,   very thick, minimum size=5mm},
squarednode/.style={rectangle,   very thick, minimum size=5mm}
]
\node[squarednode] (sposta) at (-4.5,-3.0) {$$};

\node[roundnode]   (circle11)  at (-3.0,-3.0)  {$1$};
\node[roundnode]   (circle55)  at (-3.0,-5.0)  {$5$};

\draw[->] [line width=0.03cm,](circle11.east) .. controls +(right:5mm) and +(right:5mm) .. (circle55.east);
\draw[->] [line width=0.03cm,](circle55.west) .. controls +(left:5mm) and +(left:5mm) .. (circle11.west);

\node[roundnode]   (circle22)  at (0.0,-3.0)  {$2$};
\node[roundnode]   (circle88)  at (-1.0,-5.0) {$8$};
\node[roundnode]   (circle77)  at (1.0,-5.0)  {$7$};


\draw[->] [line width=0.03cm,](circle22.east) .. controls +(right:7mm) and +(right:7mm) .. (circle77.east);
\draw[->] [line width=0.03cm,](circle77.south) .. controls +(down:5mm) and +(down:5mm) .. (circle88.south);
\draw[->] [line width=0.03cm,](circle88.west) .. controls +(left:5mm) and +(left:9mm) .. (circle22.west);

\node[roundnode]   (circle33)  at (4.0,-3.0)  {$3$};
\node[roundnode]   (circle66)  at (3.0,-5.0)  {$6$};
\node[roundnode]   (circle44)  at (5.0,-5.0)  {$4$};

\draw[->] [line width=0.03cm,](circle33.east) .. controls +(right:7mm) and +(right:7mm) .. (circle44.east);
\draw[->] [line width=0.03cm,](circle44.south) .. controls +(down:5mm) and +(down:5mm) .. (circle66.south);
\draw[->] [line width=0.03cm,](circle66.west) .. controls +(left:5mm) and +(left:9mm) .. (circle33.west);

\end{tikzpicture}

whose \textit{disjoint union} is the \textit{same} as the graph of Example \ref{perm graph1} 
(just different drawing).
\end{example} \qed

\begin{proposition}\label{numb cycle} The number of cycles on $k$ elements (i.e., the number of 
cyclic digraphs on $k$ vertices) equals $(k - 1)!$
\end{proposition}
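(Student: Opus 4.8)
The plan is to apply the overcounting/shepherd's principle in exactly the same spirit as Proposition \ref{multiset close form}. The \emph{sheep} are the cyclic $k$-digraphs on the vertex set $\underline{k}$ (equivalently, the $k$-cycles), and the \emph{legs} are the linear listings $a_1 a_2 \cdots a_k$ of all $k$ vertices, i.e.\ the bijective words with no repeated letters on the alphabet $\underline{k}$. By the solution to Problem $2$ specialized to $n = k$, such words are precisely the permutations of $\underline{k}$, so there are exactly $k!$ legs to begin with.

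Next I would exhibit the construction assigning to each listing the cyclic digraph whose arrows are
$$
a_1 \rightarrow a_2, \quad a_2 \rightarrow a_3, \quad \ldots, \quad a_{k-1} \rightarrow a_k, \quad a_k \rightarrow a_1.
$$
By construction every vertex has a unique incoming and a unique outgoing arrow and the digraph is path-connected, hence it is a cyclic $k$-digraph; conversely, starting at any vertex of a given cyclic $k$-digraph and following the arrows recovers a listing that produces it, so the construction is surjective onto the sheep. The heart of the matter is then the leg-count: two listings yield the \emph{same} cyclic digraph if and only if one is a cyclic rotation of the other, and the $k$ rotations of a fixed listing are pairwise distinct. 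The point is that recording the same oriented tour from a different starting vertex leaves the arrow set unchanged, while the orientation of the arrows forbids traversing the tour backwards, so no further listings are identified. This gives a $k \mapsto 1$ correspondence between listings and cycles.

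The main (indeed only nontrivial) obstacle is verifying this leg-count cleanly, namely checking both that each of the $k$ rotations reproduces the identical arrow set and that no non-rotation does. Once that is secured, the shepherd's principle divides the $k!$ legs into groups of size $k$, one per sheep, and yields
$$
\#\{\,k\text{-cycles}\,\} \ = \ \frac{k!}{k} \ = \ (k-1)!,
$$
which is the claim.
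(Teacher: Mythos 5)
Your proposal is correct and follows essentially the same route as the paper: both count the $k!$ linear listings (equivalently, word presentations of cycles) as legs and observe that exactly the $k$ cyclic rotations of a listing produce the same cycle, so the shepherd's principle gives $k!/k = (k-1)!$. Your verification that only rotations are identified (and that orientation rules out reversed traversals) is a slightly more careful rendering of the paper's remark that every $k$-cycle admits exactly $k$ word presentations, but the argument is the same.
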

\begin{proof} We shall use the (inverse) shepherd's principle. Given a cycle $\tau$  and  fixed 
element $i \in \underline{k}$, define the $n$-permutation 
$$
\sigma_i = {{1 \ \ \ \ 2 \ \ \ \ 3 \ \ \ \ldots \ \  k} \choose {\sigma(1)\sigma(2)\sigma(3) \ldots \sigma(k)}},
$$
as 
$$
\sigma_i(1) = \tau(i+1), \ \sigma_i(2) = \tau(i+2), \ldots, \sigma_i(h) = \tau(i+h),   \ldots,
$$
$$ 
 \ldots, \ \sigma_i(k) = \tau(i+k), 
 \ldots,   \sigma_i(1) = \tau(i+k+1), \ldots,
$$ 
for $h = 0, 1, \ldots, k$. 

This way,  we are able to produce \textit{all} the \textit{distinct} $k$-permutations
of $\underline{k}$. Overall,   these are  $k!$. 

But, depending from
the choice of $i =  0, 1 , \ldots k-1$,  
there are are  exactly $k$ different $k$-permutations (legs) that are produced by one and the same
cycle $\tau$ (sheep). Then, the number of cycles on $k$ elements equals
$$
\frac{k!}{k} = (k - 1)!
$$
\end{proof}

\begin{example}\label{cycle example} Consider the $4$-cycle $\tau$ such that
$$
\tau(1) = 3, \ \tau(3) = 2, \ \tau(2) = 4,\ \tau(4) = 1.
$$

The corresponding $4$-permutations in the above construction are:
$$
\sigma_0 = {{1 \ 2 \ 3 \ 4} \choose {3 \ 2 \ 4 \ 1}}, 
$$
$$
 \sigma_1 = {{1 \ 2 \ 3 \ 4} \choose {2 \ 4 \ 1 \ 3}},
$$
$$
\sigma_2 = {{1 \ 2 \ 3 \ 4} \choose {4 \ 1 \ 3 \ 2}}, 
$$
$$
\sigma_3 = {{1 \ 2 \ 3 \ 4} \choose {1 \ 3 \ 2 \ 4}}.
$$

Therefore, the number of $4$-cycles is:
$$
\frac{4!}{4} = 3! = 6.
$$
\end{example} \qed

Given a $k$-\textit{cycle} $\tau$, any $k$-word of the form
$$
(\tau(i), \tau(i+1), \ldots, \tau(k), \ldots, \tau(i-1)), \quad i = 1, 2, \ldots, k
$$
is called a \textit{word presentation} of the cycle $\tau$.

\begin{example} Given the $4$-cycle $\tau$ of Ex. \ref{cycle example}, its word presentations
are
$$
3241, \quad 2413, \quad 4132, \quad 1324.
$$
\end{example} \qed

Clearly, any $k$-\textit{cycle} admits $k$ different word presentations.

\subsection{The unique factorization of a permutation into disjoint cycles}

To begin with, we state and proof this basic result in the language 
of \textit{permutation digraph}.

\begin{proposition}\label{fact thm} Any permutation digraph 
$\stackrel{\rightarrow}{G}_\sigma \ = \ (V, \stackrel{\rightarrow}{E} )$
is a \emph{disjoint union}
of cyclic digraphs. In symbols
$$
\stackrel{\rightarrow}{G_\sigma} \ = \ \stackrel{\rightarrow}{G} _{C_1} \ \stackrel{.}{\cup}
\  \stackrel{\rightarrow}{G}_{C_2} \ \stackrel{.}{\cup} \cdots \cdots \ \stackrel{.}{\cup} \
\stackrel{\rightarrow}{G}_{C_k},
$$
where $\stackrel{\rightarrow}{G_{C_1}} , \ \stackrel{\rightarrow}{G}_{C_2} , \ldots, \ 
\stackrel{\rightarrow}{G}_{C_k}$ are cyclic  digraphs on disjoint subsets $C_1, C_2, \ldots, C_k \subseteq V$
of the vertex set of $\stackrel{\rightarrow}{G_\sigma}$.
\end{proposition}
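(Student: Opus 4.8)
The plan is to recover the cycle decomposition as the partition of the vertex set $V = \underline{n}$ into the \emph{orbits} of $\sigma$, and then to read off the asserted decomposition of the digraph from that partition, invoking the equivalence-relation/partition correspondence established in Proposition \ref{eq classes}.

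First I would introduce on $V$ the relation $i \sim j$ defined by the existence of an oriented path in $\stackrel{\rightarrow}{G_\sigma}$ from $i$ to $j$, equivalently $j = \sigma^m(i)$ for some $m \geq 0$. Reflexivity ($j = \sigma^0(i) = i$) and transitivity (concatenation of paths, i.e.\ composition of powers of $\sigma$) are immediate. The only delicate point --- and the real content of the statement --- is \textbf{symmetry}. Here finiteness of $V$ together with injectivity of $\sigma$ is essential: the forward orbit $i, \sigma(i), \sigma^2(i), \ldots$ cannot be an infinite list of distinct elements, so some repetition $\sigma^a(i) = \sigma^b(i)$ with $a < b$ occurs, and injectivity forces $\sigma^{b-a}(i) = i$. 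Letting $k \geq 1$ be the least such exponent, we obtain $\sigma^{-1}(i) = \sigma^{k-1}(i)$; hence from any $j = \sigma^m(i)$ we may return to $i$ by further \emph{forward} steps, which yields $j \sim i$. Thus $\sim$ is an equivalence relation.

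Next, by Proposition \ref{eq classes} the equivalence classes $C_1, C_2, \ldots, C_r$ form a partition of $V$ into nonempty, pairwise disjoint blocks. Each class is $\sigma$-invariant (if $i \sim j$ then $\sigma(i) \sim \sigma(j)$, and $\sigma(i) \sim i$), so $\sigma$ restricts to a bijection $\sigma|_{C_t} : C_t \to C_t$, and the subdigraph $\stackrel{\rightarrow}{G}_{C_t}$ induced on $C_t$ is again a permutation digraph: every vertex of $C_t$ retains its unique out-arrow $i \to \sigma(i)$ and its unique in-arrow $\sigma^{-1}(i) \to i$, both of which stay inside $C_t$. Consequently every arrow of $\stackrel{\rightarrow}{G_\sigma}$ belongs to exactly one block, the arrow set is partitioned accordingly, and $\stackrel{\rightarrow}{G_\sigma} = \stackrel{\rightarrow}{G}_{C_1} \stackrel{.}{\cup} \cdots \stackrel{.}{\cup} \stackrel{\rightarrow}{G}_{C_r}$ as a disjoint union of digraphs.

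Finally I would check that each $\stackrel{\rightarrow}{G}_{C_t}$ is genuinely \emph{cyclic}, i.e.\ path-connected in the sense defined above. Existence of an oriented path between any two vertices of $C_t$ is exactly the restriction of $\sim$ to the block; its uniqueness is forced by the permutation-digraph property that each vertex has a single outgoing arrow, so from a fixed start the path is completely determined at every step. This verifies the cyclic condition for every block and completes the argument. The main obstacle, as noted, is the symmetry step: it is the one place where the hypotheses that $V$ is finite and $\sigma$ is a bijection are indispensable, and it is precisely what rules out the infinite "ray" configurations that a mere functional digraph could otherwise exhibit.
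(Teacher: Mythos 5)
Your proof is correct, and it rests on exactly the same combinatorial kernel as the paper's: finiteness of $V$ plus injectivity of $\sigma$ force the forward orbit $i, \sigma(i), \sigma^2(i), \ldots$ to repeat, and injectivity guarantees that the first repetition closes up at the starting vertex $i$ itself --- the very point the paper makes when it observes that the first revisited vertex ``must be the initial vertex''. The packaging, however, is genuinely different. The paper runs a tracing algorithm (follow the unique out-arrow until you return, then restart from an unused vertex) and then establishes disjointness separately, by contradiction: a vertex shared by two cyclic subdigraphs would be head or tail of more than one arrow. You instead define the reachability relation $i \sim j \iff j = \sigma^m(i)$ for some $m \geq 0$, prove it is an equivalence relation (symmetry being the only nontrivial point, settled by the same pigeonhole argument), and invoke Proposition \ref{eq classes} so that the blocks $C_1, \ldots, C_r$ are nonempty, pairwise disjoint and exhaustive automatically; the price is that you must verify $\sigma$-invariance of the classes and the cyclicity of each induced subdigraph by hand. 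What your route buys is that disjointness comes for free from the partition machinery the paper has already developed, and the cycles are identified conceptually as orbits rather than as outputs of a procedure; what the paper's route buys is an explicit algorithm that actually constructs the factorization. One small remark: your uniqueness-of-paths step (``the path is completely determined at every step'') is at the same level of precision as the paper's definition of cyclic digraph, which does not literally forbid paths that wind around the cycle several times; strict uniqueness requires restricting to paths with distinct vertices, an imprecision inherited from the text rather than introduced by you.
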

\begin{proof} First, we prove that a permutation graph can be described as a union 
of cyclic digraphs, that is  any vertex belongs to at least one cyclic subdigraph.
Consider the following procedure/algorithm. Choose a vertex, say $1$, and 
examine the unique arrow $1 \rightarrow \sigma(1)$ with head $1$. 
We may have two cases.

\begin{enumerate}

\item If \ $1 = \sigma(1)$, then the arrow is indeed a loop ($1$-cycle) and we
pass to examine a further vertex.

\item If $1 \neq \sigma(1)$, then $\sigma(1) \neq \sigma(\sigma(1))$ and so on, 
that is, every arrow we are producing will have different head and tail. Cleary, we can repeat
this procedure  over and over again. 
Yet, is it possible that at any step we will find new vertices? The answer is no,
since the set of vertices is FINITE! This implies that, after a finite number of repetitions, 
we will find one among the vertices already produced, which must be the initial
vertex $1$. 
\end{enumerate}
We repeat this procedure for remaining vertices, if there are any left.
Indeed, we proved that our permutation digraph is ``covered'' by cyclic subdigraphs. 

Have these cyclic subdigraphs disjoint sets of vertices? If not, there must be at least one vertex that should be either head or tail of more than one arrow, which is a contradiction.

\end{proof}

In the language of permutations, Proposition \ref{fact thm} reads:

\begin{proposition} Any permutation $\sigma$ can be \emph{uniquely factorized} into the
product of disjoint cycles.
\end{proposition}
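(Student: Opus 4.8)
The plan is to transport Proposition \ref{fact thm} from the language of permutation digraphs back to the language of permutations, using two facts already established above: that $\sigma \mapsto \stackrel{\rightarrow}{G_\sigma}$ is a bijection between $n$-permutations and $n$-permutation digraphs, and that a cyclic digraph corresponds exactly to a cycle. First I would observe that each cyclic subdigraph $\stackrel{\rightarrow}{G}_{C_j}$ supplied by Proposition \ref{fact thm} is, by definition, the permutation digraph of a cycle $C_j$ acting on its vertex set $C_j \subseteq V$ and fixing every vertex outside $C_j$. Since the vertex sets $C_1, \ldots, C_k$ are pairwise disjoint, the corresponding cycles act on disjoint supports and therefore \emph{commute}, so the product $C_1 C_2 \cdots C_k$ is a well-defined permutation independent of the order of its factors.

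To prove the \emph{existence} part I would verify that $\sigma = C_1 C_2 \cdots C_k$ pointwise. Fix a vertex $i \in V$. By Proposition \ref{fact thm} the vertex sets $C_1, \ldots, C_k$ partition $V$, so $i$ lies in exactly one block, say $C_j$. The unique arrow with head $i$ in $\stackrel{\rightarrow}{G_\sigma}$ is $i \rightarrow \sigma(i)$, and since this arrow belongs to the subdigraph $\stackrel{\rightarrow}{G}_{C_j}$ we get $C_j(i) = \sigma(i)$, while every other cycle fixes $i$. Hence $(C_1 \cdots C_k)(i) = C_j(i) = \sigma(i)$, and as $i$ was arbitrary the factorization holds.

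The hard part will be \emph{uniqueness}, and here I would argue that the cyclic decomposition of a permutation digraph is canonical. Define on $V$ the relation ``$u$ and $v$ are joined by an oriented path''; using the cyclic structure of $\stackrel{\rightarrow}{G_\sigma}$ this is an equivalence relation, and its classes are precisely the vertex sets of the path-connected (hence cyclic) subdigraphs, so the collection $\{C_1, \ldots, C_k\}$ is determined by $\sigma$ alone. Now suppose $\sigma = D_1 D_2 \cdots D_m$ is any factorization into disjoint cycles. Each $D_\ell$ has a cyclic permutation digraph $\stackrel{\rightarrow}{G}_{D_\ell}$, the $D_\ell$ act on disjoint supports, and (exactly as in the existence step) their disjoint union reconstructs $\stackrel{\rightarrow}{G_\sigma}$. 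Thus the supports of the $D_\ell$ form a partition of $V$ into path-connected cyclic subdigraphs, which by canonicity must coincide with the partition into the $C_j$; matching blocks gives $\{D_1, \ldots, D_m\} = \{C_1, \ldots, C_k\}$, so the factorization is unique up to the order of the commuting factors. The one subtlety to handle carefully is the status of fixed points: each fixed point of $\sigma$ is a loop, i.e.\ a $1$-cycle, and whether one records these trivial cycles is the only remaining ambiguity, settled once and for all by the convention adopted for cycles.
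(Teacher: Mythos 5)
Your proposal is correct and follows the paper's own route: the paper obtains this proposition purely as the translation of Proposition \ref{fact thm} through the bijection $\sigma \mapsto \stackrel{\rightarrow}{G_\sigma}$, offering no separate argument, and your proof is exactly that translation carried out in detail. The extra care you take with uniqueness (canonicity of the path-connected components) and with fixed points as $1$-cycles makes explicit what the paper leaves implicit, and is consistent with the paper's convention that loops are $1$-cycles.
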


\subsection{The coefficients $C(n, k)$}\label{cycle coeff}

Let $n, k \in \mathbb{N}$. The numbers
$$
C(n, k)
$$
are defined in the following way:
$$
C(n, k) \stackrel{def}{=} \ \# \ \text{n-permutations \ with \ k \ cycles}.
$$

Clearly, we have
$$
C(0, k) = \delta_{0 k}, \quad C(n, 0) = \delta_{n 0}.
$$

We can compute the $C(n, k)$'s by means of the following recursion:
\begin{proposition} Let $n, k \in \mathbb{Z}^+$. Then,
$$
C(n, k) = C(n - 1, k - 1) + (n - 1)C(n - 1, k).
$$
\end{proposition}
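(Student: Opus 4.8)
The plan is to apply the \textit{bad element} method once more, taking $n \in \underline{n}$ as the bad element and splitting the $n$-permutations with exactly $k$ cycles into two disjoint and exhaustive classes according to whether $n$ lies in a cycle all by itself. This parallels exactly the earlier treatment of the Stirling numbers $S(n,k)$, the only new ingredient being the combinatorics of inserting a vertex into a cyclic digraph.

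First I would treat the case in which the singleton $\{n\}$ is a cycle, i.e.\ $\sigma(n) = n$ (a loop $n \rightarrow n$ in the associated permutation digraph). Deleting this $1$-cycle sets up a bijection between such permutations and the $(n-1)$-permutations of $\underline{n-1}$ with exactly $k-1$ cycles; hence this class contributes $C(n-1, k-1)$. Next I would treat the case in which $n$ lies in a cycle of length $\geq 2$, so that $n$ is not a fixed point. Here I would use an insertion construction: start from an arbitrary $(n-1)$-permutation $\tau$ of $\underline{n-1}$ with exactly $k$ cycles (there are $C(n-1,k)$ of these), then choose an element $j \in \underline{n-1}$ and insert $n$ immediately after $j$ in its cycle --- in digraph terms, replace the unique arrow $j \rightarrow \tau(j)$ by the pair of arrows $j \rightarrow n \rightarrow \tau(j)$. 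This merely lengthens one cycle by a single vertex, leaving the total number of cycles equal to $k$, and there are exactly $n-1$ choices of $j$. Thus this class contributes $(n-1)\,C(n-1,k)$, and summing the two contributions yields the asserted recursion.

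The step I expect to require the most care is verifying that the insertion construction in the second case is a genuine bijection onto the permutations with $n$ non-fixed and $k$ cycles --- equivalently, that it is the two-sided inverse of the natural deletion map that removes $n$ by splicing its predecessor directly to its successor (if the cycle reads $\cdots \rightarrow j \rightarrow n \rightarrow m \rightarrow \cdots$, delete $n$ and form the arrow $j \rightarrow m$). One must check that deletion always returns an $(n-1)$-permutation with the \emph{same} number $k$ of cycles, and that the pair (resulting permutation $\tau$, predecessor $j$) reconstructs the original permutation uniquely, so that the correspondence is precisely $1$-to-$1$ and the multiplicative factor is exactly $n-1$ --- the number of admissible insertion sites, one per element of $\underline{n-1}$ --- rather than some other count. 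Once this bijection is in place, the two disjoint cases give $C(n,k) = C(n-1,k-1) + (n-1)\,C(n-1,k)$ with no computation required.
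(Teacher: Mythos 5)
Your proposal is correct and takes essentially the same route as the paper: the bad element method on $n$, with the fixed-point case contributing $C(n-1,k-1)$ and the insertion of $n$ onto one of the $n-1$ existing arrows of a permutation digraph on $\underline{n-1}$ (your choice of $j$ with $j \rightarrow n \rightarrow \tau(j)$ is precisely the paper's ``split an arrow into two consecutive arrows'') contributing $(n-1)\,C(n-1,k)$. Your explicit check that insertion and deletion are mutually inverse is a detail the paper leaves implicit, but it is the same argument.
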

\begin{proof}
We shall use the bad element method. Fix $n \in \underline{n}$ as bad 
element. 

We have two cases:
\begin{enumerate}

\item $n$ is a fixed point, that is, the arrow $n \rightarrow n$ is indeed a loop. 

Therefore, the total number of this case is:
$$
C(n - 1, k - 1).
$$

\item $n$ isn't a fixed point.  

Given such a permutation $\sigma$ witk $k$ cycles, consider the associated permutation digraph
$\stackrel{\rightarrow}{G_\sigma}$. This permutation digraph is such that
the (unique) cycle   containing $n$ has at least $2$ vertices. How can we construct
these permutation digraphs? At first, we have to costruct a permutation digraph $\stackrel{\rightarrow}{G^*}$
on the first $n-1$ elements $\underline{n-1}$ with $k$ cycles: 
this can be done in $C(n - 1, k)$ different ways.

Now, we must insert the bad element $n$. This can be done putting $n$ on any existing arrow
of $\stackrel{\rightarrow}{G^*}$, in order to split it into a pair of two consecutive arrows.
But $\stackrel{\rightarrow}{G^*}$ is a permutation digraph on $n-1$ vertices.  Thus, the number 
of arrows is $n-1$. Hence, the insertion of the bad element $n$ can be performed in $n-1$
different ways. Therefore, the total number of this second case is:
$$
(n - 1)C(n - 1, k).
$$
\end{enumerate}
Thus,
$$
C(n, k) = C(n - 1, k - 1) + (n - 1)C(n - 1, k).
$$
\end{proof}

Hence, we can compute the entries of the biinfinite matrix
$$
M = \left[ \ C(n, k) \ \right]_{n, k \in \mathbb{N}}:
$$

\

\

\begin{tikzpicture}[description/.style={fill=white,inner sep=2pt}]
\fontsize{12}{14}
\draw [line width=0.02cm,] (-4.6,3.4) -- (5.0,3.4);

\node   at (-3.4,4.0) {${\boldsymbol{0}}$};
\node   at (-2.3,4.0) {${\boldsymbol{1}}$};
\node   at (-1.2,4.0) {${\boldsymbol{2}}$};
\node   at (-0.1,4.0) {${\boldsymbol{3}}$};
\node   at (1.15,4.0) {${\boldsymbol{4}}$};
\node   at (2.30,4.0) {${\boldsymbol{\cdots}}$};
\node   at (3.40,4.0) {${\boldsymbol{k}}$};
\node   at (4.50,4.0) {${\boldsymbol{\cdots}}$};

\draw [line width=0.02cm,] (-4.6,3.4) -- (-4.6,-3.8);
\node   at (-5.0,2.5) {${\boldsymbol{ 0 }}$};
\node   at (-3.4,2.5) {${\boldsymbol{ {1} }}$};
\node   at (-2.3,2.5) {${\boldsymbol{ 0}} $};
\node   at (-1.2,2.5) {${\boldsymbol{ 0 }}$};
\node   at (-0.1,2.5) {${\boldsymbol{ 0}}$};
\node   at (1.15,2.5) {${\boldsymbol{ 0}}$};
\node   at (2.30,2.5) {${\boldsymbol{ \cdots }}$};
\node   at (3.40,2.5) {${\boldsymbol{ 0 }}$};
\node   at (4.50,2.5) {${\boldsymbol{\cdots}}$};

\node   at (-5.0,1.5) {${\boldsymbol{ 1 }}$};
\node   at (-3.4,1.5) {${\boldsymbol{ 0 }}$};
\node   at (-2.3,1.5) {${\boldsymbol{ 1 }}$};
\node   at (-1.2,1.5) {${\boldsymbol{ 0}}$};
\node   at (-0.1,1.5) {${\boldsymbol{ 0 }}$};
\node   at (1.15,1.5) {${\boldsymbol{ 0 }}$};
\node   at (2.30,1.5) {${\boldsymbol{ \cdots }}$};
\node   at (3.40,1.5) {${\boldsymbol{ 0  }}$};
\node   at (4.50,1.5) {${\boldsymbol{ \cdots }}$};

\node   at (-5.0,0.5) {${\boldsymbol{ 2 }}$};
\node   at (-3.4,0.5) {${\boldsymbol{ 0 }}$};
\node   at (-2.3,0.5) {${\boldsymbol{ 1}}$};
\node   at (-1.2,0.5) {${\boldsymbol{ 1}}$};
\node   at (-0.1,0.5) {${\boldsymbol{ 0}}$};
\node   at (1.15,0.5) {${\boldsymbol{ 0}}$};
\node   at (2.30,0.5) {${\boldsymbol{ \cdots }}$};
\node   at (3.40,0.5) {${\boldsymbol{0}}$};
\node   at (4.50,0.5) {${\boldsymbol{ \cdots }}$};

\node   at (-5.0,-0.5) {${\boldsymbol{ 3 }}$};
\node   at (-3.4,-0.5) {${\boldsymbol{ 0 }}$};
\node   at (-2.3,-0.5) {${\boldsymbol{ 1 }}$};
\node   at (-1.2,-0.5) {${\boldsymbol{ 3 }}$};
\node   at (-0.1,-0.5) {${\boldsymbol{ 1}}$};
\node   at (1.15,-0.5) {${\boldsymbol{ 0 }}$};
\node   at (2.30,-0.5) {${\boldsymbol{ \cdots }}$};
\node   at (3.40,-0.5) {${\boldsymbol{ C(3, k) }}$};
\node   at (4.50,-0.5) {${\boldsymbol{ \cdots }}$};

\node   at (-5.0,-1.5) {${\boldsymbol{4}}$};
\node   at (-3.4,-1.5) {${\boldsymbol{0}}$};
\node   at (-2.3,-1.5) {${\boldsymbol{1}}$};
\node   at (-1.2,-1.5) {${\boldsymbol{10}}$};
\node   at (-0.1,-1.5) {${\boldsymbol{6}}$};
\node   at (1.15,-1.5) {${\boldsymbol{1}}$};
\node   at (2.30,-1.5) {${\boldsymbol{ \cdots }}$};
\node   at (3.40,-1.5) {${\boldsymbol{ C(4,k) }}$};
\node   at (4.50,-1.5) {${\boldsymbol{ \cdots }}$};

\node   at (-5.0,-2.5) {${\boldsymbol{\cdots}}$};
\node   at (-3.4,-2.5) {${\boldsymbol{ \cdots} }$};
\node   at (-2.3,-2.5) {${\boldsymbol{ \cdots }}$};
\node   at (-1.2,-2.5) {${\boldsymbol{ \cdots}}$};
\node   at (-0.1,-2.5) {${\boldsymbol{ \cdots }}$};
\node   at (1.15,-2.5) {${\boldsymbol{ \cdots}}$};
\node   at (2.30,-2.5) {${\boldsymbol{ \cdots }}$};
\node   at (3.40,-2.5) {${\boldsymbol{ \cdots }}$};
\node   at (4.50,-2.5) {${\boldsymbol{ \cdots} }$};

\node   at (-5.0,-3.5) {${\boldsymbol{n}}$};
\node   at (-3.4,-3.5) {${\boldsymbol{C(n,0)}}$};
\node   at (-2.3,-3.5) {${\boldsymbol{\cdots}}$};
\node   at (-1.2,-3.5) {${\boldsymbol{\cdots}}$};
\node   at (-0.1,-3.5) {${\boldsymbol{\cdots}}$};
\node   at (1.15,-3.5) {${\boldsymbol{\cdots}}$};
\node   at (2.30,-3.5) {${\boldsymbol{\cdots}}$};
\node   at (3.40,-3.5) {${\boldsymbol{C(n, k)}}$};
\node   at (4.50,-3.5) {${\boldsymbol{\cdots}}$};
\end{tikzpicture}.

\

\subsection{The type of a permutation and the Cauchy coefficients}

Given a permutation $\sigma$ of a finite $n$-set, we say that $\sigma$ has \textit{type}
$$
1^{\nu_1}2^{\nu_2}3^{\nu_3} \cdots n^{\nu_n}
$$
whenever
$$
\sigma \ has \ \nu_i \ cycles \ of \ cardinality \ i, \ for \ i = 1, 2, \ldots, n.
$$
The \textit{Cauchy  coefficients}  $P(n;1^{\nu_1}2^{\nu_2} \cdots )$ are defined in 
the following way:
$$
C(n;1^{\nu_1}2^{\nu_2} \cdots ) \stackrel{def}{=} 
\# \ \text{permutations \ of \ type} \ 1^{\nu_1}2^{\nu_2}3^{\nu_3} \cdots \ \text{of \ an \
n-set}.
$$
Clearly, 
$$
C(n;1^{\nu_1}2^{\nu_2} \cdots ) \neq 0
$$ 
if and only if
$$
\nu_1 + 2\nu_2 + 3\nu_3 + \cdots = n.
$$

\ 

\

We can compute the Cauchy coefficients by means of the remarkable close form:

\begin{proposition} If $\nu_1 + 2\nu_2 + 3\nu_3 + \cdots = n$, then
$$
C(n;1^{\nu_1}2^{\nu_2} \cdots )  = \frac{n!}{1^{\nu_1} 2^{\nu_2} 3^{\nu_3} \cdots} \cdot
\frac{1}{\nu_1! \nu_2! \nu_3! \dots}.
$$
\end{proposition}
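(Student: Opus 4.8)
The plan is to reprise, almost verbatim, the shepherd's-principle argument used for the Fa\`a di Bruno coefficients in Proposition \ref{Faa}, with the sole --- and crucial --- modification that each block now carries a \emph{cyclic} structure. The sheep are the permutations of $\underline{n}$ of type $1^{\nu_1}2^{\nu_2}\cdots$, regarded (via the unique factorization Proposition \ref{fact thm}) as \emph{unordered} families of pairwise disjoint cycles, with exactly $\nu_i$ cycles of length $i$. The legs will be the \emph{ordered} tuples of disjoint cycles $(\Gamma_1, \ldots, \Gamma_m)$ covering $\underline{n}$, where $m = \nu_1 + \nu_2 + \cdots$, in which the lengths are listed in a fixed nondecreasing order (the first $\nu_1$ of length $1$, the next $\nu_2$ of length $2$, and so on) --- the ``cyclic compositions'' of that type.

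First I would count the legs in two stages. Forgetting the cyclic structure, a leg determines an ordered composition of $\underline{n}$ into blocks of the prescribed sizes, and by Proposition \ref{comp multnom} there are $\frac{n!}{(1!)^{\nu_1}(2!)^{\nu_2}\cdots}$ of these. Then, on each block of size $i$ I may impose any cyclic structure, and by Proposition \ref{numb cycle} there are exactly $(i-1)!$ cycles on an $i$-set; since there are $\nu_i$ blocks of size $i$, the cyclic structures contribute a factor $\prod_i \bigl((i-1)!\bigr)^{\nu_i}$. Multiplying and using $(i-1)!/i! = 1/i$ collapses the count of legs to
$$
\frac{n!}{(1!)^{\nu_1}(2!)^{\nu_2}\cdots}\cdot \prod_i \bigl((i-1)!\bigr)^{\nu_i} \;=\; n!\prod_i \frac{1}{i^{\nu_i}} \;=\; \frac{n!}{1^{\nu_1}2^{\nu_2}3^{\nu_3}\cdots}.
$$
This is precisely where the formula diverges from Fa\`a di Bruno: replacing plain blocks by cycles turns the factor $(i!)^{\nu_i}$ in the denominator of Proposition \ref{Faa} into $i^{\nu_i}$.

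It remains to determine how many legs correspond to a single sheep. Two legs yield the same permutation exactly when their underlying unordered cycle families coincide; by the uniqueness half of Proposition \ref{fact thm} this happens if and only if one tuple is obtained from the other by reordering cycles. Because the fixed nondecreasing-length convention forbids moving a cycle out of its length-block, the only admissible reorderings permute the $\nu_i$ cycles of each length $i$ among themselves, giving exactly $\nu_1!\nu_2!\nu_3!\cdots$ legs per sheep. The shepherd's principle then yields
$$
C(n;1^{\nu_1}2^{\nu_2}\cdots) \;=\; \frac{1}{\nu_1!\nu_2!\nu_3!\cdots}\cdot\frac{n!}{1^{\nu_1}2^{\nu_2}3^{\nu_3}\cdots}.
$$

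The step I expect to require the most care is this last one: establishing that the leg-to-sheep correspondence is uniformly $\nu_1!\nu_2!\cdots \mapsto 1$. One must check that distinct cycles in a family are never accidentally identified (automatic, since disjoint cycles on disjoint supports are distinguishable) and, more importantly, that no reordering \emph{across} different lengths can reproduce the same sheep --- this is exactly what the fixed ordering convention for the legs is designed to enforce, and it rests squarely on the uniqueness of the cycle factorization. Everything else is the bookkeeping already rehearsed for Proposition \ref{Faa}.
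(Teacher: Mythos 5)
Your proof is correct, and it takes a mildly but genuinely different route from the paper's. The paper argues by the (inverse) shepherd's principle through \emph{partitions}: it constructs each permutation of type $1^{\nu_1}2^{\nu_2}\cdots$ exactly once by first choosing a partition of that type --- in $P(n;1^{\nu_1}2^{\nu_2}\cdots)$ ways, quoting Proposition \ref{Faa} --- and then a cycle on each block, so that $C(n;1^{\nu_1}2^{\nu_2}\cdots) = P(n;1^{\nu_1}2^{\nu_2}\cdots)\cdot\prod_i \bigl((i-1)!\bigr)^{\nu_i}$, with no division at all: the $\nu_i!$ symmetry has already been absorbed, once and for all, into $P$. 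You instead bypass partitions and the Fa\`a di Bruno coefficients entirely: your legs are ordered tuples of disjoint cycles with lengths in a fixed nondecreasing pattern, counted via the multinomial coefficient of Proposition \ref{comp multnom} times $\prod_i\bigl((i-1)!\bigr)^{\nu_i}$, and you perform the division by $\nu_1!\nu_2!\cdots$ explicitly, justified by the uniqueness of the cycle factorization (Proposition \ref{fact thm}). In effect you have inlined the paper's proof of Proposition \ref{Faa} into the Cauchy count. What your version buys is a self-contained, one-shot shepherd argument --- and your closing discussion of why the leg-to-sheep correspondence is uniformly $\nu_1!\nu_2!\cdots \mapsto 1$ (distinguishability of disjoint cycles, no reordering across length classes) makes explicit precisely the point the paper leaves buried inside the earlier proof. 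What the paper's factored version buys is that the delicate $\nu_i!$ bookkeeping is established once, at the level of partitions, so the passage to permutations becomes a clean multiplication requiring no further quotienting. Both arguments rest on the same two ingredients: $(i-1)!$ cycles on an $i$-set (Proposition \ref{numb cycle}), and permutation of same-size blocks as the only source of overcounting.
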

\begin{proof} We shall use the (inverse) shepherd's principle. We construct a permutation 
of type $1^{\nu_1}2^{\nu_2} \cdots $ by means of the following procedure.

First, we exhibit a partition $\Pi$ of type $1^{\nu_1}2^{\nu_2} \cdots $. This can be done
in $P(n;1^{\nu_1}2^{\nu_2} \cdots )$ ways.

Then, on \textit{any} of the $\nu_i$ blocks of cardinality   $i$  ($i = 1, 2, 3, \ldots $) of 
the given partition $\Pi$, 
we construct all the possible cycles: this can be done
in $(i - 1)!$ ways per block.
Therefore, the constructions of cycles on blocks can be performed in a total of
$$
((1-1)!)^{\nu_1} ((2-1)!)^{\nu_2(}(3-1)!)^{\nu_3} \cdots
$$ 
different ways.

Thus,
$$
C(n;1^{\nu_1}2^{\nu_2} \cdots ) \ = 
\ P(n;1^{\nu_1}2^{\nu_2} \cdots ) \cdot ((1-1)!)^{\nu_1} ((2-1)!)^{\nu_2(}(3-1)!)^{\nu_3} \cdots,
$$
equals (by Proposition \ref{Faa})
$$
\frac{n!}{(1!)^{\nu_1}(2!)^{\nu_2}(3!)^{\nu_3} \cdots} \cdot
\frac{1}{\nu_1! \nu_2! \nu_3! \dots} \cdot  ((1-1)!)^{\nu_1} ((2-1)!)^{\nu_2(}(3-1)!)^{\nu_3} \cdots
$$
which in turn, equals:
$$
\frac{n!}{1^{\nu_1} 2^{\nu_2} 3^{\nu_3} \cdots} \cdot
\frac{1}{\nu_1! \nu_2! \nu_3! \dots}.
$$
\end{proof}

\subsection{A concluding remark on permutation statistics}

From their combinatorial definitions, we immediately infer

\begin{proposition}Let $n \in \mathbb{N}$. Then
$$
n! \ = \ \sum_{k = 0}^n \ C(n, k) \ = \ \sum_{(\nu_1, \nu_2, \ldots )} \ C(n;1^{\nu_1}2^{\nu_2} \cdots ).
$$
\end{proposition}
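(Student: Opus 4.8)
The plan is to observe that all three quantities count one and the same set, namely the family $\mathbf{S}_n$ of all $n$-permutations of $\underline{n}$, merely organized according to two different statistics. The foundation is the fact established earlier that $|\mathbf{S}_n| = n!$, together with the \emph{unique factorization of a permutation into disjoint cycles} (Proposition on unique factorization), which guarantees that to each $\sigma \in \mathbf{S}_n$ there is attached a \emph{well-defined} number of cycles and a \emph{well-defined} type $1^{\nu_1}2^{\nu_2} \cdots$. No computation will be required: both identities are byproducts of a classification.

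First I would establish the left equality. By the unique cycle factorization, every $\sigma \in \mathbf{S}_n$ possesses a uniquely determined number of cycles $k$, with $0 \leq k \leq n$. Hence the classes
$$
\mathbf{S}_n^{(k)} \stackrel{def}{=} \{ \ \sigma \in \mathbf{S}_n; \ \sigma \ has \ exactly \ k \ cycles \ \}, \quad k = 0, 1, \ldots, n,
$$
are \emph{pairwise disjoint} and \emph{exhaust} $\mathbf{S}_n$. Since $|\mathbf{S}_n^{(k)}| = C(n, k)$ by the combinatorial definition of Subsection \ref{cycle coeff}, summing cardinalities over this disjoint union yields $n! = \sum_{k=0}^n C(n, k)$.

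Next I would establish the right equality by the same bookkeeping, now with a finer classification. Each $\sigma \in \mathbf{S}_n$ has a uniquely determined type $1^{\nu_1}2^{\nu_2} \cdots$ — again by the uniqueness of the cycle factorization — and this type necessarily satisfies $\nu_1 + 2\nu_2 + 3\nu_3 + \cdots = n$, since the cycle lengths partition the $n$ vertices. Thus the classes indexed by the admissible types $(\nu_1, \nu_2, \ldots)$ form a \emph{second} partition of $\mathbf{S}_n$; for inadmissible types the coefficient $C(n;1^{\nu_1}2^{\nu_2} \cdots)$ vanishes, so extending the sum over \emph{all} $(\nu_1, \nu_2, \ldots)$ changes nothing. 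Summing $C(n;1^{\nu_1}2^{\nu_2} \cdots)$ over these classes gives $n! = \sum_{(\nu_1, \nu_2, \ldots)} C(n;1^{\nu_1}2^{\nu_2} \cdots)$.

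The only point requiring care — and the sole genuine obstacle — is that both statistics, the cycle count and the cycle type, be \emph{single-valued} on $\mathbf{S}_n$, so that the two classifications really are \emph{partitions} and not merely coverings. This is exactly what the unique factorization into disjoint cycles supplies, and it is the reason the argument is legitimate; everything else is the additivity of cardinality over a disjoint union.
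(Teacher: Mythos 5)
Your proof is correct and takes the same route the paper intends: the paper dispatches this proposition with ``from their combinatorial definitions, we immediately infer,'' and your argument---partitioning the $n!$ permutations of $\underline{n}$ once by cycle count and once by type, with the unique factorization into disjoint cycles guaranteeing both statistics are well-defined---is precisely the reasoning left implicit there. Your explicit appeal to unique factorization is a careful filling-in of that gap, not a departure from the paper's approach.
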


\subsection{Derangements} Given a permutation $\sigma : \underline{n} \longleftrightarrow \underline{n}$, 
a \textit{fixed point} of $\sigma$ is an element $i \in \underline{n}$ such that
$$
\sigma(i) = i.
$$
Set
$$
Fix(\sigma) = \{i \in \underline{n}; \ i \ fixed \ point \ of \ \sigma \}.
$$

The permutation $\sigma : \underline{n} \longleftrightarrow \underline{n}$ is said to be 
a \textit{derangement} whenever it has no fixed points, that is
$$
Fix(\sigma) = \emptyset.
$$

For  positive integers, $n \in \mathbb{Z}^+$, the \textit{derangement numbers}
are defined in the following way:
$$
d_n \stackrel{def}{=} \ \# \ \text{derangements \ on \ an \ n-set}.
$$
Clearly, 
$$
d_1 = 0, \quad d_2 = 1.
$$
Indeed, the unique permutation of $\underline{1}$ is the identity permutation $\sigma(1) = 1$.
For $n = 2$, we have two permutations of the set $\underline{2}$:
$$
\sigma(1) = 1, \ \sigma(2) = 2, \quad \tau(1) = 2, \ \tau(2) = 1.
$$

For $n > 2$, we can compute the derangement numbers $d_n$ by means of the following recursion:

\begin{proposition}Given $n \in \mathbb{Z}^+$, with $n > 2$, we have:
$$
d_n = (n - 1) (d_{n - 2} + d_{n - 1}).
$$
\end{proposition}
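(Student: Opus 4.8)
The plan is to apply the \textbf{bad element} method with the last element $n \in \underline{n}$ as bad element, exactly as in the recursions already established for $S(n,k)$ and $C(n,k)$. Since a derangement $\sigma$ has no fixed point, we have $\sigma(n) = k$ for some $k \in \underline{n-1} = \{1, \ldots, n-1\}$, so there are $n-1$ admissible choices for this value. I would fix such a $k$, count the derangements with $\sigma(n) = k$, and then sum over the $n-1$ choices; by symmetry this count will not depend on $k$, which is what produces the factor $(n-1)$.

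With $k = \sigma(n)$ fixed, I would split the derangements into two disjoint and exhaustive classes according to the behaviour of $\sigma$ at $k$. In the first case $\sigma(k) = n$: here $n$ and $k$ form a transposition $n \mapsto k \mapsto n$, and $\sigma$ restricts to a permutation of the remaining $(n-2)$-set $\underline{n-1} \setminus \{k\}$ with no fixed point, that is, a derangement of an $(n-2)$-set; conversely any such derangement extends uniquely by adjoining the $2$-cycle on $\{n,k\}$, so this class contributes $d_{n-2}$. In the second case $\sigma(k) \neq n$: here I would set up a bijection with the derangements of the $(n-1)$-set $\underline{n-1}$. The construction deletes the element $n$ and \emph{reroutes the arrow pointing at $n$}: letting $m = \sigma^{-1}(n)$ be the unique element sent to $n$, define $\tau$ on $\underline{n-1}$ by $\tau(i) = \sigma(i)$ for $i \neq m$ and $\tau(m) = k$.

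The hard part will be verifying that this rerouting is well defined and invertible. I would check that $m \notin \{k, n\}$, which uses precisely the hypotheses $\sigma(n) = k$ and $\sigma(k) \neq n$ of this second case, so that $\tau(m) = k \neq m$ and $\tau$ has no fixed point; that $\tau$ is a bijection of $\underline{n-1}$, since removing the images $\sigma(n) = k$ and $\sigma(m) = n$ from the full image of $\sigma$ and reinstating $k$ as $\tau(m)$ recovers exactly $\underline{n-1}$; and that the inverse map sends a derangement $\tau$ of $\underline{n-1}$ to $\sigma$ by setting $m = \tau^{-1}(k)$, $\sigma(n) = k$, $\sigma(m) = n$, and $\sigma(i) = \tau(i)$ otherwise, where $\sigma(k) = \tau(k) \neq n$ is guaranteed because $\tau$ has no fixed point (so $\tau^{-1}(k) \neq k$). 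Once these checks are in place, the second class contributes $d_{n-1}$.

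Summing the two contributions for a fixed $k$ gives $d_{n-2} + d_{n-1}$, and summing over the $n-1$ choices of $k = \sigma(n)$ yields $d_n = (n-1)(d_{n-2} + d_{n-1})$, as claimed. I expect the combinatorial bookkeeping in the two cases to be routine; the only genuinely delicate point is confirming that the rerouting construction in the second case lands in the derangements of $\underline{n-1}$ and is a genuine bijection, which is where the condition $\sigma(k)\neq n$ is indispensable.
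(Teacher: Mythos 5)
Your proposal is correct and takes essentially the same approach as the paper: the same bad-element split into the case where $n$ lies in a $2$-cycle (contributing $(n-1)d_{n-2}$) and the case where the cycle through $n$ is longer (contributing $(n-1)d_{n-1}$). Your rerouting map $\tau(m)=k$ with $m=\sigma^{-1}(n)$ is precisely the inverse of the paper's construction, which inserts $n$ onto one of the $n-1$ arrows of a derangement digraph on $\underline{n-1}$; your parametrization by $k=\sigma(n)$ and the paper's by the chosen arrow $m \rightarrow \tau(m)$ correspond under the bijection $m \mapsto \tau(m)$, so the counts agree step by step.
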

\begin{proof} We shall use the bad element method. Fix $n \in \underline{n}$ as bad 
element and consider the length of the (unique) cycle $C_n$ (of a derangement $\sigma$) that contains $n$.

We have two cases:
\begin{enumerate}

\item The length of $C_n$ equals $2$. In how many ways can we  construct these derangements? 
The $2$-cycle $C_n$ can be choosen in $n - 1$ different ways; we must only specify
the second element of $C_n$, that is, any $i \in \underline{n-1} = \{1, 2, \ldots, n-1 \}$. On the remaining
$(n-2)$-set 
$$
\underline{n-1} - \{i \}
$$
 we have to construct again a derangement, and this 
can be done in $d_{n-2}$ ways. Then, the total number in this case will be:
$$
(n - 1)d_{n-2}.
$$

\item The length of $C_n$ is strictly greater than $2$.  
Given such a derangement $\sigma$, consider the associated permutation digraph
$\stackrel{\rightarrow}{G_\sigma}$. This permutation digraph has no loops, and
the (unique) cycle $C_n$ that contains $n$ has at least $3$ vertices. How can we construct
these permutation digraphs? First, we shall costruct a permutation digraph $\stackrel{\rightarrow}{G^*}$
(with no loops - derangement!) on the first $n-1$ elements $\underline{n-1}$: 
this can be done in $d_{n - 1}$ different ways.

Now, we must insert the bad element $n$. This can be done putting $n$ on any existing arrow
of $\stackrel{\rightarrow}{G^*}$, in order to split it into a pair of two consecutive arrows.
But $\stackrel{\rightarrow}{G^*}$ is a permutation digraph on $n-1$ vertices. Thus, the number 
of arrows is $n-1$. Hence, the insertion of the bad element $n$ can be performed in $n-1$
different ways. Therefore, the total number in this second case will be:
$$
(n - 1)d_{n-1}.
$$
\end{enumerate}
Hence,
$$
d_n = (n - 1)d_{n-2} + (n - 1)d_{n-1}.
$$
\end{proof}

\begin{example}We have:
$$
d_1 = 0, \ d_2 = 1, \ d_3 = 2, \ d_4 = 6, \ d_5 = 32, \ d_6 = 190, \ d_7 = 1332,\ d_8 = 10654, \ldots
$$
\end{example} \qed

\section{Some polynomial identities}

\subsection{The polynomial sequences of power polynomials \ $x^n$, 
rising factorial polynomials \ $ \langle x \rangle _k$  and falling factorial polynomials \
$(x)_k$}

We will consider three sequences of polynomials in the algebra (vector space) $\mathbb{R}[x]$:

\begin{itemize}

\item [--]

The sequence of \textit{power polynomials}:
\begin{equation}\label{powers}
\{ x^n; \ n 	\in \mathbb{N} \}.
\end{equation}

\item [--]

The sequence of \textit{rising factorial polynomials}:
\begin{equation}\label{raising}
\{ \langle x \rangle _n; \ n 	\in \mathbb{N} \},
\end{equation}
where
$$
\langle x \rangle _0 \stackrel{def}{=}  1, 
\quad \langle x \rangle _n \stackrel{def}{=}  x (x + 1) \cdots (x + n - 1)  \ for \ n > 0.
$$

\item [--]

The sequence of \textit{falling factorial polynomials}:
\begin{equation}\label{falling}
\{ (x)_n; \ n 	\in \mathbb{N} \},
\end{equation}
where
$$
(x)_0 \stackrel{def}{=}  1, 
\quad (x)_n \stackrel{def}{=}  x (x - 1) \cdots (x - n + 1)  \ for \ n > 0.
$$
\end{itemize}

Cleary, we have
\begin{proposition} The sequences \emph{(\ref{powers})}, \emph{(\ref{raising})}, \emph{(\ref{falling})}
are \emph{bases} of the vector space of polynomials $\mathbb{R}[x]$.
\end{proposition}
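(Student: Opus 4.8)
The plan is to reduce all three cases to a single structural observation: each of the three families consists of monic polynomials whose degrees run through $0, 1, 2, \ldots$ exactly once. Indeed, from the defining products one reads off immediately that $x^n$, that $\langle x \rangle_n = x(x+1)\cdots(x+n-1)$, and that $(x)_n = x(x-1)\cdots(x-n+1)$ are each monic of degree exactly $n$ (with the convention that the degree-$0$ member of each family is the constant $1$). This is the only computation required, and it is routine. The whole proof then rests on a general principle, which I would isolate and prove once.

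The principle is: any sequence $\{p_n\}_{n\in\mathbb{N}}$ in $\mathbb{R}[x]$ with $\deg p_n = n$ for every $n$ is a basis. For the spanning property I would argue by induction on the degree $N$ of a target polynomial $q$: since $p_N$ is monic of degree $N$, I can subtract the appropriate scalar multiple $c\,p_N$, with $c$ the leading coefficient of $q$, to obtain a polynomial of degree strictly less than $N$, to which the inductive hypothesis applies; the base case $N=0$ is clear. This exhibits every polynomial as a finite linear combination of the $p_n$. Equivalently, one may observe that the transition matrix between the monomial family $\{x^n\}$ and $\{p_n\}$, restricted to any truncation, is triangular with unit diagonal and hence invertible, so the two families span the same finite-dimensional truncation $\mathbb{R}[x]_{\le N}$.

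For linear independence I would take any finite relation $\sum_{i=0}^{m} c_i p_i = 0$ with $c_m \neq 0$ and examine the coefficient of $x^m$: among $p_0, \ldots, p_m$ only $p_m$ has degree $m$, so the coefficient of $x^m$ in the sum equals $c_m$ times the (unit) leading coefficient of $p_m$, which is nonzero, a contradiction. Hence every finite subfamily is linearly independent, which is precisely what independence means for an infinite family in $\mathbb{R}[x]$. Combining spanning and independence yields the principle.

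Finally I would apply this principle three times, to $p_n = x^n$, to $p_n = \langle x \rangle_n$, and to $p_n = (x)_n$, which gives the assertion; the case of the power monomials was in any event already recorded in the Remark identifying $\mathbb{R}[[t]]$ with $(\mathbb{R}[t])^*$. The only subtlety worth flagging is the infinite-dimensional setting: because $\mathbb{R}[x]$ has countable dimension and every polynomial has finite degree, \emph{basis} here means the purely algebraic (Hamel) notion, and all steps take place inside the finite-dimensional truncations $\mathbb{R}[x]_{\le N}$, so no convergence or topological issue arises. The main obstacle is therefore only bookkeeping, namely stating the degree and leading-coefficient claims for the two factorial families cleanly, rather than any genuine difficulty.
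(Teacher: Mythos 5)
Your proof is correct: the degree-triangularity principle (monic $p_n$ with $\deg p_n = n$, spanning by induction on degree, independence by comparing leading coefficients) is exactly the standard argument, and all three families are indeed monic of degree $n$. The paper itself offers no proof --- the proposition is prefaced only by ``Clearly, we have'' --- so your write-up simply supplies, correctly and completely, the justification the authors leave implicit.
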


\subsection{The coefficients $C(n, k)$ and the Stirling numbers  of the first kind $s(n, k)$}

We have a remarkable expansion formula for the rising factorial polynomials
into power polynomials.

\begin{proposition} Let $n \in  \mathbb{N}$. Then,
\begin{equation}\label{raising exp}
\langle x \rangle _n \stackrel{!}{=} \ \sum_{k = 0}^n \ C(n, k) \ x^k,
\end{equation} 
where the coefficients $C(n, k)$'s are the permutation numbers defined in subsection  \ref{cycle coeff}.
\end{proposition}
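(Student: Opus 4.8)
The plan is to exploit that both sides of (\ref{raising exp}) are polynomials in $x$ of degree $n$, so it suffices to check their equality at infinitely many points; the natural choice is $x = m$ for every $m \in \mathbb{Z}^+$. Once $\langle m \rangle_n = \sum_{k=0}^n C(n,k)\,m^k$ is established for all positive integers $m$, the two polynomials agree at infinitely many arguments and hence coincide identically.

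To verify the identity at $x = m$, I would give each side a combinatorial meaning. The right-hand side $\sum_k C(n,k)\,m^k$ counts $m$-colored permutations of $\underline{n}$: pairs $(\sigma, c)$ where $\sigma$ is an $n$-permutation and $c$ assigns to each cycle of $\sigma$ one of $m$ available colors, since a permutation with exactly $k$ cycles (there are $C(n,k)$ such) admits precisely $m^k$ colorings. The left-hand side $\langle m \rangle_n = m(m+1)\cdots(m+n-1)$ is, by Proposition \ref{queues close form}, the number $L_n$ of arrangements of $n$ flags on $m$ flagpoles. Thus the claim reduces to a bijection between $m$-colored permutations of $\underline{n}$ and flag arrangements.

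The key step is to build this correspondence by the bad element method, mirroring the flag recursion (\ref{queue rec}). I would insert the elements $1, 2, \ldots, n$ one at a time: an element $j$ inserted into a colored permutation of $\underline{j-1}$ may either be spliced into an existing cycle immediately after one of the $j-1$ elements already present (splitting the arrow $a \to \sigma(a)$ into $a \to j \to \sigma(a)$, leaving the cycle count and its color unchanged), or be placed as a new fixed-point loop carrying any one of the $m$ colors. This gives exactly $(j-1) + m = m + j - 1$ choices at step $j$, so the count of $m$-colored permutations obeys the same recursion $L_j = (m+j-1)L_{j-1}$, $L_1 = m$, as the flag problem, yielding $\langle m \rangle_n$.

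The main obstacle is the bookkeeping of this insertion: one must check that every $m$-colored permutation arises exactly once, i.e. that the ``splice after an existing element / start a new colored loop'' dichotomy is both exhaustive and injective. This is the same delicate point already met in the recursion $C(n,k) = C(n-1,k-1) + (n-1)C(n-1,k)$ (inserting the bad element on an existing arrow versus as a loop), now refined by the color datum; the inverse map simply deletes $n$, recording its color if it was a loop. A purely formal shortcut, bypassing the colored model, is a one-line induction on $n$: combining that recursion for $C(n,k)$ with $\langle x \rangle_n = (x+n-1)\langle x \rangle_{n-1}$ gives $\sum_k C(n,k)\,x^k = (x+n-1)\sum_k C(n-1,k)\,x^k$, closing the induction from the base case $n=0$.
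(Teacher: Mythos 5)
Your proof is correct, but your main argument takes a genuinely different route from the paper. The paper proceeds purely formally: it writes $\langle x \rangle_n = \sum_{k=0}^n c(n,k)\,x^k$ with unknown coefficients, uses $\langle x \rangle_n = \langle x \rangle_{n-1}(x+n-1)$ to extract the recursion $c(n,k) = c(n-1,k-1) + (n-1)\,c(n-1,k)$, checks the initial conditions, and concludes $c(n,k) = C(n,k)$ by uniqueness of a double sequence determined by initial conditions and recursion rule --- note that this is exactly the ``one-line induction'' you mention at the end as a formal shortcut, so your closing remark essentially reproduces the paper's proof. Your main argument instead gives the identity combinatorial content: evaluating at $x = m \in \mathbb{Z}^+$, you interpret $\sum_k C(n,k)\,m^k$ as counting permutations with cycles colored by $m$ colors, match this against the flag-arrangement count $L_n = \langle m \rangle_n$ of Proposition \ref{queues close form} via the insertion recursion $L_j = (m+j-1)L_{j-1}$ (whose exhaustivity and injectivity you correctly reduce to the same splice-or-loop dichotomy underlying the recursion for $C(n,k)$, with deletion of $j$ as inverse), and then pass from agreement at infinitely many integer points to the polynomial identity. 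What your route buys is a bijective explanation in the spirit of the paper's own stated philosophy (formulae as byproducts of constructions), and it mirrors precisely the technique the paper itself uses one subsection later to prove $x^n = \sum_k S(n,k)\,(x)_k$; the one caveat is that it relies on the identity principle for polynomials (Corollary \ref{ident princ}), which in the paper's ordering is only established \emph{after} this proposition, whereas the paper's recursion-matching proof needs nothing beyond what precedes it. What the paper's route buys is brevity and logical self-containment at this point of the text; what it loses is any combinatorial insight into \emph{why} cycle-counting numbers expand the rising factorial.
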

\begin{proof} We canonically write
\begin{equation}\label{first}
\langle x \rangle _n \stackrel{!}{=} \ \sum_{k = 0}^n \ c(n, k) \ x^k, \quad c(n, k) \in \mathbb{R}.
\end{equation}

Clearly, $c(0, k) = \ \delta_{0 k}$ and $c(n, 0) = \ \delta_{n 0}$. Indeed,  the polynomials
$\langle x \rangle _n$ have zero constant term whenever $n > 0$. 

Now, for $n > 0$, 
$$
\langle x \rangle_n \ = \ \langle x \rangle_{n - 1} (x + n - 1),
$$
that is
\begin{equation}\label{second}
\langle x \rangle_n \ = \  \sum_{h = 0}^{n - 1} \ \ c(n - 1, h) \ x^{h + 1} + 
(n - 1) \left( \sum_{h = 0}^{n - 1} \ \ c(n - 1, h) \ x^h \right).
\end{equation}
By setting (\ref{first}) = (\ref{second}) and $h+1=k$, we get
$$
c(n , k) = c(n - 1, k - 1) + (n-1)c(n - 1, k).
$$
The double sequences $( \ c(n, k) \ )_{n, k} \in \mathbb{N}$ and 
$( \ C(n, k) \ )_{n, k} \in \mathbb{N}$ have the same initial conditions and 
recursion rule: hence, they are equal.
\end{proof}

By definition,  the \textit{Stirling numbers of the first kind}  $s(n, k)$ are the coefficients in the 
expansions of  falling factorial polynomials  into power polynomials, that is
\begin{equation}\label{falling exp}
(x)_n  \stackrel{def}{=} \ \sum_{k = 0}^n \ s(n, k) \ x^k, \quad s(n, k) \in \mathbb{Z}.
\end{equation}

By comparing eqs. (\ref{raising exp}) and (\ref{falling exp}), it immediately follows
\begin{proposition} Let $n, k \in \mathbb{N}$. Then,
$$
s(n, k) = \ (-1)^{n - k} \ C(n, k).
$$
\end {proposition}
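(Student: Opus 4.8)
The plan is to exploit the elementary algebraic relationship between the rising and falling factorial polynomials and then compare coefficients against a common basis. The excerpt already supplies the two expansions I need: equation (\ref{raising exp}) gives $\langle x \rangle_n = \sum_{k=0}^n C(n,k)\,x^k$, while the defining equation (\ref{falling exp}) gives $(x)_n = \sum_{k=0}^n s(n,k)\,x^k$. Since the power polynomials $\{x^n\}$ form a basis of $\mathbb{R}[x]$, it suffices to produce a single polynomial identity linking $\langle x \rangle_n$ and $(x)_n$ and then read off the coefficients.

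First I would establish the key substitution identity
$$
\langle x \rangle_n = (-1)^n \, (-x)_n.
$$
This is immediate from the definitions: writing out $(-x)_n = (-x)(-x-1)\cdots(-x-n+1)$ and factoring $-1$ out of each of the $n$ factors yields $(-1)^n\, x(x+1)\cdots(x+n-1) = (-1)^n \langle x \rangle_n$, and rearranging gives the claim. This is the one genuinely substantive step, and it is purely mechanical; everything else is bookkeeping.

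Next I would substitute $-x$ into the defining expansion (\ref{falling exp}) and apply the identity above. Evaluating $(x)_n$ at $-x$ gives $(-x)_n = \sum_{k=0}^n s(n,k)\,(-x)^k = \sum_{k=0}^n (-1)^k s(n,k)\,x^k$, so that
$$
\langle x \rangle_n = (-1)^n (-x)_n = \sum_{k=0}^n (-1)^{n+k}\, s(n,k)\, x^k.
$$
Comparing this with $\langle x \rangle_n = \sum_{k=0}^n C(n,k)\,x^k$ and using linear independence of the monomials, I obtain $C(n,k) = (-1)^{n+k} s(n,k)$ for every $k$. Since $(-1)^{n+k} = (-1)^{n-k}$, multiplying through by $(-1)^{n-k}$ gives exactly $s(n,k) = (-1)^{n-k} C(n,k)$, as desired.

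The main obstacle, such as it is, lies only in the sign arithmetic: one must keep straight the two factors of $(-1)$ (one from the $(-1)^n$ in the substitution identity, one from each power $(-x)^k$) and recognize that $(-1)^{n+k}$ and $(-1)^{n-k}$ coincide. There is no analytic or combinatorial difficulty here, since the proof is entirely a formal comparison of two basis expansions of the same polynomial.
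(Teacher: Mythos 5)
Your proof is correct and follows essentially the same route as the paper, which simply asserts that the identity follows ``by comparing'' the expansions $\langle x \rangle_n = \sum_{k=0}^n C(n,k)\,x^k$ and $(x)_n = \sum_{k=0}^n s(n,k)\,x^k$; your substitution identity $\langle x \rangle_n = (-1)^n(-x)_n$ is precisely the mechanism that comparison tacitly relies on. You have merely made explicit the sign bookkeeping (including the observation $(-1)^{n+k} = (-1)^{n-k}$) that the paper leaves to the reader.
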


\subsection{The unique factorization theorem for functions}

Let $X$ and $Y$ be sets,  $F : X 	\rightarrow Y$ be a function.

Define an equivalence relation $\sim_F$ on the domain set $X$ by setting:
$$
x \sim_F x' \Leftrightarrow F(x) =  F(x').
$$

Let $X / \sim_F$ denote the quotient set of $X$ with respect to $\sim_F$, that is -
in the language of partitions - the associated partition $\Pi_{\sim_F}$. 

Let
$$
\pi : X \ \stackrel{su}{\rightarrow} \  X / \sim_F, \quad \pi : x \mapsto [x]_{\sim_F}
$$
be the \textit{canonical projection}. 

Then

\begin{proposition}\label{fact THM} The function $F$ uniquely factorizes into the composition
of a surjective function and an injective function
$$
F = \overline{F} \circ \pi,
$$
where
$$
\overline{F} :  X / \sim_F \ \stackrel{1-1}{\rightarrow} \ Y
$$
is the function
\begin{equation}\label{welldef}
\overline{F} : [x]_{\sim_F} \mapsto F(x).
\end{equation}
\end{proposition}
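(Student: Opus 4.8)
The plan is to verify four things, in this order: that the rule \eqref{welldef} genuinely defines a function $\overline{F}$ (single-valuedness), that this $\overline{F}$ is injective, that $F = \overline{F}\circ\pi$, and finally that $\overline{F}$ is the only function making the factorization work. The surjectivity of $\pi$ onto $X/\sim_F$ is immediate from its definition as the canonical projection, so it needs no separate argument.

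First I would settle the delicate point, the \emph{well-definedness} of $\overline{F}$. A single class of $X/\sim_F$ admits many representatives: we may have $[x]_{\sim_F} = [x']_{\sim_F}$ with $x \neq x'$, and the prescription \eqref{welldef} then assigns to this class both the value $F(x)$ and the value $F(x')$. For $\overline{F}$ to be a bona fide function these must coincide, and this is exactly where the construction of $\sim_F$ pays off: by the very definition of the relation, $[x]_{\sim_F} = [x']_{\sim_F}$ holds if and only if $x \sim_F x'$, which in turn holds if and only if $F(x) = F(x')$. Hence the assignment is consistent and $\overline{F} : X/\sim_F \to Y$ is well defined.

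Next I would read the same biconditional in the reverse direction to get \emph{injectivity}: if $\overline{F}([x]_{\sim_F}) = \overline{F}([x']_{\sim_F})$, then $F(x) = F(x')$ by \eqref{welldef}, whence $x \sim_F x'$ and therefore $[x]_{\sim_F} = [x']_{\sim_F}$. The \emph{factorization identity} is then a one-line check: for every $x \in X$ we have $(\overline{F}\circ\pi)(x) = \overline{F}([x]_{\sim_F}) = F(x)$, so $F = \overline{F}\circ\pi$.

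Finally, for \emph{uniqueness}, I would suppose $F = G\circ\pi$ for some $G : X/\sim_F \to Y$. Since $\pi$ is surjective, every element of $X/\sim_F$ is of the form $[x]_{\sim_F} = \pi(x)$, and there $G([x]_{\sim_F}) = G(\pi(x)) = F(x) = \overline{F}([x]_{\sim_F})$, forcing $G = \overline{F}$. The only genuine obstacle is the well-definedness step, and even that is almost tautological here, because $\sim_F$ was manufactured precisely so that $F$ is constant on each equivalence class; all the remaining steps are bookkeeping with the single biconditional $[x]_{\sim_F} = [x']_{\sim_F} \Leftrightarrow F(x) = F(x')$.
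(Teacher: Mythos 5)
Your proof is correct and follows essentially the same route as the paper: the single chain of biconditionals $[x]_{\sim_F} = [x']_{\sim_F} \Leftrightarrow x \sim_F x' \Leftrightarrow F(x) = F(x') \Leftrightarrow \overline{F}([x]_{\sim_F}) = \overline{F}([x']_{\sim_F})$, read left-to-right for well-definedness and right-to-left for injectivity, followed by the one-line verification that $(\overline{F}\circ\pi)(x) = F(x)$. The only difference is that you also verify the uniqueness clause explicitly, using the surjectivity of $\pi$ to force $G = \overline{F}$ for any other factorization $F = G\circ\pi$ --- a point the paper asserts in the statement but leaves implicit in its proof, so this is a small completion rather than a different approach.
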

\begin{proof} First of all, we have to check that  definition (\ref{welldef}) is well-posed.

But
$$
[x]_{\sim_F} = [x']_{\sim_F}  \Leftrightarrow x \sim_F x' \Leftrightarrow F(x) =  F(x')
\Leftrightarrow \overline{F}(\ [x]_{\sim_F} \ ) = \overline{F}(\ [x']_{\sim_F} \ ).
$$

The implications from left to right mean that (\ref{welldef}) is well-posed.
The implications from right to left mean that $\overline{F}$ is injective.

Finally, we have:
$$
(\overline{F} \circ \pi)(x) = \overline{F} (  \pi(x) ) = \overline{F}(\ [x]_{\sim_F} \ ) = F(x).
$$
\end{proof}

We rephrase Proposition \ref{fact THM} in the following way:

\begin{corollary}\label{fact THM bis} Let $\Pi$ be a partition of the set $X$.  The map
$$
 F \ \mapsto \ \overline{F}
$$
defines a \emph{bijection}
$$
\{ \  F : X \rightarrow Y; \ X / \sim_F = \Pi \ \} \ \longleftrightarrow \
\{ \ \overline{F} :  \Pi \ \stackrel{1-1}{\rightarrow} \ Y \ \}.
$$
\end{corollary}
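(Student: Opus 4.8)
The plan is to produce an explicit two-sided inverse to the map $F \mapsto \overline{F}$. The forward direction is already supplied by Proposition \ref{fact THM}: every $F : X \rightarrow Y$ factors as $F = \overline{F} \circ \pi$ with $\overline{F}$ injective. The point to notice first is that once the partition $\Pi$ is fixed, the canonical projection $\pi : X \rightarrow \Pi$ sending each $x$ to the block containing it is \emph{one and the same map} for every $F$ on the left-hand side, precisely because $X/\sim_F = \Pi$ forces the equivalence classes $[x]_{\sim_F}$ to be the blocks of $\Pi$. Hence the assignment $F \mapsto \overline{F}$ genuinely lands in the set of injections $\Pi \stackrel{1-1}{\rightarrow} Y$, so the displayed arrow is well-defined.

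For the inverse, given any injection $g : \Pi \rightarrow Y$, I would set $F_g \stackrel{def}{=} g \circ \pi$. Checking the two roundtrips is then routine. Starting from an $F$ on the left, $F_{\overline{F}} = \overline{F} \circ \pi = F$ directly by the factorization of Proposition \ref{fact THM}. Starting from $g$, since $F_g = g \circ \pi$ and also $F_g = \overline{F_g} \circ \pi$, and $\pi$ is surjective, the two agree on all of $\Pi$; more explicitly $\overline{F_g}(\,[x]_{\sim_{F_g}}\,) = F_g(x) = g(\pi(x)) = g(\,[x]\,)$, so $\overline{F_g} = g$.

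The step requiring real care — and the only genuine content of the argument — is verifying that $F_g = g \circ \pi$ has induced partition \emph{exactly} $\Pi$, i.e. $X/\sim_{F_g} = \Pi$, so that $F_g$ really belongs to the left-hand set. Here the injectivity of $g$ is indispensable: for $x, x' \in X$ one has $F_g(x) = F_g(x')$ iff $g(\pi(x)) = g(\pi(x'))$ iff $\pi(x) = \pi(x')$ (using that $g$ is one-to-one) iff $x$ and $x'$ lie in the same block of $\Pi$. Thus $\sim_{F_g}$ is precisely the equivalence relation whose classes are the blocks of $\Pi$, giving $X/\sim_{F_g} = \Pi$. Were $g$ merely an arbitrary function, distinct blocks could be collapsed to the same value and the induced partition would be strictly coarser than $\Pi$; this is exactly why the target must be restricted to injections.

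Since $F \mapsto \overline{F}$ and $g \mapsto g \circ \pi$ are mutually inverse, both are bijections, which proves the claim. I expect no serious obstacle beyond the bookkeeping of the last paragraph; the whole argument is really the observation that fixing $\Pi$ pins down $\pi$, whereupon the factorization theorem trades a function $F$ against its injective part $\overline{F}$.
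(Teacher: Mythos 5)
Your proof is correct and follows exactly the route the paper intends: the paper states Corollary \ref{fact THM bis} without any proof of its own, presenting it as an immediate rephrasing of Proposition \ref{fact THM}, and your explicit inverse $g \mapsto g \circ \pi$ together with the two roundtrip checks supplies precisely the details left implicit there. In particular, your verification that injectivity of $g$ forces $X/\sim_{g \circ \pi} \ = \ \Pi$ exactly (rather than something coarser) is the one point of genuine substance in the omitted argument, and you handle it correctly.
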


Now, assume that both $X$ and $Y$ are finite, say $X = \underline{n}$ , $Y = \underline{m}$.
Then,
$$
m^n =  |\{ F : \underline{n} \rightarrow \underline{m} \}| = 
| \stackrel{.}{\bigcup}_{ \Pi \ partition \ of \ \underline{n}}
\left(  \{ \  F : \underline{n} \rightarrow \underline{m}; \ \underline{n} / \sim_F = \Pi \ \} \right)|
$$
equals, by Corollary \ref{fact THM bis}, 
$$
| \stackrel{.}{\bigcup}_{ \Pi \ partition \ of \ \underline{n}}
\left(  \{ \ \overline{F} :  \Pi \ \stackrel{1-1}{\rightarrow} \ \underline{m} \ \} \right)| = 
\sum_{ \Pi \ partition \ of \ \underline{n}}
| \{ \ \overline{F} :  \Pi \ \stackrel{1-1}{\rightarrow} \ \underline{m} \ \} |
$$
which in turn, equals 
$$
\sum_{k = 0}^n \ \left( \sum_{ \Pi \ k-partition \ of \ \underline{n}}
| \{ \ \overline{F} :  \Pi \ \stackrel{1-1}{\rightarrow} \ \underline{m} \ \} | \right) =
\sum_{k = 0}^n \ S(n, k) \ (m)_k.
$$
Therefore, we proved the following class of combinatorial identities:
\begin{corollary}

\end{corollary}\label{stirling two first} Let $m, n \in \mathbb{N}$. Then,
$$
m^n \ = \ \sum_{k = 0}^n \ S(n, k) \ (m)_k.
$$

\subsection{The Stirling  numbers of the $2$nd kind $S(n, k)$ as expansion coefficients
in the vector space of polynomials $\mathbb{R}[x]$}

Recall that, given a non zero polynomial of degree $deg(p(x)) = n$, $n \in \mathbb{N}$, say  
$$
p(x) = a_0 + a_1 x + a_2 x^2 + \cdots + a_n x^n \in \mathbb{R}[x], \quad a_n \neq 0
$$
and a real number $\alpha \in \mathbb{R}$, the \textit{evaluation of} 
$p(x)$ \textit{at} $\alpha$ is the real number
$$
E_\alpha(p(x)) = a_0 + a_1 \alpha  + a_2 \alpha^2 + \cdots + a_n \alpha^n \in \mathbb{R}.
$$

A real number $\alpha$ is said to be a \textit{root} of a non zero polynomial $p(x)$
whenever
$$
E_\alpha(p(x)) = a_0 + a_1 \alpha  + a_2 \alpha^2 + \cdots + a_n \alpha^n = 0.
$$ 

From \textit{Ruffini's Theorem}, it follows:
\begin{proposition}\label{Ruff thm}The number of roots (even counted with multiplicities) 
of a non zero polynomial $p(x)$ is less than or equal to its degree $deg(p(x))$.
\end{proposition}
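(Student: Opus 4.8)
\noindent The plan is to proceed by induction on the degree $n = \deg(p(x))$, using \emph{Ruffini's Theorem} as the only nontrivial ingredient: namely, that whenever $\alpha$ is a root of $p(x)$, the linear polynomial $(x - \alpha)$ divides $p(x)$, so that
$$
p(x) = (x - \alpha) \, q(x)
$$
for a uniquely determined polynomial $q(x)$ with $\deg(q(x)) = n - 1$. This is exactly the reduction that trades a root of $p(x)$ for a polynomial of strictly smaller degree, which is what makes an induction on the degree the natural line of attack.

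\medskip

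\noindent For the base case $n = 0$, a nonzero polynomial of degree $0$ is a nonzero constant $a_0 \neq 0$, so $E_\alpha(p(x)) = a_0 \neq 0$ for every $\alpha$; it has no roots at all, and $0 \leq 0$ as required. For the inductive step, I assume the statement for all nonzero polynomials of degree strictly less than $n$ and take $p(x)$ of degree $n$. If $p(x)$ has no roots, then trivially $0 \leq n$ and there is nothing to prove. Otherwise I fix a root $\alpha$, apply Ruffini's Theorem to write $p(x) = (x - \alpha) q(x)$ with $\deg(q(x)) = n - 1$, and invoke the inductive hypothesis on $q(x)$, which therefore has at most $n - 1$ roots counted with multiplicity.

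\medskip

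\noindent The step I expect to require the most care is the bookkeeping of \emph{multiplicities}, since the statement counts roots with multiplicity. Recalling that the multiplicity of a root $\beta$ is the largest $m$ with $(x - \beta)^m$ dividing $p(x)$, the point to verify is that passing from $p(x)$ to $q(x)$ lowers the multiplicity of $\alpha$ by exactly one while leaving the multiplicity of every other root $\beta \neq \alpha$ unchanged; the latter holds because the factor $(x - \beta)$ cannot be absorbed into $(x - \alpha)$, so every power of $(x - \beta)$ dividing $p(x)$ already divides $q(x)$. This uses that $\mathbb{R}[x]$ has no zero divisors. Granting this, the total number of roots of $p(x)$ counted with multiplicity is exactly one more than that of $q(x)$, giving at most $1 + (n - 1) = n$, which closes the induction and completes the proof.
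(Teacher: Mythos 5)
Your proof is correct and takes exactly the route the paper intends: the paper states this proposition with no proof at all, merely prefacing it with ``From \textit{Ruffini's Theorem}, it follows,'' and your induction on $\deg(p(x))$ --- splitting off a factor $(x-\alpha)$ via Ruffini and reducing to a polynomial of degree $n-1$ --- is precisely the standard argument being alluded to. The multiplicity bookkeeping you single out is also handled soundly, since for a root $\beta \neq \alpha$ one has $(\beta - \alpha) \neq 0$, so writing $q(x) = (x-\beta)^{m'} r(x)$ with $r(\beta) \neq 0$ gives $p(x) = (x-\beta)^{m'} (x-\alpha) r(x)$ with $(\beta-\alpha)r(\beta) \neq 0$, confirming that multiplicities at $\beta$ pass unchanged from $p(x)$ to $q(x)$ while the multiplicity at $\alpha$ drops by exactly one.
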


From this, it follows:
\begin{corollary}(The identity principle for polynomials)\label{ident princ}
Let $p(x), q(x) \in \mathbb{R}[x]$ be non non zero polynomials and assume that
they admit an \emph{infinite} family of equal evaluations, say
$$
E_{\alpha_m}(p(x)) = E_{\alpha_m}(q(x)), \quad \alpha_m \in \mathbb{R}, \quad m \in \mathbb{N}.
$$

Then,
$$
p(x) = q(x).
$$
\end{corollary}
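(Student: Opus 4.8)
The plan is to collapse the hypothesis onto a single polynomial, the difference $d(x) = p(x) - q(x)$, and then let Proposition \ref{Ruff thm} do all the work. First I would record the (routine) fact that evaluation at a fixed point is additive: directly from the definition of $E_\alpha$ one has $E_\alpha(p(x) - q(x)) = E_\alpha(p(x)) - E_\alpha(q(x))$ for every $\alpha \in \mathbb{R}$. Applying this at each $\alpha_m$ and using the hypothesis $E_{\alpha_m}(p(x)) = E_{\alpha_m}(q(x))$ gives $E_{\alpha_m}(d(x)) = 0$ for every $m \in \mathbb{N}$. In other words, every $\alpha_m$ is a root of $d(x)$, so $d(x)$ possesses an infinite family of (pairwise distinct) roots $\alpha_0, \alpha_1, \alpha_2, \ldots$.

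Next comes the decisive dichotomy, which I would phrase as a proof by contradiction. Suppose $d(x)$ were a \emph{non zero} polynomial. Then it has a well-defined finite degree $\deg(d(x))$, and Proposition \ref{Ruff thm} bounds the number of its roots (even counted with multiplicity) by $\deg(d(x))$, a finite quantity. This contradicts the fact just established that $d(x)$ has infinitely many distinct roots. Therefore the assumption fails and $d(x)$ must be the identically zero polynomial, whence $p(x) = q(x)$, which is the assertion.

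The only genuine subtlety to watch is that Proposition \ref{Ruff thm} is a statement about \emph{non zero} polynomials (the zero polynomial vanishes everywhere and has no meaningful degree bound), so the argument cannot be run as a naive ``degree count'' applied to $d(x)$ itself; it must be organized as a contradiction against the hypothetical case $d(x) \neq 0$. One should also make explicit that the family $(\alpha_m)_{m \in \mathbb{N}}$ consists of \emph{distinct} reals, since it is the infinitude of distinct roots, not merely of equal evaluations, that clashes with the finite degree bound. Apart from these two bookkeeping points, the proof is immediate and requires no computation.
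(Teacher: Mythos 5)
Your proof is correct and takes essentially the same route as the paper: both pass to the difference $p(x) - q(x)$, observe that each $\alpha_m$ is a root of it, and invoke Proposition \ref{Ruff thm} to force the difference to be identically zero. Your two ``bookkeeping'' points --- organizing the finite-degree bound as a contradiction against the case $d(x) \neq 0$, and insisting that the $\alpha_m$ be pairwise distinct --- simply make explicit what the paper's terser proof leaves implicit.
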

\begin{proof}Clearly,
$$
E_{\alpha_m}(p(x)) = E_{\alpha_m}(q(x)) \ \iff \ \alpha_m \ is \ a \ root \ of \ p(x) - q(x)
$$
for $m \in \mathbb{N}$. From Proposition \ref{Ruff thm}, it follows
$$
p(x) - q(x) \equiv 0 \ \iff \  p(x) = q(x).
$$
\end{proof}

Notice that we are now able to rewrite Corollary \ref{stirling two first} as follows:

\begin{corollary} Let $n \in \mathbb{N}$. Then,
$$
E_{m}(x^n) \ = \ m^n \ = \ \sum_{k = 0}^n \ S(n, k) \ (m)_k \ = \ E_{m} ( \ \sum_{k = 0}^n \ S(n, k) \ (x)_k \ ), 
$$
for every  $m \in \mathbb{N}$.
\end{corollary}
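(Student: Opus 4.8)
The plan is to read this statement as a chain of three equalities, each of which is either a definition or an already-established identity, so that no genuinely new work is required. First I would dispose of the leftmost equality $E_m(x^n) = m^n$: by the definition of the evaluation map $E_\alpha$ recalled just above, evaluating the monomial $x^n$ at $\alpha = m$ simply substitutes $m$ for $x$, yielding $m^n$.

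For the central equality $m^n = \sum_{k=0}^n S(n,k)\,(m)_k$, I would invoke Corollary \ref{stirling two first} verbatim; this is precisely the combinatorial identity derived from the unique factorization of functions (Proposition \ref{fact THM} together with Corollary \ref{fact THM bis}), and it holds for every $m \in \mathbb{N}$.

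The rightmost equality is where the (very mild) content lies. Here I would use that the evaluation functional $E_m : \mathbb{R}[x] \to \mathbb{R}$ is $\mathbb{R}$-linear, so that it commutes with the finite sum and with the scalar coefficients $S(n,k)$; thus $E_m\bigl(\sum_{k=0}^n S(n,k)(x)_k\bigr) = \sum_{k=0}^n S(n,k)\,E_m\bigl((x)_k\bigr)$. It then remains only to observe that evaluating the falling factorial \emph{polynomial} $(x)_k = x(x-1)\cdots(x-k+1)$ at $m$ returns the falling factorial \emph{number} $(m)_k = m(m-1)\cdots(m-k+1)$, which matches the summands of the middle expression. Chaining the three equalities closes the argument.

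I do not anticipate any real obstacle here: the only points demanding a word of care are the linearity-of-evaluation step and the distinction between the falling factorial viewed as a polynomial in $x$ and as a numerical value at $m$. The genuine payoff of this reformulation is that it exhibits an equality of the two polynomials' evaluations at \emph{every} $m \in \mathbb{N}$ --- an infinite family of coincidences --- which is exactly the hypothesis needed to apply the identity principle for polynomials (Corollary \ref{ident princ}) and thereby upgrade the numerical identity to the polynomial identity $x^n = \sum_{k=0}^n S(n,k)(x)_k$ in the following subsection.
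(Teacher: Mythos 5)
Your proposal is correct and follows essentially the same route as the paper, which presents this corollary as a direct restatement of Corollary~\ref{stirling two first} in evaluation language; you merely make explicit the three silent ingredients (the definition of $E_m$ on $x^n$, the linearity of $E_m$, and the observation that $E_m\bigl((x)_k\bigr)=(m)_k$). Your closing remark about the role of this corollary --- supplying infinitely many equal evaluations so that the identity principle (Corollary~\ref{ident princ}) yields the polynomial identity $x^n=\sum_{k=0}^n S(n,k)\,(x)_k$ --- is exactly how the paper uses it in the next step.
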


Therefore, from Corollary \ref{ident princ} we infer the remarkable polynomial identities in 
$\mathbb{R}[x]$:

\begin{proposition} Let $n \in \mathbb{N}$. Then,
\begin{equation}\label{stirling two second}
x^n \ = \ \sum_{k = 0}^n \ S(n, k) \ (x)_k   \in \mathbb{R}[x].
\end{equation}
\end{proposition}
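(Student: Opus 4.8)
The plan is to deduce this polynomial identity directly from the combinatorial identity $m^n = \sum_{k=0}^n S(n,k)\,(m)_k$ (the Corollary immediately preceding Proposition \ref{Ruff thm}'s subsection, which holds for all $m,n \in \mathbb{N}$) by invoking the identity principle for polynomials, Corollary \ref{ident princ}. The key observation is that the statement we want to prove is an \emph{identity in} $\mathbb{R}[x]$, whereas what we have so far is only an equality of \emph{numbers} for each natural value $m$; the bridge between these two registers is precisely the identity principle.

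First I would set
$$
p(x) = x^n, \qquad q(x) = \sum_{k = 0}^n \ S(n, k) \ (x)_k,
$$
and note that both are genuine elements of $\mathbb{R}[x]$: the right-hand side is a finite linear combination (with coefficients $S(n,k) \in \mathbb{N}$) of the falling factorial polynomials $(x)_k$, which are themselves honest polynomials by their definition in \eqref{falling}. Next I would compute the evaluations $E_m(p(x))$ and $E_m(q(x))$ at an arbitrary $m \in \mathbb{N}$. Since evaluation is linear and multiplicative, $E_m((x)_k) = (m)_k$, so $E_m(q(x)) = \sum_{k=0}^n S(n,k)\,(m)_k$, while $E_m(p(x)) = m^n$.

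The heart of the argument is then to apply the already-established combinatorial identity, which asserts exactly that
$$
m^n \ = \ \sum_{k = 0}^n \ S(n, k) \ (m)_k
$$
for every $m \in \mathbb{N}$. This yields $E_m(p(x)) = E_m(q(x))$ for all $m \in \mathbb{N}$, i.e.\ an infinite family of equal evaluations (at the distinct real points $\alpha_m = m$). By Corollary \ref{ident princ} we conclude $p(x) = q(x)$, which is the desired identity \eqref{stirling two second}.

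I do not expect any serious obstacle here: all the substantive content — both the combinatorial identity and the identity principle — is already in hand, so this proposition is essentially a repackaging of the earlier Corollary into the polynomial ring. The only point demanding a little care is to make sure the evaluation map $E_m$ is treated as an algebra homomorphism so that $E_m\big(\sum_k S(n,k)(x)_k\big) = \sum_k S(n,k)(m)_k$ splits correctly across the sum and the products; this is routine but is the one place where a gap could hide. The strategy is thus to lift a numerical identity valid on the infinite test set $\mathbb{N}$ to a formal identity of polynomials, exactly as the preceding reformulation of Corollary \ref{stirling two first} via the evaluations $E_m$ anticipates.
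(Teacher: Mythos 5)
Your proposal is correct and follows the paper's own route exactly: the paper likewise evaluates $x^n$ and $\sum_{k=0}^n S(n,k)\,(x)_k$ at every $m \in \mathbb{N}$, invokes the combinatorial identity $m^n = \sum_{k=0}^n S(n,k)\,(m)_k$ from Corollary \ref{stirling two first}, and then lifts the equality of evaluations to an identity in $\mathbb{R}[x]$ via the identity principle (Corollary \ref{ident princ}). Your added remark that the evaluation map must be treated as linear so the sum splits correctly is a sound, if implicit, point in the paper's argument as well.
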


\subsection{The     Theorem $\mathbf{s} \ = \ \mathbf{S}^{-1}$}

Recall that, for the Stirling numbers of the first kind, we have (see eq. (\ref{falling exp}))
\begin{equation}\label{falling exp bis}
(x)_n  \stackrel{def}{=} \ \sum_{k = 0}^n \ s(n, k) \ x^k,
\end{equation}
and, for the Stirling numbers of the second  kind (see eq. (\ref{stirling two second}))
\begin{equation}\label{stirling two second bis}
x^n \ = \ \sum_{k = 0}^n \ S(n, k) \ (x)_k. 
\end{equation}

In the vector space $\mathbb{R}[x]$:

- Relation (\ref{falling exp bis}) means that the matrix 
$$
\mathbf{s} = [ \ s(n, k) \ ]_{n, k} \in \mathbb{N}
$$
is the transition matrix from the basis  of \textit{falling factorial polynomials}
$$
\{  (x)_n; \ n \in \mathbb{N} \}.
$$
to the basis of \textit{power polynomials}
$$
\{  x^n; \ n \in \mathbb{N} \}.
$$

- Relation (\ref{stirling two second bis}) means that the matrix 
$$
\mathbf{S} = [ \ S(n, k) \ ]_{n, k} \in \mathbb{N}
$$
is the transition matrix from the basis of \textit{power polynomials}
$$
\{ x^n; \ n \in \mathbb{N} \}
$$
to the basis of the \textit{falling factorial polynomials}
$$
\{ (x)_n; \ n \in \mathbb{N} \}.
$$

From \textit{Linear Algebra}, it follows:

\begin{proposition} We have
$$
\mathbf{s} \times \mathbf{S} = \mathbf{Id} = \mathbf{S} \times \mathbf{s},
$$
that is,
$$
\mathbf{s}^{-1} = \mathbf{S} \ \iff \ \mathbf{s} = \mathbf{S}^{-1}.
$$
\end{proposition}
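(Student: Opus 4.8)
The plan is to read the two matrices $\mathbf{s}$ and $\mathbf{S}$ as the transition matrices between the bases $\{(x)_n\}$ and $\{x^n\}$ of $\mathbb{R}[x]$, and to show directly that composing the two changes of basis returns the original coordinates. Concretely, I would substitute the expansion (\ref{falling exp bis}) into (\ref{stirling two second bis}), and then the other way round, and in each case invoke the linear independence of the relevant basis (equivalently, the identity principle for polynomials, Corollary \ref{ident princ}) to read off the entries of the matrix products.

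First I would start from $x^n = \sum_{k=0}^n S(n,k)(x)_k$ and replace each falling factorial polynomial by its expansion $(x)_k = \sum_{j=0}^k s(k,j)\, x^j$. Interchanging the order of the (finite) summations yields
$$
x^n = \sum_{j=0}^n \left( \sum_{k=j}^n S(n,k)\, s(k,j) \right) x^j.
$$
Since $\{x^j\}$ is a basis of $\mathbb{R}[x]$, the coefficients on the two sides must agree, so the inner sum equals $\delta_{n,j}$. This is exactly the statement that the $(n,j)$ entry of $\mathbf{S}\times \mathbf{s}$ is $\delta_{n,j}$, i.e. $\mathbf{S}\times \mathbf{s} = \mathbf{Id}$.

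Symmetrically, I would start from $(x)_n = \sum_{k=0}^n s(n,k)\, x^k$ and substitute $x^k = \sum_{j=0}^k S(k,j)(x)_j$; comparing coefficients in the basis $\{(x)_j\}$ gives $\sum_k s(n,k)\, S(k,j) = \delta_{n,j}$, that is $\mathbf{s}\times \mathbf{S} = \mathbf{Id}$. Taken together, the two computations establish $\mathbf{s}\times\mathbf{S} = \mathbf{Id} = \mathbf{S}\times\mathbf{s}$.

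The only point requiring a little care — and the step I would flag as the genuine, if mild, obstacle — is that $\mathbf{s}$ and $\mathbf{S}$ are \emph{biinfinite} matrices, so the products must be seen to make sense before they can be called the identity. This is harmless here because $s(n,k) = S(n,k) = 0$ whenever $k > n$: both matrices are lower triangular, so every entry of either product is the finite sum $\sum_{k=j}^{n}$ and the multiplication is well defined. Equivalently, one may phrase the whole argument abstractly, as the proposition's statement already suggests: $\mathbf{s}$ and $\mathbf{S}$ represent the two mutually inverse change-of-basis automorphisms of $\mathbb{R}[x]$, so their matrices are inverse to one another, which is precisely the content of the displayed equivalence $\mathbf{s}^{-1} = \mathbf{S} \iff \mathbf{s} = \mathbf{S}^{-1}$.
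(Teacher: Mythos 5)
Your argument is correct, and it takes a slightly different route from the paper's. The paper proves nothing computationally: having observed that eq.~(\ref{falling exp bis}) exhibits $\mathbf{s}$ as the transition matrix from the basis $\{(x)_n\}$ to the basis $\{x^n\}$, and eq.~(\ref{stirling two second bis}) exhibits $\mathbf{S}$ as the transition matrix in the opposite direction, it simply invokes the standard linear-algebra fact that two such transition matrices are mutually inverse --- essentially the abstract remark you relegate to your final sentence. What you do instead is prove that fact in this concrete instance: substituting one expansion into the other and comparing coefficients in a basis gives $\sum_{k} S(n,k)\,s(k,j) = \delta_{n,j}$ and, symmetrically, $\sum_{k} s(n,k)\,S(k,j) = \delta_{n,j}$, which is exactly the entrywise content of $\mathbf{S}\times\mathbf{s} = \mathbf{Id} = \mathbf{s}\times\mathbf{S}$. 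Your version buys two things the paper leaves tacit: first, since the matrices are biinfinite, the products must be shown to be well defined, and your observation that $s(n,k) = S(n,k) = 0$ for $k > n$ (lower triangularity, so all sums are finite) settles this; second, in an infinite-dimensional setting a one-sided inverse need not be two-sided, so explicitly carrying out both substitutions, as you do, is the honest way to get both identities. The paper's appeal to transition matrices is shorter and conceptually cleaner; your computation is self-contained and makes the convolution identities $\sum_k S(n,k)s(k,j) = \delta_{n,j} = \sum_k s(n,k)S(k,j)$ explicit, which is worth having in its own right.
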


\section{An introduction to the Moebius-Rota inversion theory}

\subsection{A glimpse on the general case}

The Rota  theory of Moebius inversion is a fairly general one, as it applies to
all \textit{locally finite partially ordered sets}. 

The foundational contribution is Rota's paper \cite{Rota} of $1964$. For a 
comprehensive treatment of the theory, we refer the reader to \cite{BBR}.

\

\

A \textit{partially ordered set} (\textit{poset}, for short) is a pair $(P,\leq )$
where $P$ is a set and $\leq$  is a \textit{partial order relation}. 
This, in  turn, is a binary relation $R \subseteq P \times P$ such that
\begin{enumerate}

\item  $(x, x) \in R$  (reflexivity)

\item $(x, y) \in R \Rightarrow (y, x) \notin R$   if $x \neq y$:  (antisymmetry)

\item $(x, y), \ (y, z) \in R  \Rightarrow (x, z) \in R$ (transitivity)
\end{enumerate}
Clearly, when we consider an order relation, we simply write $x \leq y$ for  $xRy$.

\

For the sake of simplicity, we will assume in the following that $(P, \leq )$ is a  \textit{finite} poset, 
that is $|P|< \infty$.

\

\

Let $\mu_P : P \times P \rightarrow \mathbb{Z}$ be the \textit{unique}
function that satisfies the following conditions:
\begin{enumerate}

\item  $\mu_P(x,y) = 0$  if $\ x \nleq y$,

\item $\mu_P(x,x) = 1$,  $\forall$  $x \in P$,

\item  $\mu_P(x,y) = - \sum_{z: x \leq z < y} \ \mu_P(x,z) =  - \sum_{z: x < z \leq y} \ \mu_P(z,y)$
 if $x < y$.
\end{enumerate}

The function $\mu_P$ is the \textit{Moebius function} of the poset $P$.
When no confusion might arise, we will simply write $\mu$ in place of  $\mu_P$.

Define the auxiliary functions
$$
\zeta, \delta : P \times P \rightarrow \mathbb{Z}
$$
in the following way:
$$
\zeta(x, y) = 1 \ whenever \ x \leq y, \quad \zeta(x, y)  = 0 \ otherwise, 
$$
$$
\delta(x, y) = 1 \ whenever \ x = y, \quad \delta(x, y)  = 0 \ otherwise. 
$$
The next result immediately follows from the definitions:

\begin{proposition}\label{zetamu}For every $z \in P$, we have
$$
\sum_{x \leq y} \zeta(z, x) \ \mu(x, y) \ = \ \delta(z, y)
$$
\end{proposition}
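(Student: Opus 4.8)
The plan is to recognize this identity as a direct restatement of the defining recursion for the M\"obius function, and to prove it by unwinding the left-hand side and then performing a short case analysis on the order relation between $z$ and $y$. No new idea beyond the definition of $\mu_P$ is needed; the content is entirely bookkeeping with the incidence algebra.

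First I would simplify the summand. Since $\zeta(z, x) = 1$ exactly when $z \leq x$ and $\zeta(z,x) = 0$ otherwise, the quantity $\sum_{x \leq y} \zeta(z, x)\,\mu(x, y)$ collapses to $\sum_{x:\, z \leq x \leq y} \mu(x, y)$, a sum running over the (finite, since $|P| < \infty$) interval $\{x : z \leq x \leq y\}$. The goal is thus to show that this sum equals $\delta(z,y)$. I would then distinguish three cases. If $z \nleq y$, then there is no $x$ with $z \leq x \leq y$ (by transitivity such an $x$ would force $z \leq y$), so the sum is empty and equals $0 = \delta(z,y)$. If $z = y$, the only admissible index is $x = y$, and the sum reduces to $\mu(y,y) = 1 = \delta(z,y)$ by condition $(2)$ of the definition.

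The remaining case $z < y$ is where the defining recursion does the work. Here I would isolate the $x = z$ term:
$$
\sum_{x:\, z \leq x \leq y} \mu(x, y) = \mu(z, y) + \sum_{x:\, z < x \leq y} \mu(x, y).
$$
By the \emph{second} form of condition $(3)$, namely $\mu_P(z, y) = - \sum_{x:\, z < x \leq y} \mu_P(x, y)$ for $z < y$, the two terms on the right cancel and the total is $0 = \delta(z, y)$, completing this case.

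The step I expect to require the most care is not a genuine obstacle but a matter of matching conventions: one must invoke precisely the \emph{second} of the two equivalent expressions for $\mu_P(x,y)$ in condition $(3)$ (the one summing $\mu(z,y)$ over $z$ with $x < z \leq y$), since it is the form whose summation variable sits in the second argument of $\mu$, exactly as needed after isolating the bottom element of the interval. Had the proposition instead summed $\zeta(x,y)\,\mu(z,x)$, the \emph{first} form would have been the relevant one; keeping this distinction straight is the only subtlety, and the finiteness of $P$ guarantees all sums are well defined.
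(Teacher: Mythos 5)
Your proof is correct and is exactly the argument the paper has in mind: the paper states this proposition as following immediately from the definitions (with no written proof), and your case analysis---empty sum when $z \nleq y$, the term $\mu(y,y)=1$ when $z=y$, and the second form of the defining recursion in condition $(3)$ cancelling $\mu(z,y)$ against $\sum_{z < x \leq y}\mu(x,y)$ when $z<y$---is the standard unwinding of those definitions. Your remark that the \emph{second} expression in condition $(3)$ is the one needed here (the first being relevant for the dual identity) is precisely the right point of care.
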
\qed

\

The following result, simple though it is, is fundamental.

\begin{theorem}{\bf{(Moebius inversion formula)}}\label{main inv thm} Let
$$
f, g : P \longrightarrow 	\mathbb{R}
$$
be real-valued functions such that
\begin{equation}\label{direct}
\sum_{x \leq y} \ f(x) \ = \ g(y), \quad \forall y \in P.
\end{equation}
Then
\begin{equation}\label{inverse}
f(y) \ = \ \sum_{x \leq y} \ \mu(x, y) \ g(x), \quad \forall y \in P.
\end{equation}
\end{theorem}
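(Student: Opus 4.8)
The plan is to verify the claimed identity (\ref{inverse}) by substituting the hypothesis (\ref{direct}) directly into its right-hand side and then collapsing the resulting double sum by means of Proposition \ref{zetamu}. Since $P$ is finite, every sum occurring in the argument is a finite sum, so there are no convergence issues and no subtlety in reordering summations; the whole proof is a purely finite, almost mechanical manipulation.

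Concretely, I would fix $y \in P$ and start from the right-hand side of (\ref{inverse}), namely $\sum_{x \leq y} \mu(x,y)\, g(x)$. Into each factor $g(x)$ I would insert the hypothesis (\ref{direct}) in the form $g(x) = \sum_{w \leq x} f(w)$, obtaining the double sum $\sum_{x \leq y}\sum_{w \leq x}\mu(x,y)\, f(w)$, whose index set is the set of pairs $(w,x)$ with $w \leq x \leq y$. I would then interchange the order of summation, summing first over $x$ for each fixed $w$, which yields $\sum_{w \leq y} f(w)\left(\sum_{x:\, w \leq x \leq y}\mu(x,y)\right)$.

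The inner sum is exactly the quantity governed by Proposition \ref{zetamu}. Reintroducing the function $\zeta$, the constraint $w \leq x$ is precisely the statement $\zeta(w,x)=1$, so $\sum_{x:\, w \leq x \leq y}\mu(x,y) = \sum_{x \leq y}\zeta(w,x)\,\mu(x,y) = \delta(w,y)$ by that proposition (applied with $z = w$). Substituting this value back collapses the outer sum, since $\delta(w,y)$ vanishes unless $w = y$ and $y \leq y$ always holds: the expression reduces to $\sum_{w \leq y} f(w)\,\delta(w,y) = f(y)$, which is exactly the left-hand side of (\ref{inverse}).

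The argument is essentially bookkeeping, and the one point that genuinely requires care is matching the inner sum to Proposition \ref{zetamu}: one must sum $\mu(x,y)$ over its \emph{first} argument, with $y$ held fixed, which is precisely the orthogonality relation packaged in that proposition (the $\zeta$ there already encodes the lower-bound constraint $w \leq x$). Using the wrong orthogonality relation, i.e.\ summing over the second argument of $\mu$, would not close the computation. Everything else is the interchange of two finite sums and the defining property of $\delta$.
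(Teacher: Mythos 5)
Your proof is correct and follows essentially the same route as the paper's: substitute the hypothesis into the right-hand side of (\ref{inverse}), rewrite the inner constraint via $\zeta$, interchange the two finite sums, and collapse with the orthogonality relation $\sum_{x \leq y} \zeta(z,x)\,\mu(x,y) = \delta(z,y)$ of Proposition \ref{zetamu}. Your closing remark correctly identifies the only delicate point (summing $\mu$ over its first argument with $y$ fixed), which the paper uses implicitly without comment.
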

\begin{proof} Substituting the right side of (\ref{direct}) into the right side of
(\ref{inverse}) and simplifying,
\begin{equation}\label{first passage}
\sum_{x \leq y} \ \mu(x, y) \ g(x) \ = \ \sum_{x \leq y} \ \sum_{z \leq x} \ f(z) \ \mu(x, y).
\end{equation}

The right side of (\ref{first passage}) is then rewritten in the form
$$
\sum_{x \leq y} \ \sum_z \ \zeta(z, x)   \ f(z) \ \mu(x, y).
$$
Interchanging the order of summation, this becomes
$$
\sum_z \ f(z) \ \sum_{x \leq y} \zeta(z, x) \ \mu(x, y) \ = \ \sum_z \ f(z) \ \delta(z, y) \ = \ f(y),
$$
by Proposition \ref{zetamu}.

\end{proof}

Therefore, the crucial problem of the theory is to determine \textit{explicit/close form}
formulae for the Moebius functions of different classes of posets.

We limit ourselves to recall two classical (and fundamental) cases.

\begin{example} {\bf{(The set-theoretic case)}} Let $S$ be a finite set. Let $(P, \leq ) = (\mathbb{P}(S), \subseteq )$, that is, 
$P$ is the power set $\mathbb{P}(S) = \{ A; \ A \subseteq S \}$ and the order is the inclusion $\subseteq$.
\begin{proposition} Let $\mu$ be the Moebius function of $(\mathbb{P}(S), \subseteq )$.
Then
$$
\mu(A, B) \ = \ (-1)^{|B| - |A|} \ if \ A \subseteq B, \quad \mu(A, B) = 0 \ if \ A \not\subset B.
$$
\end{proposition}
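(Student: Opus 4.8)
The plan is to exploit the \emph{uniqueness} of the Moebius function: since $\mu$ is characterized as the only integer-valued function on $\mathbb{P}(S) \times \mathbb{P}(S)$ satisfying the three defining conditions, it suffices to exhibit a candidate function meeting those conditions and then invoke uniqueness. Accordingly, I would set
$$
\nu(A, B) = (-1)^{|B| - |A|} \ \text{if} \ A \subseteq B, \quad \nu(A, B) = 0 \ \text{if} \ A \not\subseteq B,
$$
and verify the three axioms for $\nu$. The first two are immediate: $\nu(A, B) = 0$ whenever $A \not\subseteq B$ by construction, and $\nu(A, A) = (-1)^0 = 1$.

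For the recursive condition — equivalently, that $\sum_{A \subseteq Z \subseteq B} \nu(A, Z) = 0$ whenever $A \subsetneq B$ — I would pass to the interval $[A, B]$. The subsets $Z$ with $A \subseteq Z \subseteq B$ correspond bijectively to the subsets $Z \setminus A$ of the difference set $B \setminus A$, and under this correspondence $|Z| - |A| = |Z \setminus A|$. Writing $m = |B \setminus A| = |B| - |A| \geq 1$ and grouping these $Z$ according to the cardinality $j = |Z \setminus A|$, of which there are exactly $\binom{m}{j}$, the sum collapses to $\sum_{j = 0}^m (-1)^j \binom{m}{j}$.

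The key step is to recognize this alternating sum as an evaluation of the Binomial Theorem. Putting $t = -1$ into the identity $(1 + t)^m = \sum_{j = 0}^m \binom{m}{j} t^j$ yields $\sum_{j = 0}^m (-1)^j \binom{m}{j} = (1 - 1)^m = 0$, valid precisely because $m \geq 1$. This establishes the recursion for $\nu$, so $\nu$ satisfies all three defining properties, and uniqueness forces $\mu = \nu$.

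The main obstacle is really just pinpointing the reduction to the alternating binomial sum; once the interval $[A, B]$ is identified with the power set of $B \setminus A$, the computation is forced and the Binomial Theorem finishes it. A notational subtlety worth stating is the hypothesis $A \subsetneq B$ (hence $m \geq 1$): without it the alternating sum would equal $1$ rather than $0$, which is exactly why the recursive condition is imposed only in the strict case $A < B$.
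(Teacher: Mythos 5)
Your proof is correct and follows essentially the same route as the paper: both arguments verify that the candidate $(-1)^{|B|-|A|}$ satisfies the defining recursion of the Moebius function by grouping the subsets of the interval $[A,B]$ according to cardinality, reducing the sum to the alternating binomial identity $\sum_{j=0}^{m}(-1)^{j}\binom{m}{j} = (1-1)^{m} = 0$ for $m = |B|-|A| \geq 1$, and concluding by the uniqueness built into the definition of $\mu$. The only cosmetic difference is that you check the recursion with the lower endpoint fixed, $\sum_{A \subseteq Z \subseteq B}\nu(A,Z)=0$, while the paper checks the dual form $\sum_{A \subseteq C \subseteq B}(-1)^{|B|-|C|}=0$ with the upper endpoint fixed; these are the two equivalent equalities in the paper's defining condition, and the computation is identical either way.
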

\begin{proof}
We have to prove that, given $A \subseteq B \subseteq S$, $A \neq B$, we have:
\begin{equation}\label{set Moebius}
\sum_{C: A \subseteq C \subseteq B} \ (-1)^{|B| - |C|} = 0.
\end{equation}
Set $|A| = k$, $|B| = m$, $k < m$.
Eq. (\ref{set Moebius}) can be rewritten as
\begin{align*}
\sum_{h = k}^m \ \ \sum_{C: A \subseteq C \subseteq B, \ |C|=h} \ (-1)^{m - h} &= 
\sum_{h = k}^{m - k} \ {{m - k} \choose {h - k}} \ (-1)^{m - h} 
\\
&= 
\sum_{j = 0}^{m - k} \ {{m - k} \choose {j}} \ (-1)^{m - k - j} = 0.
\end{align*}
\end{proof}
\end{example}

\begin{example} {\bf{(The classical Moebius function of Number Theory)}}
Let $(P, \leq ) = (\mathbb{Z}^+, | \ )$ be the poset of positive integers, 
endowed with the partial order relation \textit{divide}, that is:
$$
m \ | \ n  \stackrel{def}{\Leftrightarrow} n = hm, \ h \in \mathbb{Z}^+.
$$
The poset $(\mathbb{Z}^+, | \ )$ is not finite, but it is a \textit{locally finite} poset,
with \textit{minimum}, the positive integer $1$.
\begin{proposition} Let $\mu$ be the Moebius function of the poset $(\mathbb{Z}^+, | \ )$
Then,
\begin{enumerate}

\item $\mu(1, n) = 1$ if $n$ is a square-free positive integer with an \emph{even} number of prime factors.

\item $\mu(1, n) = -1$ if $n$ is a square-free positive integer with an \emph{odd} number of prime factors.

\item $\mu(1, n) = 0$ if $n$ has a squared prime factor.

\item $\mu(m, n) = \mu(1,  \frac{n}{m})$ \quad if \ $m | n$.

\item $\mu(m, n) = 0$ \ \ \ if \ $m$ doesn't divide  $n$.
\end{enumerate}
\end{proposition}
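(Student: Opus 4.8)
The plan is to exploit the \emph{uniqueness} of the Moebius function built into its definition: to establish a formula for $\mu$, it suffices to check that the proposed values satisfy conditions (1)--(3) defining $\mu_P$, and uniqueness does the rest. I would first dispose of part (5), which is immediate: in $(\mathbb{Z}^+, \mid)$ the relation $x \leq y$ means exactly $x \mid y$, so $m \nmid n$ coincides with $m \nleq n$, and condition (1) of the definition forces $\mu(m,n) = 0$.

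Next I would prove parts (1)--(3), i.e. the case $m = 1$. Define a candidate $\tilde\mu(1,n)$ to be $(-1)^r$ when $n$ is square-free with exactly $r$ distinct prime factors and $0$ when $n$ has a squared prime factor. To identify $\tilde\mu$ with $\mu$ it is enough to verify the ``summed'' form of condition (3), namely $\sum_{d \mid n} \tilde\mu(1,d) = 0$ for every $n > 1$ (together with $\tilde\mu(1,1) = (-1)^0 = 1$). Writing the unique factorization $n = p_1^{a_1} \cdots p_r^{a_r}$, the only nonzero contributions come from the square-free divisors of $n$, which correspond bijectively to subsets of $\{p_1, \dots, p_r\}$; a $k$-element subset gives a divisor of value $(-1)^k$, and there are $\binom{r}{k}$ of them. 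Hence
$$
\sum_{d \mid n} \tilde\mu(1,d) \;=\; \sum_{k=0}^{r} \binom{r}{k}(-1)^k \;=\; (1-1)^r \;=\; 0,
$$
the decisive middle step being the Binomial Theorem already proved in the text, evaluated at $t=-1$, which vanishes because $r \geq 1$.

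For part (4) I would exploit an order-isomorphism of intervals. When $m \mid n$, the map $d \mapsto d/m$ sends the interval $\{d : m \mid d \mid n\}$ bijectively onto $\{e : e \mid (n/m)\}$, since $d = me$ satisfies $me \mid n \iff e \mid (n/m)$ and $d_1 \mid d_2 \iff (d_1/m) \mid (d_2/m)$. Setting $\tilde\mu(m,n) := \mu(1, n/m)$ for $m \mid n$ (and $0$ otherwise), the reindexing $e = d/m$ turns $\sum_{d:\, m \mid d \mid n} \tilde\mu(m,d)$ into $\sum_{e \mid (n/m)} \mu(1,e)$, which vanishes whenever $n/m > 1$ by the computation just performed. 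With $\tilde\mu(m,m) = \mu(1,1) = 1$ and $\tilde\mu(m,n) = 0$ for $m \nmid n$, all defining conditions hold and uniqueness yields $\mu(m,n) = \mu(1, n/m)$.

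The work here is almost entirely bookkeeping rather than a genuine difficulty. The two points to handle with care are the equivalence between the stated recursion $\mu(x,y) = -\sum_{x \leq z < y} \mu(x,z)$ and the ``total sum equals $\delta$'' form that I actually verify, and the check that the reindexing in part (4) genuinely recovers the interval $[1,n/m]$ with its divisibility order intact. The only inputs beyond the definition are unique factorization of the integers and the Binomial Theorem already available in the text.
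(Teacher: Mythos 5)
Your proof is correct, but there is nothing in the paper to compare it against line by line: the paper states this proposition without proof, inside an example, immediately after the fully proved set-theoretic case $(\mathbb{P}(S),\subseteq)$. What can be said is that your argument is exactly the one the paper's framework invites, and it fills the gap with precisely the tools the text has already set up. You use the uniqueness clause built into the definition of $\mu_P$ (so that exhibiting \emph{any} function satisfying conditions (1)--(3) identifies it with $\mu$), and the vanishing alternating sum $\sum_{k=0}^{r}\binom{r}{k}(-1)^{k}=(1-1)^{r}=0$ for $r\geq 1$, which is the same computation at the heart of the paper's proof for the Boolean algebra. The parallel is structural rather than accidental: the square-free divisors of $n=p_1^{a_1}\cdots p_r^{a_r}$ correspond bijectively to subsets of $\{p_1,\dots,p_r\}$, so your verification of $\sum_{d\mid n}\tilde\mu(1,d)=0$ is the set-theoretic computation transported along this correspondence, and your part (4) makes the transport explicit as the interval isomorphism $d\mapsto d/m$ from $\{d:\ m\mid d,\ d\mid n\}$ onto the divisors of $n/m$. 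The two points you flagged as needing care are genuinely the only ones, and you handle both correctly: the recursion $\mu(x,y)=-\sum_{x\leq z<y}\mu(x,z)$ is equivalent to $\sum_{x\leq z\leq y}\mu(x,z)=0$ for $x<y$ because the term $z=y$ occurs exactly once, and local finiteness of $(\mathbb{Z}^{+},\mid)$ (noted in the paper) guarantees every such sum is finite, so uniqueness follows by induction on the length of the interval $[x,y]$ and your candidate $\tilde\mu$ must coincide with $\mu$.
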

\end{example} \qed

\subsection{The Moebius inversion principle (set-theoretic case)}

We explicitly restate Theorem \ref{main inv thm} for the posets (boolean algebras)
$$
(\mathbb{P}(S), \subseteq ),
$$ 
where $S$ is a finite set.

\begin{proposition}{\bf{(Set-theoretic Moebius inversion formula)}}\label{set-theoretic inv thm} Let $S$ 
be a finite set. Let
$$
f, g : \mathbb{P}(S) \longrightarrow 	\mathbb{R}
$$
be real-valued functions such that
\begin{equation}
\sum_{A \subseteq B} \ f(A) \ = \ g(B), \quad \forall \ B \subseteq S.
\end{equation}
Then,
\begin{equation}\label{set inverse}
f(B) \ = \ \sum_{A \subseteq B} \ (-1)^{|B| - |A|} \ g(A), \quad \forall \ B \subseteq S.
\end{equation}
\end{proposition}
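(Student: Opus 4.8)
The plan is to derive this statement as an immediate specialization of the general Moebius inversion formula (Theorem \ref{main inv thm}) to the particular poset $(\mathbb{P}(S), \subseteq)$. Indeed, the hypothesis $\sum_{A \subseteq B} f(A) = g(B)$ is precisely condition (\ref{direct}) with the order relation $\leq$ read as set inclusion $\subseteq$, the ground poset $P$ taken to be $\mathbb{P}(S)$, and the bound variables $x, y$ relabelled as $A, B$. Since $S$ is finite, so is $\mathbb{P}(S)$, and thus Theorem \ref{main inv thm} applies verbatim and yields
$$
f(B) \ = \ \sum_{A \subseteq B} \ \mu(A, B) \ g(A), \quad \forall \ B \subseteq S,
$$
where $\mu$ denotes the Moebius function of $(\mathbb{P}(S), \subseteq)$.

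First I would invoke the preceding computation of this Moebius function: by the Proposition established in the set-theoretic Example, we have $\mu(A, B) = (-1)^{|B| - |A|}$ whenever $A \subseteq B$, while $\mu(A, B) = 0$ otherwise, so that only the terms with $A \subseteq B$ actually survive in the sum. Substituting this explicit value into the displayed inversion formula produces exactly (\ref{set inverse}), which completes the argument.

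For readers who prefer a self-contained derivation, I would alternatively substitute the definition of $g$ directly into the claimed right-hand side of (\ref{set inverse}) and interchange the order of summation, reducing the whole identity to the alternating-sum fact $\sum_{A \subseteq C \subseteq B} (-1)^{|B| - |C|} = \delta_{A, B}$. This is the set-theoretic incarnation of Proposition \ref{zetamu}, and it was in turn verified through the Binomial Theorem in equation (\ref{set Moebius}). There is no genuine obstacle here: both ingredients --- the general inversion theorem and the explicit Moebius function of the Boolean algebra --- are already in place, so the only care required is the bookkeeping of the summation indices together with the remark that terms with $A \not\subseteq B$ contribute nothing.
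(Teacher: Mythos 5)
Your proposal is correct and matches the paper exactly: the paper offers no separate argument for this Proposition, presenting it precisely as the specialization of Theorem \ref{main inv thm} to the finite poset $(\mathbb{P}(S), \subseteq)$ with the Moebius function $\mu(A,B) = (-1)^{|B|-|A|}$ computed in the preceding set-theoretic Example. Your optional self-contained variant via the alternating sum $\sum_{A \subseteq C \subseteq B} (-1)^{|B|-|C|} = \delta_{A,B}$ is also sound, but it merely unwinds the same two ingredients the paper already has in place.
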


\subsubsection{On the number of surjective functions}

Our problem is to discover and prove a \textit{close form} formula for the numbers
$$
\# \{F : \underline{k} \stackrel{su}{\rightarrow} \underline{n} \}.
$$

We proceed by Moebius inversion on the power set $(\mathbb{P}(\underline{n}), \subseteq )$.

Define
$$
f : \mathbb{P}(\underline{n}) \ \rightarrow \ \mathbb{R}
$$
by setting
$$
f(A) \stackrel{def}{=} \# \{F : \underline{k} \rightarrow \underline{n}; \ Im(F) = A \}, 
\quad \forall \ A \subseteq \underline{n}.
$$

Define
$$
g : \mathbb{P}(\underline{n}) \ \rightarrow \ \mathbb{R}
$$
by setting
$$
g(B) \stackrel{def}{=} \#  \ \{F : \underline{k} \rightarrow \underline{n}; \ Im(F) \subseteq B \}, 
\quad \forall \ B \subseteq \underline{n}.
$$

Since
$$
\{F : \underline{k} \rightarrow \underline{n}; \ Im(F) \subseteq B \} \ = \
\stackrel{.}{\bigcup}_{A \subseteq B} \ \{F : \underline{k} \rightarrow \underline{n}; \ Im(F) = A \}
\quad \forall \ B \subseteq \underline{n},
$$
then,
$$
\sum_{A \subseteq B} \ f(A) \ = \ g(B), 
\quad \forall \ B \subseteq \underline{n}.
$$
By Moebius inversion (\ref{set inverse}) and by setting $|B| = m$, we infer that
\begin{align*}
f(B) \ &= \ \sum_{A \subseteq B} \ (-1)^{|B| - |A|}   \    g(A) 
\quad \forall \ B \subseteq \underline{n}
\\
&= \ \sum_{j = 0}^m \ (-1)^{m - j} \ \sum_{|A| = j, A \subseteq B} \ m^j
\\
&= \ \sum_{j = 0}^m \ (-1)^{m - j} \ {m \choose j} \ j^k.
\end{align*}

By specializing to $B = \underline{n}$, we get:

\begin{proposition}We have:
$$
\# \{F : \underline{k} \stackrel{su}{\rightarrow} \underline{n} \} \ = \ 
\sum_{j = 0}^n \ (-1)^{n - j} \ {{n} \choose {j}}  \   j^k.
$$
\end{proposition}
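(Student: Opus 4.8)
The plan is to obtain the statement as the specialization to the top element $B = \underline{n}$ of the general formula for $f(B)$ established in the preceding paragraph, so that essentially no new work is required. First I would record the elementary but decisive observation that a function $F : \underline{k} \rightarrow \underline{n}$ is surjective if and only if its image is the whole codomain, that is $Im(F) = \underline{n}$. With the function $f$ defined above by $f(A) = \#\{F : \underline{k} \rightarrow \underline{n}; \ Im(F) = A\}$, this says precisely that $\#\{F : \underline{k} \stackrel{su}{\rightarrow} \underline{n}\} = f(\underline{n})$, so the quantity we seek is the single value of $f$ at the full set.

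Next I would apply the set-theoretic Moebius inversion formula (Proposition \ref{set-theoretic inv thm}) to the pair $f, g$. The companion function $g(B) = \#\{F : \underline{k} \rightarrow \underline{n}; \ Im(F) \subseteq B\}$ is trivial to evaluate: a function whose image lies in $B$ is simply an arbitrary function from $\underline{k}$ into $B$, so by the solution to Problem $1$ one has $g(B) = |B|^{k}$, depending only on the cardinality of $B$. Substituting into the inversion formula and grouping the subsets $A \subseteq \underline{n}$ according to their common cardinality $j$ — of which there are $\binom{n}{j}$ for each $j$, while the sign $(-1)^{|\underline{n}| - |A|}$ becomes $(-1)^{n-j}$ and $g(A)$ becomes $j^{k}$ — collapses the sum over subsets into a sum over $j$ and yields $f(\underline{n}) = \sum_{j=0}^{n} (-1)^{n-j} \binom{n}{j} j^{k}$.

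The conceptual content lies entirely in the earlier derivation, namely the passage from the easily counted ``at most'' quantity $g$ to the desired ``exactly'' quantity $f$ via Moebius inversion on the boolean lattice $(\mathbb{P}(\underline{n}), \subseteq)$; once that general formula is in hand, the present statement is the purely formal act of setting $B = \underline{n}$ and reading off $m = |B| = n$. Accordingly, I expect no genuine obstacle. If anything, the one place to be careful is the bookkeeping of cardinalities — confirming that the inner sum $\sum_{A \subseteq \underline{n}, \, |A| = j} g(A)$ equals $\binom{n}{j}\, j^{k}$ — but this is the same routine collection of terms already carried out in the general case.
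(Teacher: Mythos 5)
Your proposal is correct and follows essentially the same route as the paper: the same pair $f, g$ on $(\mathbb{P}(\underline{n}), \subseteq)$, the evaluation $g(B) = |B|^k$ via the solution to Problem $1$, set-theoretic Moebius inversion, grouping subsets by cardinality, and finally specializing to $B = \underline{n}$. Nothing is missing.
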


\subsection{The dual Moebius inversion principle (set-theoretic case)}

We explicitly restate Proposition \ref{set-theoretic inv thm} in \textit{dual form}, 
that is, for the posets (boolean algebras)
$$
(\mathbb{P}(S), \supseteq \ ), 
$$
where $\supseteq$ denotes the \textit{reverse inclusion order}.

\begin{proposition}{\bf{(Dual set-theoretic Moebius inversion formula)}}\label{dual set-theoretic inv thm} Let $S$ 
be a finite set. Let
$$
f, g : \mathbb{P}(S) \longrightarrow 	\mathbb{R}
$$
be real-valued functions such that
\begin{equation}
\sum_{A \supseteq B} \ f(A) \ = \ g(B), \quad \forall \ B \subseteq S.
\end{equation}
Then,
\begin{equation}
f(B) \ = \ \sum_{A \supseteq B} \ (-1)^{|A| - |B|} \ g(A), \quad \forall \ B \subseteq S.
\end{equation}
\end{proposition}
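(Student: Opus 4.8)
The plan is to obtain this statement as the specialization of the general Moebius inversion formula (Theorem \ref{main inv thm}) to the finite poset $(\mathbb{P}(S), \supseteq)$, exactly as Proposition \ref{set-theoretic inv thm} was the specialization to $(\mathbb{P}(S), \subseteq)$. Writing $\leq^{*}$ for the reverse-inclusion order, so that $A \leq^{*} B$ means $A \supseteq B$, the hypothesis $\sum_{A \supseteq B} f(A) = g(B)$ is precisely the hypothesis $\sum_{A \leq^{*} B} f(A) = g(B)$ of Theorem \ref{main inv thm}. Thus the conclusion will read $f(B) = \sum_{A \leq^{*} B} \mu^{*}(A, B)\, g(A) = \sum_{A \supseteq B} \mu^{*}(A, B)\, g(A)$, where $\mu^{*}$ denotes the Moebius function of the dual poset. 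The only thing left to supply is the explicit value of $\mu^{*}$.

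First I would compute $\mu^{*}$. The key observation is that the Moebius function of a dual poset is the transpose of the original, $\mu^{*}(A, B) = \mu(B, A)$; this is immediate from the defining recursion, since its condition $(3)$ records the two symmetric forms $-\sum_{z:\, x \leq z < y}\mu(x,z)$ and $-\sum_{z:\, x < z \leq y}\mu(z,y)$, and passing to the reverse order interchanges them. Feeding in the already computed Moebius function of $(\mathbb{P}(S), \subseteq)$, namely $\mu(B, A) = (-1)^{|A| - |B|}$ for $B \subseteq A$, yields $\mu^{*}(A, B) = (-1)^{|A| - |B|}$ whenever $A \supseteq B$ (and $0$ otherwise). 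Substituting this into the conclusion of the previous paragraph gives exactly the asserted formula $f(B) = \sum_{A \supseteq B} (-1)^{|A| - |B|}\, g(A)$. Note that no sign bookkeeping is actually at risk here, since $(-1)^{|A| - |B|} = (-1)^{|B| - |A|}$.

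An equally short, fully self-contained alternative is a direct substitution: plug $g(A) = \sum_{C \supseteq A} f(C)$ into the right-hand side and interchange the order of summation to get $\sum_{C \supseteq B} f(C) \left( \sum_{A:\, B \subseteq A \subseteq C} (-1)^{|A| - |B|} \right)$. The inner sum is, by the same binomial computation already used in the proof of eq. (\ref{set Moebius}), equal to $(1 - 1)^{|C| - |B|}$, hence $0$ unless $C = B$, in which case it is $1$; only the term $C = B$ survives and the sum collapses to $f(B)$. The main (and only mild) obstacle in either route is the same: confirming that this inner alternating sum over the interval $[B, C]$ vanishes whenever $B \subsetneq C$, which is nothing but the vanishing of $\sum_{j=0}^{d}\binom{d}{j}(-1)^{j}$ for $d = |C| - |B| > 0$.
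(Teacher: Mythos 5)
Your proof is correct and matches the paper's (implicit) argument: the paper offers no separate proof, presenting the proposition as the dual restatement of Proposition \ref{set-theoretic inv thm}, i.e., the specialization of Theorem \ref{main inv thm} to the reverse-inclusion poset $(\mathbb{P}(S), \supseteq)$, which is exactly your first route, with the dual Moebius function $\mu^{*}(A,B) = \mu(B,A) = (-1)^{|A|-|B|}$ made explicit. Your direct-substitution alternative is also sound, resting on the same vanishing of the alternating sum over an interval $[B,C]$ already established in the proof of eq. (\ref{set Moebius}).
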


\subsubsection{The generalized derangement problem}

Let $n, k \in \mathbb{N}$ and consider the numbers:
$$
d_{n,k} \stackrel{def}{=} \# \ \ \text{n-permutations \ with \ \underline{exactly} \ k \ fixed \ points}.
$$
Clearly, $d_{n,0} = d_n$, the number of derangements on $n$ points.

\begin{proposition} We have:
$$
d_{n,k} \ = \ {n 	\choose k} \ d_{n - k}.
$$
\end{proposition}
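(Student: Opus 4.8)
The plan is to count directly, splitting the family of $n$-permutations with exactly $k$ fixed points according to \emph{which} $k$ elements are fixed, and then recognizing the induced behaviour on the remaining elements as a derangement. First I would observe that for any $n$-permutation $\sigma$ with exactly $k$ fixed points, the set $Fix(\sigma)$ is a $k$-subset of $\underline{n}$. This gives a natural partition of the variety
$$
\{ \sigma : \underline{n} \longleftrightarrow \underline{n}; \ |Fix(\sigma)| = k \}
$$
into disjoint classes, one for each $k$-subset $S \subseteq \underline{n}$, namely the class of those $\sigma$ with $Fix(\sigma) = S$ exactly. Summing cardinalities over the $\binom{n}{k}$ subsets $S$, it will suffice to show that each such class has cardinality $d_{n-k}$.

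Next I would exhibit a bijection between the class indexed by $S$ and the set of derangements of the complement $\underline{n} \setminus S$. Since $\sigma$ is a bijection fixing every element of $S$, it must restrict to a bijection of $\underline{n}\setminus S$ onto itself; the crucial point is that the condition $Fix(\sigma) = S$ (rather than merely $S \subseteq Fix(\sigma)$) forces this restriction to have \emph{no} fixed points, i.e.\ to be a derangement. Conversely, any derangement of $\underline{n}\setminus S$ extends uniquely to a permutation of $\underline{n}$ with fixed-point set exactly $S$ by fixing every element of $S$. As $|\underline{n}\setminus S| = n-k$, this class has exactly $d_{n-k}$ elements, and summing over all $S$ yields $d_{n,k} = \binom{n}{k}\, d_{n-k}$.

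The argument is short, and the only genuine subtlety---the point I would be careful to state explicitly---is the role of the word \emph{exactly}: it is precisely this that turns the induced permutation on the complement into a derangement, so that each class is counted by $d_{n-k}$ and not by $(n-k)!$. Dropping ``exactly'' to ``at least'' would instead lead, through the dual set-theoretic Moebius inversion of Proposition \ref{dual set-theoretic inv thm} (with $g(B)$ the number of permutations fixing at least the elements of $B$, so that $g(B) = (n-|B|)!$), to a closed-form expression for the same numbers; but for the product identity stated here the direct construction above is the cleanest route.
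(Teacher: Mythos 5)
Your proof is correct and follows essentially the same route as the paper's: choose the $k$-subset of fixed points in $\binom{n}{k}$ ways and multiply by the $d_{n-k}$ derangements of the complement. You merely spell out what the paper leaves implicit --- the disjoint decomposition by fixed-point set and the bijection with derangements of $\underline{n}\setminus S$, including the role of ``exactly'' --- which is a welcome precision but not a different argument.
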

\begin{proof} We choose the $k$-subsets of fixed points in ${n 	\choose k}$ different ways.
Then, we multiply by the number $d_{n - k}$ of derangements on the remaining $n - k$ points.
\end{proof}

Our problem is to discover and prove a \textit{close form} formula for the numbers
$$
d_{n,k}.
$$

We proceed by Moebius inversion on  $(\mathbb{P}(\underline{n}), \supseteq )$.

Fix a subset $B \subseteq \underline{n}$, with $|B| = k$.
Clearly,
$$
\{ \sigma : \underline{n} \leftrightarrow \underline{n}; \ Fix(\sigma) \supseteq B \} \ = \ 
\stackrel{.}{\bigcup}_{A \supseteq B} \  
\{ \sigma : \underline{n} \leftrightarrow \underline{n}; \ Fix(\sigma) = A \}.
$$

Then, if we set
$$
f : \mathbb{P}(\underline{n}) \ \rightarrow \ \mathbb{R},
$$
$$
f(A) \stackrel{def}{=} \# \ \{ \sigma : \underline{n} \leftrightarrow \underline{n}; \ Fix(\sigma) = A \}, 
\quad \forall \ A \subseteq \underline{n}.
$$
and
$$
g : \mathbb{P}(\underline{n}) \ \rightarrow \ \mathbb{R},
$$
$$
g(B) \stackrel{def}{=} \# \ \{ \sigma : \underline{n} \leftrightarrow \underline{n}; \ Fix(\sigma) \supseteq B \}, 
\quad \forall \ A \subseteq \underline{n},
$$
we get
$$
\sum_{A \supseteq B} \ f(A) \ = \ g(B), 
\quad \forall \ B \subseteq \underline{n}.
$$

By dual Moebius inversion (\ref{dual set-theoretic inv thm}), we infer
\begin{align*}
f(B) \ &= \ \sum_{A \supseteq B} \ (-1)^{|A| - |B|}   \    g(A) 
\\
&= \ \sum_{h=k}^n   \ (-1)^{h - k} \ \sum_{|A| = h, A \supseteq B} \   \    g(A)
\\
&= \ \sum_{h=k}^n   \ (-1)^{h - k} {n-k \choose h-k} \ (n-h)!
\\
&= \ \sum_{h=k}^n   \ (-1)^{h - k} \frac{(n-k)!}{(n-h)!(h-k)!} \ (n-h)!
\\
&= \ \sum_{h=k}^n   \ (-1)^{h - k} \frac{(n-k)!}{(h-k)!}. 
\end{align*}

\begin{proposition} We have
$$
d_{n,k} \ = \ \frac{n!}{k!} \ \sum_{h = k}^n \ \frac{ (-1)^{h - k} }{ (h - k)!}.
$$
\end{proposition}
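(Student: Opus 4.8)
The plan is to read the result directly off the dual Moebius inversion computation that has already been carried out for $f(B)$. Recall that $f(B)$ counts the $n$-permutations whose fixed-point set is \emph{exactly} $B$; the displayed chain of equalities established that, whenever $|B| = k$,
$$
f(B) = \sum_{h=k}^n (-1)^{h-k} \frac{(n-k)!}{(h-k)!}.
$$
The key observation is that this quantity depends only on $k = |B|$ and not on the particular subset $B$. This is transparent combinatorially: a permutation with fixed-point set exactly $B$ is the same datum as a derangement of the complement $\underline{n} \setminus B$, a set of $n-k$ elements, so in fact $f(B) = d_{n-k}$.

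First I would assemble $d_{n,k}$ from $f(B)$. Since the permutations counted by $d_{n,k}$ are partitioned according to which $k$-subset is their fixed-point set, and there are $\binom{n}{k}$ such subsets, each contributing $f(B)$ permutations, we have $d_{n,k} = \binom{n}{k} f(B)$. (Equivalently, this is the Proposition $d_{n,k} = \binom{n}{k} d_{n-k}$ stated above, combined with $f(B) = d_{n-k}$.) Substituting the formula for $f(B)$ gives
$$
d_{n,k} = \binom{n}{k} \sum_{h=k}^n (-1)^{h-k} \frac{(n-k)!}{(h-k)!}.
$$

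Then I would simplify the prefactor. Pulling the $k$-independent binomial coefficient inside the sum is immediate, and the only arithmetic is the cancellation $\binom{n}{k}(n-k)! = \frac{n!}{k!(n-k)!}(n-k)! = \frac{n!}{k!}$, after which
$$
d_{n,k} = \frac{n!}{k!} \sum_{h=k}^n \frac{(-1)^{h-k}}{(h-k)!},
$$
which is exactly the asserted identity. There is no real obstacle: the substantive work — the dual Moebius inversion on $(\mathbb{P}(\underline{n}), \supseteq)$ — is already done, so the proof reduces to the bookkeeping step $d_{n,k} = \binom{n}{k} f(B)$ and one factorial cancellation. If one prefers to surface the classical derangement formula en route, the reindexing $j = h-k$ turns $f(B)$ into $(n-k)! \sum_{j=0}^{n-k} (-1)^j / j!$, recovering the familiar closed form for $d_{n-k}$ before the multiplication by $\binom{n}{k}$.
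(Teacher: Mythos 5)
Your proposal is correct and follows the paper's own argument essentially verbatim: both use the dual Moebius inversion computation of $f(B)$, multiply by the $\binom{n}{k}$ choices of the fixed-point set $B$, and cancel $\binom{n}{k}(n-k)! = \frac{n!}{k!}$. Your added observations that $f(B) = d_{n-k}$ and that the reindexing $j = h-k$ recovers the classical derangement formula are pleasant but not a different route.
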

\begin{proof} The choices of the subset $B \subseteq \underline{n}$, $|B| = k$ are 
$n \choose k$. Then,
\begin{align*}
d_{n,k} \ &= \ {n \choose k} \ f(B) 
\\
&= \ \ {n \choose k} \ \sum_{h=k}^n   \ (-1)^{h - k} \frac{(n-k)!}{(h-k)!}
\\
&= \ \ {\frac{n!}{k!(n-k)!}} \ \sum_{h=k}^n   \ (-1)^{h - k} \frac{(n-k)!}{(h-k)!}
\\
&= \ \ {\frac{n!}{k!}} \ \sum_{h=k}^n   \  \frac{(-1)^{h - k}}{(h-k)!}.
\end{align*}
\end{proof}

In the case of \textit{derangements}, that is, $k = 0$, we get:
\begin{corollary}We have:
$$
d_n \ = d_{n,0} \ = \ n! \ \sum_{h = 0}^n \ \frac{ (-1)^{h} }{ h!}.
$$
\end{corollary}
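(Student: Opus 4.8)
The plan is to obtain this identity directly as the special case $k = 0$ of the immediately preceding Proposition, which establishes the close form
$$
d_{n,k} \ = \ \frac{n!}{k!} \ \sum_{h = k}^n \ \frac{(-1)^{h - k}}{(h - k)!}
$$
for the number of $n$-permutations with exactly $k$ fixed points. So no new combinatorial construction is required here; all the genuine work (the dual set-theoretic Moebius inversion on $(\mathbb{P}(\underline{n}), \supseteq)$ together with the count of permutations whose fixed-point set contains a prescribed subset) has already been carried out.

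First I would recall, straight from the definitions in the subsection on derangements, that $d_{n,0}$ counts the $n$-permutations having \emph{exactly} zero fixed points. But a permutation $\sigma$ has no fixed points precisely when $Fix(\sigma) = \emptyset$, that is, precisely when $\sigma$ is a derangement. Hence $d_{n,0} = d_n$ by the very definition of the derangement number, and this identification is really the only observation that needs to be made explicit.

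Then I would simply substitute $k = 0$ into the displayed formula. Since $0! = 1$, the prefactor $\frac{n!}{k!}$ collapses to $n!$; the summation index runs from $h = 0$ to $n$; and one has $(-1)^{h - k} = (-1)^h$ together with $(h - k)! = h!$. This yields
$$
d_n \ = \ d_{n,0} \ = \ n! \ \sum_{h = 0}^n \ \frac{(-1)^h}{h!},
$$
which is exactly the asserted identity. There is essentially no obstacle: beyond the bookkeeping substitution, the single conceptual point is the identification $d_{n,0} = d_n$ tying the generalized fixed-point count back to the classical derangement count, and everything else is routine.
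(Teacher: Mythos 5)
Your proposal is correct and coincides with the paper's own derivation: the corollary is stated there precisely as the specialization $k=0$ of the preceding close form for $d_{n,k}$, with the identification $d_{n,0}=d_n$ already recorded when $d_{n,k}$ was introduced. Your substitution bookkeeping ($0!=1$, $(-1)^{h-k}=(-1)^h$, $(h-k)!=h!$) is exactly what the paper leaves implicit.
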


\

We provide a beautiful \textit{probabilistic} version/interpretation of this fact.

Clearly, the \textit{probability} $\mathbf{P}_n$ that an \underline{$n$-permutation is a \textit{derangement}}
is given by
$$
\mathbf{P}_n \ = \ \frac{d_n}{n!} \ = \  \sum_{h = 0}^n \ \frac{ (-1)^{h} }{ h!}.
$$
Then, the \textit{asymptotic} 
$\mathbf{P}_n \ \stackrel{n \rightarrow \infty}{\longrightarrow} \mathbf{P}_\infty$ 
of this probability is:
$$
\mathbf{P}_\infty \ = \ \sum_{h = 0}^\infty \ \frac{ (-1)^{h} }{ h!} \ = \ \frac{1}{\mathbf{e}},
$$
where $\mathbf{e}$ denotes the \textit{Nepero number}  (the basis of natural logarithms).

As a matter of fact,
$$
\sum_{h = 0}^\infty \ \frac{ (-1)^{h} }{ h!}
$$
is the \underline{evaluation} at $-1$ of the Taylor expansion of the exponential
$$
\mathbf{e}^x \ = \ \sum_{h = 0}^\infty \ \frac{ x^{h} }{ h!}, \quad x \in \mathbb{R}.
$$

\subsection{Sieve method}

Let ${\bf{\Omega}}$ be a finite set (sample space) and let $A_1, A_2, \ldots, A_n$
be subsets $A_i \subseteq {\bf{\Omega}}$ (forbidden events).
Let $\underline{n} = \{1, 2, \ldots, n \}$ be the \textit{family of indexes}.

\subsubsection{Complete products} 

Given $T \subseteq \underline{n}$, consider the subset
\begin{equation}\label{compl prod}
\left( \bigcap_{i \in T}  \ A_i \right) \ \bigcap \ \left( \bigcap_{i \notin T} \ A_i^c \right) 
\subseteq {\bf{\Omega}},
\end{equation}
where $A_i^c$ denotes the complementary set ${\bf{\Omega}}  \smallsetminus  A_i$ of $A_i$ in $\bf{\Omega}$.

The subset (\ref{compl prod}) is called the \textit{complete product}
associated to the subfamily of indexes $T \subseteq \underline{n}$.

The subset (\ref{compl prod}) is the set
\begin{equation}\label{compl prod bis}
\{ x \in {\bf{\Omega}}; \ x \in A_i \ for \ i 	\in T, \ x \notin A_i \ for \ i 	\notin T \}.
\end{equation}

Complete products are pairwise \textit{disjoint}. Furthermore, from (\ref{compl prod bis}) 
one  easily recognizes:
\begin{proposition}\label{disjoint form}Given $S \subseteq \underline{n}$, we have
$$
\bigcap_{i \in S} \ A_i \ = \ \stackrel{.}{\bigcup}_{T \supseteq S} \ 
\left( \bigcap_{i \in T}  \ A_i \right) \ \bigcap \ \left( \bigcap_{i \notin T} \ A_i^c \right) .
$$
\end{proposition}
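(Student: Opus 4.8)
The plan is to prove the set equality by a direct element-wise characterization, using the explicit description (\ref{compl prod bis}) of a complete product. For each point $x \in {\bf{\Omega}}$ I would introduce its \emph{type}
$$
T(x) \ \stackrel{def}{=} \ \{ i \in \underline{n}; \ x \in A_i \},
$$
recording exactly which forbidden events contain $x$. Reading off (\ref{compl prod bis}), the point $x$ belongs to the complete product indexed by $T$ if and only if $x \in A_i$ for all $i \in T$ and $x \notin A_i$ for all $i \notin T$, i.e. if and only if $T = T(x)$. Thus every point of ${\bf{\Omega}}$ lies in \emph{exactly one} complete product, namely the one indexed by its own type; this simultaneously reproves the pairwise disjointness already noted and shows that the complete products partition ${\bf{\Omega}}$.

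With the type isolated, the identity reduces to comparing two membership conditions. On the one hand,
$$
x \in \bigcap_{i \in S} \ A_i \ \iff \ x \in A_i \ \text{for every} \ i \in S \ \iff \ S \subseteq T(x).
$$
On the other hand, $x$ lies in the right-hand union precisely when it belongs to some complete product indexed by a $T \supseteq S$; since the unique complete product containing $x$ is the one indexed by $T(x)$, this occurs if and only if $T(x) \supseteq S$. The two conditions coincide, so the two sets are equal. Moreover the union on the right is genuinely disjoint, this being exactly the pairwise disjointness of complete products recorded before the statement.

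I expect no real obstacle here: once the type $T(x)$ is introduced, the argument is a routine equivalence of membership criteria (equivalently, a short double inclusion). The only point deserving a moment's care is to confirm that the right-hand side is a \emph{disjoint} union over the supersets $T \supseteq S$, which is immediate from the fact that a point's type is uniquely determined and hence distinct indices $T$ contribute disjoint pieces.
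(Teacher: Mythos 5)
Your proof is correct and follows essentially the same route as the paper, which states the proposition without a written proof and merely remarks that it is ``easily recognized'' from the explicit description (\ref{compl prod bis}); your introduction of the type $T(x)$ is precisely that membership check made explicit. In fact your version is a useful elaboration, since it simultaneously verifies the pairwise disjointness of the complete products that the paper asserts separately.
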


Define two functions
$$
f, g : \mathbb{P}({\underline{n}}) \rightarrow \mathbb{R}
$$
in the following way
$$
f(T) \ = \ \left| \left( \bigcap_{i \in T}  \ A_i \right) \ \bigcap \ \left( \bigcap_{i \notin T} \ A_i^c \right) \right|, \ \forall \ T \subseteq \underline{n},
$$
$$
g(S) \ = \ \left| \bigcap_{i \in S} \ A_i  \right|, \ \forall \ S \subseteq \underline{n}.
$$

Proposition \ref{disjoint form} reads as:
$$
\sum_{T \supseteq S} \ f(T) \ = \ g(S), \ \forall \ S \subseteq \underline{n}.
$$

By dual Moebius inversion on $\mathbb{P}(\underline{n})$, we obtain
\begin{proposition}\label{sieve general}Given $S \subseteq \underline{n}$, $|S| = m$, we have:
\begin{align*}
f(S) \ &= \ \left| \left( \bigcap_{i \in S} \ A_i \right) \ \bigcap  \ \left( \bigcap_{i \notin S} 
\ A_i^c \right) \right|
\\
&= \ \sum_{T \supseteq S} \ (-1)^{|T| - |S|} \ g(T), \ \forall \ S \subseteq \underline{n}
\\
&= \ \sum_{k = m}^n \ (-1)^{k - m} \ \sum_{|T|=k, T \supseteq S} \ g(T)
\\
&= \ \sum_{k = m}^n \ (-1)^{k - m} \ \sum_{|T|=k, T \supseteq S} \ \left| 
 \bigcap_{i \in T} \ A_i  \right|.
\end{align*}
\end{proposition}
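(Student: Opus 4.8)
The plan is to recognize this proposition as a direct invocation of the Dual set-theoretic Moebius inversion formula (Proposition \ref{dual set-theoretic inv thm}), applied to the two functions $f, g$ defined immediately above the statement. The single hypothesis that formula requires is the relation
$$
\sum_{T \supseteq S} \ f(T) \ = \ g(S), \quad \forall \ S \subseteq \underline{n},
$$
and this is exactly what Proposition \ref{disjoint form} delivers: the complete products indexed by the various $T \supseteq S$ are pairwise disjoint and their union is $\bigcap_{i \in S} A_i$, so taking cardinalities turns the disjoint-union identity of Proposition \ref{disjoint form} into the displayed sum, since cardinalities add over disjoint unions.

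First I would restate this inversion hypothesis, citing Proposition \ref{disjoint form}. Then I would apply Proposition \ref{dual set-theoretic inv thm} verbatim, with the roles of $A, B$ there played by $T, S$ here, to obtain
$$
f(S) \ = \ \sum_{T \supseteq S} \ (-1)^{|T| - |S|} \ g(T).
$$
This already establishes the equality between the first and second displayed lines of the statement.

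The remaining two lines are pure bookkeeping. Writing $|S| = m$, every $T \supseteq S$ contained in $\underline{n}$ has cardinality $|T| = k$ with $m \leq k \leq n$ (the extreme values $k = m$ and $k = n$ arising from $T = S$ and $T = \underline{n}$). Grouping the sum according to the value of $k$ and observing that the sign $(-1)^{|T| - |S|} = (-1)^{k - m}$ is constant within each group lets me factor it outside the inner sum, which yields the third line; substituting the definition $g(T) = \left| \bigcap_{i \in T} A_i \right|$ then produces the fourth. I expect no real obstacle here: the only point deserving a word of justification is that the complete products are genuinely pairwise disjoint, so that the passage from the set identity to the cardinality identity is legitimate — and this disjointness is already recorded in the discussion preceding Proposition \ref{disjoint form}.
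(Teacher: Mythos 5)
Your proposal is correct and follows exactly the paper's route: the paper likewise takes the disjoint-union identity of Proposition \ref{disjoint form}, reads it (via additivity of cardinality over disjoint unions) as the hypothesis $\sum_{T \supseteq S} f(T) = g(S)$, and then invokes dual Moebius inversion (Proposition \ref{dual set-theoretic inv thm}) followed by the same regrouping of the sum by $|T| = k$. No gaps; your explicit remark on why disjointness legitimizes the passage to cardinalities is a point the paper leaves implicit.
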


\subsubsection{The formula of Sylvester}

\underline{The problem}: Let ${\bf{\Omega}}$ be a finite set (sample space) and let $A_1, A_2, \ldots, A_n$
be subsets $A_i \subseteq {\bf{\Omega}}$ (forbidden events).
Let $\underline{n} = \{1, 2, \ldots, n \}$ be the \textit{family of indexes}. Compute the cardinality (more in general: probability/measure) 
$$
\left| \ {\bf{\Omega}} \ \smallsetminus \ \bigcup_{i = 1}^n \ A_i \right|
$$
by an \textit{efficient close form} formula.

\

Consider the natural integers:
$$
\mathbf{S}_k \stackrel{def}{=} \ \sum_{|T|=k, T \subseteq \underline{n}} \ \left| \bigcap_{i \in T} \ A_i \right|.
$$
These numers are called \textit{Sylvester numbers}.

\ 

The cardinality 
$$
\left| \ {\bf{\Omega}} \ \smallsetminus \ \bigcup_{i = 1}^n \ A_i \right|
$$
can be computed by means of an \textit{alternating signs} sum of the 
Sylvester numbers.

\begin{proposition}{\bf{(The formula of Sylvester)}}\label{Sylvester} We have
$$
\left| \ {\bf{\Omega}} \ \smallsetminus \ \bigcup_{i = 1}^n \ A_i \right| \ = \ 
\sum_{k = 0}^n \ (-1)^k \ \mathbf{S}_k.
$$
\end{proposition}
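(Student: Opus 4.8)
The plan is to obtain this as the special case $S = \emptyset$ of Proposition \ref{sieve general}, which has already done all the work. First I would observe that the quantity to be computed is itself a complete product: by De Morgan,
$$
{\bf{\Omega}} \ \smallsetminus \ \bigcup_{i = 1}^n \ A_i \ = \ \bigcap_{i = 1}^n \ A_i^c,
$$
and the right-hand side is precisely the complete product (\ref{compl prod}) associated to the empty subfamily of indexes $T = \emptyset$ (there are no factors of the first kind, and all $n$ factors of the second kind appear). In the notation of Proposition \ref{sieve general} this means
$$
\left| \ {\bf{\Omega}} \ \smallsetminus \ \bigcup_{i = 1}^n \ A_i \right| \ = \ f(\emptyset).
$$

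Next I would apply Proposition \ref{sieve general} with $S = \emptyset$, so that $m = |S| = 0$. The condition $T \supseteq S = \emptyset$ is then vacuous, i.e. it is satisfied by \emph{every} subset $T \subseteq \underline{n}$, so the displayed formula of Proposition \ref{sieve general} becomes
$$
f(\emptyset) \ = \ \sum_{k = 0}^n \ (-1)^{k} \ \sum_{|T| = k, \ T \subseteq \underline{n}} \ \left| \bigcap_{i \in T} \ A_i \right|.
$$
By the very definition of the Sylvester numbers, the inner sum is exactly $\mathbf{S}_k$, and substituting gives
$$
\left| \ {\bf{\Omega}} \ \smallsetminus \ \bigcup_{i = 1}^n \ A_i \right| \ = \ \sum_{k = 0}^n \ (-1)^k \ \mathbf{S}_k,
$$
which is the assertion.

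The one point that requires care, rather than genuine difficulty, is the bookkeeping of the empty-index conventions. I would note explicitly that the empty intersection is read as the whole sample space, $\bigcap_{i \in \emptyset} A_i = {\bf{\Omega}}$, so that the $k = 0$ term is $\mathbf{S}_0 = |{\bf{\Omega}}|$; and correspondingly that the complete product for $T = \emptyset$ has no factors $A_i$ and all factors $A_i^c$, matching $\bigcap_i A_i^c$. Since the substantive content (the dual Moebius inversion on $\mathbb{P}(\underline{n})$) is already packaged in Proposition \ref{sieve general}, there is no remaining obstacle beyond verifying that this specialization is legitimate.
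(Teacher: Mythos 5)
Your proposal is correct and follows exactly the paper's own route: specializing Proposition \ref{sieve general} to $S = \emptyset$, identifying $\left| \, {\bf{\Omega}} \smallsetminus \bigcup_{i=1}^n A_i \right|$ with $f(\emptyset)$ via De Morgan, and regrouping the sum over $T \subseteq \underline{n}$ by cardinality to recover the Sylvester numbers $\mathbf{S}_k$. Your explicit remarks on the empty-index conventions (the empty intersection read as ${\bf{\Omega}}$, so that $\mathbf{S}_0 = |{\bf{\Omega}}|$) are a welcome clarification that the paper leaves implicit, but they do not change the argument.
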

\begin{proof}From Proposition \ref{sieve general}. Specialize to $S = \emptyset$: then,
$$
f(\emptyset) \ =  \  \left| \bigcap_{i=1}^n \  (A_i)^c \right| \ =  
\ \left| \ {\bf{\Omega}} \ \smallsetminus \ \bigcup_{i = 1}^n \ A_i \right|,
$$
by the DeMorgan laws (elementary, from high school math).

Then,
\begin{align*}
\left| \ {\bf{\Omega}} \ \smallsetminus \ \bigcup_{i = 1}^n \ A_i \right| &= \ f(\emptyset)
\\
&= \ \sum_{T \subseteq \underline{n}} \ (-1)^{|T|} \ g(T)
\\
&= \ \sum_{k=0}^n \ (-1)^k \ \sum_{|T|=k, \ T \subseteq \underline{n}} \ \left| \bigcap_{i \in T} \ A_i \right|
\\
&= \ \sum_{k=0}^n \  (-1)^k \ \mathbf{S}_k .
\end{align*}
\end{proof}

\begin{example} \textit{Sieve} originates from \textit{inclusion/exclusion}.

For $n = 3$, Proposition \ref{Sylvester} reads:

\begin{align*}
| \ {\bf{\Omega}} \ \smallsetminus \ \cup_{i = 1}^3 \ A_i| &= \ |{\bf{\Omega}}| - |A_1| - |A_2| - |A_3|
\\
& \  + |A_1 \cap A_2|  + |A_1 \cap A_3|  + |A_2 \cap A_3| -  |A_1 \cap A_2 \cap A_3|.
\end{align*}

\end{example} \qed

\subsubsection{Ch. Jordan's formula}

Let ${\bf{\Omega}}$ be a finite set (sample space) and let $A_1, A_2, \ldots, A_n$
be subsets $A_i \subseteq {\bf{\Omega}}$ (forbidden events).
Let $\underline{n} = \{1, 2, \ldots, n \}$ be the \textit{family of indexes}.

Given $m = 0, 1, \ldots, n$, let
$$
\mathbf{e}_m
$$
be the number of elements of ${\bf{\Omega}}$  that belongs to \textit{exactly}
$m$ of the forbidden events $A_1, A_2, \ldots, A_n$.

In the notation of Proposition  \ref{sieve general}, we have:
\begin{align*}
\mathbf{e}_m \ &= \ \sum_{S \subseteq \underline{n}, \ |S| = m} \ f(S)
\\
&= \  \sum_{S \subseteq \underline{n}, \ |S| = m} \ \sum_{T \supseteq S} \ (-1)^{|T| - |S|} \ g(T)  
\\
&= \ \sum_{k = m}^n \ \sum_{T \subseteq \underline{n}, \ |T| = k} \ \sum_{S \subseteq T, |S| = m} \ (-1)^{|T| - |S|} \ g(T)
\\
&= \  \sum_{k = m}^n \ (-1)^{k - m} \
\binom{k}{m} \ \sum_{|T|=k, \ T \subseteq \underline{n}} \ g(T)
\\
&= \ \sum_{k = m}^n \ (-1)^{k - m} \ \binom{k}{m} \ 
\sum_{|T|=k, \ T \subseteq \underline{n}} \ \left| \bigcap_{i \in T} \ A_i \right|
\\
&= \ \sum_{k=m}^n \  (-1)^{k - m} \ \binom{k}{m} \ \mathbf{S}_k.
\end{align*}

Hence,
\begin{proposition}{\bf{(Ch. Jordan's formula)}}  Given $m = 0, 1, \ldots, n$, we have
$$
\mathbf{e}_m \ = \ \sum_{k=m}^n \  (-1)^{k - m} \ \binom{k}{m} \ \mathbf{S}_k.
$$
\end{proposition}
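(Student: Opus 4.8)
The plan is to reduce everything to Proposition~\ref{sieve general}, which already computes the cardinality $f(S)$ of a single complete product as an alternating sum of the intersection cardinalities $g(T) = \left| \bigcap_{i \in T} A_i \right|$ over $T \supseteq S$. The first step is to express $\mathbf{e}_m$ in terms of these complete products. A point $x \in \mathbf{\Omega}$ belonging to \emph{exactly} $m$ of the forbidden events lies in precisely one complete product, namely the one indexed by the $m$-set $S = \{ i \in \underline{n}; \ x \in A_i \}$. Since the complete products are pairwise disjoint, summing their cardinalities over all $m$-subsets $S \subseteq \underline{n}$ gives the clean identity $\mathbf{e}_m = \sum_{|S|=m} f(S)$.

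Next I would substitute the formula of Proposition~\ref{sieve general} for each $f(S)$ and interchange the order of the (finite) double summation, grouping the resulting terms by the index set $T$ rather than by $S$. Concretely, for a fixed $T \subseteq \underline{n}$ with $|T| = k \geq m$, the term $(-1)^{k-m} g(T)$ appears once for each $m$-subset $S \subseteq T$; the number of such subsets is exactly $\binom{k}{m}$, by the combinatorial definition of the binomial coefficient. This is the only nontrivial bookkeeping step, and it is precisely where the binomial factor in the final formula is produced.

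Finally, I would collect terms by the common value $k = |T|$ and recognize $\sum_{|T|=k} g(T) = \mathbf{S}_k$ as the $k$-th Sylvester number, yielding
$$
\mathbf{e}_m \ = \ \sum_{k=m}^n \ (-1)^{k-m} \ \binom{k}{m} \ \mathbf{S}_k.
$$
The main obstacle here is not conceptual but organizational: one must justify the interchange of summation and correctly count the multiplicity $\binom{k}{m}$ with which each $g(T)$ is counted after the grouping. Everything else follows mechanically from the dual set-theoretic Moebius inversion already established in Proposition~\ref{sieve general}, so no genuinely new idea beyond that inversion and the disjointness of complete products is required.
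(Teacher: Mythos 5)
Your proposal is correct and follows essentially the same route as the paper: both express $\mathbf{e}_m$ as $\sum_{|S|=m} f(S)$ via the disjointness of complete products, substitute the inversion formula of Proposition~\ref{sieve general}, interchange the summations so that each $g(T)$ with $|T|=k$ is counted $\binom{k}{m}$ times, and recognize the inner sum as $\mathbf{S}_k$. No gap to report.
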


Clearly, for $m = 0$, we find again the formula of Sylvester.

\

The approach to the \textit{generalized derangement problem} is even simpler and intuitive,
via  Ch. Jordan's formula.

Let ${\bf{\Omega}}$ be the set of $n$-permutations:
$$
{\bf{\Omega}} \ = \ \{\sigma : \underline{n} \longleftrightarrow \underline{n} \}.
$$
and consider the $n$-family of forbidden events
$$
A_i \ = \ \{\sigma : \underline{n} \longleftrightarrow \underline{n}; \ \sigma(i) = i \}, \quad i = 1, 2, \ldots, n.
$$ 

Then,
$$
d_{n, k} \ = \ \mathbf{e}_k.
$$

From  Ch. Jordan's formula, by setting $m=k$ and $k=h$, we get
\begin{align*}
d_{n, k} \ = \ \mathbf{e}_k \ &= \ \sum_{h = k}^n \ (-1)^{h - k} \ \binom{h}{k} \ \binom{n}{h} \ (n - h)!
\\
&= \  \frac{n!}{k!} \ \sum_{h = k}^n \ \frac{(-1)^{h - k}}{(h - k)!}.
\end{align*}

\subsection{The problem of menages}

The \textit{menage problem}  asks for the number of different ways in which it is possible to seat a set of male-female couples at a round dining table so that women and men alternate and nobody sits next to his or her partner. This problem was formulated in $1891$ by \'{E}douard Lucas  and, independently, a few years earlier, by Peter Guthrie Tait in connection with knot theory.

The problem was solved by J. Touchard in $1934$. Touchard's approach was simplified by I. Kaplansky in $1943$.

\subsubsection{A preliminary remark on the ``circular'' Gergonne problem}

\underline{Problem}. In how many ways can we choose a $k$-subset $S$ of seats in a round table with seats labelled 
$\underline{2n} = \{1, 2, 3, \ldots , 2n - 1, 2n \}$ such that no two  seats in $S$
are \textit{adjiacent}?

\

\underline{Solution}. We have to apply two times the solution of the classical Gergonne problem 
twice.

Consider the following cases.
 
i) Suppose that the $k$-subset $S$ contains  the seat $1$. Then, we have to choose $k - 1$ seats in the set
$\{3, 4, \ldots, 2n - 1 \}$,  keeping in mind that no two seats must be adjacent.
Equation \ref{classical Gergonne}, implies that it can be done in
$$
\binom{2n-3-k+1+1}{k - 1} \ = \ \binom{2n - k -1}{k - 1}
$$
ways.

ii) Suppose that the $k$-subset $S$ doesn't contain  the seat $1$. Then, we have to choose $k$ seats in the 
$2n - 1$-set $\{2, 3, \ldots, 2n\}$, keeping in mind that no two seats must be adjacent.
Equation \ref{classical Gergonne}, implies that it can be done in
$$
\binom{2n-1-k+1+1}{k} \ = \ \binom{2n - k}{k}
$$
ways.

Then, the solution is provided by the number
\begin{equation}\label{solution circ Gerg}
\binom{2n - k -1}{k - 1} \ + \ \binom{2n - k}{k} \ = \ \frac{2n}{2n - k} \binom{2n - k}{k}.
\end{equation}

\subsubsection{The formulae of Touchard  and Kaplansky}

To begin with, we consider the \textit{reduced} problem of menages: the $n$ women 
$\underline{n} = \{1, 2, \ldots, n \}$ are already sitting in 
the \textit{odd} seats $\{1, 3, \ldots, 2n - 1 \}$.

We must place the $n$ men, say $\underline{n'} = \{1', 2', \ldots, n' \}$.

Their placements are described by the function
$$
f : \underline{n} = \{1, 2, \ldots, n \} \longrightarrow \underline{n'} = \{1', 2', \ldots, n' \}
$$
such that
\begin{equation}
f(i) \stackrel{def}{=} \ \text{the \ man  \ f(i) \ sitted \ on \ the \ right \ of \ his \ partner \ i}.
\end{equation}

Our aim is to apply the formula of Sylvester. The set $\mathbf{\Omega}$ is
$$
\mathbf{\Omega} \ = \ \{ \ f : \underline{n} = \{1, 2, \ldots, n \} \longrightarrow \underline{n'} \}
= \{ 1', 2', \ldots, n' \}.
$$
 
In order for the \textit{placement function} $f$ obey to the restrictions of 
the \textit{menage problem}, we must consider the following $2n$ \underline{\textit{forbidden events}}
$A_i \subseteq \mathbf{\Omega}$, $i = 1, 2, \ldots, 2n$:

\begin{enumerate}

\item \ For $i = 1, 2, \ldots, n$,  let
$$
A_{2n - 1} \ = \ \{ \ f : \underline{n}  \rightarrow \underline{n'},  \ f(i) = i'    \},
$$
\item \ For $i = 1, 2, \ldots, n -1$,  let
$$
A_{2i} \ = \ \{ \ f : \underline{n}  \rightarrow \underline{n'},  \ f(i) = (i + 1)'    \},
$$
\item \ Let
$$
A_{2n} \ = \ \{ \ f : \underline{n}  \rightarrow \underline{n'},  \ f(n) = 1'    \}.
$$ 
\end{enumerate}

Therefore, the solution of the reduced problem is given by the positive integer
\begin{equation}\label{menage one}
\left| \ \mathbf{\Omega} \  \ \smallsetminus \  \bigcup_{j = 1}^{2n} \ A_j \right|.
\end{equation}

From the Sylvester formula, it follows that (\ref{menage one}) equals:
\begin{equation}\label{menage two}
\sum_{k = 0}^{2n} \ (-1)^k \ \sum_{T \subseteq{\underline{2n}}: \  |T| = k} \ \left| \bigcap_{j \in T} \ A_j \right|.
\end{equation}

But now
$$
\bigcap_{j \in T} \ A_j \ = \ \emptyset,
$$
whenever $T$ contains adjacent elements, and
$$
\left| \bigcap_{j \in T} \ A_j \right| \ = \ (n - k)!,
$$
otherwise.

Hence, by applying formula (\ref{solution circ Gerg}), the solution of the reduced
problem is given by the \textit{Touchard numbers}:
\begin{equation}\label{Touchard one}
\mathbf{U_n} \ = \ \sum_{k = 0}^{2n} \ (-1)^k \ \frac{2n}{2n -k} \binom{2n - k}{k} \ (n - k)!
\end{equation}

Now, we have to pass from the reduced problem to the general one. To wit:

i) women may be sitting in the odd seats into \textit{any} order: then, there are
$n!$ different cases. 

ii) women may be sitting either in the odd seats or in the even seats: then, there are $2$
different cases.

Hence, the solution of the general menage problem is:
\begin{equation}\label{Touchard two}
2 \ n! \  \mathbf{U_n} \ = \ \sum_{k = 0}^{2n} \ (-1)^k \ \frac{4n}{2n - k} {\binom{2n - k}{k}} \ {n!(n - k)!}
\end{equation}

\subsection{The Euler $\Phi$ function}

The \textit{Euler} $\Phi$ \textit{function} is the function
$$
\Phi : \mathbb{Z}^+ \rightarrow \mathbb{Z}^+
$$
such that
$$
\Phi(n) \stackrel{def}{=} \ |\{ m \in \underline{n}; \ GCD(m, n) = 1 \}|, \quad n \in \mathbb{Z}^+,
$$
that is the number of positive integers $m$ less than or equal to $n$ that are \textit{coprime} 
with $n$.

\begin{proposition}{\bf{(Euler's Theorem)}} Let $n \in \mathbb{Z}^+$, and let
$$
n \ \stackrel{!}{=} \ p_1^{i_1} p_2^{i_2} \cdots p_r^{i_r}
$$
($p_1, p_2, \ldots p_r$ pairwise distinct \emph{primes}, different from $1$)
be its \emph{unique factorization} into product of  powers of primes.
Then
$$
\Phi(n) \ = \ n \ \left(1 - \frac{1}{p_1}\right) \left(1 - \frac{1}{p_2}\right) \cdots \left(1 - \frac{1}{p_r}\right).
$$
\end{proposition}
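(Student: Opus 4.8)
The plan is to apply the formula of Sylvester (Proposition \ref{Sylvester}) with a sieve over the prime divisors of $n$. I would take the sample space $\mathbf{\Omega} = \underline{n} = \{1, 2, \ldots, n \}$ and, for each prime $p_j$ occurring in the factorization, introduce the forbidden event
$$
A_j \ = \ \{ \ m \in \underline{n}; \ p_j \mid m \ \}, \quad j = 1, 2, \ldots, r.
$$
A positive integer $m \leq n$ is coprime with $n$ precisely when it is divisible by \emph{none} of the primes $p_j$; hence
$$
\{ \ m \in \underline{n}; \ GCD(m, n) = 1 \ \} \ = \ \mathbf{\Omega} \ \smallsetminus \ \bigcup_{j=1}^r \ A_j,
$$
so that $\Phi(n) = \left| \ \mathbf{\Omega} \ \smallsetminus \ \bigcup_{j=1}^r A_j \right|$ and the Sylvester formula applies with $r$ forbidden events.

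The computational heart is the evaluation of the Sylvester numbers $\mathbf{S}_k$. First I would show that for any subfamily of indexes $T \subseteq \underline{r}$ the intersection $\bigcap_{i \in T} A_i$ is exactly the set of multiples of $\prod_{i \in T} p_i$: since the $p_i$ are \emph{pairwise distinct} primes, an integer is divisible by each $p_i$ (for $i \in T$) if and only if it is divisible by their product. Because this product divides $n$, the number of its multiples in $\underline{n}$ is exactly $n / \prod_{i \in T} p_i$, yielding
$$
\left| \ \bigcap_{i \in T} \ A_i \ \right| \ = \ \frac{n}{\prod_{i \in T} p_i}.
$$

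Substituting into Proposition \ref{Sylvester} and merging the alternating sum over $k = |T|$ into a single sum over all subsets $T \subseteq \underline{r}$, I obtain
$$
\Phi(n) \ = \ \sum_{T \subseteq \underline{r}} \ (-1)^{|T|} \ \frac{n}{\prod_{i \in T} p_i} \ = \ n \ \sum_{T \subseteq \underline{r}} \ \prod_{i \in T} \left( - \frac{1}{p_i} \right).
$$
The concluding step recognizes the subset-indexed sum as the expansion of a product: distributing $\prod_{i=1}^r \left( 1 - \frac{1}{p_i} \right)$ produces exactly one term $\prod_{i \in T} \left( - \frac{1}{p_i} \right)$ for each $T \subseteq \underline{r}$, which is the binary-solution/characteristic-function bookkeeping already seen for subsets. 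This gives the claimed formula.

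The main obstacle is the identity $\left| \bigcap_{i \in T} A_i \right| = n / \prod_{i \in T} p_i$, resting on the two elementary number-theoretic facts noted above (distinct primes dividing $m$ forces their product to divide $m$, and a divisor $d$ of $n$ has exactly $n/d$ multiples in $\underline{n}$); everything else is the inclusion/exclusion already packaged in the Sylvester formula together with the factorization of the subset sum into a product. I would also check the endpoints: the term $T = \emptyset$ contributes $|\mathbf{\Omega}| = n$, matching $\mathbf{S}_0$, which is precisely why the leading factor of the product formula is $n$.
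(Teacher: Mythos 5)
Your proposal is correct and takes essentially the same route as the paper's proof: a Sylvester sieve on $\mathbf{\Omega} = \underline{n}$ with forbidden events $A_j$ the multiples of the primes $p_j$, the key count $\left| \bigcap_{i \in T} A_i \right| = n / \prod_{i \in T} p_i$, and the recognition of the alternating subset sum as the expansion of $n \prod_{j=1}^r \left(1 - \frac{1}{p_j}\right)$. If anything, you justify the intersection cardinality and the final factorization more explicitly than the paper, which simply asserts both.
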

\begin{proof}It is another beautiful application of the formula of Sylvester.

Set ${\bf{\Omega}} \ = \ \underline{n}$ and
$$
A_i \ = \ \{m \in \underline{n}; \ p_i \ | \ n \}, \quad i = 1, 2, \ldots, r.
$$
Then,
$$
\Phi(n) \ = \ |{\bf{\Omega}} - \cup_{i = 1}^r \ A_i|.
$$
Now,
$$
|A_i| \ = \ \frac{n}{p_i}, \quad i = 1, 2, \ldots, r.
$$
In general, given $T \subseteq \underline{r}$, $|T| = k$, say
$$
T \ = \ \{i_1, i_2, \ldots, i_k \},
$$
then,
$$
\left| \bigcap_{i \in T} \ A_i \right| \ = \ \frac{n}{p_{i_1}p_{i_2} \cdots p_{i_k}}.
$$
Hence,
\begin{align*}
\Phi(n) \ &= \ n ( 1 - \frac{1}{p_1} - \cdots - \frac{1}{p_r} \ + \ \sum_{p, q} \ \frac{1}{p_{i_p}p_{i_q}}
\\
& \ \ \ \ - \sum_{p, q, s} \ \frac{1}{p_{i_p}p_{i_q}p_{i_s}} + \cdots + (-1)^r \ \frac{1}{p_1p_2 \cdots p_r} 
\\
&=\ n \ \left(1 - \frac{1}{p_1} \right) \left(1 - \frac{1}{p_2} \right) \cdots \left(1 - \frac{1}{p_r} \right).
\end{align*}
\end{proof}

\begin{example} We have:

\begin{enumerate}

\item \ $n = 30 = 2 \cdot 3 \cdot 5$. Then, $\Phi(30) \ = \ 30 (1-\frac{1}{2})(1-\frac{1}{3})(1-\frac{1}{5}) = 8$.

\item \ $n = 100 = 2^2 \cdot 5^2$. Then, $\Phi(100) \ = \ 100 (1-\frac{1}{2})(1-\frac{1}{5}) = 40$.

\item \ $n = 125 =  5^3$. Then, $\Phi(125) \ = \ 125 (1-\frac{1}{5}) = 100$.

\item \ $n = 210 =  2 \cdot 3 \cdot 5 \cdot 7$. Then, $\Phi(210) \ = \ 210 
(1-\frac{1}{2})(1-\frac{1}{3})(1-\frac{1}{5})(1-\frac{1}{7}) = 48$.
\end{enumerate}
\end{example} \qed

\subsection{A glimpse on $RSA$ public-key cryptography}

\subsubsection{Preliminaries. Congruences \underline{\textit{mod \ n}}  \ on the integers $\mathbb{Z}$}

Given an integer $n \in \mathbb{Z}$, $n > 1$,  we define an equivalence relation $\equiv(mod \ n)$
in the following way.

Given $x, y \in \mathbb{Z}$,
$$ 
x \equiv y(mod \ n) \ \stackrel{def}{\iff} \ x - y \ = \ qn, \quad q \in \mathbb{Z}.
$$

Therefore, $x \equiv y(mod \ n)$ if and only if $x$ and $y$ have the same \textit{remainder} with respect 
to the division by $n$, i.e.:
$$
x\stackrel{!}{=}q_1n+r_1, \ y\stackrel{!}{=}q_2n+r_2, \ 0 \leq r_1, r_2 <n \ \Rightarrow \ r_1 \ = \ r_2.
$$
Hence, the equivalence classes with respect to $\equiv(mod \ n)$ are canonically represented by the 
\underline{remainders} with respect to the division by $n$:
$$
[0]_{mod \ n}, \ [1]_{mod \ n}, \ldots, \ [n-1]_{mod \ n}. 
$$

The equivalence relations $=(mod \ n)$ are \textit{congruences}, that is, are compatible with the operations
of addition and sum.

If $x \equiv y(mod \ n)$ and $x' \equiv y'(mod \ n)$, then
$$
x + x'\ \equiv \ y + y'(mod \ n) \ \ \ \emph{and} \ \ \ x  x' \ \equiv \ y  y'(mod \ n).
$$

We limit ourselves to recall a couple of elementary facts.

\begin{proposition}\label{unique inverse} Let $n \in \mathbb{Z}^+$, $n>1$, and let $x \in \mathbb{Z}^+$,
$0 < x < n$, such that $GCD(x, n) = 1$. 
There exists a \emph{unique} $y \in \mathbb{Z}^+$, $0 < y < n$, such that
\begin{equation}
x y \equiv 1(mod \ n).
\end{equation}\label{congruence inversa}
\end {proposition}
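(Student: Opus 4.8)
The plan is to show that multiplication by $x$ permutes the residues modulo $n$, and then read off both existence and uniqueness from this single fact. Concretely, I would consider the $n$ integers
$$
x \cdot 0, \ x \cdot 1, \ x \cdot 2, \ \ldots, \ x \cdot (n-1)
$$
and look at their remainders modulo $n$. If I can prove that these $n$ remainders are pairwise distinct, then --- since there are exactly $n$ possible remainders $0, 1, \ldots, n-1$ --- every remainder is attained exactly once, by a pigeonhole/bijection argument on the finite set $\{0, 1, \ldots, n-1\}$. In particular the remainder $1$ is attained by exactly one $y \in \{0, 1, \ldots, n-1\}$, giving $x y \equiv 1 \pmod{n}$; and this $y$ cannot be $0$, because $x \cdot 0 = 0 \not\equiv 1 \pmod{n}$ since $n > 1$. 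This settles existence and uniqueness at one stroke.

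To prove the remainders are distinct, suppose $x i \equiv x j \pmod{n}$ with $0 \le i, j \le n-1$. Then $n \mid x(i-j)$, and the whole argument reduces to the cancellation implication
$$
n \mid x(i - j) \ \text{ and } \ GCD(x, n) = 1 \ \Longrightarrow \ n \mid (i - j),
$$
after which $|i - j| < n$ forces $i = j$. This step is where the coprimality hypothesis does all the work, and it is the main obstacle: it is precisely Euclid's lemma. I would establish it through B\'ezout's identity, i.e. by running the Euclidean algorithm on $x$ and $n$ to produce integers $a, b \in \mathbb{Z}$ with $a x + b n = GCD(x, n) = 1$; multiplying $n \mid x(i-j)$ through by $a$ and using $a x = 1 - b n$ then yields $n \mid (i - j)$. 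As a bonus, the same identity gives existence directly: reducing $a x + b n = 1$ modulo $n$ shows $a x \equiv 1$, so the representative of $a$ in $\{1, \ldots, n-1\}$ is already an inverse.

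Finally, uniqueness also admits a clean algebraic confirmation that bypasses the counting step, using only that congruences are compatible with multiplication (as recorded just above): if $x y \equiv 1$ and $x y' \equiv 1 \pmod{n}$, then
$$
y' \equiv (x y)\, y' = y\, (x y') \equiv y \pmod{n},
$$
and since both $y$ and $y'$ lie in $\{1, \ldots, n-1\}$ they must be equal. I expect the only genuinely non-formal ingredient to be B\'ezout's identity (equivalently, Euclid's lemma); everything else is routine bookkeeping with the congruence rules.
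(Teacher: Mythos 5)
Your proof is correct, but there is nothing in the paper to compare it against: the authors state this proposition as a recalled elementary fact and mark it \qed\ without any argument, so your write-up fills a gap rather than paralleling an existing proof. Within your proposal you in fact give \emph{two} complete arguments layered on top of each other: the permutation/pigeonhole route (the map $i \mapsto xi \bmod n$ is injective on $\{0,1,\ldots,n-1\}$, hence surjective, hence hits $1$ exactly once) and the direct B\'ezout route ($ax + bn = 1$ gives existence immediately, and compatibility of congruence with multiplication gives uniqueness via $y' \equiv (xy)y' = y(xy') \equiv y$). Either one alone suffices; the B\'ezout version is shorter and is the one that meshes naturally with how the paper uses the result later (computing the private exponent $d$ from $de \equiv 1 \pmod{\Phi(n)}$ is exactly the extended Euclidean algorithm), while the counting version is more in the spirit of the paper's bijective style and needs B\'ezout only through Euclid's lemma. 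All the small steps check out: your cancellation argument $n \mid x(i-j) \Rightarrow n \mid (i-j)$ is correctly derived from $ax = 1 - bn$, the exclusion of $y = 0$ uses $n > 1$ as it must, and the final uniqueness step tacitly uses that two integers in $\{1,\ldots,n-1\}$ congruent modulo $n$ differ by less than $n$ and so coincide --- worth one explicit sentence, but not a gap.
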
 \qed

\begin{remark}
Under condition (\ref{congruence inversa}), we write  (by consistent convention): 
$$
y \equiv x^{-1}(mod \ n), \quad [y]_{mod \ n} = ([x]_{mod \ n})^{-1} .
$$
\end{remark}\qed

\begin{example}\label{congruence inversa bis}
For example, let $n = 20$, $x = 3$, $y = 7$. Since
$$
x y = 21 \equiv 1(mod \ 20),
$$
then,
$$
7 \equiv 3^{-1}(mod \ 20),
$$
and
$$
[7]_{mod \ 20} = ([3]_{mod \ 20})^{-1}. 
$$
Furthermore,
$$
7 \equiv -13(mod \ 20),
$$
then,
$$
[7]_{mod \ 20} = [-13]_{mod \ 20} =  ([3]_{mod \ 20})^{-1}.
$$
\end{example} \qed

\begin{proposition}{\bf{(Fermat's little theorem)}}\label{little Fermat} Let $p \in \mathbb{Z}^+$ be a \emph{prime} number.
For every $a \in \mathbb{Z}$, if $m \equiv n (mod \ (p - 1))$, then 
$$
a^m  \equiv a^n(mod \ p).
$$
\end {proposition}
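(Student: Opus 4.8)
The plan is to deduce the statement from the classical Fermat congruence $a^p \equiv a \pmod{p}$, valid for every $a \in \mathbb{Z}$, and then to exploit the hypothesis $m \equiv n \pmod{p-1}$ to slide one exponent onto the other. Throughout I assume the exponents $m,n$ to be positive: when $\gcd(a,p)=1$ the argument works for $m,n \geq 0$ as well, whereas when $p \mid a$ the case of a zero exponent must be excluded (indeed $a^0 = 1 \not\equiv 0 \equiv a^{p-1}$ there), so positivity is the right hypothesis to make the statement hold for all $a$.

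First I would establish the base case $a^p \equiv a \pmod p$. The cleanest route through the material already developed is the ``freshman's dream'': by the Binomial Theorem, $(x+1)^p = \sum_{k=0}^p \binom{p}{k} x^k$, and from the closed form $\binom{p}{k} = \frac{p!}{k!(p-k)!}$ one reads off that $p \mid \binom{p}{k}$ whenever $0 < k < p$, since the prime $p$ divides the numerator $p!$ but none of the factors appearing in $k!(p-k)!$ (all of which are strictly smaller than $p$). Hence $(x+1)^p \equiv x^p + 1 \pmod p$, and a straightforward induction on $a \geq 0$ using $a^p = ((a-1)+1)^p \equiv (a-1)^p + 1 \equiv (a-1)+1 = a$ yields $a^p \equiv a \pmod p$ for all natural $a$, and then for all $a \in \mathbb{Z}$ since $a \equiv b \pmod p$ forces $a^p \equiv b^p \pmod p$. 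A purely combinatorial alternative, fully in the spirit of these notes, counts the $a^p$ words of length $p$ over an $a$-letter alphabet under cyclic rotation: a word of prime length fixed by a nontrivial rotation must be constant (such a rotation, being coprime to $p$, generates all rotations), so the non-constant words split into orbits of size exactly $p$, whence $p \mid a^p - a$.

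With the base case in hand, I would bootstrap the exponent. For any $j \geq 1$, $a^{\,j+(p-1)} = a^{\,j-1}\cdot a^{p} \equiv a^{\,j-1}\cdot a = a^{\,j} \pmod p$, so adding a multiple of $p-1$ to a positive exponent does not change the residue modulo $p$. Writing, without loss of generality, $m = n + k(p-1)$ with $k \geq 0$ (possible precisely because $m \equiv n \pmod{p-1}$), an induction on $k$ then gives $a^m \equiv a^n \pmod p$, which is the claim. The main obstacle is the divisibility $p \mid \binom{p}{k}$ for $0<k<p$: this is exactly where primality of $p$ is indispensable, and it is the only genuinely arithmetic (as opposed to formal) ingredient of the whole argument. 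The secondary subtlety is mere bookkeeping, namely tracking the excluded zero-exponent case when $p \mid a$, which is why the positivity of $m,n$ (or the coprimality of $a$ and $p$) is assumed at the outset.
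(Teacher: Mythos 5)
Your proof is correct, and it is worth noting that the paper itself offers no proof at all here: Proposition \ref{little Fermat} is merely ``recalled'' as an elementary fact before the $RSA$ discussion, so your argument fills a genuine gap rather than diverging from an existing one. Your route is sound at every step: the divisibility $p \mid \binom{p}{k}$ for $0<k<p$ follows exactly as you say from $k!\,(p-k)!\,\binom{p}{k} = p!$, since $p$ is prime and divides no factor of $k!\,(p-k)!$; the induction $a^p \equiv a \pmod p$ via the freshman's dream, extended to negative $a$ by reducing modulo $p$ first, is complete; and the bootstrap $a^{\,j+(p-1)} = a^{\,j-1}a^p \equiv a^{\,j} \pmod p$ for $j \geq 1$, iterated along $m = n + k(p-1)$, correctly delivers the claim. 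Two features of your write-up deserve emphasis. First, your diagnosis of the hypothesis is sharper than the paper's own statement: as printed, the proposition is false when $p \mid a$ and one exponent is $0$ (e.g.\ $a = p$, $m = 0$, $n = p-1$ gives $1 \not\equiv 0$), so positivity of the exponents --- or $\gcd(a,p)=1$ --- is indeed needed; fortunately the $RSA$ application only uses the exponents $ed \geq 1$ and $1$, so the paper's use of the proposition is unaffected. Second, your alternative proof of $p \mid a^p - a$ by counting length-$p$ words over an $a$-letter alphabet under cyclic rotation (non-constant words lie in orbits of size exactly $p$ because $p$ is prime) is very much in the spirit of these notes --- it is a pure overcounting argument requiring no computation --- and arguably fits the authors' stated methodology better than the binomial route, which in turn has the advantage of leaning only on results (the Binomial Theorem and the closed form for $\binom{n}{k}$) already established earlier in the text.
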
 

\begin{example}
$$
2^2  \equiv 1(mod \ 3)  \equiv2^4(mod \ 3)   \equiv 2^6(mod \ 3),
$$
$$ 
5^2 \equiv 1(mod \ 3) \equiv 5^4(mod \ 3) \equiv 5^6  (mod \ 3), 
$$
$$
5^1 = 5 \equiv 2 (mod \ 3)  \equiv 5^3 (mod \ 3) \equiv 5^5 (mod \ 3), 
$$
$$
3^4 = 81 \equiv 1(mod \ 5) \equiv 3^8 (mod \ 5), 
$$
$$
2^6 = 64 \equiv 1(mod \ 7) \equiv 2^{12}(mod \ 7) 
$$
$$
5^6 = 15.625 \equiv  1(mod \ 7) \equiv  5^{12} (mod \ 7),
$$
$$
5^2 = 25 \equiv  4(mod \ 7) \equiv  5^8 (mod \ 7),
$$
$$
12^3(mod \ 17) \equiv  12^{19}(mod \ 17).
$$

In the notation of Remark \ref{congruence inversa bis}, we have:
$$
[3]_{mod \ 5} = [2]_{mod \ 5}^{-1}.
$$
Then,
$$
[3]_{mod \ 5}^7 = [3]_{mod \ 5}^2 = [4]_{mod \ 5} = ([2]_{mod \ 5}^{-1})^2 = [2]_{mod \ 5}^{-2} 
\stackrel{def}{=} [3]_{mod \ 5}^2.
$$
\end{example}\qed

\subsubsection{The method}

$RSA$ (Rivest-Shamir-Adleman) is a public-key cryptosystem that is
widely used for secure data transmission. A typical application of public-key cryptography 
is the \textit{digital signature}.

In a public-key cryptosystem, the \textit{encryption key} is \textit{public} and distinct
from the \textit{decryption key}, which is kept \textit{secret (private)}. 

An $RSA$ user
creates and publishes a public key $n = pq$ based on two large prime numbers $p, q$,
along with an auxiliary value. The prime numbers are kept secret.
Messages can be encrypted by anyone, via the public key, but can
only be decrypted by someone who knows the prime numbers.
The security of $RSA$ relies on the practical difficulty of factoring the
product of two large prime numbers, the \textit{factoring problem}.
Breaking $RSA$ encryption is known as the $RSA$ problem. Whether it
is as difficult as the factoring problem is an open question. There are
no published methods to defeat the system if a large enough key is
used.

\

Suppose that you have a classified \textit{clear} $M$    message that you need to convey to a friend 
of yours.
How can you do that in a safe way? First, you should
build a new message  (\textit{encryption}).
This process will lead your message to be the encrypted \textit{dark} message $\mathbf{M}$, that you
really convey. 
The next step (\textit{decryption}) is for your friend to rebuild the
original clear message $M$  from the dark message $\mathbf{M}$:
$$
M \stackrel{encryption}{\longrightarrow}  \mathbf{M}, \quad \mathbf{M} \stackrel{decryption}{\longrightarrow}  M.
$$

\

\

For the encryption process, we use a known $n \in \mathbb{Z}^+$ key - $n$ being a 
product $n = pq$ of \textit{two prime numbers} $p, q$ -
while for the decryption process we use the evaluation $\Phi(n)$ of the Euler function.
In order to calculate it one needs to know the prime numbers $p, q$.
The security of $RSA$ relies on the practical difficulty of factoring the product of two large prime numbers, 
the ``factoring problem''.
In current technology, the numbers $n, p, q$  are   of the order
of $10^{300}$.

\

We fix the number $n$ and a positive integer $e$ such that
$$
1 < e < \Phi(n), \qquad
GCD(e; \Phi(n)) = 1;
$$ 
this number $e$ is the \textit{public exponent} and the
pair  $(n, e)$ is the \textit{public key}. 

\

\

 The \underline{encryption process} consists in  splitting the original clear message  into 
submessages, and by transforming each submessage   into a positive integer number $m$, with the 
\underline{constraint that}
$1 < m < n$.

Then, we proceed to the encryption of each clear submessage $m$:
$$
(clear) \ m \stackrel{encryption}{\longrightarrow} c \equiv m^e(mod \ n) \ (dark),
$$
where $c$ is the unique positive integer $c < n$ such that $c \equiv m^e(mod \ n)$
(Proposition \ref{unique inverse}).
In plain words, the dark message $c$ is the \underline{remainder} in the division  by $n$ of 
the power  $m^e$ of the \textit{clear massage} $m$. 

\

 The message $m$ to be encripted may be \underline{any positive integer}
such that $ 1 < m < n$,  but the \textit{ public exponent} $e$,   $1 < e < \Phi(n)$, must be
\underline{\textit{coprime}} with $\Phi(n)$.

\

Now, given the public key $(n, e)$, we have
$$
\Phi(n) \ = \ pq \left(1 - \frac{1}{p} \right) \left(1 - \frac{1}{q} \right) \ = \ (p - 1)(q - 1),
$$ 
and,  from Proposition \ref{unique inverse},
\begin{equation}\label{basic eq}
\exists ! \ d < \Phi(n) \ such \ that \ d \cdot e \ \equiv \ 1(mod \ \Phi(n)).
\end{equation}

The pair $(n, d)$ is the \textit{private key} and $d$ is the \textit{private exponent}.

\

 The \underline{decryption process} is:
$$
(dark) \ c \stackrel{decryption}{\longrightarrow} c ^d (mod \ n) \ \equiv \ m \ (clear).
$$
In plain words, the original message $m$ is the \underline{remainder} in the division by $n$ of 
the power $c^d$ of the \textit{dark message} $c$.

\subsubsection{Proof}
The \textit{exponents} $e, d < \Phi(n)$ are such that
$$
e d \equiv \ 1(mod \ (p - 1)(q - 1)). 
$$
Hence,
$$
ed \ = \ h(p - 1)(q -1) + 1.
$$
Thus,
$$
e d \equiv \ 1(mod \ (p - 1))
$$
and   
$$
e d \equiv \ 1(mod \ (q - 1)).
$$

From Proposition \ref{little Fermat}, we infer:
$$
(m^e)^d = m^{ed}  \equiv m(mod \ p) \Longrightarrow m^{ed} - m \equiv 0(mod \ p)
$$
and
$$
(m^e)^d = m^{ed}  \equiv m(mod \ q) \Longrightarrow m^{ed} - m \equiv 0(mod \ q).  
$$

Then, $p$ \textit{divides} $m^{ed} - m$ and
 $q$ \textit{divides} $m^{ed} - m$.

Since $p, q$ are \textit{distinct primes}, then 
$$
n = p q  \ \  \mathbf{divides} \ \ m^{ed} - m.
$$
Therefore,
$$
m^{ed} = (m^e)^d = c^d  (mod \ n) \stackrel{!}{\equiv} m,
$$ 
is exactly the original \textit{clear} message.
\qed

\begin{example} 
Let $n = 25$, then, $\Phi(25) = 20$.

Let $e = 3$ be the \textit{public exponent}; then,  the \textit{private exponent} is $d = 7$
(indeed $7\cdot 3 = 21 \equiv 1 (mod \ 20)$).

Let $m = 14$ be the \textit{clear message}. Then, $c = 19  \equiv 14^3(mod \ 25)  = m^e(mod \ 25)$ 
is the \textit{dark message}.

Hence,
$$
c^d(mod  25) = 19^7(mod \ 25) = 14 = m,
$$
as desired.
\end{example} \qed

\end{document}